\crefname{thm}{Theorem}{Theorems}
\crefname{pro}{Proposition}{Propositions}
\crefname{lem}{Lemma}{Lemmas}
\crefname{rmk}{Remark}{Remarks}
\crefname{cor}{Corollary}{Corollaries}
\crefname{dfn}{Definition}{Definitions}
\crefname{ex}{Example}{Examples}
\crefname{section}{Section}{Sections}
\crefname{subsection}{Subsection}{Subsections}
\newcommand{\eps}{\varepsilon}
\newcommand{\To}{\rightarrow}
\newcommand{\as}{{\rm d}\mathbb{P}\times{\rm d} t-a.e.}
\newcommand{\ps}{\mathbb{P}-a.s.}
\newcommand{\F}{\mathcal{F}}
\newcommand{\E}{\mathbb{E}}
\newcommand{\p}{\mathbb{P}}
\newcommand{\s}{\mathcal{S}}
\newcommand{\lcal}{\mathcal{L}}
\newcommand{\mcal}{\mathcal{M}}
\newcommand{\ecal}{\mathcal{E}}
\newcommand{\hcal}{\mathcal{H}}
\newcommand{\T}{[0,T]}
\newcommand{\R}{{\mathbb R}}
\newcommand{\RE}{\forall}
\newcommand {\Dis}{\displaystyle}
\newtheorem{thm}{Theorem}[section]
\newtheorem{lem}[thm]{Lemma}
\newtheorem{pro}[thm]{Proposition}
\newtheorem{rmk}[thm]{Remark}
\newtheorem{cor}[thm]{Corollary}
\newtheorem{ex}[thm]{Example}
\journal{arXiv}
\begin{document}
\begin{frontmatter}

\title{{Multi-dimensional non-Markovian backward stochastic differential equations of  interactively quadratic generators}\tnoteref{found}}
\tnotetext[found]{This work is supported by National Natural Science Foundation of China (Nos. 12171471, 12031009 and 11631004), by Key Laboratory of Mathematics for Nonlinear Sciences (Fudan University), Ministry of Education, Handan Road 220, Shanghai 200433, China; by Lebesgue Center of Mathematics ``Investissements d'avenir" program-ANR-11-LABX-0020-01, by CAESARS-ANR-15-CE05-0024 and by MFG-ANR-16-CE40-0015-01.
\vspace{0.2cm}}


\author[Fan]{Shengjun Fan} \ead{shengjunfan@cumt.edu.cn}
\author[Hu]{Ying Hu} \ead{ying.hu@univ-rennes1.fr}
\author[Tang]{Shanjian Tang\corref{cor}
} \ead{sjtang@fudan.edu.cn} \vspace{-0.5cm}

\affiliation[Fan]{organization={School of Mathematics, China University of Mining and Technology},
            city={Xuzhou 221116},
            country={China}}

\affiliation[Hu]{organization={Univ. Rennes, CNRS, IRMAR-UMR6625},
            city={F-35000, Rennes},
            country={France}}

\affiliation[Tang]{organization={Department of Finance and Control Sciences, School of Mathematical Sciences, Fudan University},
            city={Shanghai 200433},
            country={China}}\vspace{-0.5cm}

\cortext[cor]{Corresponding author}

\begin{abstract}
This paper is devoted to a general solvability of multi-dimensional  non-Markovian backward stochastic differential equations (BSDEs) with interactively quadratic generators. Some general structures of the generator $g$ are posed for  both local and global existence and uniqueness results on BSDEs, which admit  a general growth of  the generator $g$ in the state variable $y$, and  a quadratic growth of the $i$th component $g^i$ both  in the $j$th row $z^j$ of the state variable $z$ for $j\neq i$ (by which we mean the ``{\it interactively quadratic}" growth) and  in the $i$th row $z^i$ of $z$.
We first establish an existence and uniqueness result on local bounded solutions and then several existence and uniqueness results on global bounded and unbounded solutions. They improve several existing works in the non-Markovian setting, and also incorporate some interesting examples, one of which is a partial answer to the  problem posed in \citet{Jackson2023SPA}. A comprehensive study on the bounded solution of one-dimensional quadratic BSDEs with unbounded stochastic parameters is provided  for deriving our main results.
\vspace{0.2cm}
\end{abstract}

\begin{keyword}
Multi-dimensional BSDE \sep Interactively quadratic generator \sep BMO martingale\sep \\ \hspace*{2cm}  Bounded solution \sep Unbounded solution \sep Existence and uniqueness. \vspace{0.2cm}

\MSC[2010] 60H10\vspace{0.2cm}
\end{keyword}

\end{frontmatter}
\vspace{-0.5cm}

\section{Introduction}
\label{sec:1-Introduction}
\setcounter{equation}{0}

Let $T\in (0,+\infty)$, $(B_t)_{t\in\T}$ be a $d$-dimensional  Brownian motion defined on a complete probability space $(\Omega, \F, \mathbb{P})$, and $(\F_t)_{t\in\T}$ be the augmented  filtration generated by $B$. Consider the following backward stochastic differential equation (BSDE in short):
\begin{equation}\label{eq:1.1}
  Y_t=\xi+\int_t^T g(s,Y_s,Z_s){\rm d}s-\int_t^T Z_s {\rm d}B_s, \ \ t\in\T,
\end{equation}
where the terminal value $\xi$ is a $d$-dimensional $\F_T$-measurable random vector, the generator function $g(\omega, t, y, z):\Omega\times\T\times\R^n\times\R^{n\times d}\To \R^n$
is $(\F_t)$-progressively measurable for each pair $(y,z)$, and the solution $(Y_t,Z_t)_{t\in\T}$ is a pair of $(\F_t)$-progressively measurable processes taking values in $\R^n\times\R^{n\times d}$ which almost surely satisfies BSDE \eqref{eq:1.1}.

\citet{Bismut1973JMAA} initially introduced in 1973 the above BSDE in the linear case and further in  \cite{Bismut1976SIAM} studied in 1976 the backward stochastic Riccati differential equation as a specifically structured  matrix-valued nonlinear BSDE, and \citet{PardouxPeng1990SCL} established in 1990 the existence and uniqueness result for a multidimensional nonlinear BSDE, where the generator $g$ is uniformly Lipschitz continuous with respect to the state variables $(y,z)$. Since then, there has been an increasing interest in BSDEs and their applications in  numerous fields such as mathematical finance, stochastic control, partial differential equations (PDEs), etc. The interested reader is refereed to,   among others, \citet{KarouiPengQuenez1997SPA,Kobylanski2000AP, KarouiHamadene2003SPA,HuImkeller2005AAP,FreiDosReis2011MFE,KramkovPulido2016AAP,EscauriazaSchwarzXing2022AAP,
Tian2023SIAMJFM}, and \citet{Weston2024FS}.\vspace{0.2cm}

When the generator $g$ has a quadratic growth in the state variable $z$, the above BSDE is called quadratic.  A quadratic BSDE naturally arises from non-Markovian  linear quadratic control, risk-sensitive  control, differential games and economic equilibrium (see~\citet{Bismut1976SIAM} and~\citet{KarouiHamadene2003SPA}).   When $n=1$ and the terminal value is bounded, the  existence and uniqueness result is first given by \citet{Kobylanski2000AP}, and subsequently by~\citet{Tevzadze2008SPA,BriandElie2013SPA,Fan2016SPA} and \citet{LuoFan2018SD}. When $n=1$ and the terminal value is allowed to be unbounded, existence and uniqueness of unbounded solutions of quadratic BSDEs was given in \citet{BriandHu2006PTRF,BriandHu2008PTRF,
DelbaenHuRichou2011AIHPPS,BarrieuElKaroui2013AoP,DelbaenHuRichou2015DCD}, and \citet{FanHuTang2020CRMA}.
A system of quadratic BSDEs is widely known to be  more difficult to be solved (see \citet{Peng1999Open}), and a general existence and uniqueness, without any structural assumption on the quadratic growth of the generator, is impossible, as shown by~\citet{FreiDosReis2011MFE} via a counterexample. On the other hand, many systems of quadratic BSDEs have been derived from various problems such as financial price-impact modeling, financial market equilibrium  among  interacting agents, nonzero-sum risk-sensitive stochastic differential games, and stochastic equilibrium in an incomplete financial markets. Many results on  them are available now. When  the terminal value has a very small bound, \citet{Tevzadze2008SPA} established the first existence and uniqueness result, which has inspired numerous studies on the bounded or unbounded solution under some ``smallness" assumptions on the terminal value, the terminal time and the generator; see for example \citet{Frei2014SPA,JamneshanKupperLuo2017ECP,KramkovPulido2016AAP,
KramkovPulido2016SIAM,HarterRichou2019EJP} and \citet{Kardaras2022SA}. In the particular Markovian setting, \citet{cheridito2015Stochastics} proved the global solvability of a special system of quadratic BSDEs associated to a forward SDE, and \citet{XingZitkovic2018AoP} formulated a large class of systems of quadratic BSDEs via the Bensoussan-Frehse (BF) condition (inspired by \citet{BensoussanFrehse2002ESAIM}) and the a priori boundedness (AB) condition on the generator (see \ref{A:AB} in \cref{sec:2-statement of main result} for details); see \citet{WestonZitkovic2020FS} and \citet{EscauriazaSchwarzXing2022AAP} for more developments. Recently, \citet{JacksonZitkovic2022SIAMC} and \citet{Jackson2023SPA} further extended the result of \cite{XingZitkovic2018AoP} to a non-Markovian system of quadratic BSDEs, but assuming an additional Malliavin-regularity on the generator, in addition to the (BF) and  (AB) conditions.\vspace{0.2cm}

Numerous  works are also available on bounded solutions in the non-Markovian setting, with  neither smallness  nor Malliavin-regular assumptions on the terminal time, the terminal value and the generator.  \citet{cheridito2015Stochastics} considered multidimensional BSDEs with projectable quadratic generators and sub-quadratic generators. \citet{HuTang2016SPA} established the global solvability of multi-dimensional BSDEs, assuming that  the $i$th component $g^i(t,y,z)$ of the generator $g(t,y,z)$ admits a quadratic growth only in the $i$th row $z^i$ of the matrix variable $z$ (the so-called ``{\it diagonally quadratic}" generator) and is bounded in $z^j$ for each $j\neq i$, and consequently answered the open problem arising from a nonzero-sum risk-sensitive stochastic differential game posed by \citet{KarouiHamadene2003SPA}.  When $g^i(t,y,z)$ has a strictly quadratic growth  in $z^i$ for each $i=1,\ldots,n$,  \citet{FanHuTang2023JDE}  further improved the preceding result of \cite{HuTang2016SPA} by allowing the generator $g(t,y,z)$ to have a sub-quadratic growth in $z$.   \citet{Luo2020EJP} solved a system of BSDEs, assuming that $g^i(t,y,z)$ depends quadratically on $z^j$ for any $1\leq j\leq i$ ( the so-called ``{\it triangularly quadratic}" generator) and has  a special form. Very recently, \citet{FanWangYong2022arXiv} generalized the result of \cite{FanHuTang2023JDE} to the case of multi-dimensional quadratic backward stochastic volterra integral equations, and \citet{Weston2024FS} applied the result of \cite{FanHuTang2023JDE} to prove the global existence of a Radner equilibrium in a limited participation economy models.\vspace{0.2cm}

The present paper focuses on both local and global solvability of systems of BSDEs with interactively quadratic generators in the non-Markovian setting, when $g^i$ depends quadratically both on $z^i$ and on $z^j$ for each $j\neq i$. Our results unify and strengthen some existing results in the non-Markovian setting such as \cite{HuTang2016SPA}, \cite{Luo2020EJP} and \cite{FanHuTang2023JDE}, and show that under some circumstances, a system of quadratic BSDEs has a unique solution when the generator $g(t,y,z)$ has a  quadratic growth  in the product of $\sqrt{\theta}$ and  some rows of the matrix variable $z$ for a  small $\theta$, which is compared to some existing results mentioned in the second last paragraph. More specifically, we first establish an existence and uniqueness result on the local bounded solution of systems of quadratic BSDEs with bounded terminal values (see \cref{thm:2.3} in \cref{sec:2-statement of main result}) under general assumptions, see assumptions \ref{A:B1} and \ref{A:B2} in \cref{sec:2-statement of main result} for details. The assumption \ref{A:B1} states that $g^i$  has any of the following three different ways of growth in $z^i$: strictly quadratic, generally quadratic and linear.   $g^i$ also has a general growth in $y$, and has a quadratic growth in $\sqrt{\theta} z^j$ for $j\neq i$  for a small  $\theta$, and an interacting quadratic growth like the inner product of $z^i$ and $z^j$ for $j<i$. We would like to mention that generally speaking, the parameters of processes defined in assumptions \ref{A:B1} and \ref{A:B2} (namely, $\alpha, \bar\alpha, \tilde\alpha$ and $v$) are all unbounded and stochastic, and satisfy different integrability conditions, which is endogenous  for our desired conclusions, and on the other hand, provides convenience for the study on unbounded solutions of systems of  quadratic BSDEs. Then, under, respectively, an a priori boundedness assumption and two stronger assumptions than \ref{A:B1} (see \ref{A:AB}, \ref{A:C1a} and \ref{A:C1b} in \cref{sec:2-statement of main result}), based on \cref{thm:2.3} we further establish three existence and uniqueness results on global bounded solutions of systems of quadratic BSDEs, see \cref{thm:2.5e,thm:2.5,thm:2.8a} in \cref{sec:2-statement of main result} for details. These results include  all those of  \citet{HuTang2016SPA}, \citet{Luo2020EJP} and \citet{FanHuTang2023JDE}. The proofs involve the contraction mapping argument and the utilization of the induction technique based on some delicate a priori estimates on the solution, where the problem on the bounded solution of one-dimensional quadratic BSDEs with unbounded stochastic parameters is comprehensively investigated by virtue of the BMO martingale (bounded oscillation martingale) tool, Girsanov's transform and some useful inequalities, and some new results are explored, see \cref{pro:A.1,pro:A.2,pro:A.3} in the Appendix for details. As a natural application of \cref{thm:2.5,thm:2.8a}, by means of the invertible linear transformation method we address solvability of three kinds of special structured systems of BSDEs, see \cref{thm:2.14g,thm:2.12b,thm:2.14b} in \cref{sec:2-statement of main result} for more details, which can be respectively compared with Theorems 6.9 and 6.19 of \citet{Jackson2023SPA} and Theorem 3.1 of \citet{XingZitkovic2018AoP}. By the way, by \cref{pro:2.16b,pro:2.15b} in \cref{sec:2-statement of main result} we answer partially the open problem 6.25 of \citet{Jackson2023SPA} on solvability of a special system of BSDEs. Finally, according to \cref{thm:2.5e,thm:2.5,thm:2.8a}, we establish three existence and uniqueness results for the unbounded solution of multi-dimensional quadratic BSDEs with the unbounded BMO terminal values, where the generator $g$ needs to be bounded in $y$, see assumptions \ref{A:D1} and \ref{A:D2} as well as \cref{thm:2.22e,thm:2.9,thm:2.16a} in \cref{sec:2-statement of main result} for details. The method of proof is to transfer the BSDE with an unbounded terminal value into one with a bounded terminal value and a generator satisfying the assumptions of \cref{thm:2.5e,thm:2.5,thm:2.8a}. These results also improve some existing results in the non-Markovian setting.\vspace{0.2cm}

The rest of the paper is organized as follows. In the next section, we introduce notations used, state the main results, and present some interesting examples and remarks to illustrate the novelty of our results. Some proofs  of the results are also given in this section when they are not lengthy. In \cref{sec:3-local solution} we prove the solvability of local bounded solutions (\cref{thm:2.3}), and in \cref{sec:4-global bounded solution} we prove solvability of global bounded solutions (\cref{thm:2.5,thm:2.8a}). The solvability of global unbounded solution (\cref{thm:2.22e,thm:2.9}) is finally proved in \cref{sec:5-global unbounded solution}. In Appendix, we present and prove some auxiliary results for  bounded solutions of one-dimensional quadratic BSDEs with unbounded stochastic parameters, including existence, uniqueness and several important a priori estimates.

\section{Notations and statement of main results}
\label{sec:2-statement of main result}
\setcounter{equation}{0}

\subsection{Notations\vspace{0.2cm}}

First, let us fix a terminal time $T\in (0,\infty)$ and two positive integers $n$ and $d$. Let $a\wedge b:=\min\{a,b\}$, $a\vee b:=\max\{a,b\}$, $a^+:=a\vee 0$, $a^-:=-(a\wedge 0)$ and $$\sum_{j=1}^{0}b^j=\sum_{j=n+1}^{n}b^j=0$$ for any real $b^j$. And, denote by $|\cdot|$ the Euclidean norm, $z^\top$ the transpose of vector (or matrix) $z$, ${\bf 1}_A$ the indicator of set $A$, and
${\rm sgn}(x):={\bf 1}_{x>0}-{\bf 1}_{x\leq 0}$.\vspace{0.2cm}

In the whole paper, all processes are $(\F_t)_{t\in\T}$-progressively measurable, and all equalities and inequalities between
random variables and processes are understood in the senses of $\ps$ and $\as$, respectively. We need the following spaces of random variables and processes:
\begin{itemize}
\item $L^{\infty}(\R^n)$: all $\R^n$-valued  and $\F_T$-measurable random variables $\xi$ satisfying
$$\|\xi\|_{\infty}:=\mathop{\rm ess\ sup}_{\omega\in \Omega}|\xi(\omega)| <+\infty.\vspace{0.1cm}$$

\item $\s^p(\R^n)$ for $p\geq 1$: all $\R^n$-valued  continuous adapted processes $(Y_t)_{t\in\T}$ such that
$$\|Y\|_{{\s}^p}:=\E\Bigg[\sup_{t\in\T} |Y_t|^p\Bigg]^{1\over p}<+\infty.\vspace{0.1cm}$$

\item $\s^{\infty}(\R^n)$: all $Y\in \bigcap_{p\geq 1}\s^p(\R^n)$ such that
$$\|Y\|_{{\s}^{\infty}}:=\mathop{\rm ess\ sup}_{(\omega,t)\in \Omega\times \T}|Y_t(\omega)| =\Big\|\sup_{t\in\T} |Y_t| \Big\|_{\infty}<+\infty.\vspace{0.1cm}$$

\item $\mcal^{\infty}$: all real-valued non-negative progressively measurable process $(Y_t)_{t\in\T}$ satisfying
$$
\|Y\|_{{\mcal}^{\infty}}:=\sup_{\tau\in \mathcal {T}_{\T}}\left\|\E_{\tau}\left[\int_{\tau}^T Y_s {\rm d}s\right]\right\|_{\infty}<+\infty,\vspace{0.1cm}
$$
here and hereafter $\mathcal {T}_{[a,b]}$ denotes the set of all $(\F_t)$-stopping times $\tau$ valued in $[a,b]\subset \T$, and $\E_{\tau}$ stands for the conditional expectation with respect to $\F_\tau$.

\item $\lcal^{\infty}$: all $Y\in \mcal^{\infty}$ satisfying
$$
\|Y\|_{{\lcal}^{\infty}}:=\left\|\int_0^T Y_s {\rm d}s \right\|_{\infty}<+\infty.
$$

\item $\hcal^p(\R^{n\times d})$ for $p\geq 1$: all $\R^{n\times d}$-valued  progressively measurable processes $(Z_t)_{t\in\T}$ such that
$$
\|Z\|_{\hcal^p}:=\E\left[\left(\int_0^T |Z_s|^2{\rm d}s\right)^{p\over 2}\right]^{1\over p}<+\infty.\vspace{0.1cm}
$$

\item ${\rm BMO}(\R^{n\times d})$: all $Z\in \hcal^2(\R^{n\times d})$ such that
$$
\|Z\|_{\rm BMO}:=\sup_{\tau\in \mathcal {T}_{\T}}\left\|\E_{\tau}\left[\int_{\tau}^T |Z_s|^2 {\rm d}s\right]\right\|_{\infty}^{1\over 2}<+\infty.\vspace{0.1cm}
$$
\end{itemize}

We write $L^{\infty}(\R):= L^{\infty}(\R^1)$,  $\s^{\infty}(\R):= \s^{\infty}(\R^1)$, $\s^p(\R):= \s^p(\R^1)$ for $p\ge 1$, $\hcal^p(\R) :=\hcal^p(\R^{1\times 1})$ for $p\ge 1$, and ${\rm BMO}(\R):={\rm BMO}(\R^{1\times 1})$. By $\hcal_{[a,b]}$, we denote the restriction to the subinterval $[a,b]$ of the space $\hcal$ of processes on $[0,T]$, i.e., the space of all processes which are restrictions to the interval $[a,b]$ of processes in $\hcal$. It is noted that the process $(\int_0^t Z_s{\rm d}B_s)_{t\in\T}$ is an $n$-dimensional BMO martingale for each $Z\in {\rm BMO}(\R^{n\times d})$. We refer to \citet{Kazamaki1994book} for more details on the BMO theory. For the reader's  convenience, here we would like to recall the well-known John-Nirenberg inequality (see for example Lemma A.1 in \citet{HuTang2016SPA}): Let the process $Z$ belong to the space of ${\rm BMO}(\R)$. If $\|Z\|_{\rm BMO}^2<1$, then for any $\tau\in \mathcal {T}_{\T}$,
$$
\E_{\tau}\left[\exp\left(\int_{\tau}^T |Z_s|^2{\rm d}s\right)\right]\leq \frac{1}{1-\|Z\|_{\rm BMO}^2}.\vspace{0.2cm}
$$

Throughout the paper, we also always fix four real-valued non-negative progressively measurable processes
$$
(v_t)_{t\in\T}\in {\rm BMO}(\R),\ \ (\tilde\alpha_t)_{t\in\T}\in \lcal^{\infty},\ \ (\bar\alpha_t)_{t\in\T}\in \mcal^{\infty}
$$
and $(\alpha_t)_{t\in\T}$ as well as a deterministic nondecreasing continuous function $\phi(\cdot):[0,+\infty)\To [0,+\infty)$ and several real constants $\beta,\lambda,\bar\lambda,\theta,c,\bar c\geq 0, \ 0<\bar\gamma\leq \gamma \ {\rm and}\ \delta\in [0,1).$\vspace{0.2cm}

Finally, for $i=1,\cdots, n$, denote by $(z^i, y^i, g^i)$  the $i$th row of $(z,y,g)\in\R^{n\times d} \times  \R^n\times  \R^n$. In addition, for some real $r>0$ and some time sub-interval $[a,b]\subset \T$, we let $\ecal^{\infty}_{[a,b]}(r)$ represent the set of all real-valued non-negative progressively measurable process $(\hat\alpha_t)_{t\in \T}$ such that
$$\|\hat\alpha\|_{\ecal^{\infty}_{[a,b]}(r)}:={1\over r}\ln\left(\sup_{\tau\in \mathcal{T}_{[a,b]}}\left\|\E_{\tau}\left[\exp\left(r\int_\tau^b \hat\alpha_s {\rm d}s\right)\right]\right\|_{\infty}\right)<+\infty.$$
We also denote $\ecal^{\infty}_{[0,T]}(r)$ by $\ecal^{\infty}(r)$ simply.

\begin{rmk}\label{rmk:2.1}
Let $[a,b]\subset \T$. By virtue of Jensen's inequality, the John-Nirenberg inequality and H\"{o}lder's inequality, it is not difficult to verify the following assertions.
\begin{itemize}
\item [(i)] The assertion $\hat\alpha \in {\rm BMO}(\R)$ holds if and only if $\hat\alpha^2\in \mcal^{\infty}$, and $\|\hat\alpha^2\|_{\mcal^{\infty}}=\|\hat\alpha\|_{\rm BMO}^2$.\\
If $\hat\alpha\in \lcal^{\infty}_{[a,b]}$, then $\hat\alpha\in \ecal^{\infty}_{[a,b]}(r)$ for each $r>0$, and
$\|\hat\alpha\|_{\ecal^{\infty}_{[a,b]}(r)}\leq \|\hat\alpha\|_{\lcal^{\infty}_{[a,b]}}$.\vspace{0.1cm} \\
If $\hat\alpha\in \ecal^{\infty}_{[a,b]}(r)$ for some $r>0$, then $\hat\alpha\in \mcal^{\infty}_{[a,b]}$, and $\|\hat\alpha\|_{\mcal^{\infty}_{[a,b]}}\leq \|\hat\alpha\|_{\ecal^{\infty}_{[a,b]}(r)}$.\vspace{0.1cm} \\
If $\hat\alpha\in \mcal^{\infty}_{[a,b]}$, then $\hat\alpha\in \ecal^{\infty}_{[a,b]}(r)$ for each $0<r<r_0:=\|\hat\alpha\|_{\mcal^{\infty}_{[a,b]}}$.\vspace{0.1cm} \\
In addition, for any $0<r<\bar r$, we have $\ecal^{\infty}(r)\subset \ecal^{\infty}_{[a,b]}(r)$ and $\ecal^{\infty}_{[a,b]}(\bar r) \subset \ecal^{\infty}_{[a,b]}(r)$.
\item [(ii)] If $\hat\alpha^1\in \ecal^{\infty}_{[a,b]}(r)$ for some $r>0$ and $\hat\alpha^2 \in \lcal^{\infty}_{[a,b]}$, then $\hat\alpha:=\hat\alpha^1+\hat\alpha^2\in \ecal^{\infty}_{[a,b]}(r)$. Moreover,
$$
\|\tilde\alpha\|_{\ecal^{\infty}_{[a,b]}(r)}\leq \|\hat\alpha^1\|_{\ecal^{\infty}_{[a,b]}(r)}
+\|\hat\alpha^2\|_{\lcal^{\infty}_{[a,b]}}.
$$
\item [(iii)] If $\hat\alpha^1\in \ecal^{\infty}_{[a,b]}(pr)$ and $\hat\alpha^2\in \ecal^{\infty}_{[a,b]}(qr)$ for some $r>0$ and $p,q>1$ satisfying ${1\over p}+{1\over q}=1$, then $\hat\alpha:=\hat\alpha^1+\hat\alpha^2\in \ecal^{\infty}_{[a,b]}(r)$. Moreover, we have
$$
\|\hat\alpha\|_{\ecal^{\infty}_{[a,b]}(r)}\leq \|\hat\alpha^1\|_{\ecal^{\infty}_{[a,b]}(pr)}
+\|\hat\alpha^2\|_{\ecal^{\infty}_{[a,b]}(qr)}.
$$
\end{itemize}
\end{rmk}

Let the $\R^n$-valued function $g(\omega,t,y,z): \Omega\times\T\times\R^n\times\R^{n\times d}\To \R^n$ be $(\F_t)$-progressively measurable for each $(y,z)\in \R^n\times\R^{n\times d}$, and consider the following multidimensional BSDE:
\begin{equation}\label{eq:2.1}
Y_t=\xi+\int_t^T g(s,Y_s,Z_s){\rm d}s-\int_t^T Z_s {\rm d}B_s, \ \ t\in\T,
\end{equation}
or, equivalently,
\begin{equation}\label{eq:2.2}
Y_t^i=\xi^i+\int_t^T g^i(s,Y_s,Z_s) {\rm d}s-\int_t^T Z_s^i {\rm d}B_s, \ \ t\in\T,\ \ i=1,\cdots,n,
\end{equation}
where $\xi$ is an $\F_T$-measurable $\R^n$-valued random vector, and the solution $(Y,Z)$ is a pair of $(\F_t)$-progressively measurable processes with values in $\R^n\times \R^{n\times d}$.\vspace{0.2cm}

\subsection{Local bounded solution of multi-dimensional quadratic BSDEs\vspace{0.2cm}}

We first introduce the following two assumptions on the quadratical generator $g$.\vspace{-0.1cm}

\begin{enumerate}
\renewcommand{\theenumi}{(B\arabic{enumi})}
\renewcommand{\labelenumi}{\theenumi}
\item\label{A:B1} For each fixed $i=1,\cdots,n$, either of the following three conditions holds:
\begin{enumerate}
\item [(i)] $f:=g^i$ or $f:=-g^i$ satisfies that $\as$, for any $(y,z)\in \R^n\times\R^{n\times d}$,
$$
\begin{array}{l}
\Dis\frac{\bar \gamma}{2} |z^i|^2-\bar\alpha_t(\omega)-\phi(|y|)-\sum_{j=i+1}^{n} (\bar\lambda |z^j|^{1+\delta}+\theta |z^j|^2)-\bar c\sum_{j=1}^{i-1}|z^j|^2\leq f(\omega,t,y,z)\\[3mm]
\quad\quad\quad \Dis \leq \alpha_t(\omega)+\phi(|y|)+ \sum_{j\neq i} (\lambda |z^j|^{1+\delta}+\theta |z^j|^2)+\frac{\gamma}{2} |z^i|^2;
\end{array}
$$

\item [(ii)] Almost everywhere in $\Omega\times\T$, for any $(y,z)\in \R^n\times\R^{n\times d}$, we have
$$
|g^i(\omega,t,y,z)|\leq \tilde\alpha_t(\omega)+\phi(|y|)+[v_t(\omega)+\phi(|y|)]|z^i|+
c\sum_{j=1}^{i-1}|z^i(z^j)^\top|+\frac{\gamma}{2} |z^i|^2;
$$

\item [(iii)] Almost everywhere in $\Omega\times\T$, for any $(y,z)\in \R^n\times\R^{n\times d}$, we have
$$
|g^i(\omega,t,y,z)|\leq \bar\alpha_t(\omega)+\phi(|y|)+\bar\lambda |z|+\theta\sum_{j\neq i}|z^j|^2.\vspace{-0.2cm}
$$
\end{enumerate}

\item\label{A:B2} For $i=1,\cdots,n$, $g^i$ satisfies that $\as$, for any $(y,\bar y, z, \bar z)\in \R^n\times\R^n\times\R^{n\times d}\times\R^{n\times d}$,
    $$
    \begin{array}{l}
   |g^i(\omega,t,y,z)-g^i(\omega,t,\bar y,\bar z)|\\
   \ \ \leq \Dis \phi(|y|\vee|\bar y|) \Dis \Big\{\left([v_t(\omega)]^{1+\delta}+|z|^{1+\delta}+|\bar z|^{1+\delta}\right)|y-\bar y|+\left(v_t(\omega)+|z|+|\bar z|\right)\sum_{j=1}^i |z^i-\bar z^i|\\
     \Dis \hspace{3.4cm} +\left[\left([v_t(\omega)]^\delta+|z|^\delta+|\bar z|^\delta\right)+\theta \left(v_t(\omega)+|z|+|\bar z|\right)\right]\sum_{j=i+1}^n |z^j-\bar z^j|\Big\}.\vspace{0.2cm}
    \end{array}
    $$
\end{enumerate}

\begin{rmk}\label{rmk:2.2}
Concerning Assumptions \ref{A:B1} and \ref{A:B2}, we make the following remarks.
\begin{itemize}
\item [(i)] Both assumptions \ref{A:B1} and \ref{A:B2} imply that $g^i$ admits a strictly quadratic, generally quadratic and  linear growth in $z^i$. $g^i$ can also have a general growth in $y$, a quadratic growth in $\sqrt{\theta} z^j$ for $j\neq i$  and a small constant $\theta$, and an interactively quadratic growth like the inner product of $z^i$ and $z^j$ for $j<i$.

\item [(ii)] Assumptions \ref{A:B1}(i) and \ref{A:B2} are more general than those of \citet[Theorem 2.2]{Luo2020EJP}, where for each $i=1,\cdots,n$, $g^i$ has a strictly quadratic growth in $z^i$ and a stronger growth and continuity in both unknown variables $y$ and $z$.

\item [(iii)] The following assumptions \ref{A:B1'} and \ref{A:B2'} are used in Theorem 2.1 of \citet{FanHuTang2023JDE}:
\begin{enumerate}
\renewcommand{\theenumi}{(B\arabic{enumi}')}
\renewcommand{\labelenumi}{\theenumi}
\item\label{A:B1'} For each $i=1,\cdots, n$, $g^i$ satisfies that $\as$, for any $(y,z)\in \R^n\times\R^{n\times d}$,
$$
|g^i(\omega,t,y,z)|\leq \tilde\alpha_t(\omega)+\phi(|y|)+\frac{\gamma}{2} |z^i|^2+\lambda\sum_{j\neq i}|z^j|^{1+\delta};
$$
\item\label{A:B2'} For $i=1,\cdots,n$, $g^i$ \vspace{0.1cm}satisfies that $\as$, for any $(y,\bar y, z, \bar z)\in \R^{2n}\times\R^{2(n\times d)}$,
    $$
    \begin{array}{l}
   |g^i(\omega,t,y,z)-g^i(\omega,t,\bar y,\bar z)|\vspace{0.1cm}\\
   \ \ \leq \Dis \phi(|y|\vee|\bar y|) \Dis \Big[\left(1+|z|+|\bar z|\right)(|y-\bar y|+|z^i-\bar z^i|)+\left(1+|z|^\delta+|\bar z|^\delta\right)\sum_{j\neq i} |z^j-\bar z^j|\Big].\vspace{0.1cm}
    \end{array}
    $$
\end{enumerate}
It is easy to check that Assumption \ref{A:B2} is strictly weaker than Assumption \ref{A:B2'}, while both assumptions \ref{A:B1} and \ref{A:B1'} do not cover each other.

\item [(iv)] In Assumption \ref{A:B1}, the set of integers $\{1,\cdots, n\}$ is divided into two disjoint parts $I_1$ and $I_2$, both of which can be $\emptyset$, such that $I_1+I_2=\{1,\cdots, n\}$, $g^i$ satisfies either of \ref{A:B1}(i), \ref{A:B1}(ii) and \ref{A:B1}(iii) for $i\in I_1$, and $-g^i$ satisfies \ref{A:B1}(i) for $i\in I_2$. Now, we define, for each $(\omega,t,y,z)\in \Omega\times \T\times \R^n\times \R^{n\times d}$,
    $$
    \bar g^i(\omega,t,y,z):=\left\{
    \begin{array}{rl}
    g^i(\omega,t,\bar y,\bar z), & \Dis i\in I_1;\vspace{0.2cm}\\
    -g^i(\omega,t,\bar y,\bar z), & \Dis i\in I_2
    \end{array}
    \right.
   $$
with
$$
\bar y^i:=\left\{
    \begin{array}{rl}
    y^i, & \Dis i\in I_1;\vspace{0.2cm}\\
    -y^i, & \Dis i\in I_2
    \end{array}
    \right.
\ \ {\rm and}\ \ \bar z^i:=\left\{
    \begin{array}{rl}
    z^i, & \Dis i\in I_1;\vspace{0.2cm}\\
    -z^i, & \Dis i\in I_2.\vspace{0.3cm}
    \end{array}
    \right.
$$
It is not hard to verify that $\bar g$ satisfies~\ref{A:B1}. Therefore, in Assumption \ref{A:B1} we can assume, without loss of generality,  that $g^i$ satisfies either of \ref{A:B1}(i), \ref{A:B1}(ii) and \ref{A:B1}(iii) for all $i=1,\cdots, n$. Furthermore, it is clear that $\bar g$ satisfies Assumption \ref{A:B2} as $g$ does.
\vspace{0.2cm}
\end{itemize}
\end{rmk}

We have the following general existence and uniqueness result on the local bounded solution of systems of BSDEs with interactively quadratic generators.

\begin{thm}\label{thm:2.3}
Let $\xi\in L^{\infty}(\R^n)$, $\alpha\in \ecal^{\infty}(p\gamma)$ for some $p>1$ and the generator $g$ satisfy Assumptions \ref{A:B1} and \ref{A:B2}. Then, there exist two constants $\eps_0>0$ and $\theta_0>0$ depending only on $(\|\xi\|_{\infty}, \|\alpha\|_{\ecal^{\infty}(p\gamma)}, \|\bar\alpha\|_{\mcal^{\infty}}, \|\tilde\alpha\|_{\lcal^{\infty}}, \|v\|_{{\rm BMO}}, n,\gamma,\bar\gamma,\lambda,\bar\lambda, c,\bar c,\delta,T,p)$ and $\phi(\cdot)$ together with a bounded subset $\mathcal{B}_{\eps_0}$ of the product space $\s^\infty_{[T-\eps_0,T]}(\R^n)\times {\rm BMO}_{[T-\eps_0,T]}(\R^{n\times d})$ such that when $\theta\in [0,\theta_0]$, BSDE \eqref{eq:2.1} has a unique local solution $(Y,Z)$ on the time interval $[T-\eps_0,T]$ with $(Y,Z)\in \mathcal{B}_{\eps_0}$. Moreover, the above conclusion holds still for $p=1$ when $\lambda=0$ and $\theta_0=0$.\vspace{0.2cm}
\end{thm}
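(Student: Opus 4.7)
The plan is to solve the system via a contraction mapping argument on a short time interval $[T-\eps_0,T]$, treating the off-diagonal coupling among the $n$ rows of $Z$ as a perturbation that becomes negligible once $\eps_0$ and $\theta$ are both chosen small. I would first fix constants $R_1,R_2>0$ depending only on the data listed in the theorem, and work on the closed subset
$$
\mathcal{B}_{\eps_0}:=\Big\{(Y,Z)\in \s^{\infty}_{[T-\eps_0,T]}(\R^n)\times {\rm BMO}_{[T-\eps_0,T]}(\R^{n\times d}):\ \|Y\|_{\s^{\infty}}\leq R_1,\ \|Z\|_{\rm BMO}\leq R_2\Big\}.
$$
Next, I would define a map $\Gamma:\mathcal{B}_{\eps_0}\to \s^{\infty}\times {\rm BMO}$ row-by-row: given $(U,V)\in \mathcal{B}_{\eps_0}$, let $(Y^i,Z^i)$ be the unique solution of the one-dimensional BSDE obtained from \eqref{eq:2.2} by freezing $U^j$ in place of $Y^j$ and $V^j$ in place of $Z^j$ for all $j\neq i$. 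Under~\ref{A:B1}(i)--(iii) the resulting generator in $(Y^i,Z^i)$ is one-dimensional with quadratic/linear growth in $Z^i$ and with \emph{stochastic, possibly unbounded} coefficients controlled by
$$
\alpha_t+\phi(R_1)+\theta\sum_{j\neq i}|V^j_s|^2+\bar\lambda\sum_{j\neq i}|V^j_s|^{1+\delta}+c\sum_{j<i}|V_s^j|\,|Z_s^i|.
$$
By \cref{rmk:2.1} combined with the BMO bound on $V$, these coefficients lie in $\ecal^{\infty}(p\gamma)$, $\mcal^{\infty}$, $\lcal^{\infty}$, or ${\rm BMO}$ as required, so that \cref{pro:A.1,pro:A.2,pro:A.3} from the Appendix produce $(Y^i,Z^i)\in \s^{\infty}\times {\rm BMO}$ together with quantitative a priori bounds.

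The stability step is then to show that $\Gamma(\mathcal{B}_{\eps_0})\subset \mathcal{B}_{\eps_0}$. The Appendix estimates yield $\|Y^i\|_{\s^{\infty}}$ bounded by a function of $(\|\xi\|_{\infty},\|\alpha\|_{\ecal^{\infty}(p\gamma)},\|\bar\alpha\|_{\mcal^{\infty}},\|\tilde\alpha\|_{\lcal^{\infty}},\phi,R_2,T)$, and $\gamma^2\|Z^i\|_{\rm BMO}^2$ bounded by a similar quantity. The dependence on $R_2$ enters only through the cross terms, which are multiplied by the small factors $\theta$, $\bar\lambda$, $c\|V\|_{\rm BMO}$ or by the short-horizon factor accrued over $[T-\eps_0,T]$. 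Selecting first $R_1,R_2$ large enough to absorb the data-dependent part with a definite margin, and then $\eps_0,\theta_0$ small enough to keep the $V$-dependent contribution below that margin, the inclusion $\Gamma(\mathcal{B}_{\eps_0})\subset \mathcal{B}_{\eps_0}$ follows. The contraction step uses \ref{A:B2}: writing $(\hat Y,\hat Z):=\Gamma(U,V)-\Gamma(\tilde U,\tilde V)$, one finds that $\hat Y^i$ solves a linear BSDE whose driver is bounded in absolute value by
$$
\phi(R_1)\Big\{([v_s]^{1+\delta}+|V_s|^{1+\delta}+|\tilde V_s|^{1+\delta})|\hat Y|+(v_s+|V_s|+|\tilde V_s|)|\hat Z^i|+W_s(\hat U,\hat V)\Big\},
$$
where $W_s(\hat U,\hat V)$ is linear in $(U-\tilde U,V-\tilde V)$ with BMO-integrable weights. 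Applying Girsanov's transform with the BMO martingale generated by those weights and then BMO duality, one obtains
$$
\|\hat Y\|_{\s^{\infty}_{[T-\eps_0,T]}}+\|\hat Z\|_{{\rm BMO}_{[T-\eps_0,T]}}\leq K(\eps_0,\theta)\bigl(\|U-\tilde U\|_{\s^{\infty}}+\|V-\tilde V\|_{\rm BMO}\bigr),
$$
with $K(\eps_0,\theta)\to 0$ as $(\eps_0,\theta)\to 0$. Banach's fixed-point theorem then supplies the desired unique local solution in $\mathcal{B}_{\eps_0}$.

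The hard part will be the careful bookkeeping of exponential integrability, in particular around the interactively quadratic term $c\sum_{j<i}|z^i(z^j)^\top|$ in \ref{A:B1}(ii) and the weight $|z|^{1+\delta}+|\bar z|^{1+\delta}$ attached to $|y-\bar y|$ in \ref{A:B2}. Closing the a priori estimates for the frozen one-dimensional problem requires that the effective $\alpha$-type coefficient stays in $\ecal^{\infty}(p\gamma)$ after adding a term proportional to $\theta|V|^2$; this is where \cref{rmk:2.1}(iii) is used together with the John--Nirenberg bound applied to $V$ on the short interval. The final clause of the theorem (the case $p=1$ with $\lambda=0$ and $\theta_0=0$) follows from the same argument, since without the $\lambda|z^j|^{1+\delta}$ and $\theta|z^j|^2$ interactions, the frozen coefficient reduces to $\alpha+$ lower order and membership in $\ecal^{\infty}(\gamma)$ alone is sufficient in the one-dimensional quadratic theory of the Appendix.
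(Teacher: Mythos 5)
Your overall architecture --- a fixed-point argument on a ball $\mathcal{B}_{\eps_0}\subset \s^\infty\times{\rm BMO}$ over a short horizon, with the frozen one-dimensional equations handled by \cref{pro:A.1,pro:A.2,pro:A.3} and the contraction obtained via Girsanov's transform and BMO duality --- is the same as the paper's. The gap is in how you decouple the rows. You freeze \emph{all} off-diagonal rows at the input, replacing $Z^j$ by $V^j$ for every $j\neq i$ (a Jacobi-type scheme). But the coupling to the rows $j<i$ is not weak: \ref{A:B1} allows the terms $\bar c\sum_{j<i}|z^j|^2$ and $c\sum_{j<i}|z^i(z^j)^\top|$ with $c,\bar c$ fixed (not small), and \ref{A:B2} attaches the full-strength weight $v_t+|z|+|\bar z|$ to the increments $|z^j-\bar z^j|$ for $j\le i$, with no factor of $\theta$, no power $\delta<1$, and no time-integral that shrinks with $\eps_0$. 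Consequently your claimed contraction constant $K(\eps_0,\theta)$ does \emph{not} tend to $0$: after Girsanov and the energy inequality, the coefficient of $\|V^j-\widetilde V^j\|_{\rm BMO}^2$ for $j<i$ is of order $[\phi(R_1)]^2\left(\|v\|_{\rm BMO}^2+R_2^2\right)$, independent of $\eps_0$ and $\theta$. The invariance step has the same defect: the a priori bound for $\|Z^i\|_{\rm BMO}^2$ then contains a contribution from $\sum_{j<i}\|V^j\|_{\rm BMO}^2$ (raised to the power $\tfrac{1+\delta}{1-\delta}>1$ in case (i)) with a non-small coefficient, so the self-consistency inequality for $R_2$ need not be solvable. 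Listing ``$c\|V\|_{\rm BMO}$'' among the ``small factors'' is where this is hidden; it is not small.

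The paper repairs exactly this point by making the scheme triangular (Gauss--Seidel rather than Jacobi): within one application of $\Gamma$, the $i$th equation is fed the \emph{freshly solved} $Z^1,\dots,Z^{i-1}$ below the diagonal and the input $V^{i+1},\dots,V^n$ only above it (see the definition of $f^i$ before \eqref{eq:3.14} and \cref{pro:3.2c}). Only the $j>i$ coupling is genuinely weak (sub-quadratic via Young's inequality, or $\theta$-weighted), and that is the only place the free input $V$ is allowed to enter. The strong $j<i$ coupling is then controlled by induction on $i$: the a priori bounds propagate through the recursion \eqref{eq:3.11} to the finite radius $K=C^n_1$, and in the difference estimate the non-small coefficient multiplies $\|Z^j-\widetilde Z^j\|_{\rm BMO}^2$ for $j<i$, which by the inductive hypothesis is already bounded by the small factor $(C_3\eps^{1-\delta}+2\theta^2C_4)/L_1^2$ times $\|(U,V)-(\widetilde U,\widetilde V)\|^2$; this is the $C_5^n$ accumulation in \eqref{eq:3.39} and the display following it. As written, your argument does not close; the reordering (or an equivalent row-weighted norm exploiting the strict lower-triangularity of the strong coupling) is not optional.
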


\begin{rmk}\label{rmk:2.3-a}
It follows from (ii) and (iii) of \cref{rmk:2.2} that \cref{thm:2.3} strengthens \citet[Theorem 2.2]{Luo2020EJP}, and that \cref{thm:2.3} and \citet[Theorem 2.1]{FanHuTang2023JDE} do not cover each other. In addition, it follows from \citet[Remark 2.2]{FanHuTang2023JDE} that \citet[Theorem 2.1]{FanHuTang2023JDE} extends Theorem 2.2 of \citet{HuTang2016SPA}. However, it can be easily checked that \cref{thm:2.3} and \citet[Theorem 2.2]{HuTang2016SPA} also do not cover each other.\vspace{0.2cm}
\end{rmk}

\subsection{Global bounded solution of multi-dimensional quadratic BSDEs\vspace{0.2cm}}

In this subsection, we will present three existence and uniqueness results on the global bounded solution of multi-dimensional BSDE \eqref{eq:2.1}. For the first one, let us introduce the following a priori boundedness assumption \ref{A:AB} on the generator $g$, which was used in \citet{Jackson2023SPA} and \citet{JacksonZitkovic2022SIAMC}. Interested readers are refereed to \citet{XingZitkovic2018AoP} and \citet{EscauriazaSchwarzXing2022AAP} for the other versions of this assumption.\vspace{-0.1cm}

\begin{enumerate}
\renewcommand{\theenumi}{(AB)}
\renewcommand{\labelenumi}{\theenumi}
\item\label{A:AB} There exists a finite collection $\{a_k\}=(a_1,\cdots,a_K)$ of vectors in $\R^n$ such that
\begin{itemize}
\item [(i)] the collection of $(a_1,\cdots,a_K)$ positively span $\R^n$, i.e., for any $a\in \R^n$ there exist nonnegative constants $\lambda_1,\cdots,\lambda_K$ such that $\lambda_1a_1+\cdots+\lambda_K a_K=a$;
\item [(ii)] Almost everywhere in $\Omega\times\T$, for any $k=1,\cdots, K$ and any $(y,z)\in\R^n\times \R^{n\times d}$, we have \ \
    $
    a_k^\top g(\omega,t,y,z)\leq \tilde\alpha_t(\omega)+\gamma |a_k^\top z|^2.\vspace{0.2cm}
    $
\end{itemize}
\end{enumerate}

Identical to the proof of  Lemma 6.6 of \citet{Jackson2023SPA} and Proposition 3.8 of \citet{JacksonZitkovic2022SIAMC}, we have

\begin{pro}\label{pro:2.4e}
Let $\xi\in L^{\infty}(\R^n)$ and the generator $g$ satisfy Assumption \ref{A:AB}. Assume that for some $h\in (0,T]$, BSDE \eqref{eq:2.1} has a solution $(Y,Z)\in \s^{\infty}_{[T-h,T]}(\R^n)\times {\rm BMO}_{[T-h,T]}(\R^{n\times d})$ on the time interval $[T-h,T]$. Then, there exists a positive constant $\widetilde K>0$ depending only on $(\|\xi\|_{\infty}, \|\tilde\alpha\|_{\lcal^{\infty}}, \{a_k\}, \gamma)$ and being independent of $h$ such that
$$
\|Y\|_{\s^{\infty}_{[T-h,T]}}+\|Z\|_{{\rm BMO}_{[T-h,T]}}^2 \leq \widetilde K.\vspace{0.2cm}
$$
\end{pro}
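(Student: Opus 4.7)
The plan is to adapt the standard It\^{o}-exponential argument of \citet{XingZitkovic2018AoP} and \citet{JacksonZitkovic2022SIAMC} to the present setting, using assumption \ref{A:AB}. First, for each $k=1,\ldots,K$, I apply It\^{o}'s formula to the process $V_t^k:=\exp(2\gamma a_k^\top Y_t)$. Using BSDE \eqref{eq:2.1} and the inequality $a_k^\top g\le\tilde\alpha_t+\gamma|a_k^\top Z|^2$ from \ref{A:AB}(ii), the drift of $dV_t^k$ dominates $-2\gamma\tilde\alpha_t V_t^k$, since
$$
-2\gamma a_k^\top g(t,Y_t,Z_t)+2\gamma^2|a_k^\top Z_t|^2\ge -2\gamma\tilde\alpha_t-2\gamma^2|a_k^\top Z_t|^2+2\gamma^2|a_k^\top Z_t|^2=-2\gamma\tilde\alpha_t.
$$
Consequently $V_t^k\exp\bigl(2\gamma\int_0^t\tilde\alpha_s\,ds\bigr)$ is a local submartingale, and the boundedness of $Y$ on $[T-h,T]$ together with $Z\in{\rm BMO}_{[T-h,T]}$ turns the stochastic integral into a true martingale. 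Taking conditional expectation yields
$$
\exp(2\gamma a_k^\top Y_t)\le\E_t\Bigl[\exp(2\gamma a_k^\top\xi)\exp\Bigl(2\gamma\int_t^T\tilde\alpha_s\,ds\Bigr)\Bigr]\le\exp\bigl(2\gamma|a_k|\|\xi\|_\infty+2\gamma\|\tilde\alpha\|_{\lcal^\infty}\bigr),
$$
so $a_k^\top Y_t\le|a_k|\|\xi\|_\infty+\|\tilde\alpha\|_{\lcal^\infty}$ for every $k$. Because $\{a_k\}$ positively spans $\R^n$, these one-sided upper bounds translate into a two-sided control $|Y_t|\le C_1$ with $C_1$ depending only on $(\|\xi\|_\infty,\|\tilde\alpha\|_{\lcal^\infty},\{a_k\},\gamma)$ and independent of $h$.

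For the ${\rm BMO}$ bound on $Z$, I repeat the It\^{o} calculation with the \emph{larger} exponent $\mu:=4\gamma>2\gamma$, which leaves a net positive $|Z|^2$ term. Setting $W_t^k:=\exp(4\gamma a_k^\top Y_t)$, assumption \ref{A:AB}(ii) gives
$$
W_t^k\bigl[-4\gamma a_k^\top g+8\gamma^2|a_k^\top Z_t|^2\bigr]\ge W_t^k\bigl[-4\gamma\tilde\alpha_t+4\gamma^2|a_k^\top Z_t|^2\bigr].
$$
Summing over $k$, using $|Y|\le C_1$ to bound the $W_t^k$ uniformly from below, and invoking the positive-definiteness of $\sum_k a_ka_k^\top$ (a direct consequence of the positive spanning of $\{a_k\}$), so that $\sum_k|a_k^\top Z_t|^2\ge\lambda_{\min}\bigl(\sum_k a_ka_k^\top\bigr)|Z_t|^2$, I obtain a deterministic $\kappa>0$ such that
$$
d\sum_{k=1}^K W_t^k\ge\bigl(-C_2\tilde\alpha_t+\kappa|Z_t|^2\bigr)dt+dM_t,
$$
with $M$ a true martingale by the ${\rm BMO}$-boundedness pair. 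Integrating from $\tau\in\mathcal{T}_{[T-h,T]}$ to $T$, discarding the nonnegative $\sum_k W_\tau^k$, and taking $\E_\tau$ produces
$$
\kappa\,\E_\tau\int_\tau^T|Z_s|^2\,ds\le\sum_{k=1}^K\E_\tau[W_T^k]+C_2\|\tilde\alpha\|_{\lcal^\infty}\le C_3,
$$
and a supremum over $\tau$ delivers $\|Z\|_{{\rm BMO}_{[T-h,T]}}^2\le C_3/\kappa$.

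The principal technical point is the promotion of each local (sub)martingale to a true martingale before conditional expectation is taken; the hypothesis $(Y,Z)\in\s^\infty_{[T-h,T]}(\R^n)\times{\rm BMO}_{[T-h,T]}(\R^{n\times d})$ ensures this by making $V^k,W^k$ bounded and the corresponding stochastic integrals ${\rm BMO}$ martingales. The $h$-independence of $\widetilde K$ is then automatic because all dominating constants and the spanning-induced $\kappa$ depend only on $(\|\xi\|_\infty,\|\tilde\alpha\|_{\lcal^\infty},\{a_k\},\gamma)$, and not on the length of the subinterval.
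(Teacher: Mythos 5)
Your proof is correct and follows exactly the argument the paper invokes: the paper gives no proof of its own but declares it identical to Lemma 6.6 of Jackson (2023) and Proposition 3.8 of Jackson--\v{Z}itkovi\'{c} (2022), which is precisely the It\^{o}-exponential/positive-spanning scheme you carry out (upper bounds on $a_k^\top Y$ from the submartingale property of $\exp(2\gamma a_k^\top Y_t)$, two-sided control via positive spanning, then the larger exponent to extract the surviving $|Z|^2$ term and the BMO bound). No gaps; the promotion of the local martingales to true martingales via $Y\in\s^\infty$ and $Z\in{\rm BMO}$ is handled as in the cited sources.
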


With \cref{thm:2.3} and \cref{pro:2.4e} in hands, we can closely follow the proof of Theorem 4.1 in \citet{cheridito2015Stochastics} to prove the following \cref{thm:2.5e}, which is our first result on the global bounded solution of multi-dimensional BSDE \eqref{eq:2.1}. All details are omitted here.

\begin{thm}\label{thm:2.5e}
Let $\xi\in L^{\infty}(\R^n)$, $\alpha\in \ecal^{\infty}(p\gamma)$ for some $p>1$ and the generator $g$ satisfy Assumptions \ref{A:B1}, \ref{A:B2} and \ref{A:AB}. Then, there exists a positive constant $\theta_0>0$ depending only on $(\|\xi\|_{\infty}, \|\alpha\|_{\ecal^{\infty}(p\gamma)}, \|\bar\alpha\|_{\mcal^{\infty}}, \|\tilde\alpha\|_{\lcal^{\infty}}, \|v\|_{{\rm BMO}}, n,\gamma,\bar\gamma,\lambda,\bar\lambda, c,\bar c,\delta,T,p)$ and $\phi(\cdot)$ such that when $\theta\in [0,\theta_0]$, BSDE \eqref{eq:2.1} admits a unique global solution $(Y,Z)\in \s^\infty(\R^n)\times {\rm BMO}(\R^{n\times d})$. Moreover, the above assertion is still true for $p=1$ when $\lambda=0$ and $\theta_0=0$. \vspace{0.1cm}
\end{thm}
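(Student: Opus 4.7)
The plan is to combine the local existence and uniqueness result of \cref{thm:2.3} with the uniform a priori estimate of \cref{pro:2.4e} and construct a global solution by backward-in-time iteration, following the strategy of Theorem 4.1 of \citet{cheridito2015Stochastics}.

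The crucial feature I would exploit from \cref{pro:2.4e} is that $\widetilde{K}$ depends only on $(\|\xi\|_\infty, \|\tilde\alpha\|_{\lcal^\infty}, \{a_k\}, \gamma)$ and is \textbf{independent of the interval length} $h$. Consequently, whenever a local solution of \eqref{eq:2.1} exists on $[T-h,T]$ under \ref{A:AB}, it automatically satisfies $\|Y\|_{\s^\infty_{[T-h,T]}}\leq \widetilde{K}$ and $\|Z\|_{{\rm BMO}_{[T-h,T]}}^2 \leq \widetilde{K}$. Set $M:=\widetilde{K}\vee \|\xi\|_\infty$ and let $\eps_0,\theta_0>0$ be the constants produced by \cref{thm:2.3} when the parameter ``$\|\xi\|_\infty$'' is replaced by $M$ (with all other parameters as in the statement).

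Starting from the terminal condition $\xi$, \cref{thm:2.3} yields a local solution $(Y^{(1)},Z^{(1)})$ on $[T-\eps_0,T]$ inside the associated set $\mathcal{B}_{\eps_0}$. Since $\|Y^{(1)}_{T-\eps_0}\|_\infty \leq \widetilde{K} \leq M$ by \cref{pro:2.4e}, I can apply \cref{thm:2.3} once more with the \emph{same} $\eps_0$ and $\theta_0$ on $[T-2\eps_0,T-\eps_0]$, using $Y^{(1)}_{T-\eps_0}$ as terminal value, to obtain $(Y^{(2)},Z^{(2)})$. Iterating $N=\lceil T/\eps_0\rceil$ times and concatenating the pieces produces a continuous adapted $(Y,Z)$ on $[0,T]$ with $\|Y\|_{\s^\infty}\leq \widetilde{K}$ and $Z\in{\rm BMO}(\R^{n\times d})$ (the global BMO norm being controlled by \cref{pro:2.4e} applied on $[0,T]$ itself). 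For uniqueness, any other $(\tilde Y,\tilde Z)\in \s^\infty(\R^n)\times {\rm BMO}(\R^{n\times d})$ solving \eqref{eq:2.1} satisfies $\|\tilde Y\|_{\s^\infty}\leq \widetilde{K}=M$, so its restriction to $[T-\eps_0,T]$ lies in the uniqueness class $\mathcal{B}_{\eps_0}$ of \cref{thm:2.3} and must coincide with $(Y^{(1)},Z^{(1)})$; a finite backward induction over the $N$ subintervals propagates the identification to all of $[0,T]$.

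The main obstacle I anticipate is showing that the pair $(\eps_0,\theta_0)$ can be chosen \emph{once and for all} and works at every iteration step. The key observation is that the seminorms $\|\alpha\|_{\ecal^\infty(p\gamma)}$, $\|\bar\alpha\|_{\mcal^\infty}$, $\|\tilde\alpha\|_{\lcal^\infty}$ and $\|v\|_{\rm BMO}$ over any subinterval of $[0,T]$ are dominated by their values over $[0,T]$ (straightforward by monotonicity of these norms in the interval), while the terminal value at each iteration step never exceeds $M$; hence all quantitative dependencies in \cref{thm:2.3} remain uniform across the $N$ steps. The final clause concerning $p=1$, $\lambda=0$, $\theta_0=0$ follows by exactly the same argument using the corresponding clause of \cref{thm:2.3}.
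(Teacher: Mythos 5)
Your proposal is correct and is essentially the paper's own proof: the authors state exactly that \cref{thm:2.5e} follows by combining \cref{thm:2.3} with \cref{pro:2.4e} and closely following the patching argument of Theorem~4.1 in \citet{cheridito2015Stochastics}, omitting the details you have filled in. The only cosmetic point is that the global BMO membership of the concatenated $Z$ should be obtained directly by splitting $\E_\tau[\int_\tau^T|Z_s|^2\,{\rm d}s]$ at the junction times (a finite concatenation of BMO pieces is BMO) rather than by invoking \cref{pro:2.4e} on $[0,T]$, since that proposition presupposes $Z\in{\rm BMO}_{[0,T]}$.
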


By \cref{thm:2.5e} and (i) of \cref{rmk:2.1}, we immediately have

\begin{cor}\label{cor:2.6e}
Let $\alpha,\bar\alpha\in \lcal^{\infty}$ and the generator $g$ satisfy Assumptions \ref{A:B1}, \ref{A:B2} and \ref{A:AB} with $\theta=0$. Then for each $\xi\in L^{\infty}(\R^n)$, BSDE \eqref{eq:2.1} admits a unique global solution $(Y,Z)\in \s^\infty(\R^n)\times {\rm BMO}(\R^{n\times d})$.
\end{cor}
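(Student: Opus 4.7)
The plan is to reduce the corollary to a direct application of \cref{thm:2.5e}, so the task is simply to verify its hypotheses under the weaker assumption $\alpha,\bar\alpha\in\lcal^{\infty}$ (together with $\theta=0$). I would fix any $p>1$, say $p=2$, and show the embedding $\lcal^{\infty}\hookrightarrow \ecal^{\infty}(p\gamma)$, which is exactly the content of the second bullet in \cref{rmk:2.1}(i): if $\alpha\in\lcal^{\infty}$ then $\alpha\in\ecal^{\infty}(r)$ for every $r>0$ with $\|\alpha\|_{\ecal^{\infty}(r)}\leq \|\alpha\|_{\lcal^{\infty}}$. Choosing $r=p\gamma$ gives $\alpha\in\ecal^{\infty}(p\gamma)$ as required.

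Next I would observe that $\lcal^{\infty}\subset\mcal^{\infty}$, since for every stopping time $\tau\in\mathcal{T}_{\T}$ one has $\E_\tau[\int_\tau^T\bar\alpha_s\,\mathrm{d}s]\leq \|\int_0^T\bar\alpha_s\,\mathrm{d}s\|_\infty$; thus $\bar\alpha\in\lcal^{\infty}$ guarantees $\bar\alpha\in\mcal^{\infty}$, which is the integrability condition on $\bar\alpha$ assumed in \cref{thm:2.5e}. Hypotheses \ref{A:B1}, \ref{A:B2} and \ref{A:AB} are given, and $\xi\in L^{\infty}(\R^n)$ is assumed. Hence every hypothesis of \cref{thm:2.5e} is in place.

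Finally, since the constant $\theta_0$ produced by \cref{thm:2.5e} is strictly positive, the value $\theta=0$ automatically lies in the admissible interval $[0,\theta_0]$. Invoking \cref{thm:2.5e} then yields the existence and uniqueness of a global solution $(Y,Z)\in\s^\infty(\R^n)\times\mathrm{BMO}(\R^{n\times d})$ to BSDE \eqref{eq:2.1}, completing the proof. There is no genuine obstacle here; the only point worth pausing on is making sure that one is in the regime $p>1$ of \cref{thm:2.5e} rather than its fallback case $p=1$ (which required $\lambda=0$), and this is ensured simply by picking, e.g., $p=2$ at the outset.
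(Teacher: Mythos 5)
Your proof is correct and follows exactly the paper's route: the paper derives this corollary "by Theorem \ref{thm:2.5e} and (i) of Remark \ref{rmk:2.1}", which is precisely the embedding $\lcal^{\infty}\subset\ecal^{\infty}(p\gamma)\cap\mcal^{\infty}$ you spell out, followed by the observation that $\theta=0$ always lies in $[0,\theta_0]$. Nothing is missing.
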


\begin{rmk}\label{rmk:2.6e} We have the following remarks.
\begin{itemize}
\item [(i)] Compared with \citet[Theorem 3.5]{JacksonZitkovic2022SIAMC}, the Malliavin regular condition on the generator $g$ is not required in \cref{thm:2.5e} via a new proof.

\item [(ii)] By \cite[Theorem 2.1]{FanHuTang2023JDE} and \cref{pro:2.4e}, identical to  the proof of
    \citet[Theorem 4.1]{cheridito2015Stochastics}, we still have the assertion of \cref{cor:2.6e} when \ref{A:B1} and \ref{A:B2} are replaced with \ref{A:B1'} and \ref{A:B2'} in (iii) of \cref{rmk:2.2}.

\item [(iii)] The \ref{A:AB} condition of the generator $g$ seems difficult to be verified in the multi-dimensional case. In particular,  it seems to imply that  the generator is bounded in the state variable $y$, which is a too strong restriction. Consequently, in the sequel we will search for some better conditions on the generator $g$ for a unique global solution of systems of  BSDEs \eqref{eq:2.1}.
\end{itemize}
\end{rmk}

To obtain the second result on the global bounded solution of multi-dimensional BSDE \eqref{eq:2.1}, the assumption \ref{A:B1} needs to be strengthened to the following assumption \ref{A:C1a}. Before that, we introduce the following notations. For each $i=1,\cdots,n$ and any $M\in \R^n$ and $x\in \R$, denote by $M(x;i)$ the vector in $\R^n$ whose $i$th component is $x$ and whose $j$th component is $M^j$ for $j\neq i$. And, for each $i=1,\cdots,n$ and any $H\in \R^{n\times d}$ and $w\in \R^{1\times d}$, denote by $H(w;i)$ the matrix in $\R^{n\times d}$ whose $i$th row is $w$ and whose $j$th row is $H^j$ for $j\neq i$.

\begin{enumerate}
\renewcommand{\theenumi}{(C1a)}
\renewcommand{\labelenumi}{\theenumi}
\item\label{A:C1a} For each fixed $i=1,\cdots,n$, either of the following three conditions holds:
\begin{enumerate}
\item [(i)] The random field $f:=g^i$ or $f(\omega,t,y,z):=-g^i(\omega,t,y(-y^i;i),z(-z^i;i))$ satisfies that $\as$, for any $(y,z)\in \R^n\times\R^{n\times d}$,
$$
\begin{array}{l}
\Dis \frac{\bar \gamma}{2} |z^i|^2-\bar\alpha_t(\omega)-\beta |y|-\sum_{j=i+1}^{n} (\bar\lambda |z^j|^{1+\delta}+\theta |z^j|^2)-\bar c\sum_{j=1}^{i-1}|z^j|^2\leq f(\omega,t,y,z)\\
\quad\quad \leq  \Dis \alpha_t(\omega)+\left[\beta |y|\,  {\bf 1}_{y^i>0}+\phi(|y|)\, {\bf 1}_{y^i<0}\right]+\sum_{j\neq i} (\lambda |z^j|^{1+\delta \, {\bf 1}_{y^i<0}}+\theta |z^j|^2)+\frac{\gamma}{2} |z^i|^2;
\end{array}
$$

\item [(ii)] Almost everywhere in $\Omega\times\T$, for any $(y,z)\in \R^n\times\R^{n\times d}$, we have
$$
\begin{array}{l}
\Dis -\left[\beta |y^i|\,  {\bf 1}_{y^i<0}+\phi(|y|)\, {\bf 1}_{y^i>0}\right]-l_i(\omega,t,z)\leq g^i(\omega,t,y,z)\\[3mm]
\hspace{1.5cm}\Dis \leq \left[\beta |y^i|\,  {\bf 1}_{y^i>0}+\phi(|y|)\, {\bf 1}_{y^i<0}\right]+l_i(\omega,t,z)
\end{array}
$$
with
$$
l_i(\omega,t,z):=\tilde\alpha_t(\omega)+v_t(\omega)|z^i|+
c\sum_{j=1}^{i-1}|z^i(z^j)^\top|+\frac{\gamma}{2} |z^i|^2;
$$

\item [(iii)] Almost everywhere in $\Omega\times\T$, for any $(y,z)\in \R^n\times\R^{n\times d}$, we have
$$
\begin{array}{l}
\Dis -\left[\beta |y^i|\,  {\bf 1}_{y^i<0}+\phi(|y|)\, {\bf 1}_{y^i>0}\right]-\bar l_i(\omega,t,z)\leq g^i(\omega,t,y,z)\\[3mm]
\hspace{1.5cm}\Dis \leq \left[\beta |y^i|\,  {\bf 1}_{y^i>0}+\phi(|y|)\, {\bf 1}_{y^i<0}\right]+\bar l_i(\omega,t,z)
\end{array}
$$
with
$$
\bar l_i(\omega,t,z):=\bar\alpha_t(\omega)+\bar\lambda |z|+\theta\sum_{j\neq i}|z^j|^2.
$$
\end{enumerate}
\end{enumerate}

\begin{rmk}\label{rmk:2.4} Assumptions \ref{A:C1a} and \ref{A:B2} allow the generator $g(t,y,z)$ to have a general growth in $y$ and a general quadratic growth in  $z$. Assumption \ref{A:C1a} requires $g$ to be diagonally quadratic  in $z$ when $\theta=0$ and $c=\bar c=0$, and to be triangularly quadratic  in $z$ when $\theta=0$ and $c>0$ or $\bar c>0$. In addition, similar  to (iv) of \cref{rmk:2.2}, in Assumption \ref{A:C1a} we can assume without loss of generality that $g^i$ satisfies either of conditions \ref{A:C1a}(i), \ref{A:C1a}(ii) and \ref{A:C1a}(iii) for all $i=1,\cdots,n$.\vspace{0.2cm}
\end{rmk}

\begin{thm}\label{thm:2.5}
Let $\xi\in L^{\infty}(\R^n)$, $\alpha\in \ecal^{\infty}(p\gamma\exp(\beta T))$ for some $p>1$ and the generator $g$ satisfy Assumptions \ref{A:C1a} and \ref{A:B2}. Then, there exists a positive constant $\theta_0>0$ depending only on $(\|\xi\|_{\infty}, \|\alpha\|_{\ecal^{\infty}(p\gamma\exp(\beta T))}, \|\bar\alpha\|_{\mcal^{\infty}}, \|\tilde\alpha\|_{\lcal^{\infty}}, \|v\|_{{\rm BMO}}, n,\beta,\gamma,\bar\gamma,\lambda,\bar\lambda, c,\bar c, \delta, T, p)$ such that when $\theta\in [0,\theta_0]$, BSDE \eqref{eq:2.1} admits a unique global solution $(Y,Z)\in \s^\infty(\R^n)\times {\rm BMO}(\R^{n\times d})$. Moreover, the above assertion is still true for $p=1$ when $\lambda=0$ and $\theta_0=0$.\vspace{0.2cm}
\end{thm}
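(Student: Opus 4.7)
The plan is to combine a uniform a priori $\s^\infty$ bound on any bounded solution of \eqref{eq:2.1} with the local existence result \cref{thm:2.3}, and then to patch local solutions backward from $T$ to $0$ in finitely many steps of uniform length. Concretely, if I can show that every solution $(Y,Z)\in \s^\infty_{[T-h,T]}(\R^n)\times {\rm BMO}_{[T-h,T]}(\R^{n\times d})$ of \eqref{eq:2.1} satisfies $\|Y\|_{\s^\infty_{[T-h,T]}}+\|Z\|_{{\rm BMO}_{[T-h,T]}}\leq C$ for some $C$ independent of $h\in(0,T]$ (depending only on the data in the theorem), then on every subinterval the generator $g$ effectively satisfies \ref{A:B1} with $\phi(\cdot)$ replaced by the constant $\phi(C)+\beta C$. \cref{thm:2.3} then yields a uniform local-existence length $\eps_0>0$ and a uniform admissible $\theta_0>0$, and $\lceil T/\eps_0\rceil$ backward iterations (each starting from a new terminal value of norm $\leq C$) give a global solution on $\T$.

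\emph{The a priori bound (the crux).} The essential structural feature of \ref{A:C1a} is the asymmetric linear-in-$y$ dependence: in the half-space $\{y^i>0\}$ the upper bound of $g^i$ (or the analogous quantity in \ref{A:C1a}(ii),(iii)) grows only like $\beta|y|$, since the $\phi(|y|)$ piece carries the indicator ${\bf 1}_{y^i<0}$; symmetrically in $\{y^i<0\}$ for the lower bound. I would therefore work componentwise with $(Y^i)^\pm$, apply It\^o's formula to $\exp\bigl(p\gamma e^{\beta(T-t)}(Y^i_t)^\pm\bigr)$, and invoke the one-dimensional a priori estimates \cref{pro:A.1,pro:A.2,pro:A.3} from the Appendix, which are designed for exactly this kind of quadratic BSDE with unbounded stochastic parameters $\alpha,\bar\alpha,\tilde\alpha,v$. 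The factor $e^{\beta T}$ inside the exponential transform comes from Gronwall absorption of the linear-in-$y$ contribution, and this is precisely why the integrability requirement on $\alpha$ is upgraded from $\ecal^\infty(p\gamma)$ to $\ecal^\infty(p\gamma\exp(\beta T))$. The three conditions \ref{A:C1a}(i)--(iii) are each handled by the corresponding appendix proposition. The cross terms $c\sum_{j<i}|z^i(z^j)^\top|$, the $\theta\sum_{j\neq i}|z^j|^2$, and the $\bar\lambda|z^j|^{1+\delta}$ pieces are closed by an induction on $i$: the bound on $Y^i$ and the induced BMO bound on $Z^i$ (from a standard $|Y^i|^2$ It\^o expansion) use the previously established BMO bounds on $Z^j$, $j<i$, while the $j>i$ cross terms come with a factor $\theta$ that can be absorbed provided $\theta_0$ is chosen small enough relative to the exponential constants appearing in \cref{pro:A.1,pro:A.2,pro:A.3}. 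Young's inequality splits $|z^i(z^j)^\top|$ into $\eps|z^i|^2+C_\eps|z^j|^2$ and reduces $|z^j|^{1+\delta}$ to a quadratic piece plus a lower-order term.

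\emph{Uniqueness.} For two bounded solutions $(Y,Z),(Y',Z')$ sharing the terminal value $\xi$, the preceding a priori bound gives uniform $\s^\infty$ and $\rm BMO$ control, so \ref{A:B2} linearizes the BSDE for $(\delta Y,\delta Z):=(Y-Y',Z-Z')$ into a linear system whose coefficients are BMO-in-$z$ and controlled in $y$ by $\phi$ evaluated at the uniform bound. A standard Girsanov change of measure on a short terminal interval kills the stochastic integral, after which a Gronwall estimate gives $\delta Y\equiv 0$ on that interval; an iteration backward, mirroring the patching in the existence part, yields global uniqueness.

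\emph{Main obstacle.} The delicate point is the a priori bound together with the induction on $i$: one must simultaneously (i) handle the three distinct growth regimes \ref{A:C1a}(i)--(iii), (ii) track how the appendix estimates transform the unbounded stochastic parameters $\alpha,\bar\alpha,\tilde\alpha,v$ into admissible data for the next induction step, and (iii) calibrate $\theta_0$ so that the interactively quadratic and sub-quadratic cross terms do not destroy the exponential a priori estimate. The assertion for $p=1$ with $\lambda=0$ and $\theta_0=0$ comes out as the degenerate case where the $|z^j|^{1+\delta}$ and $\theta|z^j|^2$ terms are absent and no Young splitting is needed, so the weaker integrability $\alpha\in\ecal^\infty(\gamma e^{\beta T})$ suffices to close the estimates.
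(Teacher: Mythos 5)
Your proposal follows essentially the same route as the paper: the paper proves exactly your a priori estimate as \cref{pro:4.1} (componentwise reduction to the one-dimensional results \cref{pro:A.1,pro:A.2,pro:A.3}, exploiting the asymmetric ${\bf 1}_{y^i>0}/{\bf 1}_{y^i<0}$ growth to get one-sided bounds with linear $y$-dependence, induction on $i$ to close the $c\sum_{j<i}|z^i(z^j)^\top|$ and $\theta$-cross terms via Young's inequality and a small $\theta_0$, and iteration over subintervals of uniform length to remove the $h$-dependence), and then combines it with \cref{thm:2.3} by patching backward in finitely many steps following Cheridito--Nam, which also delivers uniqueness. The details you flag as the crux (the $e^{\beta T}$ upgrade of the integrability of $\alpha$, the three regimes of \ref{A:C1a}, and the degenerate $p=1$, $\lambda=\theta_0=0$ case) are handled in the paper exactly as you describe.
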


Note that \cref{thm:2.5} can be compared with Theorem 2.3 and Remark 2.5 of \citet{JamneshanKupperLuo2017ECP} to observe the role of $\theta$ in Assumptions \ref{A:C1a} and \ref{A:B2} for existence and uniqueness of the bounded solution of multi-dimensional interacting quadratic BSDEs.\vspace{0.3cm}

By virtue of \cref{thm:2.5} and (i) of \cref{rmk:2.1}, we immediately have

\begin{cor}\label{cor:2.6}
Let $\alpha,\bar\alpha\in \lcal^{\infty}$ and the generator $g$ satisfy Assumptions \ref{A:C1a} and \ref{A:B2} with $\theta=0$. Then, for each $\xi\in L^{\infty}(\R^n)$, BSDE \eqref{eq:2.1} admits a unique global solution $(Y,Z)\in \s^\infty(\R^n)\times {\rm BMO}(\R^{n\times d})$.
\end{cor}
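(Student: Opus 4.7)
The plan is to simply verify the hypotheses of \cref{thm:2.5} and invoke it; there is no real obstacle, since passing from \cref{thm:2.5} to \cref{cor:2.6} is a routine qualitative/quantitative reduction that exploits the trivialization of the smallness constraint when $\theta=0$.

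First, I would fix an arbitrary $\xi\in L^{\infty}(\R^n)$ and choose any $p>1$. By (i) of \cref{rmk:2.1}, membership in $\lcal^{\infty}$ implies membership in $\ecal^{\infty}(r)$ for every $r>0$, with the norm control $\|\hat\alpha\|_{\ecal^{\infty}(r)}\leq \|\hat\alpha\|_{\lcal^{\infty}}$. Specializing to $r=p\gamma\exp(\beta T)$, the assumption $\alpha\in \lcal^{\infty}$ yields $\alpha\in \ecal^{\infty}(p\gamma\exp(\beta T))$, so the integrability hypothesis of \cref{thm:2.5} on $\alpha$ is met.

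Second, I would note that $\lcal^{\infty}\subset \mcal^{\infty}$ with $\|\bar\alpha\|_{\mcal^{\infty}}\leq \|\bar\alpha\|_{\lcal^{\infty}}$, because for any $\tau\in\mathcal{T}_{\T}$ one has
\[
\E_\tau\!\left[\int_\tau^T \bar\alpha_s\,{\rm d}s\right]\leq \E_\tau\!\left[\int_0^T \bar\alpha_s\,{\rm d}s\right]\leq \left\|\int_0^T \bar\alpha_s\,{\rm d}s\right\|_{\infty}.
\]
Hence $\bar\alpha\in\mcal^{\infty}$, which is the only remaining hypothesis on the data in \cref{thm:2.5} that needs checking beyond the standing assumptions \ref{A:C1a} and \ref{A:B2}.

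Finally, \cref{thm:2.5} produces a constant $\theta_0>0$ (depending on the quantities listed there, all finite under our assumptions) such that the desired existence and uniqueness hold for every $\theta\in[0,\theta_0]$. Since the corollary imposes $\theta=0$, which automatically lies in $[0,\theta_0]$, \cref{thm:2.5} furnishes a unique global solution $(Y,Z)\in \s^{\infty}(\R^n)\times {\rm BMO}(\R^{n\times d})$ of BSDE \eqref{eq:2.1}, and the proof is complete.
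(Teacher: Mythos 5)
Your proposal is correct and follows exactly the paper's intended route: the paper derives \cref{cor:2.6} "by virtue of \cref{thm:2.5} and (i) of \cref{rmk:2.1}," i.e., by upgrading $\alpha\in\lcal^{\infty}$ to $\alpha\in\ecal^{\infty}(p\gamma\exp(\beta T))$ and noting that $\theta=0$ always lies in $[0,\theta_0]$. Your additional check that $\lcal^{\infty}\subset\mcal^{\infty}$ is harmless (and also contained in \cref{rmk:2.1}(i)), since $\bar\alpha\in\mcal^{\infty}$ is already a standing assumption of the paper.
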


\begin{rmk}\label{rmk:2.11*} If $g$ satisfies those assumptions in \citet[Theorem 2.3]{Luo2020EJP}, then Assumption \ref{A:C1a}(i) with $\lambda=\theta=0$ is true for each $g^i$ with $i=1,\cdots,n$, and Assumption \ref{A:B2} with $\theta=0$ is true. Consequently, \cref{cor:2.6} improves \citet[Theorem 2.3]{Luo2020EJP}. In particular,   the interacting term $z^i(z^j)^\top$ is excluded to  appear in $g^i$ for $1\leq j<i\leq n$ in \citet[Theorem 2.3]{Luo2020EJP}, but included  in \cref{cor:2.6}, see the next subsection for more details. In addition, we especially mention that multidimensional BSDEs with this type of interacting term arise from many problems, such as price impact models (see \citet{KramkovPulido2016AAP}), incomplete stochastic equilibrium (see \citet{XingZitkovic2018AoP,Kardaras2022SA,EscauriazaSchwarzXing2022AAP}, and \citet{
Weston2024FS}), and risk-sensitive nonzero-sum stochastic games (see \citet{XingZitkovic2018AoP,JacksonZitkovic2022SIAMC}, and \citet{Jackson2023SPA}).\vspace{0.2cm}
\end{rmk}

Finally, let us further demonstrate the third existence and uniqueness result on the global bounded solution. In stating it, the following assumption \ref{A:C1b} on the generator will be used, which is strictly stronger than \ref{A:B1} with $\theta=0$. It should be noted that Assumptions \ref{A:C1b} and \ref{A:C1a} with $\theta=0$ do not cover each other.

\begin{enumerate}
\renewcommand{\theenumi}{(C1b)}
\renewcommand{\labelenumi}{\theenumi}
\item\label{A:C1b} The set of integers $\{1,\cdots,n\}$ consists of  three mutually  disjoint subsets  $J_1$, $J_2$ and $J_3$, any of which can vanish, such that $J_1+J_2+J_3=\{1,\cdots,n\}$. For each fixed $i\in \{1,\cdots,n\}$, either of the following three conditions holds:
\begin{enumerate}
\item [(i)] If $i\in J_1$, then the random field $f:=g^i$ or $f(\omega,t,y,z):=-g^i(\omega,t,y(-y^i;i),z(-z^i;i))$ satisfies that $\as$, for any $(y,z)\in \R^n\times\R^{n\times d}$,
$$
\begin{array}{l}
\Dis \frac{\bar \gamma}{2} |z^i|^2-\tilde\alpha_t(\omega)-\beta |y|-\lambda \sum_{j\in J_1}|z^j|^{1+\delta}\leq f(\omega,t,y,z)\\
\quad\quad\quad \leq  \Dis \tilde\alpha_t(\omega)+\left [\beta |y|\,  {\bf 1}_{y^i>0}+\phi(|y|)\, {\bf 1}_{y^i<0}\right ]+\lambda \sum_{j\in J_1}|z^j|^{1+\delta}+\frac{\gamma}{2} |z^i|^2;
\end{array}
$$

\item [(ii)] If $i\in J_2$, then $\as$, for any $(y,z)\in \R^n\times\R^{n\times d}$, we have
$$
\begin{array}{l}
\Dis -\left[\beta |y|\,  {\bf 1}_{y^i<0}+\phi(|y|)\, {\bf 1}_{y^i>0}\right]-\tilde l_i(\omega,t,y,z)\leq g^i(\omega,t,y,z)\\[3mm]
\hspace{1.5cm}\Dis \leq \left[\beta |y|\,  {\bf 1}_{y^i>0}+\phi(|y|)\, {\bf 1}_{y^i<0}\right]+\tilde l_i(\omega,t,y,z)
\end{array}
$$
with
$$
\tilde l_i(\omega,t,y,z):=\tilde\alpha_t(\omega)+\left[v_t(\omega)+\phi(|y|)\right]|z^i|+
c\sum_{j=1}^{i-1}|z^i(z^j)^\top|+\frac{\gamma}{2} |z^i|^2;
$$

\item [(iii)] If $i\in J_3$, then $\as$, for any $(y,z)\in \R^n\times\R^{n\times d}$, we have
$$
\begin{array}{l}
\Dis -\left[\beta |y|\,  {\bf 1}_{y^i<0}+\phi(|y|)\, {\bf 1}_{y^i>0}\right]-\hat l(\omega,t,z)\leq g^i(\omega,t,y,z)\\[3mm]
\hspace{1.6cm}\Dis \leq \left[\beta |y|\,  {\bf 1}_{y^i>0}+\phi(|y|)\, {\bf 1}_{y^i<0}\right]+\hat l(\omega,t,z)
\end{array}
$$
with
$$
\hat l(\omega,t,z):=\tilde\alpha_t(\omega)+\lambda \sum_{j\in J_3}|z^j|.
$$
\end{enumerate}
\end{enumerate}

\begin{rmk}\label{rmk:2.7a} Similar to (iv) of \cref{rmk:2.2}, in Assumption \ref{A:C1b} we can assume without loss of generality  that $g^i$ satisfies either of conditions \ref{A:C1b}(i), \ref{A:C1b}(ii) and \ref{A:C1b}(iii) for all $i=1,\cdots,n$. In addition, in Assumption \ref{A:C1b}(i) it gives no essential difference to replace the sum $\sum_{j\in J_1}|z^j|^{1+\delta}$ with the sum $\sum_{j\in J_1,j\neq i}|z^j|^{1+\delta}$.\vspace{0.2cm}
\end{rmk}

\begin{thm}\label{thm:2.8a}
Let the generator $g$ satisfy Assumptions \ref{A:C1b} and \ref{A:B2} with $\theta=0$. Then, for each $\xi\in L^{\infty}(\R^n)$, BSDE \eqref{eq:2.1} admits a unique global solution $(Y,Z)\in \s^\infty(\R^n)\times {\rm BMO}(\R^{n\times d})$.
\end{thm}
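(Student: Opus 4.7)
The proof strategy rests on three pillars: local existence and uniqueness from \cref{thm:2.3}; a uniform a priori bound $\|Y\|_{\s^\infty}\le K$ and $\|Z\|_{\rm BMO}\le K'$ valid on every subinterval $[\tau,T]$ on which a bounded solution lives; and the backward patching procedure of \citet[Theorem~4.1]{cheridito2015Stochastics}, which yields global existence once the first two are in place. To deploy \cref{thm:2.3}, observe that \ref{A:C1b} with $\theta=0$ implies \ref{A:B1} with $\theta=0$ after a routine parameter identification (taking $\alpha=\bar\alpha=\tilde\alpha\in\lcal^\infty$, enlarging $\phi$ so $\phi(\cdot)\ge\beta\cdot$, enlarging $\bar\lambda,\bar c$, and noting the $J_3$-block matches \ref{A:B1}(iii) with $\bar\lambda=\lambda$); since $\tilde\alpha\in\lcal^\infty\subset\ecal^\infty(p\gamma)$ for every $p>0$ by \cref{rmk:2.1}(i), \cref{thm:2.3} applies with any $p>1$ and produces a local solution on $[T-\epsilon_0,T]$ whose length depends only on $\|\xi\|_\infty$ and the listed parameters. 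Uniqueness on each local patch follows from \ref{A:B2} via a standard Tevzadze-type BMO contraction, since the difference of two bounded solutions satisfies a BSDE whose generator is Lipschitz with a BMO-integrable constant.

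The core of the argument is the a priori bound, which I obtain by a stratified estimate along $\{1,\ldots,n\}=J_1\cup J_2\cup J_3$. The key structural observation is that \ref{A:C1b}(i) couples $g^i$, $i\in J_1$, only to $\{z^j:j\in J_1\}$; \ref{A:C1b}(iii) couples $g^i$, $i\in J_3$, only to $\{z^j:j\in J_3\}$; and \ref{A:C1b}(ii) couples $g^i$, $i\in J_2$, to $\{z^j:j<i\}$ through the interactive term alone. This acyclic and triangular dependency dictates the order of the estimate. For $i\in J_1$, Itô's formula applied to $\exp(\pm\kappa Y^i)$ with $\kappa\ge\gamma\vee\bar\gamma$ linearizes the quadratic $|Z^i|^2$ term; the cross terms $\lambda|z^j|^{1+\delta}$, $j\in J_1$, are split by Young as $\epsilon|z^j|^2+C_\epsilon$ and the residual quadratic pieces are mutually absorbed by summing over $i\in J_1$ with balanced weights against the lower bounds $(\bar\gamma/2)|z^j|^2$ already available in the BSDEs for $Y^j$, $j\in J_1$. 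For $i\in J_3$, Itô on $\sum_{i\in J_3}\exp(\mu|Y^i|)$ with $\mu$ large generates a drift $(\mu^2/2)|Z^i|^2$ that absorbs $\lambda\sum_{j\in J_3}|z^j|$ via Young. With $J_1\cup J_3$ estimated in $\s^\infty\times{\rm BMO}$, I turn to $J_2$ by induction on $i=1,2,\ldots,n$: the interactive term $c\sum_{j<i}|z^i(z^j)^\top|\le c|z^i|\sum_{j<i}|Z^j|$ becomes a linear-in-$z^i$ perturbation with BMO coefficient $v+c\sum_{j<i}|Z^j|$ (the last quantity being already bounded in BMO by the inductive hypothesis), and \cref{pro:A.1,pro:A.2,pro:A.3} in the Appendix --- existence, uniqueness, and $\s^\infty\times{\rm BMO}$ a priori estimates for one-dimensional quadratic BSDEs with unbounded BMO parameters --- close the induction. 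The Grönwall coupling in $|y|$ across classes is then closed by iterating on the common bound $K$.

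The main obstacle is the interlocking between sup-norm bounds on $Y$ and BMO bounds on $Z$: the Appendix propositions required for the $J_2$ induction take BMO coefficients as input, and those coefficients must themselves come from BMO estimates on the already-treated components. The resolution, sketched above, exploits two features of \ref{A:C1b}: the classes $J_1$ and $J_3$ are internally closed (their generators never reach $Z^j$ for $j\in J_2$), so they can be estimated independently and in any order; and the interactive term of $J_2$ is strictly triangular, so the induction on $i\in J_2$ terminates in at most $|J_2|$ closed sub-estimates without circularity. Once the uniform a priori bound is in hand, finite-step backward patching through \cref{thm:2.3} produces a global solution, and the \ref{A:B2}-driven BMO contraction gives uniqueness on each patch, which is extended to $[0,T]$ by backward induction.
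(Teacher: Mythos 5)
Your overall architecture --- local solvability from \cref{thm:2.3}, a uniform a priori estimate independent of the interval length, and the backward patching of Cheridito--Nam --- is exactly the paper's, and your reduction of \ref{A:C1b} to \ref{A:B1} for the local step is a routine and correct observation. The gap is in the a priori estimate, which is the heart of the proof. You propose to first bound $(Y^j,Z^j)_{j\in J_1\cup J_3}$ in $\s^\infty\times{\rm BMO}$ and then run the induction over $i\in J_2$, feeding the \emph{quantitative} BMO bound on $v+c\sum_{j<i}|Z^j|$ into the one-dimensional estimates of the Appendix. This ordering is circular in a way that Gronwall cannot repair. The BMO bound on $Z^{J_1}$ coming out of the exponential transformation has the form $\sum_{j\in J_1}\|Z^j\|^2_{{\rm BMO}_{[t,T]}}\le C+C'\|Y^{J_1}\|_{\s^{\infty}_{[t,T]}}+C''\int_t^T\|Y\|_{\s^{\infty}_{[s,T]}}\,{\rm d}s$ with $C'$ a fixed constant of order $n/\bar\gamma$ that is neither small nor attached to a time integral, and $\|Y^{J_1}\|_{\s^\infty}$ is itself coupled to the $J_2$-components through the drift $\beta|y|$. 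Worse, if you invoke the combined estimate \eqref{eq:A.9} of \cref{pro:A.2} for $i\in J_2$, the term $\beta|Y|$ must be lodged in $\ddot\alpha$, so the prefactor becomes $\exp\{4\gamma e^{\beta T}(\cdots+\beta T\|Y\|_{\s^\infty})\}$, i.e., \emph{exponential} in the unknown. Either way you arrive at $\|Y\|_{\s^\infty}\le C_1+C_2\,\Psi(\|Y\|_{\s^\infty})+C_3\int_t^T\|Y\|_{\s^{\infty}_{[s,T]}}\,{\rm d}s$ with $\Psi$ at least linear with a non-small constant, which is not closable; ``iterating on the common bound $K$'' does not resolve this.

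The paper's \cref{pro:4.2c} avoids the problem by never requesting a quantitative BMO bound on $Z$ at the a priori stage: the estimate there is on $\|Y\|_{\s^\infty_{[T-h,T]}}$ only. For $i\in J_2$ the terms $[v_t+\phi(|Y_t|)]|z^i|+c\sum_{j<i}|z^i(z^j)^\top|+\frac{\gamma}{2}|z^i|^2$ are not estimated but \emph{eliminated} by It\^o--Tanaka and a Girsanov change of measure --- only the qualitative membership $Z\in{\rm BMO}$, which is part of the definition of the solution class, is used to legitimize the exponential martingale --- leaving $|Y^i_t|\le\|\xi^i\|_\infty+\|\tilde\alpha\|_{\lcal^{\infty}}+\beta\int_t^T\|Y\|_{\s^{\infty}_{[s,T]}}\,{\rm d}s$, which is free of every norm of $Z$ and linear in the time integral of $\|Y\|_{\s^\infty}$. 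Combined with the squared estimates for $J_1$ (exponential transform plus Young, essentially as you describe) and for $J_3$ (It\^o applied to $\sum_{j\in J_3}|Y^j|^2$ rather than to exponentials --- an immaterial difference), Gronwall closes the $Y$-bound uniformly in $h$; no a priori BMO bound on $Z$ is needed for the patching, since the local interval length in \cref{thm:2.3} depends only on the terminal sup-norm and the fixed parameters. If you reorganize your $J_2$ step along these lines, the rest of your argument goes through.
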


\begin{rmk}\label{rmk:2.9a} We have the following remarks.
\begin{itemize}
\item [(i)] Assumptions \ref{A:C1b}(ii)-(iii) generalize Assumption (H3.2) of \citet{FanWangYong2022arXiv}. Consequently, \cref{thm:2.8a} improves \citet[Theorem 3.6]{FanWangYong2022arXiv},
    and then \citet[Theorems 2.4]{FanHuTang2023JDE} and
    \citet[Theorem 2.3]{HuTang2016SPA}.

\item [(ii)] In addition to~\ref{A:B1'} and \ref{A:B2'} in (ii) of \cref{rmk:2.2}, Theorem 2.5 of \citet{FanHuTang2023JDE} requires the following two assumptions:
\begin{enumerate}
\renewcommand{\theenumi}{(B3')}
\renewcommand{\labelenumi}{\theenumi}
\item\label{A:B3'} For each $i=1,\cdots, n$, $g^i$ satisfies that $\as$, for any $(y,z)\in \R^n\times\R^{n\times d}$,
$$
g^i(\omega,t,y,z)\, {\rm sgn}(y^i)\leq \tilde\alpha_t(\omega)+\beta |y|+\lambda\sum_{j\neq i}|z^j|^{1+\delta}+\frac{\gamma}{2} |z^i|^2;
$$
\end{enumerate}
\begin{enumerate}
\renewcommand{\theenumi}{(B4')}
\renewcommand{\labelenumi}{\theenumi}
\item\label{A:B4'} For $i=1,\cdots,n$, $f:=g^i$ or $f:=-g^i$ satisfies that $\as$,
$$
f(\omega,t,y,z)\geq \Dis\frac{\bar \gamma}{2} |z^i|^2-\tilde\alpha_t(\omega)-\beta |y|-\lambda\sum_{j\neq i}|z^j|^{1+\delta},\quad \RE (y,z)\in \R^n\times\R^{n\times d}.
$$
\end{enumerate}
Assumptions \ref{A:B1'}-\ref{A:B4'} coincides with \ref{A:C1b} where both $J_2$ and $J_3$ vanish. As a consequence, \cref{thm:2.8a} improves \citet[Theorem 2.5]{FanHuTang2023JDE}.

\item [(iii)] Let the assumptions of \cref{thm:2.8a} be satisfied except that $\theta=0$. It can also be proved that the assertion of \cref{thm:2.8a} is still true when $\theta$ is smaller than a constant $\theta_0>0$ depending only on $\|\xi\|_{\infty}$ and those parameters in Assumption \ref{A:C1b}.

\item [(iv)] Let $l\in (1,2]$, $\{(a_t,b_t)\}_{t\in\T}\in \s^\infty(\R^n\times \R^n)$ and the generator be defined as follows:
    $$g(\omega,t,y,z):=a_t(\omega)|y|+b_t(\omega)\sin (|z|^l),\ \ \RE (\omega,t,y,z)\in \Omega\times\T\times\R^n\times\R^{n\times d}.$$
    According to (iii) of this remark, we know that for each $\xi\in L^{\infty}(\R^n)$, there exists a constant $\theta_0>0$ such that when $\|b\|_{\s^\infty}{\bf 1}_{l=2}\leq \theta_0$, BSDE \eqref{eq:2.1} with this generator $g$ admits a unique global solution $(Y,Z)\in \s^\infty(\R^n)\times {\rm BMO}(\R^{n\times d})$, which seems to be new.
\end{itemize}
\end{rmk}

\subsection{Illustrating Examples\vspace{0.2cm}}

First of all, \citet{JamneshanKupperLuo2017ECP} addressed the following two-dimensional BSDE:
\begin{equation}\label{eq:2.4b}
\left\{
\begin{array}{l}
\Dis Y^1_t=\xi^1+\int_t^T \left(\theta_1|Z^1_s|^2+\vartheta_1|Z^2_s|^2\right){\rm d}s-\int_t^T Z^1_s{\rm d}B_s, \vspace{0.1cm}\\
\Dis Y^2_t=\xi^2+\int_t^T \left(\vartheta_2|Z^1_s|^2+\theta_2|Z^2_s|^2\right){\rm d}s-\int_t^T Z^2_s{\rm d}B_s,\quad t\in \T,
\end{array}
\right.
\end{equation}
where $\xi^i\in L^{\infty}(\R)$ and $\theta_i,\vartheta_i\in \R$, $i=1,2$. According to \cref{thm:2.5}, we see that if $\theta_1\theta_2\neq 0$, then for each $\xi\in L^\infty(\R^2)$, there exists a constant $\theta_0>0$ depending only on $(\theta_1,\theta_2,\|\xi\|_\infty)$ such that for each pair $(\vartheta_1,\vartheta_2)$ satisfying $|\vartheta_1|,|\vartheta_2|\leq \theta_0$, the system of BSDEs \eqref{eq:2.4b} admits a unique global solution $(Y,Z)\in \s^\infty(\R^2)\times {\rm BMO}(\R^{2\times d})$ on the time interval $\T$. This conclusion can be compared with Theorem 2.3 and Remark 2.5 posed in \citet{JamneshanKupperLuo2017ECP}.\vspace{0.2cm}

On the other hand, in the case of $\theta_1=\vartheta_1=0$, $\vartheta_2=1$ and $\theta_2=1/2$, \citet{FreiDosReis2011MFE} showed that for some $\xi\in L^\infty(\R^2)$, the system of BSDEs \eqref{eq:2.4b} fails to have a global bounded solution on the time interval $\T$, see Theorem 2.1 of \citet{FreiDosReis2011MFE} for more details. However, for the case of $\vartheta_1=0$ and $\vartheta_2\theta_2<0$ (for example, $\vartheta_2=1$ and $\theta_2=-1/2$), by \cref{cor:2.6} we know that for each $\xi\in L^\infty(\R^2)$, the system of BSDEs \eqref{eq:2.4b} admits a unique global solution $(Y,Z)\in \s^\infty(\R^2)\times {\rm BMO}(\R^{2\times d})$ on the time interval $\T$. \vspace{0.2cm}

Furthermore, we consider the following variant of the system of BSDEs \eqref{eq:2.4b}:
\begin{equation}\label{eq:2.5b}
\left\{
\begin{array}{l}
\Dis Y^1_t=\xi^1+\int_t^T \left(\theta_1|Z^1_s|^2+\vartheta_1|Z^2_s|\right){\rm d}s-\int_t^T Z^1_s{\rm d}B_s, \vspace{0.1cm}\\
\Dis Y^2_t=\xi^2+\int_t^T \left(\vartheta_2|Z^1_s|^2+\theta_2|Z^2_s|^2+l Z^1_s(Z^2_s)^\top \right){\rm d}s-\int_t^T Z^2_s{\rm d}B_s,\quad t\in \T,
\end{array}
\right.
\end{equation}
where $l\in \R$ and $\xi^i\in L^{\infty}(\R)$ and $(\theta_i,\vartheta_i)^T\in \R^2$ for $i=1,2$ such that $\theta_2\vartheta_2<0$. Without loss of generality, we assume that $\vartheta_2<0$ and $\theta_2>0$. Observe that
$$
-\frac{l^2}{2\theta_2}|Z^1_s|^2-\frac{\theta_2}{2}|Z^2_s|^2\leq l Z^1_s(Z^2_s)^\top \leq -\frac{\vartheta_2}{2}|Z^1_s|^2-\frac{l^2}{2\vartheta_2}|Z^2_s|^2
$$
and then
\begin{equation}\label{eq:2.5c}
\frac{\theta_2}{2}|Z^2_s|^2-(\frac{l^2}{2\theta_2}-\vartheta_2)|Z^1_s|^2\leq \vartheta_2|Z^1_s|^2+\theta_2|Z^2_s|^2+l Z^1_s(Z^2_s)^\top \leq (\theta_2-\frac{l^2}{2\vartheta_2})|Z^2_s|^2.\vspace{0.1cm}
\end{equation}
We get that the generator $f:=g^1$ satisfies \ref{A:C1a}(i) for $\theta_1\neq 0$ and \ref{A:C1a}(iii) for $\theta_1=0$, and $f:=g^2$ satisfies \ref{A:C1a}(i). It follows from \cref{cor:2.6} that for each $\xi\in L^\infty(\R^2)$, the system of BSDEs \eqref{eq:2.5b} admits a unique global solution $(Y,Z)\in \s^\infty(\R^2)\times {\rm BMO}(\R^{2\times d})$.
In addition, it is clear that the above assertion is still true as $\vartheta_2=0$.

We proceed by considering the following three-dimensional BSDE, which is another variant of the system of BSDEs \eqref{eq:2.4b}:
\begin{equation}\label{eq:2.5d}
\left\{
\begin{array}{l}
Y^1_t=\xi^1+\int_t^T \left(\vartheta_1|Z^1_s|^2+\theta_1 |Z^3_s|\right){\rm d}s-\int_t^T Z^1_s{\rm d}B_s, \vspace{0.1cm}\\
\Dis Y^2_t=\xi^2+\int_t^T \left(\vartheta_2|Z^1_s|^2+\theta_2|Z^2_s|^2+l_{21} Z^2_s(Z^1_s)^\top +k_2|Z^3_s| \right){\rm d}s-\int_t^T Z^2_s{\rm d}B_s,\vspace{0.1cm}\\
\Dis Y^3_t=\xi^3+\int_t^T \left(\vartheta_3|Z^1_s|^2+\theta_3|Z^2_s|^2+\kappa_3|Z^3_s|^2\right){\rm d}s-\int_t^T Z^3_s{\rm d}B_s\vspace{0.1cm}\\
\Dis \hspace{2cm} +\int_t^T \left(l_{31} Z^3_s(Z^1_s)^\top +l_{32} Z^3_s(Z^2_s)^\top +l_{33} Z^1_s(Z^2_s)^\top +k_3|Z^2_s|\right){\rm d}s; \quad t\in [0,T],
\end{array}
\right.
\end{equation}
where $\xi^i\in L^{\infty}(\R)$, $(\theta_i,\vartheta_i)^\top\in \R^2$ for $i=1,2,3$, and $(\kappa_3, l_{21},l_{31},l_{32},l_{33},k_2,k_3)^\top\in \R^7$ with $$\theta_2\vartheta_2<0,\quad  \kappa_3\theta_3<0,\quad \kappa_3\vartheta_3<0\quad  {\rm and}\quad  l^2_{33}<4\theta_3\vartheta_3.$$
Without loss of generality, we assume that $\theta_2>0$, $\vartheta_2<0$, $\kappa_3>0$, $\theta_3<0$ and $\vartheta_3<0$. Observe that there exists a unique real  number $\eps\in (0,1]$ such that
$$
l^2_{33}=4(1-\eps)^2\theta_3\vartheta_3\vspace{-0.2cm}
$$
and
$$
\vartheta_3(1-\eps)|Z^1_s|^2+\theta_3(1-\eps)|Z^2_s|^2+l_{33} Z^1_s(Z^2_s)^\top =-(1-\eps)\left|\sqrt{|\vartheta_3|}Z^1_s
-{\rm sgn}(l_{33})\sqrt{|\theta_3|}Z^2_s \right|^2.
$$
Combining a similar argument to \eqref{eq:2.5c}, we can directly verify that
$$
\Delta_s\leq \left(\kappa_3-\frac{l_{31}^2}{4\eps\vartheta_3}-\frac{l_{32}^2}{4\eps\theta_3}
\right)|Z^3_s|^2
$$
and
$$
\Delta_s\geq \frac{\kappa_3}{2} |Z^3_s|^2-\left(\frac{1}{2}+\frac{l_{31}^2}{\kappa_3}-\vartheta_3\right)
|Z^1_s|^2-\left(\frac{l_{33}^2}{2}+\frac{l_{32}^2}{\kappa_3}-\theta_3\right)
|Z^2_s|^2,
$$
where
$$
\Delta_s:=\vartheta_3|Z^1_s|^2+\theta_3|Z^2_s|^2+\kappa_3|Z^3_s|^2+l_{31} Z^3_s(Z^1_s)^\top +l_{32} Z^3_s(Z^2_s)^\top +l_{33} Z^1_s(Z^2_s)^\top .
$$
Consequently, the generator $g$ of the system of BSDEs \eqref{eq:2.5d} satisfies Assumption \ref{A:C1a}. It follows from \cref{cor:2.6} that for each $\xi\in L^\infty(\R^2)$, the system of BSDEs \eqref{eq:2.5d} admits a unique global solution $(Y,Z)\in \s^\infty(\R^2)\times {\rm BMO}(\R^{2\times d})$. In addition, it can also be proved that when $\vartheta_2=k_2=0, \kappa_3\theta_3<0, \kappa_3\vartheta_3<0, l_{31}=l_{32}=0$ and $l^2_{33}=4\theta_3\vartheta_3$ holds or $\vartheta_2=k_2=0$ and $\theta_3=\vartheta_3=l_{33}=k_3=0$ holds, \vspace{0.3cm}the above assertion is still true.

Finally, let us present several specific examples of multi-dimensional solvable BSDEs with interactively quadratic generators, to which one of \cref{thm:2.5}, \cref{cor:2.6} and \cref{thm:2.8a} applies, but no existing results could, to our best knowledge.\vspace{0.1cm}

\begin{ex}\label{ex:2.7} We have the following assertions.
\begin{itemize}
\item [(i)] Assume that the generator $g:=(g^1,\cdots,g^n)^\top$ is defined as follows: for each $i=1,\cdots,n$, and $(\omega,t,y,z)\in \Omega\times \T\times \R^n\times\R^{n\times d}$,
$$
g^i(\omega,t,y,z):=\tilde\alpha_t(\omega)+(-1)^i
\Big(\sum_{j=1}^{i-1}a_{ij}|z^j|^2-a_{ii}|z^i|^2
+\sum_{j=i+1}^{n}a_{ij}|z^j|^{1+\delta}\Big)+h^i(y,z),
$$
where any element of matrix $A:=(a_{ij})_{n\times n}$ is positive, and for each $i=1,\cdots,n$, $h^i(y,z):\R^n\times \R^{n\times d}\To \R$ is any Lipschitz continuous function. It is not hard to verify that this generator $g$ satisfies Assumption \ref{A:B2} with $\theta=0$, and for each $i=1,\cdots,n$, $f:=g^i$ or $f(\omega,t,y,z):=-g^i(\omega,t,y(-y^i;i),z(-z^i;i))$
satisfies Assumption \ref{A:C1a}(i) with $\theta=0$. Then, from \cref{cor:2.6} we can conclude that for each $\xi\in L^{\infty}(\R^n)$, BSDE \eqref{eq:2.1} with this generator $g$ admits a unique global solution $(Y,Z)\in \s^\infty(\R^n)\times {\rm BMO}(\R^{n\times d})$. Note that in Assumption \ref{A:C1a}(i), both terms  $|z^i|^2$ and $|z^j|^2$ for  $j<i$ share opposite signs.

\item [(ii)] Let $n=d$ and the generator $g(y,z):=zy$ for each $(y,z)\in \R^n\times \R^{n\times n}$. Then $g$ satisfies Assumption \ref{A:B2} with $\theta=0$, and for each $i=1,\cdots,n$, $g^i(y,z)=z^i y$ satisfies Assumption \ref{A:C1b}(ii). From \cref{thm:2.8a}, we see  that BSDE \eqref{eq:2.1} with this generator $g$ admits a unique global solution $(Y,Z)\in \s^\infty(\R^n)\times {\rm BMO}(\R^{n\times d})$ for each $\xi\in L^{\infty}(\R^n)$. Note that when the terminal value is Markovian, this BSDE is associated to the Burger system. Therefore,   it is reasonable to call it the backward stochastic Burger system or equations.

\item [(iii)] Assume that the generator $g:=(g^1,\cdots,g^n)^\top$ is defined as follows: for each $i=1,\cdots,n$, and $(\omega,t,y,z)\in \Omega\times \T\times \R^n\times\R^{n\times d}$,
$$
g^i(\omega,t,y,z):=\tilde\alpha_t(\omega)+\left(v_t(\omega)+e^{|y|}\right)|z^i|
+e^{-y_i}|z^i|^{\frac{3}{2}}+z^i\sum_{j=1}^{i}c_{ij} (z^j)^\top ,
$$
where $C=(c_{ij})_{n\times n}$ is any real matrix. It is easy to verify that this generator $g$ satisfies Assumption \ref{A:B2} with $\theta=0$, and for each $i=1,\cdots,n$, $f:=g^i$ satisfies Assumption \ref{A:C1b}(ii). Then, from~\cref{thm:2.8a}, we see that BSDE \eqref{eq:2.1} with this generator $g$ admits a unique global solution $(Y,Z)\in \s^\infty(\R^n)\times {\rm BMO}(\R^{n\times d})$  for each $\xi\in L^{\infty}(\R^n)$.

\item [(iv)] Let $n=5$ and $d=2$. Assume that for each $(\omega,t,y,z)\in \Omega\times \T\times \R^n\times\R^{n\times d}$, the generator $g$ is defined as follows:
$$
g(\omega,t,y,z):=\left(
\begin{array}{c}
g^1\vspace{0.1cm}\\
g^2\vspace{0.1cm}\\
g^3\vspace{0.1cm}\\
g^4\vspace{0.1cm}\\
g^5\vspace{0.1cm}
\end{array}
\right) (\omega,t,y,z):=\left(
\begin{array}{c}
e^{-y^1}-|y|+|z^1|^2-|z^2|^{\frac{4}{3}}+\sin|z^3|\vspace{0.1cm}\\
|y|\cos|y|-|z^2|^2+|z^1|^{\frac{5}{4}}-\cos|z^4|\vspace{0.1cm}\\
|y|+z^3(2z^1-3z^2)^\top +z^3A(z^3)^\top -\arcsin|z^5|\vspace{0.1cm}\\
2|y|\sin|y|+|z^4|-|z^5|+\arccos|z^1|\vspace{0.1cm}\\
y^1+3y^3-y^4+y^5-|z^4|+2|z^5|-\arctan|z^2|\vspace{0.1cm}
\end{array}
\right)
$$
with the matrix
$$
A:=\left(\begin{array}{cc}
1&1\\
0&0
\end{array}\right)
$$
It is easy to check that this generator $g$ satisfies \ref{A:B2} with $\theta=0$, and that $f:=g^i$ satisfies \ref{A:C1b}(i) for $i=1,2$, \ref{A:C1b}(ii) for $i=3$, and \ref{A:C1b}(iii) for $i=4,5$. Then, from~\cref{thm:2.8a}, we see that  BSDE \eqref{eq:2.1} with this generator $g$ admits a unique global solution $(Y,Z)\in \s^\infty(\R^n)\times {\rm BMO}(\R^{n\times d})$  for each $\xi\in L^{\infty}(\R^n)$.
\end{itemize}
\end{ex}

\subsection{Connections to existing results\vspace{0.2cm}}

With an invertible linear transformation as in~\citet{XingZitkovic2018AoP} and \citet{Weston2024FS}, we immediately have  the following result after an application of  It\^{o} formula.

\begin{pro}\label{pro:2.11b}
Let $A\in \R^{n\times n}$ be a real invertible matrix, and $A^{-1}$  its inverse. Then, $(Y,Z)\in \s^\infty(\R^n)\times {\rm BMO}(\R^{n\times d})$ is a solution of BSDE \eqref{eq:2.1} if and only if $(\bar Y,\bar Z):=(AY,AZ)\in \s^\infty(\R^n)\times {\rm BMO}(\R^{n\times d})$ is a solution of the following BSDE
\begin{equation}\label{eq:2.6b}
\bar Y_t=\bar \xi+\int_t^T \bar g(s,\bar Y_s,\bar Z_s){\rm d}s-\int_t^T \bar Z_s {\rm d}B_s, \quad \ t\in\T,
\end{equation}
where $\bar\xi:=A\xi$ and
$$
\bar g(\omega,t,\bar y,\bar z):= A g(\omega,t,A^{-1}\bar y,A^{-1}\bar z), \quad \forall (\omega,t,\bar y,\bar z)\in \Omega\times \T\times \R^n\times \R^{n\times d}.
$$
\end{pro}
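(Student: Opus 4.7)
The plan is straightforward: since $y\mapsto Ay$ is a deterministic linear map with constant coefficients, the desired equivalence will follow by left-multiplying BSDE \eqref{eq:2.1} by $A$ (and, for the converse, by $A^{-1}$), combined with an elementary verification that the space $\s^\infty(\R^n)\times {\rm BMO}(\R^{n\times d})$ is invariant under multiplication by any invertible matrix. It\^o's formula applied to the linear function $\Phi(y)=Ay$ merely reproduces this pointwise manipulation, since the quadratic-variation term vanishes for a linear function of a semimartingale.

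For the forward implication, I assume $(Y,Z)\in \s^\infty(\R^n)\times {\rm BMO}(\R^{n\times d})$ solves \eqref{eq:2.1}. Because $A$ is deterministic and constant in $t$, it commutes with both the Lebesgue integral $\int_t^T\cdot\,{\rm d}s$ and the It\^o integral $\int_t^T\cdot\,{\rm d}B_s$, so
\[
AY_t=A\xi+\int_t^T A\, g(s,Y_s,Z_s)\,{\rm d}s-\int_t^T AZ_s\,{\rm d}B_s.
\]
Substituting $Y_s=A^{-1}\bar Y_s$ and $Z_s=A^{-1}\bar Z_s$ inside $g$ yields exactly BSDE \eqref{eq:2.6b} with $\bar\xi=A\xi$ and $\bar g$ as in the statement. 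The converse is identical after left-multiplying \eqref{eq:2.6b} by $A^{-1}$, noting that $(A^{-1})^{-1}=A$ and the composition of $\bar g$ with the substitutions returns $g$.

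For space invariance, I use the operator-norm bound $|Av|\leq |A|\,|v|$ for all $v\in\R^n$, which gives $\|\bar Y\|_{\s^\infty}\leq |A|\,\|Y\|_{\s^\infty}$, and, for each $\tau\in\mathcal{T}_{\T}$,
\[
\E_\tau\!\left[\int_\tau^T |\bar Z_s|^2\,{\rm d}s\right]\leq |A|^2\,\E_\tau\!\left[\int_\tau^T |Z_s|^2\,{\rm d}s\right],
\]
so $\|\bar Z\|_{\rm BMO}\leq |A|\,\|Z\|_{\rm BMO}$. Replacing $A$ with $A^{-1}$ and $(Y,Z)$ with $(\bar Y,\bar Z)$ yields the reverse inequalities, hence membership in $\s^\infty(\R^n)\times {\rm BMO}(\R^{n\times d})$ is preserved in both directions. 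There is no substantive obstacle; the only minor point of care is that $A$ is generally not an isometry, so the norms transform with constants $|A|$ and $|A^{-1}|$ rather than being invariant, but finiteness is clearly preserved.
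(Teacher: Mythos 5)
Your proof is correct and follows exactly the route the paper intends: the paper gives no written proof beyond the remark that the result is immediate from an application of It\^o's formula to the linear map $y\mapsto Ay$, which is precisely your pointwise left-multiplication argument together with the norm bounds $\|AY\|_{\s^\infty}\leq |A|\,\|Y\|_{\s^\infty}$ and $\|AZ\|_{\rm BMO}\leq |A|\,\|Z\|_{\rm BMO}$ (and their analogues with $A^{-1}$) showing the solution spaces are preserved. Nothing further is needed.
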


Since the condition \ref{A:AB} is invariant under an invertible linear transformation of $\R^n$, as shown in Remark 2.12 of \citet{XingZitkovic2018AoP},  we  immediately have from \cref{cor:2.6e} and \cref{pro:2.11b} the following assertion.

\begin{thm}\label{thm:2.11e}
Let $\alpha,\bar\alpha, \tilde\alpha\in \lcal^{\infty}$ and the generator $g$ satisfy \ref{A:AB}. If there is a real invertible matrix $A\in \R^{n\times n}$ such that the generator $\bar g$ defined in \cref{pro:2.11b} satisfies \ref{A:B1} and \ref{A:B2} with $\theta=0$, then for each $\xi\in L^{\infty}(\R^n)$, the system of BSDEs \eqref{eq:2.1} admits a unique global solution $(Y,Z)\in \s^\infty(\R^n)\times {\rm BMO}(\R^{n\times d})$.\vspace{0.1cm}
\end{thm}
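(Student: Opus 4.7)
The plan is to reduce the conclusion to \cref{cor:2.6e} via the linear change of variables from \cref{pro:2.11b}. Fix the invertible $A$ supplied by the hypothesis and set $\bar\xi:=A\xi$ and $\bar g(\omega,t,\bar y,\bar z):=Ag(\omega,t,A^{-1}\bar y,A^{-1}\bar z)$. Since $A$ is a deterministic matrix, $\bar\xi\in L^{\infty}(\R^n)$ with $\|\bar\xi\|_{\infty}\leq |A|\,\|\xi\|_{\infty}$, and the spaces $\s^\infty(\R^n)$ and ${\rm BMO}(\R^{n\times d})$ are stable under $A$ and $A^{-1}$. By \cref{pro:2.11b}, establishing unique solvability of BSDE \eqref{eq:2.1} in $\s^\infty(\R^n)\times{\rm BMO}(\R^{n\times d})$ is equivalent to the same statement for BSDE \eqref{eq:2.6b} with data $(\bar\xi,\bar g)$. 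By assumption, $\bar g$ already satisfies \ref{A:B1} and \ref{A:B2} with $\theta=0$, so it remains only to check that $\bar g$ still satisfies \ref{A:AB}; then \cref{cor:2.6e} applied to BSDE \eqref{eq:2.6b} finishes the proof.

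The key step is thus the transformation invariance of \ref{A:AB}. Let $\{a_1,\dots,a_K\}$ be the collection associated with $g$ in \ref{A:AB} and define $\bar a_k:=(A^{-1})^\top a_k$ for $k=1,\dots,K$. Since $(A^{-1})^\top$ is a linear bijection of $\R^n$, the images $\{\bar a_k\}$ still positively span $\R^n$: given any $\bar a\in\R^n$, decompose $A^\top\bar a=\sum_k\lambda_k a_k$ with $\lambda_k\geq 0$, whence $\bar a=\sum_k\lambda_k\bar a_k$. Next, for any $(\omega,t,\bar y,\bar z)$,
\[
\bar a_k^\top \bar g(\omega,t,\bar y,\bar z)=a_k^\top A^{-1}Ag(\omega,t,A^{-1}\bar y,A^{-1}\bar z)=a_k^\top g(\omega,t,y,z),
\]
with $y:=A^{-1}\bar y$ and $z:=A^{-1}\bar z$. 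Using \ref{A:AB} for $g$ and $a_k^\top z=a_k^\top A^{-1}\bar z=\bar a_k^\top \bar z$, we obtain
\[
\bar a_k^\top \bar g(\omega,t,\bar y,\bar z)\leq \tilde\alpha_t(\omega)+\gamma|a_k^\top z|^2=\tilde\alpha_t(\omega)+\gamma|\bar a_k^\top \bar z|^2,
\]
which is exactly condition \ref{A:AB}(ii) for $\bar g$ with the same $\tilde\alpha$ and $\gamma$.

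With $\bar g$ satisfying \ref{A:B1}, \ref{A:B2} (with $\theta=0$) and \ref{A:AB}, \cref{cor:2.6e} applies and yields a unique global solution $(\bar Y,\bar Z)\in\s^\infty(\R^n)\times{\rm BMO}(\R^{n\times d})$ to BSDE \eqref{eq:2.6b}. Setting $(Y,Z):=(A^{-1}\bar Y,A^{-1}\bar Z)$ and invoking \cref{pro:2.11b} again produces a solution of BSDE \eqref{eq:2.1} in the same spaces, and uniqueness transfers back through the bijective correspondence $(Y,Z)\leftrightarrow(AY,AZ)$. There is no real obstacle beyond the invariance check; the only subtlety is ensuring that the positive spanning property survives the transpose-inverse transformation, which is the content of the short argument above.
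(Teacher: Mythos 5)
Your proposal is correct and follows essentially the same route as the paper: transfer the equation by the invertible matrix $A$ via \cref{pro:2.11b}, observe that \ref{A:AB} is invariant under this transformation, and apply \cref{cor:2.6e} to the transformed system before pulling the solution back. The only difference is that you write out the short verification of the invariance of \ref{A:AB} (via $\bar a_k:=(A^{-1})^\top a_k$ and the identity $\bar a_k^\top\bar z=a_k^\top z$), whereas the paper simply cites Remark 2.12 of Xing and {\v{Z}}itkovi{\'c} for this fact; your computation is accurate and fills in that citation.
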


From \cref{pro:2.11b}, we easily see the following slight extension of Theorem 2.1 of \citet{FreiDosReis2011MFE} .

\begin{thm}\label{thm:2.12d}
Let $\theta_1=\vartheta_1=0$ and $\theta_2\vartheta_2>0$. Then, there is a terminal value $\xi\in L^\infty(\R^2)$ such that the system of BSDEs \eqref{eq:2.4b} has no global solution $(Y,Z)\in \s^\infty(\R^2)\times {\rm BMO}(\R^{2\times d})$.
\end{thm}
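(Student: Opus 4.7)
The plan is to reduce the claim to the Frei--dos Reis counterexample (Theorem~2.1 of~\cite{FreiDosReis2011MFE}), which treats the specific case $\theta_1=\vartheta_1=0$, $\vartheta_2=1$, $\theta_2=1/2$, by exploiting the invertible linear transformation of~\cref{pro:2.11b}. Since $\theta_2\vartheta_2>0$, the two coefficients share a common sign, and I would first dispose of the negative case by applying the transformation $A_0:=-I$: by~\cref{pro:2.11b}, $(Y,Z)$ solves~\eqref{eq:2.4b} with terminal $\xi$ and coefficients $(\theta_2,\vartheta_2)$ if and only if $(-Y,-Z)$ solves it with terminal $-\xi$ and coefficients $(-\theta_2,-\vartheta_2)$, as $|z^i|^2$ is even in $z^i$. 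Hence without loss of generality I assume $\theta_2,\vartheta_2>0$.

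Next I would use the diagonal rescaling $A:={\rm diag}(c_1,c_2)$ with $c_1:=\sqrt{2\theta_2\vartheta_2}>0$ and $c_2:=2\theta_2>0$, both well defined by $\theta_2,\vartheta_2>0$. A short computation gives the transformed generator
$$\bar g(t,\bar y,\bar z)=\left(0,\ \frac{c_2\vartheta_2}{c_1^2}|\bar z^1|^2+\frac{\theta_2}{c_2}|\bar z^2|^2\right)^\top=\left(0,\ |\bar z^1|^2+\frac{1}{2}|\bar z^2|^2\right)^\top,$$
which is precisely the generator of the counterexample in~\cite{FreiDosReis2011MFE}.

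By that counterexample there exists $\bar\xi\in L^\infty(\R^2)$ such that the BSDE~\eqref{eq:2.6b} with generator $\bar g$ and terminal value $\bar\xi$ admits no solution in $\s^\infty(\R^2)\times{\rm BMO}(\R^{2\times d})$. Setting $\xi:=A^{-1}\bar\xi\in L^\infty(\R^2)$ and applying~\cref{pro:2.11b} in the reverse direction, any hypothetical solution $(Y,Z)\in\s^\infty(\R^2)\times{\rm BMO}(\R^{2\times d})$ of~\eqref{eq:2.4b} with terminal $\xi$ would produce $(AY,AZ)$ as a solution of~\eqref{eq:2.6b} with terminal $\bar\xi$ in the same class (since $A$ is invertible and the $\s^\infty$ and ${\rm BMO}$ norms are equivalent under a linear isomorphism), contradicting the Frei--dos Reis non-existence. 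The only real content is the explicit identification of the scaling constants $c_1,c_2$ that turn the quadratic form into the Frei--dos Reis canonical one; no serious obstacle is expected beyond this routine rescaling.
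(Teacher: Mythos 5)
Your proposal is correct and follows essentially the same route as the paper: a diagonal invertible rescaling reduces the system to the canonical Frei--dos Reis generator $(0,\ |\bar z^1|^2+\tfrac12|\bar z^2|^2)^\top$, and \cref{pro:2.11b} transfers the non-existence back (your direct pull-back of the bad terminal value $\bar\xi$ via $\xi:=A^{-1}\bar\xi$ is logically equivalent to the paper's contradiction argument, and your preliminary sign reduction via $-I$ is harmless, indeed unnecessary since $c_1=\sqrt{2\theta_2\vartheta_2}$ and $c_2=2\theta_2\neq 0$ are well defined whenever $\theta_2\vartheta_2>0$). Note that your scaling constant $c_2=2\theta_2$ is the one that actually produces the coefficients $1$ and $\tfrac12$; the paper's displayed matrix uses $2\vartheta_2$ in that entry, which yields $\vartheta_2/\theta_2$ and $\theta_2/(2\vartheta_2)$ and so appears to be a typo.
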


\begin{proof}
We use a contradiction argument. Suppose that for any $\xi\in L^\infty(\R^2),$  the system of BSDEs \eqref{eq:2.4b} admits a global solution $(Y,Z)\in \s^\infty(\R^2)\times {\rm BMO}(\R^{2\times d})$. Define
$$
(\bar Y, \bar Z):=\left(\begin{array}{cc}
\sqrt{2\theta_2\vartheta_2}& 0\\
0& 2\vartheta_2
\end{array}
\right) (Y,Z).
$$
Then, we have from \cref{pro:2.11b} that $(\bar Y, \bar Z)\in \s^\infty(\R^2)\times {\rm BMO}(\R^{2\times d})$ solves the following system of BSDEs
$$
\left\{
\begin{array}{l}
\Dis \bar Y^1_t=\sqrt{2\theta_2\vartheta_2}\xi^1-\int_t^T \bar Z^1_s{\rm d}B_s,\vspace{0.1cm}\\
\Dis \bar Y^2_t=2\vartheta_2 \xi^2+\int_t^T \left(|\bar Z^1_s|^2+\frac{1}{2}|\bar Z^2_s|^2\right){\rm d}s-\int_t^T \bar Z^2_s{\rm d}B_s,\quad t\in\T,
\end{array}
\right.
$$
which is a contradiction to  Theorem 2.1 of \citet{FreiDosReis2011MFE} as $\xi$ is arbitrary. The proof is then complete.
\end{proof}

The following theorem is a nonlinear extension of Theorem 6.9 of \citet{Jackson2023SPA} with a different proof.

\begin{thm}\label{thm:2.12b} Define the generator $g$ as follows: $\RE (\omega,t,y,z)\in \Omega\times \T\times \R^n\times\R^{n\times d}$,
$$
g(\omega,t,y,z):=f(\omega,t,y,z)+zh(b^\top z),
$$
where $b:=(b_1,\cdots,b_n)^\top\in \R^n$ with $b_1\neq 0$, the vector function $h:\R^{1\times d}\To \R^d$ is Lipschitz continuous, and $f:\Omega\times \T\times \R^n\times\R^{n\times d}\To \R^n$ has the following stronger continuity than Assumption \ref{A:B2}: $\as$, for each $(y,\bar y,z,\bar z)\in \R^{n}\times\R^{n}\times \R^{n\times d}\times \R^{n\times d}$,
\begin{equation}\label{eq:2.7e}
|f(\omega,t,y,z)-f(\omega,t,\bar y,\bar z)|\leq \phi(|y|\vee |\bar y|)\left(v_t(\omega)+|z|^\delta+|\bar z|^\delta\right)\left(|y-\bar y|+|z-\bar z|\right),
\end{equation}
and the following growth: $\as$,
\begin{equation}\label{eq:2.7b}
|b^\top f(\omega,t,y,z)|\leq \tilde\alpha_t(\omega)+\beta |y|+\frac{\gamma}{2} |b^\top z|^2,\quad\forall (y,z)\in \R^{n}\times \R^{n\times d}
\end{equation}
and
\begin{equation}\label{eq:2.8b}
|f^i(\omega,t,y,z)|\leq \tilde\alpha_t(\omega)+\beta |y|+\frac{\gamma}{2}|z^i|^2, \quad i=2,\cdots,n.
\end{equation}
Then, for each $\xi\in L^{\infty}(\R^n)$, BSDE \eqref{eq:2.1} with the above generator $g$ admits a unique global solution $(Y,Z)\in \s^\infty(\R^n)\times {\rm BMO}(\R^{n\times d})$.\vspace{0.2cm}
\end{thm}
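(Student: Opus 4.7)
The plan is to apply the invertible linear transformation of \cref{pro:2.11b} to reduce \eqref{eq:2.1} to a BSDE that fits the framework of \cref{thm:2.8a}. Since $b_1\neq 0$, let $A\in \R^{n\times n}$ be the invertible matrix whose first row is $b^\top$ and whose $i$th row is the standard basis vector $e_i^\top$ for $i=2,\ldots,n$; then $\det A=b_1\neq 0$ and $A^{-1}$ is explicit. By \cref{pro:2.11b}, \eqref{eq:2.1} is equivalent to the transformed BSDE \eqref{eq:2.6b} with bounded terminal value $A\xi$ and generator
$$
\bar g(\omega,t,\bar y,\bar z)=Af(\omega,t,A^{-1}\bar y,A^{-1}\bar z)+\bar z\,h(\bar z^1),
$$
since $Az\,h(b^\top z)=\bar z\,h(\bar z^1)$. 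Componentwise, $\bar g^1=b^\top f+\bar z^1 h(\bar z^1)$ and $\bar g^i=f^i+\bar z^i h(\bar z^1)$ for $i\geq 2$, with $f$ evaluated at $(A^{-1}\bar y,A^{-1}\bar z)$.

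The substantive step is to verify that $\bar g$ satisfies \ref{A:C1b} (with $J_2=\{1,\ldots,n\}$, $J_1=J_3=\emptyset$) and \ref{A:B2}. Let $L$ denote the Lipschitz constant of $h$. Combining \eqref{eq:2.7b}--\eqref{eq:2.8b} with the estimate $|\bar z^i h(\bar z^1)|\leq |h(0)||\bar z^i|+L|\bar z^i||\bar z^1|$ and $|y|\leq |A^{-1}||\bar y|$ shows that each $|\bar g^i|$ admits a bound consisting of an $\tilde\alpha_t$ plus $|\bar y|$-growth piece, a diagonal quadratic $(\gamma/2)|\bar z^i|^2$ (after enlarging $\gamma$ to $\gamma+2L$ in the case $i=1$), a linear term in $|\bar z^i|$ absorbed into $[v+\phi(|\bar y|)]|\bar z^i|$, and, for $i\geq 2$, a cross-term $L|\bar z^i||\bar z^1|$ placed into the interactive sum $c\sum_{j=1}^{i-1}|z^i(z^j)^\top|$ of $\tilde l_i$ at $j=1<i$. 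The $|\bar y|$-dependence fits the one-sided $\beta|\bar y|\,{\bf 1}_{\bar y^i>0}+\phi(|\bar y|)\,{\bf 1}_{\bar y^i<0}$ structure of \ref{A:C1b}(ii) after enlarging $\phi$ and absorbing $|h(0)|$ into $v$.

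For \ref{A:B2}, decompose $\bar g^i(\bar y,\bar z)-\bar g^i(\bar y',\bar z')$ into the $f$-difference, controlled by \eqref{eq:2.7e} combined with $|z-z'|\leq |A^{-1}||\bar z-\bar z'|$, and the $h$-difference, estimated as
$$
|\bar z^i h(\bar z^1)-\bar z'^i h(\bar z'^1)|\leq L|\bar z^i||\bar z^1-\bar z'^1|+(|h(0)|+L|\bar z'^1|)|\bar z^i-\bar z'^i|.
$$
Routing each Lipschitz term into the appropriate structural weight of \ref{A:B2} (the $f$-weight $(v+|z|^\delta+|z'|^\delta)$ being dominated by all three \ref{A:B2}-weights after absorbing constants into $\phi$ and $v$; the $h$-part fitting the middle-sum weight $(v+|\bar z|+|\bar z'|)$ at $j=1\leq i$ and $j=i$ respectively), an application of \cref{thm:2.8a} to $\bar g$ yields a unique global solution $(\bar Y,\bar Z)\in \s^\infty(\R^n)\times {\rm BMO}(\R^{n\times d})$ of \eqref{eq:2.6b}, from which $(Y,Z):=(A^{-1}\bar Y,A^{-1}\bar Z)$ is the desired unique solution of \eqref{eq:2.1} in the same space.

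The principal technical obstacle is the cross-term bookkeeping: the norm-product $L|\bar z^i||\bar z^1|$ generated by $\bar z^i h(\bar z^1)$ does not exactly match the inner-product form $|z^i(z^j)^\top|$ of the interactive sum in \ref{A:C1b}(ii), so either a refined estimate tailored to the specific structure of $\bar z^i h(\bar z^1)$ must be used, or a Young-inequality split (with the residual $|\bar z^1|^2$ absorbed into an enlarged diagonal coefficient $\gamma$) is required; similarly, the last-sum verification of \ref{A:B2} for the $f$-part is delicate when $v_t$ is an unbounded ${\rm BMO}$ process, and in the worst case would necessitate a fallback to \cref{thm:2.5} with a small positive $\theta$ in place of \cref{thm:2.8a}.
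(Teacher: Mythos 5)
Your proof is correct and follows essentially the same route as the paper's: the same matrix $A$ (first row $b^\top$, identity below, $\det A=b_1\neq 0$), the reduction via \cref{pro:2.11b} to the generator $\bar g=Af(\cdot,A^{-1}\bar y,A^{-1}\bar z)+(\bar z^1 h(\bar z^1),\ldots,\bar z^n h(\bar z^1))^\top$, verification of \ref{A:B2} with $\theta=0$ and of \ref{A:C1b}(ii), and then an appeal to \cref{thm:2.8a}. Regarding the obstacle you flag, the correct resolution is your first option rather than the Young-split fallback: everywhere \ref{A:C1b}(ii) is actually used (case (2) in the proofs of \cref{thm:2.3} and of \cref{pro:4.2c}), the interactive term is immediately relaxed via $|z^i(z^j)^\top|\leq |z^j|\,|z^i|$ and absorbed into a BMO-valued coefficient multiplying $|z^i|$, so the estimate $|\bar z^i h(\bar z^1)|\leq \left(|h(0)|+L|\bar z^1|\right)|\bar z^i|$ already delivers exactly the structure the machinery needs; by contrast, the Young split leaves a residual $\tfrac{L}{2}|\bar z^1|^2$ that would force the off-diagonal quadratic coefficient $\theta$ to be of order $L$, which is incompatible with the smallness of $\theta_0$ in \cref{thm:2.5} unless $L$ happens to be small.
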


\begin{proof}
The following matrix
\begin{equation}\label{eq:2.11g}
A:=\left(
\begin{array}{cccc}
b_1& b_2& \cdots & b_n\\
& 1& & \\
&&\ddots &\\
&&& 1
\end{array}
\right)
\end{equation}
 is invertible. For each $(\bar y,\bar z)\in \R^{n}\times\R^{n\times d}$, let $(y,z):=A^{-1}(\bar y, \bar z) $, and then $(\bar y, \bar z)=A(y,z)$. Clearly, $\bar y^1=b^\top y$, $\bar z^1=b^\top z$, $\bar y^i=y^i$ and $\bar z^i=z^i$ for each $i=2,\cdots,n$.
 By \eqref{eq:2.7e}-\eqref{eq:2.8b} together with the Lipschitz continuity of $h$,
  the following generator
$$
\bar g(\omega,t,\bar y,\bar z):=A g(\omega,t,A^{-1}\bar y,A^{-1}\bar z)
= Af(\omega,t,A^{-1}\bar y,A^{-1}\bar z)+\left(
\begin{array}{c}
\bar z^1 h(\bar z^1)\\
\bar z^2 h(\bar z^1)\\
\vdots\\
\bar z^n h(\bar z^1)
\end{array}
\right)
$$
satisfies Assumption \ref{A:B2} with $\theta=0$, and Assumption \ref{A:C1b}(ii).
From \cref{thm:2.8a} we see that for each $\xi\in L^\infty(\R^n)$, the system of BSDEs \eqref{eq:2.6b} admits a unique global solution $(\bar Y,\bar Z)\in \s^\infty(\R^n)\times {\rm BMO}(\R^{n\times d})$. Finally, by \cref{pro:2.11b} we know that $(A^{-1}\bar Y,A^{-1}\bar Z)\in \s^\infty(\R^n)\times {\rm BMO}(\R^{n\times d})$ is just the desired unique solution of \eqref{eq:2.1}. The proof is complete.\vspace{0.2cm}
\end{proof}

\begin{rmk}\label{rmk:2.13b}
If the generator $f$ is Lipschitz continuous in the last two variables $(y,z)$ and bounded in the last variable $z$, then \eqref{eq:2.7e}-\eqref{eq:2.8b} hold naturally. On the other hand, if $b_1=0$ but $b_{i_0}\neq 0$ for some $i_0>1$, \cref{thm:2.12b} is still true when \eqref{eq:2.8b} is satisfied  for all integer $i\not=i_0$ instead of for all $i\ge 2$, just using an obvious  invertible  transformation in the proof.\vspace{0.2cm}
\end{rmk}

Similar to the proof of \cref{thm:2.12b}, we have the following \cref{thm:2.14b}.

\begin{thm}\label{thm:2.14b} Consider  the following generator $g$: $\RE (\omega,t,y,z)\in \Omega\times \T\times \R^n\times\R^{n\times d}$,
$$
g(\omega,t,y,z):=f(\omega,t,y,z)+zh\left(b^\top z \right)+a\bar h\left(b^\top z \right),
$$
where  $a:=(a_1,\cdots, a_n)^\top\in \R^n$ and $b:=(b_1,\cdots, b_n)^\top \in \R^n$ with $a_1\neq 0$ and $b^\top a\neq 0$,
the vector function $h:\R^{1\times d}\To \R^d$ is Lipschitz continuous, the function $\bar h:\R^{1\times d}\To \R$ satisfies that $\bar h(0)=0$ and
\begin{equation}\label{eq:2.11f}
|\bar h(w_1)-\bar h(w_2)|\leq \gamma (1+|w_1|+|w_2|)|w_1-w_2|,\ \ \RE w_1,w_2\in \R^{1\times d},
\end{equation}
and  $f:\Omega\times \T\times \R^n\times\R^{n\times d}\To \R^n$ has the continuity~\eqref{eq:2.7e} and the following growth: $\as$, for each $(y,z)\in \R^{n}\times\R^{n\times d}$,
\begin{equation}\label{eq:2.11b}
|f(\omega,t,y,z)|\leq \tilde\alpha_t(\omega)+\beta |y|.
\end{equation}
Then, for each $\xi\in L^{\infty}(\R^n)$, BSDE \eqref{eq:2.1} with the above generator $g$ admits a unique global solution $(Y,Z)\in \s^\infty(\R^n)\times {\rm BMO}(\R^{n\times d})$.
\end{thm}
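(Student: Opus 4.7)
The plan is to follow the strategy of \cref{thm:2.12b} by choosing a suitable invertible matrix $A\in\R^{n\times n}$ that handles both the quadratic term $zh(b^\top z)$ and the new quadratic term $a\bar h(b^\top z)$. Specifically, I would apply \cref{pro:2.11b} to transform BSDE \eqref{eq:2.1} into BSDE \eqref{eq:2.6b} whose generator $\bar g(\bar y,\bar z):=Ag(A^{-1}\bar y, A^{-1}\bar z)$ satisfies Assumptions \ref{A:B2} with $\theta=0$ and \ref{A:C1b}(ii), and then conclude via \cref{thm:2.8a}.

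Using $a_1\neq 0$ and $b^\top a\neq 0$, I would take $A$ with first row $b^\top$ and $i$th row $e_i^\top-(a_i/a_1)\,e_1^\top$ for $i=2,\ldots,n$, so that these later rows all lie in $a^\perp$. A direct linear-independence argument shows invertibility of $A$, and by construction $Aa=(b^\top a)\,e_1$ and $\bar z^1=b^\top z$. Consequently the transformed generator reads
\[
\bar g^1 = b^\top f(\omega,t,A^{-1}\bar y,A^{-1}\bar z) + \bar z^1 h(\bar z^1) + (b^\top a)\,\bar h(\bar z^1),
\]
\[
\bar g^i = A^i f(\omega,t,A^{-1}\bar y,A^{-1}\bar z) + \bar z^i h(\bar z^1),\qquad i=2,\ldots,n,
\]
where $A^i$ denotes the $i$th row of $A$. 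The key feature is that the quadratically growing contribution $\bar h(\bar z^1)$ is now confined to $\bar g^1$.

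Next I would verify \ref{A:C1b}(ii) for $\bar g$, taking $J_2=\{1,\ldots,n\}$. By \eqref{eq:2.11b}, $|A^i f|\leq |A^i|(\tilde\alpha_t+\beta|A^{-1}||\bar y|)$ independently of $\bar z$; Lipschitz continuity of $h$ gives $|\bar z^i h(\bar z^1)|\leq |h(0)||\bar z^i|+L\,|\bar z^i(\bar z^1)^\top|$; and combining $\bar h(0)=0$ with \eqref{eq:2.11f} yields $|\bar h(\bar z^1)|\leq \gamma(|\bar z^1|+|\bar z^1|^2)$. These match the structure of $\tilde l_i$ in \ref{A:C1b}(ii), because the cross product $|\bar z^i(\bar z^j)^\top|$ appears only for $j=1<i$ (as required for $i\geq 2$), and the symmetric $|\bar y|$-growth is absorbed by enlarging $\beta$ and $\phi$. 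Assumption \ref{A:B2} with $\theta=0$ then follows from \eqref{eq:2.7e}, the elementary Lipschitz-quadratic estimate $|wh(w)-w'h(w')|\leq (|h(0)|+L|w|+L|w'|)|w-w'|$, and the analogous quadratic-Lipschitz estimate \eqref{eq:2.11f} for $\bar h$, after absorbing the operator norms $|A|,|A^{-1}|$ into $\phi$ and the reference process $v$.

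With both assumptions in force, \cref{thm:2.8a} provides a unique solution $(\bar Y,\bar Z)\in \s^\infty(\R^n)\times {\rm BMO}(\R^{n\times d})$ of BSDE \eqref{eq:2.6b}, and \cref{pro:2.11b} then gives $(Y,Z)=(A^{-1}\bar Y,A^{-1}\bar Z)$ as the unique global solution of \eqref{eq:2.1}. The main obstacle is the design of $A$: both requirements — that its first row equal $b^\top$ (so the $h$-factor depends on $\bar z^1$ only) and that $Aa$ be parallel to $e_1$ (so the quadratic $\bar h$-term affects only $\bar g^1$) — must be met simultaneously while keeping $A$ invertible, and the twin hypotheses $a_1\neq 0$ and $b^\top a\neq 0$ are precisely what render this construction feasible.
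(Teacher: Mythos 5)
Your proposal is correct and follows essentially the same route as the paper: the paper's matrix $A$ has first row $b^\top$ and $i$th row $-a_ie_1^\top+a_1e_i^\top$, which is exactly your choice rescaled by $a_1$, so that $Aa=(b^\top a)e_1$ confines the quadratic $\bar h$-term to $\bar g^1$, and the conclusion is then drawn from \cref{thm:2.8a} via \cref{pro:2.11b} just as in \cref{thm:2.12b}. One small slip: the displayed bound $|\bar z^i h(\bar z^1)|\leq |h(0)||\bar z^i|+L|\bar z^i(\bar z^1)^\top|$ is false in general and should read $|h(0)||\bar z^i|+L|\bar z^i||\bar z^1|$; this still suffices, since the proofs relying on \ref{A:C1b}(ii) only ever use the relaxation $|z^i(z^j)^\top|\leq |z^i||z^j|$.
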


It is quite related  to Theorem 6.19 of \citet{Jackson2023SPA}, which  requires the a priori boundedness condition \ref{A:AB} and the Malliavin regular condition on the generator $g$. However, both do not cover each other.

\begin{proof}
The following matrix
$$
A:=\left(
\begin{array}{ccccc}
b_1& b_2& b_3& \cdots & b_n\\
-a_2& a_1& & &\\
-a_3&& a_1&& \\
\vdots &&&\ddots & \\
-a_n&&&& a_1
\end{array}
\right)
$$
 has the determinant ${\rm det} (A)=a_1^{n-2}b^\top a\neq 0$, and is thus invertible.  For each $(\bar y,\bar z)\in \R^{n}\times\R^{n\times d}$, let $(y,z):=A^{-1}(\bar y, \bar z) $, and then $(\bar y, \bar z)=A(y,z)$. Clearly, $\bar y^1=b^\top y$, $\bar z^1=b^\top z$, $\bar y^i=-a_iy^1+a_1y^i$ and $\bar z^i=-a_iz^1+a_1z^i$ for each $i=2,\cdots,n$.  From \eqref{eq:2.7e} and \eqref{eq:2.11f}-\eqref{eq:2.11b} together with the Lipschitz continuity of $h$, we see that the following generator
$$
\begin{array}{lrl}
\Dis \bar g(\omega,t,\bar y,\bar z)&:= & \Dis Af(\omega,t,A^{-1}\bar y,A^{-1}\bar z)+\left(
\begin{array}{c}
\Dis \bar z^1 h(\bar z^1)+b^\top a \bar h (\bar z^1)\\
\Dis \bar z^2 h(\bar z^1)\\
\Dis \vdots\\
\Dis \bar z^n h(\bar z^1)
\end{array}
\right)
\end{array}
$$
satisfies Assumption \ref{A:B2} with $\theta=0$, and Assumption \ref{A:C1b}(ii).
Proceeding identically  as in \cref{thm:2.12b}, we have the desired assertion. \vspace{0.2cm}
\end{proof}

Furthermore, we have the following theorem, which  and Theorem 3.1 of \citet{XingZitkovic2018AoP} do not cover each other.

\begin{thm}\label{thm:2.14g}
Define the generator $g:=(g^1,\cdots,g^n)^\top$ as follows: for each $(\omega,t,y,z)\in \Omega\times \T\times \R^n\times\R^{n\times d}$, \vspace{-0.1cm}
\begin{equation}\label{eq:2.13g}
\left\{
\begin{array}{l}
\Dis g^1(\omega,t,y,z):=f^1(\omega,t,y,z)+z^1 h(b^\top z)-\bar h_1(b^\top z)-\frac{1}{b_1}\sum_{j=2}^{n}
a_jb_j|z^j|^2;\vspace{0.2cm}\\
\Dis g^i(\omega,t,y,z):=f^i(\omega,t,y,z)+z^i h(b^\top z)-\bar h_i(b^\top z)+a_i |z^i|^2,\ \ i=2,\cdots,n,
\end{array}
\right.\vspace{0.1cm}
\end{equation}
where  $a:=(0,a_2,\cdots, a_n)^\top\in \R^n$ and $b:=(b_1,\cdots, b_n)^\top \in \R^n$ with $b_1\neq 0$,
the vector function $h:\R^{1\times d}\To \R^d$ is Lipschitz continuous with Lipschitz constant $L>0$ and $|h(0)|\leq L$, the vector function $\bar h=(\bar h_1,\cdots,\bar h_n)^\top:\R^{1\times d}\To \R^n$ satisfies that $\bar h(0)=0$ and
\begin{equation}\label{eq:2.14g}
|\bar h(w_1)-\bar h(w_2)|\leq L(1+|w_1|+|w_2|)|w_1-w_2|,\ \ \RE w_1,w_2\in \R^{1\times d},
\end{equation}
and $f=(f^1,\cdots,f^n)^\top:\Omega\times \T\times \R^n\times\R^{n\times d}\To \R^n$ satisfies that $\as$, $|f(\omega,t,0,0)|\leq \tilde\alpha_t(\omega)$ and for each $(y_1,y_2,z_1,z_2)\in \R^{n}\times\R^{n}\times\R^{n\times d}\times\R^{n\times d}$,
\begin{equation}\label{eq:2.12g}
|f(\omega,t,y_1,z_1)-f(\omega,t,y_2,z_2)|\leq \beta |y_1-y_2|+\gamma |z_1-z_2|.
\end{equation}
Assume further that
\begin{equation}\label{eq:2.15g}
\begin{array}{l}
{\rm either} \quad \inf_{w\in \R^{1\times d}} \left[wh(w)-b^\top \bar h(w)-\frac{\bar\gamma}{2}|w|^2\right]\ge -c\quad {\rm or}\vspace{0.1cm}\\
\hspace*{1.4cm} \sup_{w\in \R^{1\times d}} \left[wh(w)-b^\top \bar h(w)+\frac{\bar\gamma}{2}|w|^2\right]\le c,
\end{array}
\end{equation}
and for each $i=2,\cdots,n$,
\begin{equation}\label{eq:2.16g}
\begin{array}{l}
{\rm either }\ \ a_i>0\ {\rm and} \ \inf_{w\in \R^{1\times d}}\left[ \bar h_i(w)- \frac{\bar\gamma}{2}|w|^2\right]\ge -c\ \ {\rm or}\\
\hspace*{1.25cm} a_i<0\ {\rm and}\ \sup_{w\in \R^{1\times d}}\left[\bar h_i(w) +\frac{\bar\gamma}{2}|w|^2\right]\le c.
\end{array}
\end{equation}
Then, for each $\xi\in L^{\infty}(\R^n)$, BSDE \eqref{eq:2.1} with the above  generator $g$ admits a unique global solution $(Y,Z)\in \s^\infty(\R^n)\times {\rm BMO}(\R^{n\times d})$.
\end{thm}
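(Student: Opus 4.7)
The strategy mirrors that of \cref{thm:2.12b} and \cref{thm:2.14b}: I would apply the invertible linear transformation of \cref{pro:2.11b} with the matrix $A$ defined in \eqref{eq:2.11g}, which has $\det(A)=b_1\ne 0$, and show that the transformed generator $\bar g(\omega,t,\bar y,\bar z)=Ag(\omega,t,A^{-1}\bar y, A^{-1}\bar z)$ falls within the scope of \cref{cor:2.6}. Under this transformation one has $\bar y^1=b^\top y$, $\bar z^1=b^\top z$, and $(\bar y^i,\bar z^i)=(y^i,z^i)$ for $i\geq 2$. A direct computation, relying crucially on the cancellation $\sum_{j\geq 2}(-a_jb_j+a_jb_j)|z^j|^2=0$ built into the definition \eqref{eq:2.13g}, gives
$$
\bar g^1(\omega,t,\bar y,\bar z)=b^\top f(\omega,t,A^{-1}\bar y,A^{-1}\bar z)+\bar z^1 h(\bar z^1)-b^\top\bar h(\bar z^1),
$$
and, for $i\geq 2$,
$$
\bar g^i(\omega,t,\bar y,\bar z)=f^i(\omega,t,A^{-1}\bar y,A^{-1}\bar z)+\bar z^i h(\bar z^1)-\bar h_i(\bar z^1)+a_i|\bar z^i|^2.
$$

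The second step is to verify Assumption \ref{A:B2} with $\theta=0$ for $\bar g$. This follows routinely from the Lipschitz continuity \eqref{eq:2.12g} of $f$, the Lipschitz continuity of $h$, and the local Lipschitz estimate \eqref{eq:2.14g} of $\bar h$; the terms $\bar z^ih(\bar z^1)$ and $a_i|\bar z^i|^2$ contribute Lipschitz constants of the form $v+|z|+|\bar z|$ in the variables $\bar z^j$ with $j\leq i$, which is exactly what \ref{A:B2} permits.

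The third and main step is to verify Assumption \ref{A:C1a}(i) (or its sign-flipped version) for each $\bar g^i$ with $\theta=0$. For $\bar g^1$, the one-sided quadratic control \eqref{eq:2.15g} yields, in its first alternative, $\bar z^1h(\bar z^1)-b^\top\bar h(\bar z^1)\geq \tfrac{\bar\gamma}{2}|\bar z^1|^2-c$, producing the desired lower bound with leading term $\tfrac{\bar\gamma}{2}|\bar z^1|^2$, while the upper bound follows from the generic growth $|wh(w)|+|b^\top\bar h(w)|\leq L(1+|b|)|w|(1+|w|)$, which fits into the $\tfrac{\gamma}{2}|z^1|^2$ slot of \ref{A:C1a}(i) with constant coefficient. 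The second alternative of \eqref{eq:2.15g} is handled by taking $f(\bar y,\bar z):=-\bar g^1(\bar y(-\bar y^1;1),\bar z(-\bar z^1;1))$ and applying the supremum bound at $w=-\bar z^1$. For $i\geq 2$ and $a_i>0$, the lower bound on $\bar g^i$ is obtained by Young's inequality $L|\bar z^i||\bar z^1|\leq \tfrac{a_i}{2}|\bar z^i|^2+\tfrac{L^2}{2a_i}|\bar z^1|^2$; the residual $|\bar z^1|^2$ contribution is absorbed by the $-\bar c\sum_{j<i}|z^j|^2$ term permitted in the lower bound of \ref{A:C1a}(i). The case $a_i<0$ proceeds analogously after the sign flip $(\bar y^i,\bar z^i)\mapsto (-\bar y^i,-\bar z^i)$, which leaves $\bar z^1$ unchanged.

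The main obstacle is the upper bound of $\bar g^i$ for $i\geq 2$: the cross term $\bar z^ih(\bar z^1)$ produces, after Young's inequality, a \emph{positive} quadratic contribution in $|\bar z^1|^2$, which \ref{A:C1a}(i) only tolerates with the small parameter $\theta$ in the upper bound. The key observation that unlocks the proof is that \eqref{eq:2.16g} forces $-\bar h_i(\bar z^1)\leq -\tfrac{\bar\gamma}{2}|\bar z^1|^2+c$ when $a_i>0$, supplying a \emph{negative} quadratic in $|\bar z^1|^2$. Choosing the Young-splitting coefficient so that $L|\bar z^i||\bar z^1|\leq \tfrac{\bar\gamma}{4}|\bar z^1|^2+\tfrac{L^2}{\bar\gamma}|\bar z^i|^2$, the residual coefficient of $|\bar z^1|^2$ in the upper bound becomes $-\tfrac{\bar\gamma}{4}\leq 0$ and may be dropped; the remaining $|\bar z^i|^2$ part is absorbed into $\tfrac{\gamma}{2}|z^i|^2$ in \ref{A:C1a}(i) by enlarging $\gamma$. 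The sign-flipped analogue handles $a_i<0$ symmetrically. Once \ref{A:C1a} and \ref{A:B2} are established with $\theta=0$ and $\tilde\alpha,\bar\alpha,\alpha\in\lcal^\infty$ (which here reduce to deterministic constants coming from $|h(0)|\leq L$, $\bar h(0)=0$, and $|f(\cdot,0,0)|\leq \tilde\alpha$), \cref{cor:2.6} provides a unique global solution $(\bar Y,\bar Z)\in\s^\infty(\R^n)\times{\rm BMO}(\R^{n\times d})$ of the transformed BSDE \eqref{eq:2.6b}, and \cref{pro:2.11b} gives back the unique solution $(Y,Z)=(A^{-1}\bar Y, A^{-1}\bar Z)$ of the original system.
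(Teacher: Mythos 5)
Your proposal is correct and follows essentially the same route as the paper's proof: the same transformation by the matrix $A$ of \eqref{eq:2.11g}, the same cancellation yielding $\bar g^1=b^\top f+\bar z^1h(\bar z^1)-b^\top\bar h(\bar z^1)$, the same key use of \eqref{eq:2.16g} to supply the negative quadratic $-\tfrac{\bar\gamma}{2}|\bar z^1|^2$ that offsets the Young-split cross term in the upper bound of $\bar g^i$, and the same conclusion via \cref{cor:2.6} and \cref{pro:2.11b}. The only cosmetic difference is your Young coefficient $\tfrac{\bar\gamma}{4}$ versus the paper's $\tfrac{\bar\gamma}{2}$, which is immaterial.
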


\begin{proof} We only give the proof when the first conditions of \eqref{eq:2.15g} and \eqref{eq:2.16g} are satisfied. The other cases are proved identically. \vspace{0.1cm}

Let the invertible matrix $A$ be defined in \eqref{eq:2.11g}. For each $(\bar y,\bar z)\in \R^{n}\times\R^{n\times d}$, let $(y,z):=A^{-1}(\bar y, \bar z) $, and then $(\bar y, \bar z)=A(y,z)$. i.e., $\bar y^1=b^\top y$, $\bar z^1=b^\top z$, $\bar y^i=y^i$ and $\bar z^i=z^i$ for each $i=2,\cdots,n$. From \eqref{eq:2.14g}, \eqref{eq:2.12g} and the Lipschitz continuity of $h$, the following generator
\begin{equation}\label{eq:2.23g}
\begin{array}{lrl}
\Dis \bar g(\omega,t,\bar y,\bar z)&:=& \Dis Af(\omega,t,A^{-1}\bar y,A^{-1}\bar z)+\left(
\begin{array}{c}
\Dis \bar z^1 h(\bar z^1)-b^\top \bar h(\bar z^1)\\
\Dis \bar z^2 h(\bar z^1)-\bar h_i(\bar z^1)+a_i |\bar z^2|^2\\
\Dis \vdots\\
\Dis \bar z^n h(\bar z^1)-\bar h_n(\bar z^1)+a_n |\bar z^n|^2
\end{array}
\right)
\end{array}
\end{equation}
satisfies Assumption \ref{A:B2} with $\theta=0$. Furthermore, we have from \eqref{eq:2.12g} that
\begin{equation}\label{eq:2.24g}
|A f(\omega,t,A^{-1}\bar y,A^{-1}\bar z)|\leq |A|\alpha_t(\omega)+\beta |\bar y|+\gamma |\bar z|.
\end{equation}
From~\eqref{eq:2.14g}, \eqref{eq:2.15g}, \eqref{eq:2.23g} and \eqref{eq:2.24g} together with the Lipschitz continuity of $h$,  we see that $\bar g^1$ satisfies Assumption \ref{A:C1a}(i) with $\theta=0$ since
$$
\bar g^1(\omega,t,\bar y,\bar z)\geq \frac{\bar\gamma}{2}|\bar z^1|^2-c-|A|\alpha_t(\omega)-\beta |\bar y|-\gamma |\bar z|\vspace{-0.2cm}
$$
and
$$
\bar g^1(\omega,t,\bar y,\bar z)\leq |A|\alpha_t(\omega)+\beta |\bar y|+\gamma |\bar z|+L|\bar z^1|(1+|\bar z^1|)+L|b|(1+|\bar z^1|)|\bar z^1|.\vspace{0.1cm}
$$
From \eqref{eq:2.14g}, \eqref{eq:2.16g}, \eqref{eq:2.23g} and \eqref{eq:2.24g} together with the Lipschitz continuity of $h$ and Young's inequality,  we see that for each $i=2,\cdots,n$, $\bar g^i$ satisfies \ref{A:C1a}(i) with $\theta=0$ since
$$
\begin{array}{lll}
\Dis \bar g^i(\omega,t,\bar y,\bar z)&\geq & \Dis a_i|\bar z^i|^2-L|\bar z^i|(1+|\bar z^1|)-L(1+|\bar z^1|)|\bar z^1|-|A|\alpha_t(\omega)-\beta |\bar y|-\gamma |\bar z| \vspace{0.2cm}\\
&\geq & \Dis \frac{a_i}{2}|\bar z^i|^2-\left(\frac{L^2}{2a_i}+L\right)|\bar z^1|^2-L|\bar z^i|-L|\bar z^1|-|A|\alpha_t(\omega)-\beta |\bar y|-\gamma |\bar z|
\end{array}
$$
and
$$
\begin{array}{lll}
\Dis \bar g^i(\omega,t,\bar y,\bar z)&\leq & \Dis a_i|\bar z^i|^2+L|\bar z^i|(1+|\bar z^1|)-\frac{\bar\gamma}{2}|\bar z^1|^2+c+|A|\alpha_t(\omega)+\beta |\bar y|+\gamma |\bar z|\vspace{0.2cm}\\
&\leq & \Dis \left(a_i+\frac{L^2}{2\bar\gamma}\right)|\bar z^i|^2+L|\bar z^i|+c+|A|\alpha_t(\omega)+\beta |\bar y|+\gamma |\bar z|.\vspace{0.1cm}
\end{array}
$$
Consequently, we have  from \cref{cor:2.6} that for each $\xi\in L^\infty$, the system of BSDEs \eqref{eq:2.6b} with the generator $\bar g$ defined in \eqref{eq:2.23g} admits a unique global solution $(\bar Y,\bar Z)\in \s^\infty(\R^n)\times {\rm BMO}(\R^{n\times d})$. Finally, by \cref{pro:2.11b} we know that $(A^{-1}\bar Y,A^{-1}\bar Z)\in \s^\infty(\R^n)\times {\rm BMO}(\R^{n\times d})$ is just the desired unique solution of the system of BSDEs~\eqref{eq:2.1}. The proof is complete.\vspace{0.2cm}
\end{proof}

The following proposition and its corollary  partially answer the problem 6.25 of \citet{Jackson2023SPA}.\vspace{-0.1cm}

\begin{pro}\label{pro:2.16b}
Assume that $n=2$ and $d=1$. Consider  the following generator $g$
$$
g(z):=\left(
\begin{array}{l}
g^1(z)\\
g^2(z)
\end{array}
\right)=\left(
\begin{array}{l}
\Dis z^\top A_1z+z^\top k_1+l_1\vspace{0.2cm}\\
\Dis z^\top A_2z+z^\top k_2+l_2
\end{array}
\right), \quad z\in \R^2,
$$
where $A_i\in \R^{2\times 2}$, $k_i\in \R^2$ and $l_i\in \R$ for $i=1,2$. Assume further that there exist three constants $a,b,\iota\in \R$ with $a\neq 0$ such that
\begin{equation}\label{eq:2.13b}
aA_1+bA_2=\iota\left(\begin{array}{c}
a\\
b
\end{array}\right)(a,b)=\left(
\begin{array}{cc}
a^2\iota & ab\iota\\
ab\iota & b^2\iota
\end{array}
\right)
\end{equation}
and either $\alpha_{11}=0$ and $\alpha_{22}\neq 0$ or $\alpha_{11}\alpha_{22}<0$ with
$$
\begin{pmatrix} \alpha_{11} & \alpha_{12}\\
\alpha_{12}&\alpha_{22}
\end{pmatrix}:= \frac{1}{a^2}\  \left(
\begin{array}{cc}
1 & 0\\
-b & a
\end{array}
\right)A_2\left(
\begin{array}{cc}
1 & -b\\
0 & a
\end{array}
\right)
$$
Then for each $\xi\in L^{\infty}(\R^2)$, BSDE \eqref{eq:2.1} with the above generator $g$ admits a unique global solution $(Y,Z)\in \s^\infty(\R^2)\times {\rm BMO}(\R^{2\times 1})$.\vspace{0.2cm}
\end{pro}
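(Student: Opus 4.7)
My plan is to apply \cref{pro:2.11b} with the invertible matrix $A := \begin{pmatrix} a & b \\ 0 & 1 \end{pmatrix}$, which is invertible because $a\neq 0$. This produces an equivalent BSDE \eqref{eq:2.6b} for $(\bar Y,\bar Z):=(AY,AZ)$ with generator $\bar g(\bar z):=Ag(A^{-1}\bar z)$. The constraint \eqref{eq:2.13b} yields $z^\top(aA_1+bA_2)z=\iota(az^1+bz^2)^2=\iota(\bar z^1)^2$, so the quadratic part of $\bar g^1=ag^1+bg^2$ is $\iota(\bar z^1)^2$; and the matrix $\alpha$ in the statement is defined precisely so that $z^\top A_2z=\bar z^\top\alpha\bar z$ after the substitution $z=A^{-1}\bar z$. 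Writing out the result,
$$\bar g^1(\bar z)=\iota(\bar z^1)^2+L_1(\bar z),\qquad \bar g^2(\bar z)=\alpha_{11}(\bar z^1)^2+2\alpha_{12}\bar z^1\bar z^2+\alpha_{22}(\bar z^2)^2+L_2(\bar z),$$
where $L_1,L_2$ are affine in $\bar z$ and collect the lower-order residues coming from $z^\top k_i+l_i$.

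Next I would verify that $\bar g$ satisfies the assumptions of \cref{cor:2.6}, i.e.~\ref{A:C1a} and~\ref{A:B2} with $\theta=0$. Assumption~\ref{A:B2} is automatic since $\bar g$ is a polynomial of degree two in $\bar z$ (with no $y$-dependence). For~\ref{A:C1a}, the component $\bar g^1$ fits~\ref{A:C1a}(i) by taking $f:=\bar g^1$ or $f:=-\bar g^1$ depending on the sign of $\iota$ when $\iota\neq 0$, and fits~\ref{A:C1a}(iii) as an essentially linear generator when $\iota=0$. For $\bar g^2$, in Case 2 ($\alpha_{11}\alpha_{22}<0$) the sign pattern mirrors that of $g^2$ in example~\eqref{eq:2.5b} under $\theta_2\vartheta_2<0$, and the two-sided bound~\eqref{eq:2.5c} places it into~\ref{A:C1a}(i) for $i=2$ with suitable $\bar\gamma,\gamma,\bar c$. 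In Case 1 ($\alpha_{11}=0$, $\alpha_{22}\neq 0$), the term $\alpha_{11}(\bar z^1)^2$ drops out, the cross term $2\alpha_{12}\bar z^1\bar z^2$ fits the slot $c|\bar z^2(\bar z^1)^\top|$ of~\ref{A:C1a}(ii) with $c=2|\alpha_{12}|$, and $\alpha_{22}|\bar z^2|^2$ contributes to the $\tfrac{\gamma}{2}|\bar z^2|^2$ slot; this matches the structure of~\eqref{eq:2.5b} with $\vartheta_2=0$, which the paragraph after~\eqref{eq:2.5c} asserts is still covered by \cref{cor:2.6}. The resulting unique global solution $(\bar Y,\bar Z)\in\s^\infty(\R^2)\times{\rm BMO}(\R^{2\times 1})$ is then translated back via \cref{pro:2.11b} to the unique solution $(Y,Z):=(A^{-1}\bar Y,A^{-1}\bar Z)$ of~\eqref{eq:2.1}.

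The main obstacle I anticipate is the careful absorption of the affine residues $L_1,L_2$ into the stochastic parameters $\tilde\alpha_t,\alpha_t,\bar\alpha_t,v_t$ of~\ref{A:C1a} without disturbing the delicate sign conditions that make the purely quadratic parts admissible. Linear-in-$\bar z^2$ pieces hide inside $v_t|\bar z^i|$ and constants inside $\tilde\alpha_t$, but a linear-in-$\bar z^1$ piece of $\bar g^2$ does not literally sit inside any clause of~\ref{A:C1a}(ii); handling it requires Young's inequality $|c_1||\bar z^1|\leq\epsilon|\bar z^1|^2+c_1^2/(4\epsilon)$ together with a check that the resulting small $\theta=\epsilon$ stays below the threshold $\theta_0$ provided by \cref{thm:2.5}, or equivalently a standard Lipschitz-perturbation stability argument for BMO solutions of quadratic BSDEs.
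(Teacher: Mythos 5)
Your proposal follows essentially the same route as the paper: the same invertible matrix $A=\bigl(\begin{smallmatrix}a&b\\0&1\end{smallmatrix}\bigr)$, the same identities $\bar g^1(\bar z)=\iota(\bar z^1)^2+\cdots$ and $\bar g^2(\bar z)=\bar z^\top\alpha\bar z+\cdots$ obtained from \eqref{eq:2.13b}, the same $(2.5c)$-type sign analysis, and the same return trip via \cref{pro:2.11b}. The one substantive divergence is which structural hypothesis you verify: the paper asserts that $\bar g$ satisfies \ref{A:C1b} and invokes \cref{thm:2.8a} (through the template of \cref{thm:2.12b}), whereas you verify \ref{A:C1a} and invoke \cref{cor:2.6}. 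Your choice is arguably the more accurate one in the case $\alpha_{11}\alpha_{22}<0$: the lower bound coming from \eqref{eq:2.5c} contains a genuine $-C|\bar z^1|^2$ term, which fits the $-\bar c\sum_{j<i}|z^j|^2$ slot of \ref{A:C1a}(i) but has no counterpart in \ref{A:C1b}(i), where only $-\lambda|z^j|^{1+\delta}$ with $\delta<1$ is permitted. On the affine residues: the paper simply restricts to $k_1=k_2=0$, $l_1=l_2=0$ ``for simplicity,'' so you are attempting more than the written proof does. You correctly identify the obstruction (a linear-in-$\bar z^1$ term in $\bar g^2$ fits no clause of \ref{A:C1a}(ii)), but your proposed repair is not yet watertight: \ref{A:C1a}(ii) contains no $\theta\,|z^j|^2$ slot at all, so the Young-inequality remainder $\epsilon|\bar z^1|^2$ cannot be parked there; and rerouting $\bar g^2$ through \ref{A:C1a}(i) with a small $\theta=\epsilon$ forces the cross term $2\alpha_{12}\bar z^1\bar z^2$ to contribute $\epsilon'|\bar z^1|^2$ to the upper bound at the price of inflating $\gamma$ and $\bar c$, on which the threshold $\theta_0$ of \cref{thm:2.5} itself depends --- a circularity you would need to break explicitly. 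If you keep to the paper's reduced setting $k_i=l_i=0$, your argument is complete and correct.
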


\begin{proof}
For simplicity, we only prove the case of $k_1=k_2=0$ and $l_1=l_2=0$.  Let
$$
A:=\left(
\begin{array}{cc}
a & b \\
0& 1
\end{array}
\right).
$$
For each $\bar z\in \R^{2}$, let
$$
z:=A^{-1}\bar z=\frac{1}{a}\left(
\begin{array}{cc}
1 & -b \\
0& a
\end{array}
\right)\bar z.
$$
Then, in view of \eqref{eq:2.13b}, we have
$$
\begin{array}{lrl}
\bar g^1(\bar z)&:=& \Dis ag^1(z)+bg^2(z)=\frac{1}{a^2}\ \bar z^\top \left(
\begin{array}{cc}
1 & 0\\
-b & a
\end{array}
\right)\left(aA_1+bA_2\right)\left(
\begin{array}{cc}
1 & -b\\
0 & a
\end{array}
\right)\bar z=\iota \ (\bar z^1)^2
\end{array}
$$
and
$$
\bar g^2(\bar z)=g^2(z)=z^\top A_2z=\frac{1}{a^2}\ \bar z^\top \left(
\begin{array}{cc}
1 & 0\\
-b & a
\end{array}
\right)A_2\left(
\begin{array}{cc}
1 & -b\\
0 & a
\end{array}
\right)\bar z.\vspace{0.2cm}
$$
According to the conditions of \cref{pro:2.16b} combined with an argument similar to \eqref{eq:2.5c}, we know that $\bar g$ satisfies Assumptions \ref{A:B2} and \ref{A:C1b} with $\theta=0$. Proceeding identically  as in \cref{thm:2.12b}, we have the desired assertion.
\end{proof}

The following result is a direct consequence of \cref{pro:2.16b}.

\begin{cor}\label{pro:2.15b}
Assume that $n=2$ and $d=1$. Consider  the following generator $g$
$$
g(z):=\left(
\begin{array}{l}
g^1(z)\\
g^2(z)
\end{array}
\right)=\left(
\begin{array}{l}
\Dis z^1(z^1+z^2)-\frac{\alpha}{2}(z^1)^2\vspace{0.2cm}\\
\Dis z^2(z^1+z^2)-\frac{\beta}{2}(z^2)^2
\end{array}
\right), \quad z\in \R^2, \vspace{0.1cm}
$$
where $\alpha, \beta\in \R^*$ are two constants. If $1/\alpha+1/\beta=1$, then for each $\xi\in L^{\infty}(\R^2)$, BSDE \eqref{eq:2.1} with this generator $g$ admits a unique global solution $(Y,Z)\in \s^\infty(\R^2)\times {\rm BMO}(\R^{2\times 1})$.
\end{cor}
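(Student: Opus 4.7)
The plan is to deduce the corollary directly from \cref{pro:2.16b} by fitting the given generator into its hypotheses. First I would rewrite $g^1$ and $g^2$ as symmetric quadratic forms $z^\top A_i z$, reading off
$$
A_1=\begin{pmatrix} 1-\alpha/2 & 1/2\\ 1/2 & 0\end{pmatrix},\qquad A_2=\begin{pmatrix} 0 & 1/2 \\ 1/2 & 1-\beta/2\end{pmatrix},
$$
with $k_i=0$ and $l_i=0$, so that the generator has exactly the form required by \cref{pro:2.16b}.

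Next I would search for $(a,b,\iota)\in\R^3$, $a\ne 0$, satisfying the matrix identity \eqref{eq:2.13b}. Comparing entries yields the three scalar relations $\iota a=1-\alpha/2$, $\iota b=1-\beta/2$, and $(a+b)/2=\iota ab$. Substituting the first two into the third gives $a=(1-\alpha)b$ and $b=(1-\beta)a$, which are simultaneously solvable (with nonzero $a$) if and only if $(1-\alpha)(1-\beta)=1$, i.e., $\alpha\beta=\alpha+\beta$, which is exactly the hypothesis $1/\alpha+1/\beta=1$. This also forces $\alpha\ne 1$ and $\beta\ne 1$, so I can take $a:=1-\alpha\ne 0$, $b:=1$, and $\iota:=(2-\alpha)/(2(1-\alpha))$; these then satisfy \eqref{eq:2.13b}.

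The remaining step is to compute
$$
\begin{pmatrix}\alpha_{11}&\alpha_{12}\\ \alpha_{12}&\alpha_{22}\end{pmatrix}
=\frac{1}{a^2}\begin{pmatrix} 1 & 0\\ -b & a\end{pmatrix}A_2\begin{pmatrix} 1 & -b\\ 0 & a\end{pmatrix}
$$
and check the sign/vanishing condition in \cref{pro:2.16b}. A direct multiplication, followed by the algebraic simplification $(1-\alpha)(1-\beta/2)-1=-\alpha/2$ (valid under $\alpha\beta=\alpha+\beta$), produces $\alpha_{11}=0$ and $\alpha_{22}=-\alpha/(2(1-\alpha))$. Since $\alpha\in\R^*$, we have $\alpha_{22}\ne 0$, so the first alternative ``$\alpha_{11}=0$ and $\alpha_{22}\ne 0$'' of \cref{pro:2.16b} is satisfied, and the existence and uniqueness of the global solution in $\s^\infty(\R^2)\times\mathrm{BMO}(\R^{2\times 1})$ follows at once.

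The only mildly delicate point will be the algebraic simplification of the bottom-right entry of the above matrix; everything else is a routine verification of the assumptions of \cref{pro:2.16b}. No new estimates on BSDEs are needed, since the heavy lifting is entirely contained in \cref{pro:2.16b} (and ultimately in \cref{thm:2.8a}).
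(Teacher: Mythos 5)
Your proof is correct and takes essentially the same route as the paper: both reduce the corollary to \cref{pro:2.16b} with the same matrices $A_1,A_2$ and $k_i=l_i=0$, and then verify \eqref{eq:2.13b} together with $\alpha_{11}=0$, $\alpha_{22}\neq 0$. The only difference is that your uniform choice $a=1-\alpha$, $b=1$, $\iota=(2-\alpha)/(2(1-\alpha))$ (legitimate since $(1-\alpha)(1-\beta)=1$ forces $\alpha\neq 1$) handles all cases at once, whereas the paper takes $a=1-\alpha/2$, $b=1-\beta/2$, $\iota=1$ and must treat $\alpha=2$ separately; your computations of the entries, in particular $\alpha_{22}=-\alpha/(2(1-\alpha))\neq 0$, are accurate.
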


\begin{proof}
In \cref{pro:2.16b}, let $k_1=k_2=0$, $l_1=l_2=0$,
$$
A_1:=\left(
\begin{array}{cc}
\Dis 1-\frac{\alpha}{2} & \Dis \frac{1}{2}\vspace{0.2cm}\\
\Dis \frac{1}{2} & 0
\end{array}
\right)\ \ {\rm and}\ \
A_2:=\left(
\begin{array}{cc}
0 & \Dis \frac{1}{2} \vspace{0.2cm}\\
\Dis \frac{1}{2}& \Dis 1-\frac{\beta}{2}
\end{array}
\right).
$$
There are two cases: $\alpha=2$ and $\alpha\neq 2$. In the case of $\alpha=2$, we have $A_1=A_2$. When $a=1$, $b=-1$ and $\iota=0$, we have \eqref{eq:2.13b}, $\alpha_{11}=0$ and $\alpha_{22}=1$, and thus all assumptions of \cref{pro:2.16b} are satisfied. In the other case of $\alpha\neq 2$, we take $a=1-\alpha/2\neq 0$, $b=1-\beta/2$ and $\iota=1$. Since $\alpha+\beta=\alpha\beta$, we have
$$
\frac{a+b}{2}=1-\frac{\alpha+\beta}{4}
=1-\frac{\alpha+\beta}{2}+\frac{\alpha\beta}{4}
=\left(1-\frac{\alpha}{2}\right)\left(1-\frac{\beta}{2}\right)=ab.
$$
It can also be directly verified that all of \eqref{eq:2.13b}, $\alpha_{11}=0$ and $\alpha_{22}\neq 0$ hold, and then all assumptions of \cref{pro:2.16b} are satisfied. Thus, the desired assertion follows immediately from \cref{pro:2.16b}.\vspace{0.2cm}
\end{proof}

\subsection{Global unbounded solution of multi-dimensional quadratic BSDEs\vspace{0.2cm}}

In this subsection, we will present three existence and uniqueness results on the global unbounded solution of multi-dimensional BSDE \eqref{eq:2.1}. For this, let us first introduce the following assumption \ref{A:D2}, which is strictly stronger than the previous assumption \ref{A:B2}. In particular, the generator $g$ satisfying \ref{A:B2} can have a general growth in the state variable $y$, while the generator $g$ satisfying \ref{A:D2} can only have a linear growth in $y$.

\begin{enumerate}
\renewcommand{\theenumi}{(D2)}
\renewcommand{\labelenumi}{\theenumi}
\item\label{A:D2} For $i=1,\cdots,n$, $g^i$ satisfies that $\as$, for each $(y,\bar y, z, \bar z)\in \R^n\times\R^n\times\R^{n\times d}\times\R^{n\times d}$,
    $$
    \begin{array}{l}
    \Dis |g^i(\omega,t,y,z)-g^i(\omega,t,\bar y,\bar z)|\leq \Dis \Dis  \gamma\left(v_t(\omega)+|z|+|\bar z|\right)\Big(|y-\bar y|+\sum_{j=1}^i |z^i-\bar z^i|\Big)\\
     \Dis \hspace{3.5cm}+\left[\gamma \left([v_t(\omega)]^\delta+|z|^\delta+|\bar z|^\delta\right)+\theta \left(v_t(\omega)+|z|+|\bar z|\right)\right]\sum_{j=i+1}^n |z^j-\bar z^j|.\vspace{0.2cm}
    \end{array}
    $$
\end{enumerate}

The following is the first existence and uniqueness result of this subsection.

\begin{thm}\label{thm:2.22e}
Let the generator $g$ satisfy Assumptions \ref{A:B1}, \ref{A:D2} and \ref{A:AB} with $\theta=0$ and $\phi(\cdot)\equiv 0$, and let
$$\xi=\bar\xi+\int_0^T H_s {\rm d}B_s$$
with $\bar\xi\in L^{\infty}(\R^n)$ and $H\in {\rm BMO}(\R^{n\times d})$. When $g^i$ satisfies (ii) in Assumption \ref{A:B1} and violates \ref{A:B1} (i) and (iii), and $-g^i$ violates \ref{A:B1} (i),
we further assume that $H^i\equiv 0$. If $\alpha,\bar\alpha, |H|^2\in \lcal^{\infty}$, then BSDE \eqref{eq:2.1} admits a unique global solution $(Y,Z)$ such that
$$
\Big(Y-\int_0^\cdot H_s {\rm d}B_s, Z\Big)\in \s^\infty(\R^n)\times {\rm BMO}(\R^{n\times d}).\vspace{0.2cm}
$$
\end{thm}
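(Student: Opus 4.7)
The plan is to reduce the unbounded problem to a bounded one via the classical change of variable that absorbs the martingale part of $\xi$ into the unknown. Setting $\mathcal{H}_t := \int_0^t H_s\, {\rm d}B_s$, I would define
$$\tilde Y_t := Y_t - \mathcal{H}_t,\quad \tilde Z_t := Z_t - H_t,\quad \tilde g(\omega, s, y, z) := g(\omega, s, y + \mathcal{H}_s(\omega), z + H_s(\omega)),$$
and verify that $(Y, Z)$ solves BSDE \eqref{eq:2.1} in the stated class if and only if $(\tilde Y, \tilde Z) \in \s^\infty(\R^n) \times {\rm BMO}(\R^{n\times d})$ solves the BSDE with bounded terminal value $\bar\xi$ and generator $\tilde g$. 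The proof then reduces to checking that $\tilde g$ satisfies the hypotheses of \cref{thm:2.5e} with $\theta = 0$.

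The bulk of the work is to show that \ref{A:B1}, \ref{A:B2} and \ref{A:AB} are preserved under the shift. Since $\phi \equiv 0$, the shift in $y$ leaves all bounds in \ref{A:B1} unchanged. The shift $z \mapsto z + H_t$ produces two kinds of new terms: pure-$H$ quantities such as $|H|^{1+\delta}$, $\bar\lambda |H|$, $|H|^2$ and $|a_k^\top H|^2$---all dominated by $|H|^2 + 1 \in \lcal^\infty$, absorbing into $\alpha$, $\bar\alpha$, $\tilde\alpha$---and cross terms like $\gamma z^i \cdot H^i_t$ arising from the expansion $\tfrac{\gamma}{2}|z^i + H^i|^2$, which I would absorb into a slightly enlarged quadratic coefficient via Young's inequality at the cost of an extra $\lcal^\infty$ contribution. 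For \ref{A:D2} (and hence \ref{A:B2}), the only change is to replace $v$ by $\tilde v := v + |H| \in {\rm BMO}(\R)$, while \ref{A:AB} follows from
$$a_k^\top \tilde g(t, y, z) \leq \tilde\alpha_t + \gamma|a_k^\top(z + H_t)|^2 \leq \big(\tilde\alpha_t + 2\gamma |a_k|^2 |H_t|^2\big) + 2\gamma |a_k^\top z|^2,$$
which is \ref{A:AB} with the same spanning family, a new $\lcal^\infty$ process, and constant $2\gamma$.

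The hard part will be the case where $g^i$ satisfies only \ref{A:B1}(ii). Its cross-product bound $c|z^i (z^j)^\top|$ for $j < i$ transforms into $c|(z^i + H^i_t)(z^j + H^j_t)^\top|$, which contains the term $c|H^i_t|\,|z^j|$---a linear-in-$z^j$ contribution with no place in the structure of \ref{A:B1}(ii). The supplementary hypothesis $H^i \equiv 0$ imposed precisely in this case eliminates the obstruction by making the $z^i$-shift trivial. Once all assumptions are verified, since the modified $\lcal^\infty$ processes belong to $\ecal^\infty(p\gamma')$ for every $p \geq 1$ by \cref{rmk:2.1}(i), \cref{thm:2.5e} (with $\theta_0 = 0$) yields a unique $(\tilde Y, \tilde Z) \in \s^\infty(\R^n) \times {\rm BMO}(\R^{n\times d})$, and inverting the transformation gives the unique solution of \eqref{eq:2.1} in the stated class.
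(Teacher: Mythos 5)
Your proposal is correct and follows essentially the same route as the paper: the paper also defines $\bar g(t,y,z):=g(t,y+\int_0^t H_s\,{\rm d}B_s,z+H_t)$, observes the equivalence of the two BSDEs under this shift, and reduces to \cref{cor:2.6e} (the $\lcal^\infty$, $\theta=0$ specialization of \cref{thm:2.5e}) by checking that \ref{A:B1}, \ref{A:D2} and \ref{A:AB} are preserved. The paper dismisses that verification as ``straightforward''; your proposal supplies the details, including the correct identification of the role of the hypothesis $H^i\equiv 0$ in the \ref{A:B1}(ii) case.
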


The second existence and uniqueness result requires the following assumption \ref{A:D1}, which is strictly stronger than the previous assumption \ref{A:C1a},  for $g$ admits a general growth in the state variable $y$ in \ref{A:C1a},  instead of only a linear growth in $y$ in \ref{A:D1}.

\begin{enumerate}
\renewcommand{\theenumi}{(D1)}
\renewcommand{\labelenumi}{\theenumi}
\item\label{A:D1} For each fixed $i=1,\cdots,n$, either of the following three conditions holds:
\begin{enumerate}
\item [(i)] $f:=g^i$ or $f:=-g^i$ satisfies that $\as$, for any $(y,z)\in \R^n\times\R^{n\times d}$,
$$
\begin{array}{l}
\Dis \frac{\bar \gamma}{2} |z^i|^2-\bar\alpha_t(\omega)-\beta |y|-\sum_{j=i+1}^{n} (\bar\lambda |z^j|^{1+\delta}+\theta |z^j|^2)-\bar c\sum_{j=1}^{i-1}|z^j|^2 \leq f(\omega,t,y,z)\\
\quad\quad\quad \leq  \Dis \alpha_t(\omega)+\beta |y|+\sum_{j\neq i} (\lambda |z^j|+\theta |z^j|^2)+\frac{\gamma}{2} |z^i|^2;
\end{array}
$$

\item [(ii)] Almost everywhere in $\Omega\times\T$, for any $(y,z)\in \R^n\times\R^{n\times d}$, we have
$$
|g^i(\omega,t,y,z)|\leq \tilde\alpha_t(\omega)+\beta |y^i|+\Big(v_t(\omega)+c\sum_{j=1}^{i-1}|z^j|\Big)|z^i|+\frac{\gamma}{2} |z^i|^2;
$$

\item [(iii)] Almost everywhere in $\Omega\times\T$, for any $(y,z)\in \R^n\times\R^{n\times d}$, we have
$$
|g^i(\omega,t,y,z)|\leq \bar\alpha_t(\omega)+\beta |y^i|+\bar\lambda |z|+\theta\sum_{j\neq i}|z^j|^2.\vspace{0.2cm}
$$
\end{enumerate}
\end{enumerate}

The following \cref{thm:2.9} and
\citet[Theorem 2.6]{JamneshanKupperLuo2017ECP} do not cover each other, and so are \cref{thm:2.9} and \citet[Proposition 2.1]{Frei2014SPA}.

\begin{thm}\label{thm:2.9}
Let the generator $g$ satisfy Assumptions \ref{A:D1} and \ref{A:D2} with $\beta=0$, and let
$$\xi=\bar\xi+\int_0^T H_s {\rm d}B_s$$
with $\bar\xi\in L^{\infty}(\R^n)$ and $H\in {\rm BMO}(\R^{n\times d})$. When $g^i$ satisfies (ii) in Assumption \ref{A:D1} and violates \ref{A:D1} (i) and (iii), and $-g^i$ violates \ref{A:D1} (i),
we further assume that $H^i\equiv 0$. If $\alpha\in \ecal^{\infty}(p\gamma)$ for some $p>1$, and $|H|^2\in \ecal^{\infty}(2\bar p (q\gamma)^2)$ for some $\bar p>1$ with $q=p/(p-1)$ such that $1/p+1/q=1$, then there exists a positive constant $\theta_0>0$ depending only on $(\|\bar\xi\|_{\infty}, \|\alpha\|_{\ecal^{\infty}(p\gamma)}, \|\bar\alpha\|_{\mcal^{\infty}}, \|\tilde\alpha\|_{\lcal^{\infty}}, \|v\|_{{\rm BMO}}, \||H|^2\|_{\ecal^{\infty}(2\bar p (q\gamma)^2)},$
$ n,\gamma, \bar\gamma,\lambda,\bar\lambda,c,\bar c,\delta, T, p,\bar p)$ such that when $\theta\in [0,\theta_0]$, BSDE \eqref{eq:2.1} admits a unique global solution $(Y,Z)$ on $\T$ such that
$$
\Big(Y-\int_0^\cdot H_s {\rm d}B_s, Z\Big)\in \s^\infty(\R^n)\times {\rm BMO}(\R^{n\times d}).\vspace{0.2cm}
$$
\end{thm}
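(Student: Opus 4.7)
The plan is to reduce \cref{thm:2.9} to an application of \cref{thm:2.5} via the shift
$$\tilde Y_t := Y_t - \int_0^t H_s\,{\rm d}B_s, \qquad \tilde Z_t := Z_t - H_t.$$
Writing $\xi = \bar\xi + \int_0^T H_s\,{\rm d}B_s$ and splitting the stochastic integral, $(Y,Z)$ solves \eqref{eq:2.1} with $(Y - \int_0^\cdot H_s\,{\rm d}B_s, Z) \in \s^\infty(\R^n) \times {\rm BMO}(\R^{n\times d})$ if and only if $(\tilde Y, \tilde Z) \in \s^\infty(\R^n) \times {\rm BMO}(\R^{n\times d})$ solves
$$\tilde Y_t = \bar\xi + \int_t^T \tilde g(s, \tilde Y_s, \tilde Z_s)\,{\rm d}s - \int_t^T \tilde Z_s\,{\rm d}B_s,$$
where $M_s := \int_0^s H_r\,{\rm d}B_r$ and $\tilde g(\omega,s,\tilde y, \tilde z) := g(\omega, s, \tilde y + M_s(\omega), \tilde z + H_s(\omega))$. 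Since $\bar\xi \in L^\infty(\R^n)$, the transformed equation falls within the scope of \cref{thm:2.5} as soon as \ref{A:C1a} and \ref{A:B2} are verified for $\tilde g$ with admissible integrability on the new parameters.

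To check \ref{A:C1a} for $\tilde g$, I substitute $y = \tilde y + M_s$ and $z = \tilde z + H_s$ into the bounds in \ref{A:D1}. The assumption $\beta = 0$ removes every linear-in-$y$ contribution, so the $y$-shift is harmless. For the $z$-shift, Young's inequality yields, for any $\eps > 0$,
$$\tfrac{\gamma}{2}\bigl|\tilde z^i + H^i_s\bigr|^2 \leq \tfrac{(1+\eps)\gamma}{2}\bigl|\tilde z^i\bigr|^2 + \tfrac{(1+\eps^{-1})\gamma}{2}\bigl|H^i_s\bigr|^2,$$
and analogous inequalities handle $|\tilde z^j + H^j_s|^{1+\delta}$, $|\tilde z^j + H^j_s|^2$ and $|\tilde z^i + H^i_s|$. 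All resulting $(\omega,s)$-only extras are absorbed into new dominating processes, giving the shape required by \ref{A:C1a} with $\gamma$ replaced by $\gamma' := (1+\eps)\gamma$, a new $\alpha'_s := \alpha_s + C(|H_s|^2 + |H_s|^{1+\delta} + |H_s|)$, and a new $v'_s := v_s + |H_s|$, which belongs to ${\rm BMO}$ by \cref{rmk:2.1}(i) since $|H|^2 \in \mcal^\infty$. For indices where $g^i$ falls only under \ref{A:D1}(ii), the standing hypothesis $H^i \equiv 0$ is exactly what preserves the cross-term $c|z^i(z^j)^\top|$ of \ref{A:C1a}(ii), because then $\tilde z^i = z^i$. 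Condition \ref{A:B2} for $\tilde g$ follows from \ref{A:D2} by the same substitution, again with the $v$-process replaced by $v + |H|$ and the constant Lipschitz factor $\gamma$ absorbed into the $\phi$-slot of \ref{A:B2}.

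The main obstacle is to confirm $\alpha' \in \ecal^\infty(p'\gamma')$ for some $p' > 1$, as demanded by \cref{thm:2.5}. The specific integrability hypotheses $\alpha \in \ecal^\infty(p\gamma)$ and $|H|^2 \in \ecal^\infty(2\bar p (q\gamma)^2)$ with $1/p + 1/q = 1$ and $\bar p > 1$ are designed precisely for this step: the H\"older-type combination of $\ecal^\infty$-spaces given by \cref{rmk:2.1}(iii), applied with exponents matched to $(p,q)$, together with Young's inequality used to dominate the lower-order $|H|^{1+\delta}$ and $|H|$ terms by a constant plus a multiple of $|H|^2$, places $\alpha'$ in $\ecal^\infty(p'\gamma')$ for some $p' > 1$, provided $\eps$ is chosen small enough that $\gamma' = (1+\eps)\gamma$ still leaves room in the exponent. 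Once this is verified, \cref{thm:2.5} delivers a unique $(\tilde Y, \tilde Z) \in \s^\infty(\R^n) \times {\rm BMO}(\R^{n\times d})$ together with the small-$\theta$ threshold $\theta_0$; reversing the shift then produces the desired unique solution $(Y, Z)$ of \eqref{eq:2.1} in the stated space.
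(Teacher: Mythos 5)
Your proposal is correct and follows essentially the same route as the paper: shift by $\int_0^\cdot H_s\,{\rm d}B_s$ to get a bounded terminal value $\bar\xi$, verify that the shifted generator still satisfies the structural and integrability assumptions of \cref{thm:2.5} (the paper checks \ref{A:D1}--\ref{A:D2}, which imply \ref{A:C1a} and \ref{A:B2}), with $\gamma$ inflated to $(1+\eps)\gamma$ via Young's inequality and the exponent bookkeeping carried out through \cref{rmk:2.1}(iii) exactly as you describe, and then undo the shift.
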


\begin{rmk}\label{rmk:2.10} By the John-Nirenberg inequality, if $\|H\|_{\rm BMO}<1/(\sqrt{2\bar p}q\gamma)$, then
    $|H|^2\in \ecal^{\infty}(2\bar p (q\gamma)^2)$. Then, we have a unique solution for the multi-dimensional BSDE, as stated in Theorem 3.1 of \citet{KramkovPulido2016AAP}.
\end{rmk}

In view of (i) of \cref{rmk:2.1},  we have immediately from \cref{thm:2.9}  the following corollary.

\begin{cor}\label{cor:2.11}
Let the generator $g$ satisfy \ref{A:D1} and \ref{A:D2} with $\beta=0$ and $\theta=0$, and let
$$\xi=\bar\xi+\int_0^T H_s {\rm d}B_s$$
with $\bar\xi\in L^{\infty}(\R^n)$ and $H\in {\rm BMO}(\R^{n\times d})$. When $g^i$ satisfies (ii) in Assumption \ref{A:D1} and violates \ref{A:D1} (i) and (iii), and $-g^i$ violates \ref{A:D1} (i),
we further assume that $H^i\equiv 0$. If $\alpha,|H|^2\in \lcal^{\infty}$, then BSDE \eqref{eq:2.1} admits a unique global solution $(Y,Z)$ on the time interval $\T$ such that
$$
\Big(Y-\int_0^\cdot H_s {\rm d}B_s, Z\Big)\in \s^\infty(\R^n)\times {\rm BMO}(\R^{n\times d}).\vspace{0.2cm}
$$
\end{cor}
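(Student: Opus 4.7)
The plan is to derive \cref{cor:2.11} directly from \cref{thm:2.9} combined with (i) of \cref{rmk:2.1}. All structural assumptions on the generator $g$, on the decomposition of the terminal value $\xi$, and on the vanishing of $H^i$ in the exceptional case coincide with those of \cref{thm:2.9}; the only items left to verify are the exponential integrability conditions on $\alpha$ and $|H|^2$ and the smallness condition on $\theta$.

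First I would fix any $p>1$ (for concreteness, say $p=2$), set $q=p/(p-1)$, and fix any $\bar p>1$. By (i) of \cref{rmk:2.1}, every process in $\lcal^{\infty}$ also belongs to $\ecal^{\infty}(r)$ for each $r>0$, with $\ecal^{\infty}(r)$-norm bounded by its $\lcal^{\infty}$-norm. Applying this observation to $\alpha$ with $r=p\gamma$ and to $|H|^2$ with $r=2\bar p(q\gamma)^2$ yields $\alpha\in\ecal^{\infty}(p\gamma)$ and $|H|^2\in\ecal^{\infty}(2\bar p(q\gamma)^2)$, which are precisely the exponential integrability hypotheses required by \cref{thm:2.9}.

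Since $\theta=0$ by assumption, the smallness requirement $\theta\in[0,\theta_0]$ produced by \cref{thm:2.9} is automatically satisfied regardless of the value of the constant $\theta_0>0$ furnished by that theorem. Consequently, \cref{thm:2.9} applies and delivers the unique global solution $(Y,Z)$ on $\T$ with $(Y-\int_0^{\cdot}H_s\,{\rm d}B_s,Z)\in\s^{\infty}(\R^n)\times{\rm BMO}(\R^{n\times d})$. I do not foresee any genuine obstacle here; the one small point to be careful about is that the exponents $p$ and $\bar p$ must be chosen in advance of invoking the embedding of (i) of \cref{rmk:2.1}, but any choice works because that embedding holds for every positive exponent, and the constants depend on the problem data only through the resulting $\ecal^{\infty}$-norms, which are in turn dominated by $\|\alpha\|_{\lcal^{\infty}}$ and $\||H|^2\|_{\lcal^{\infty}}$.
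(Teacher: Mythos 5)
Your proposal is correct and is exactly the paper's route: the corollary is stated as an immediate consequence of \cref{thm:2.9} via (i) of \cref{rmk:2.1}, which embeds $\lcal^{\infty}$ into $\ecal^{\infty}(r)$ for every $r>0$, so the hypotheses $\alpha\in\ecal^{\infty}(p\gamma)$ and $|H|^2\in\ecal^{\infty}(2\bar p(q\gamma)^2)$ hold for any choice of $p,\bar p>1$, and $\theta=0$ trivially satisfies the smallness condition.
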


\begin{rmk}\label{rmk:2.12}
If the constant $c$ appearing in (ii) of Assumption \ref{A:D1} vanishes, then the condition that  $H_i\equiv 0$ appearing in \cref{thm:2.9} and \cref{cor:2.11} can be weakened to that both $|H^i|$ and $|H^i|v$ belong to $\lcal^{\infty}$.
\end{rmk}

 We end this subsection with the following theorem.

\begin{thm}\label{thm:2.16a}
Let the generator $g$ satisfy Assumptions \ref{A:B1} and \ref{A:D2} with $\theta=c=\bar c=0$ and $\phi(\cdot)\equiv 0$, and
$$\xi=\bar\xi+\int_0^T H_s {\rm d}B_s\vspace{0.1cm}$$
with $\bar\xi\in L^{\infty}(\R^n)$ and $H\in {\rm BMO}(\R^{n\times d})$.
When $g^i$ satisfies (ii) in Assumption \ref{A:B1} and violates \ref{A:B1} (i) and (iii), and $-g^i$ violates \ref{A:B1} (i),
we further assume that $|H^i|^2, |H^i|v\in \lcal^{\infty}$. If $\alpha\in \ecal^{\infty}(p\gamma)$ for some $p>1$, and $|H|^2\in \ecal^{\infty}(2\bar p (q\gamma)^2)$ for some $\bar p>1$ with $q=p/(p-1)$ such that $1/p+1/q=1$, then BSDE \eqref{eq:2.1} admits a unique global solution $(Y,Z)$ such that
$$
\Big(Y-\int_0^\cdot H_s {\rm d}B_s, \ Z\Big)\in \s^\infty(\R^n)\times {\rm BMO}(\R^{n\times d}).\vspace{0.2cm}
$$
\end{thm}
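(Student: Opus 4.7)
The strategy is to reduce the BSDE with unbounded BMO terminal value to one with bounded terminal value by the translation
\[
(\tilde Y_t, \tilde Z_t) := \Big(Y_t - \int_0^t H_s\,dB_s,\; Z_t - H_t\Big),\qquad M_t := \int_0^t H_s\,dB_s.
\]
A direct computation shows $(Y,Z)$ solves \eqref{eq:2.1} if and only if $(\tilde Y, \tilde Z)$ solves the BSDE with bounded terminal value $\bar\xi$ and generator
\[
\tilde g(\omega,t,\tilde y,\tilde z) := g(\omega,t,\tilde y + M_t(\omega),\; \tilde z + H_t(\omega)).
\]
The goal is then to verify that $\tilde g$ satisfies the hypotheses of one of the bounded-solution theorems (\cref{thm:2.5e}, \cref{thm:2.5}, or \cref{thm:2.8a}) applied to $\bar\xi$, thereby obtaining $(\tilde Y,\tilde Z)\in \s^\infty(\R^n)\times {\rm BMO}(\R^{n\times d})$, and to undo the translation to recover $(Y,Z)$.

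The bulk of the work is to propagate assumptions \ref{A:B1} and \ref{A:D2} through the shift. Expanding $g(t,\tilde y + M_t,\tilde z + H_t)$ and using the elementary bounds $|\tilde z^i + H^i_t|^2 \le (1+\eta)|\tilde z^i|^2 + C_\eta|H^i_t|^2$ and $|\tilde z^j + H^j_t|^{1+\delta}\le 2^\delta(|\tilde z^j|^{1+\delta}+|H^j_t|^{1+\delta})$, one checks that $\tilde g$ satisfies~\ref{A:B1} with the quadratic coefficient inflated from $\gamma$ to $\gamma(1+\eta)$, the BMO coefficient replaced by $v':=v+|H|\in{\rm BMO}(\R)$, and with new parameters
\[
\alpha':=\alpha + C_\eta\gamma|H|^2 + C\lambda|H|^{1+\delta},\qquad \bar\alpha':=\bar\alpha + C|H|^{1+\delta}+\bar\lambda|H|,
\]
\[
\tilde\alpha' := \tilde\alpha + v|H^i| + \gamma|H^i|^2.
\]
Because $\phi\equiv 0$, the $y$-translation by $M_t$ does not contaminate the growth bounds. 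In the exceptional case where $g^i$ only satisfies \ref{A:B1}(ii), the added hypotheses $|H^i|^2, v|H^i|\in\lcal^\infty$ are precisely what keep $\tilde\alpha'\in\lcal^\infty$. Assumption \ref{A:D2}, being a Lipschitz-type condition, transfers to $\tilde g$ with Lipschitz coefficients inflated only by $|H|$-terms that remain in BMO.

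The delicate point is to show $\alpha'\in\ecal^\infty(p''\gamma(1+\eta))$ for some $p''>1$, so that \cref{thm:2.5e}-type hypotheses are met under the inflated quadratic coefficient $\gamma(1+\eta)$. I would choose $\eta>0$ and a new Hölder pair $(p'',q'')$ with $1/p''+1/q''=1$ so that $p''(1+\eta)<p$ and $q''(1+\eta)<q$; then Remark~\ref{rmk:2.1}(iii) combined with Hölder's inequality gives
\[
\alpha' \in \ecal^\infty\bigl(p''\gamma(1+\eta)\bigr)
\]
from $\alpha\in\ecal^\infty(p\gamma)$ (handled by the first Hölder factor) and $|H|^2\in\ecal^\infty(2\bar p(q\gamma)^2)$ (handled by the second, using $1/p+1/q=1$ to match scales). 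The $|H|^{1+\delta}$ contribution is absorbed using $|H|^{1+\delta}\le 1+|H|^2$ and the inclusion $\ecal^\infty(r)\subset \ecal^\infty(r')$ for $r'\le r$. A similar but simpler argument shows $\bar\alpha'\in\mcal^\infty$ by Remark~\ref{rmk:2.1}(i).

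With $\tilde g$ satisfying the required bounded-solution hypotheses and $\bar\xi\in L^\infty(\R^n)$, the bounded-solution theorem yields existence and uniqueness of $(\tilde Y,\tilde Z)\in\s^\infty(\R^n)\times{\rm BMO}(\R^{n\times d})$, and setting $Y_t=\tilde Y_t+M_t$, $Z_t=\tilde Z_t+H_t$ gives the desired solution of \eqref{eq:2.1}. Uniqueness in the target class follows because any two solutions translate to two bounded solutions of the same translated BSDE. The main obstacle is calibrating the parameters $(p,q,\bar p,\eta,p'',q'')$ in the Hölder/exponential estimate, since the inflation $\gamma\mapsto\gamma(1+\eta)$ must be reconciled with the conjugacy $1/p+1/q=1$ and the given scales $p\gamma$ and $2\bar p(q\gamma)^2$; this is exactly the quantitative role of the two $\ecal^\infty$-assumptions in the hypothesis.
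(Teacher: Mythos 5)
Your proposal is correct and follows essentially the same route as the paper: the paper's (omitted) proof of \cref{thm:2.16a} is declared identical to that of \cref{thm:2.9}, i.e., the same translation $\tilde g(t,y,z)=g\big(t,\,y+\int_0^t H_s\,{\rm d}B_s,\,z+H_t\big)$, the same elementary inequalities to propagate \ref{A:B1} and the Lipschitz condition through the shift (with the extra hypotheses on $|H^i|^2$ and $|H^i|v$ used exactly where you use them), and the same H\"older/John--Nirenberg calibration of exponents — the paper's choice $\eps=(p-1)/(p+1)$, $\hat p$, $\tilde p$, $\tilde q$ with $p\gamma=\tilde p\hat p\gamma(1+\eps)$ and $(\bar\eps+q\gamma)\tilde q\hat p\gamma(1+\eps)=2\bar p(q\gamma)^2$ is precisely your $(\eta,p'',q'')$ bookkeeping. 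The one detail to pin down is the target theorem: since \ref{A:AB} is not assumed here, the bounded-terminal-value result to invoke is \cref{thm:2.8a} (as the paper indicates), not a \cref{thm:2.5e}-type statement.
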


The proof is  identical to that of \cref{thm:2.9} with the help of \cref{thm:2.8a}, and thus is omitted here.


\section{Local unbounded solution: proof of \cref{thm:2.3}}
\label{sec:3-local solution}
\setcounter{equation}{0}

Let us begin with the following technical lemma for subsequent arguments.

\begin{lem}\label{lem:3.1}
Let $p>1$, $r>0$, $\bar\delta\in [0,1)$, $i\in \{1,\cdots,n\}$, $\bar t\in\T$ and $V,\bar V\in {\rm BMO}_{[\bar t,T]}(\R^{n\times d})$ such that, with $q:=\frac{p}{p-1}>1$ satisfying that $\frac{1}{p}+\frac{1}{q}=1$,
$$
2n(qr\vee 1)\theta \Big(\sum_{j=1}^{i-1}\|\bar V^j\|^2_{{\rm BMO}_{[\bar t,T]}}+\|V\|^2_{{\rm BMO}_{[\bar t,T]}}\Big)\leq 1.
$$
\begin{itemize}
\item [(i)] Define
\begin{equation}\label{eq:3.1}
\breve\alpha_s:=\hat\alpha_s+\sum_{j=1}^{i-1}\left(\lambda |\bar V^j_s|^{1+\bar\delta}+\theta |\bar V^j_s|^2\right)+\sum_{j=i+1}^{n}\left(\lambda |V^j_s|^{1+\bar\delta}+\theta |V^j_s|^2\right),\ \ \ s\in [\bar t,T].
\end{equation}
If $\hat\alpha\in \ecal^{\infty}_{[\bar t,T]}(pr)$, then $\breve\alpha \in \ecal^{\infty}_{[\bar t,T]}(r)$. And, for each $t\in [\bar t,T]$, we have
\begin{equation}\label{eq:3.2}
\begin{array}{lll}
\Dis \|\breve\alpha\|_{\ecal^{\infty}_{[t,T]}(r)} &\leq &\Dis \|\hat\alpha\|_{\ecal^{\infty}_{[t,T]}(pr)}+{\ln 2\over r}+C_{p,n,r,\lambda,\bar\delta} \sum_{j=1}^{i-1}\|\bar V^j\|_{{\rm BMO}_{[t,T]}}^{2\frac{1+\bar\delta}{1-\bar\delta}}(T-t)\\
&&\Dis+C_{p,n,r,\lambda,\bar\delta}\sum_{j=i+1}^{n} \|V^j\|_{{\rm BMO}_{[t,T]}}^{2\frac{1+\bar\delta}{1-\bar\delta}}(T-t),
\end{array}
\end{equation}
where
\begin{equation}\label{eq:3.3}
C_{p,n,r,\lambda,\bar\delta}:=\lambda\frac{1-\bar\delta}{2}\left({pnr\lambda (1+\bar\delta)\over p-1}\right)^{\frac{1+\bar\delta}{1-\bar\delta}}.\vspace{0.1cm}
\end{equation}
Moreover, it is clear that the previous conclusion holds also for $p=1$ and $q=0$ when $\lambda=0$ and $\theta=0$ if we let $C_{p,n,r,\lambda,\bar\delta}:=0$ when $\lambda=0$ and $p=1$.
\item [(ii)] Define
\begin{equation}\label{eq:3.4}
\breve\alpha_s:=\hat\alpha_s+\bar c\sum_{j=1}^{i-1}|\bar V^j_s|^2+\sum_{j=i+1}^{n} \left(\bar\lambda |V^j_s|^{1+\bar\delta}+\theta |V^j_s|^2\right) ,\ \ \ s\in [\bar t,T].\vspace{0.1cm}
\end{equation}
If $\hat\alpha\in \mcal^{\infty}_{[\bar t,T]}$, then $\breve\alpha \in \mcal^{\infty}_{[\bar t,T]}$. And, for each $t\in [\bar t,T]$,
\begin{equation}\label{eq:3.5}
\Dis \left\|\breve\alpha\right\|_{\mcal^{\infty}_{[t,T]}} \leq \Dis 1+\left\|\hat\alpha\right\|_{\mcal^{\infty}_{[t,T]}}+\bar c\sum_{j=1}^{i-1}\|\bar V^j\|^2_{{\rm BMO}_{[t,T]}}+n\bar\lambda \|V\|_{{\rm BMO}_{[t,T]}}^{1+\bar\delta}(T-t)^{1-\bar\delta\over 2}.
\end{equation}
\item [(iii)] Define
\begin{equation}\label{eq:3.6}
\breve\alpha_s:=\hat\alpha_s+\sum_{j=1}^{i-1}\left(\bar\lambda |\bar V^j_s|+\theta |\bar V^j_s|^2\right)+\sum_{j=i+1}^{n}\left(\bar\lambda |V^j_s|+\theta |V^j_s|^2\right),\ \ s\in [\bar t,T].
\end{equation}
If $\hat\alpha\in \mcal^{\infty}_{[\bar t,T]}$, then $\breve\alpha \in \mcal^{\infty}_{[\bar t,T]}$. And, for each $t\in [\bar t,T]$, we have
\begin{equation}\label{eq:3.7}
\Dis \left\|\breve\alpha\right\|_{\mcal^{\infty}_{[t,T]}} \leq \Dis 1+\left\|\hat\alpha\right\|_{\mcal^{\infty}_{[t,T]}}+\bar\lambda \Big(\sum_{j=1}^{i-1}\|\bar V^j\|_{{\rm BMO}_{[t,T]}}+\sum_{j=i+1}^{n}\|V^j\|_{{\rm BMO}_{[t,T]}}\Big)(T-t)^{1\over 2}.
\end{equation}
\end{itemize}
\end{lem}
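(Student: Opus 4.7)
}

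The plan is to treat the three parts with a common strategy: split off $\hat\alpha$ from the BMO-type perturbations by a H\"older argument, convert any sub-quadratic term $|V^j|^{1+\bar\delta}$ into a quadratic term plus a deterministic remainder by Young's inequality, and finally absorb the quadratic terms by the John--Nirenberg inequality under the small-BMO hypothesis $2n(qr\vee 1)\theta(\sum_j\|\bar V^j\|^2_{{\rm BMO}_{[\bar t,T]}}+\|V\|^2_{{\rm BMO}_{[\bar t,T]}})\leq 1$. Parts (ii) and (iii) are analogous first-moment computations that avoid the exponential altogether.

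For part (i), first I would apply H\"older's inequality with exponents $(p,q)$ inside $\E_\tau$ to get
$$
\E_\tau\bigl[\exp\bigl(r\jf\breve\alpha_s\,{\rm d}s\bigr)\bigr]\leq \E_\tau\bigl[\exp\bigl(pr\jf\hat\alpha_s\,{\rm d}s\bigr)\bigr]^{1/p}\,\E_\tau\bigl[\exp\bigl(qr\jf R_s\,{\rm d}s\bigr)\bigr]^{1/q},
$$
where $R_s$ collects the $\lambda|\cdot|^{1+\bar\delta}$ and $\theta|\cdot|^2$ terms. The first factor gives $\tfrac{1}{p}\|\hat\alpha\|_{\ecal^{\infty}_{[t,T]}(pr)}\leq \|\hat\alpha\|_{\ecal^{\infty}_{[t,T]}(pr)}$ after taking $\tfrac{1}{r}\ln$. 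For the second factor, I would apply Young's inequality
$$
\lambda|u|^{1+\bar\delta}\leq \tfrac{1+\bar\delta}{2}\eps |u|^2+\tfrac{1-\bar\delta}{2}\lambda^{2/(1-\bar\delta)}\eps^{-(1+\bar\delta)/(1-\bar\delta)}
$$
with the precise choice $\eps=\bigl(qnr(1+\bar\delta)\bigr)^{-1}$, which is exactly calibrated so that $qr\cdot\tfrac{1+\bar\delta}{2}\eps\cdot n=\tfrac{1}{2}$ and so that the constant remainder, summed over the at most $n-1$ indices $j\neq i$ and integrated on $[t,T]$, yields precisely the factor $C_{p,n,r,\lambda,\bar\delta}$ in \eqref{eq:3.3} times $(T-t)$ and a power of the BMO norms. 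Then I would H\"older-split the remaining exponential as $\prod_j \E_\tau[\exp(nqr(\tfrac{1+\bar\delta}{2}\eps+\theta)\int|\cdot|^2)]^{1/n}$ and use the standing hypothesis together with John--Nirenberg: each factor is bounded by $(1-K_j)^{-1}$ with $K_j\leq 1/2$, giving a total of at most $2$; taking $\tfrac{1}{r}\ln$ produces exactly the $\tfrac{\ln 2}{r}$ term in~\eqref{eq:3.2}. The degenerate case $\lambda=0,\theta=0,p=1$ is immediate since the rest part then vanishes.

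For parts (ii) and (iii), I would simply bound $\E_\tau[\jf\breve\alpha_s\,{\rm d}s]$ term-by-term. The $\hat\alpha$ contribution gives $\|\hat\alpha\|_{\mcal^{\infty}_{[t,T]}}$. For $|\bar V^j|^2$ terms the BMO norm gives the bound directly. For $\bar\lambda|V^j|^{1+\bar\delta}$ (part (ii)), I would apply H\"older in time with exponents $(2/(1+\bar\delta),2/(1-\bar\delta))$ followed by conditional Jensen to get $\E_\tau[\jf|V^j|^{1+\bar\delta}\,{\rm d}s]\leq\|V^j\|_{{\rm BMO}_{[t,T]}}^{1+\bar\delta}(T-t)^{(1-\bar\delta)/2}$, and sum; for $\bar\lambda|V^j|$ (part (iii)) the analogous Cauchy--Schwarz gives $\|V^j\|_{{\rm BMO}_{[t,T]}}(T-t)^{1/2}$. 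The residual $\theta$-contribution in (ii) satisfies $\theta\sum_{j=i+1}^n\|V^j\|^2_{{\rm BMO}_{[t,T]}}\leq n\theta\|V\|^2_{{\rm BMO}_{[t,T]}}\leq 1/(2(qr\vee 1))\leq 1$, which accounts for the leading constant $1$ on the right-hand side of \eqref{eq:3.5} and \eqref{eq:3.7}.

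The main obstacle is the bookkeeping in part (i): the calibration of $\eps$ must simultaneously produce the precise power $\eps^{-(1+\bar\delta)/(1-\bar\delta)}$ that combines with $\lambda^{2/(1-\bar\delta)}$ to match $C_{p,n,r,\lambda,\bar\delta}$ in \eqref{eq:3.3}, and leave enough room in the coefficient of $|V^j|^2$ so that John--Nirenberg applies under the stated small-BMO hypothesis while producing the clean $\tfrac{\ln 2}{r}$ remainder. Getting every constant to line up is essentially the whole content of the argument.
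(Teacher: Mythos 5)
Parts (ii) and (iii) of your plan are correct and coincide with the paper's argument (H\"older in time plus the definition of the BMO norm, with the $\theta$-terms absorbed into the leading constant $1$ via the standing smallness hypothesis). The overall architecture of part (i) — H\"older with exponents $(p,q)$ to split off $\hat\alpha$, Young's inequality on the sub-quadratic terms, John--Nirenberg on the quadratic remainder — is also the paper's.

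However, there is a genuine gap in your calibration of Young's inequality in part (i). You take $\eps=(qnr(1+\bar\delta))^{-1}$, a \emph{constant}, so that the quadratic piece becomes $\frac{1}{2qnr}|V^j_s|^2$; after the $n$-fold H\"older split each factor then carries the exponent $\bigl(\frac12+nqr\theta\bigr)\int_\tau^T|V^j_s|^2\,{\rm d}s$, and John--Nirenberg requires $\bigl(\frac12+nqr\theta\bigr)\|V^j\|^2_{{\rm BMO}_{[t,T]}}<1$. The lemma assumes no smallness of $\|V\|_{{\rm BMO}}$ or $\|\bar V\|_{{\rm BMO}}$ (only of $\theta$ times their squares), so your claim that ``each factor is bounded by $(1-K_j)^{-1}$ with $K_j\le 1/2$'' fails whenever $\|V^j\|^2_{{\rm BMO}_{[t,T]}}\ge 2$. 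Relatedly, a constant $\eps$ makes the Young remainder a pure constant, so it cannot produce the factor $\|V^j\|_{{\rm BMO}_{[t,T]}}^{2(1+\bar\delta)/(1-\bar\delta)}$ appearing in \eqref{eq:3.2}; your own phrase ``times $(T-t)$ and a power of the BMO norms'' is inconsistent with the choice of $\eps$ you wrote down. The fix is the paper's choice (its \eqref{eq:3.8}--\eqref{eq:3.9}): take $\eps$ depending on $j$ and proportional to $\|V^j\|^{-2}_{{\rm BMO}_{[t,T]}}$, namely split
$$
\lambda|V^j_s|^{1+\bar\delta}\le \frac{p-1}{2pnr\,\|V^j\|^2_{{\rm BMO}_{[t,T]}}}\,|V^j_s|^2+C_{p,n,r,\lambda,\bar\delta}\,\|V^j\|^{2\frac{1+\bar\delta}{1-\bar\delta}}_{{\rm BMO}_{[t,T]}}.
$$
Then the quadratic exponent after H\"older is exactly $\frac{1}{2\|V^j\|^2_{{\rm BMO}_{[t,T]}}}\int_\tau^T|V^j_s|^2\,{\rm d}s$, whose conditional expectation is $\le 1/2$ regardless of the size of $\|V^j\|_{{\rm BMO}}$, so John--Nirenberg gives the factor $2$ and the $\frac{\ln 2}{r}$ term, while the remainder reproduces \eqref{eq:3.3} exactly; the $\theta$-quadratic terms are collected in a separate H\"older factor controlled by the standing hypothesis, as you intended.
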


\begin{proof}
(i)
Since $V,\bar V\in {{\rm BMO}}(\R^{n\times d})$, it follows from Young's inequality that for $p>1$, $\lambda\geq 0$, $0\leq \bar t\leq t\leq s\leq T$ and $j=1,\cdots,n$, we have
\begin{equation}\label{eq:3.8}
\begin{array}{lll}
\Dis \lambda |V_s^j|^{1+\bar\delta} &=& \Dis \lambda \Big(\Big({pnr\lambda (1+\bar\delta)\over p-1}\Big)^{\frac{1+\bar\delta}{1-\bar\delta}}\|V^j\|_{{\rm BMO}_{[t,T]}}^{2\frac{1+\bar\delta}{1-\bar\delta}} \Big)^{\frac{1-\bar\delta}{2}}\vspace{0.2cm}\\
&& \Dis  \times\,  \Big(\frac{(p-1)|V_s^j|^2}{pnr\lambda(1+\bar\delta)\|V^j\|_{{\rm BMO}_{[t,T]}}^2} \Big)^{\frac{1+\bar\delta}{2}}\vspace{0.1cm}\\
&\leq & \Dis \frac{p-1}{2pnr\|V^j\|_{{\rm BMO}_{[t,T]}}^2}|V_s^j|^2+ C_{p,n,r,\lambda,\bar\delta} \|V^j\|_{{\rm BMO}_{[t,T]}}^{2\frac{1+\bar\delta}{1-\bar\delta}}
\end{array}
\end{equation}
and
\begin{equation}\label{eq:3.9}
\lambda |\bar V_s^j|^{1+\bar\delta} \leq \frac{p-1}{2pnr\|\bar V^j\|_{{\rm BMO}_{[t,T]}}^2}|\bar V_s^j|^2+ C_{p,n,r,\lambda,\bar\delta} \|\bar V^j\|_{{\rm BMO}_{[t,T]}}^{2\frac{1+\bar\delta}{1-\bar\delta}},\vspace{0.2cm}
\end{equation}
where the constant $C_{p,n,r,\lambda,\bar\delta}$ is defined in \eqref{eq:3.3}. Note that $q:={p\over p-1}>1$ satisfying that ${1\over p}+{1\over q}=1$. By \eqref{eq:3.8} and \eqref{eq:3.9} we know that for each $t\in [\bar t,T]$, $\breve\alpha_s \leq \hat\alpha_s+\breve\alpha^{1,t}_s+\breve\alpha^{2,t}_s, s\in [t,T]$, where
$$
\breve\alpha^{1,t}_s:= \sum_{j=1}^{i-1}\frac{1}{2qnr\|\bar V^j\|_{{\rm BMO}_{[t,T]}}^2}|\bar V_s^j|^2+\sum_{j=i+1}^{n}\frac{1}{2qnr\|V^j\|_{{\rm BMO}_{[t,T]}}^2}|V_s^j|^2+\theta \Big(\sum_{j=1}^{i-1}|\bar V_s^j|^2+|V_s|^2\Big)
$$
and
$$
\breve\alpha^{2,t}_s:= C_{p,n,r,\lambda,\bar\delta} \Big(\sum_{j=1}^{i-1}\|\bar V^j\|_{{\rm BMO}_{[t,T]}}^{2\frac{1+\bar\delta}{1-\bar\delta}}+ \sum_{j=i+1}^{n}\|V^j\|_{{\rm BMO}_{[t,T]}}^{2\frac{1+\bar\delta}{1-\bar\delta}}\Big).
$$
It follows from H\"older's inequality and the John-Nirenberg inequality that for each $t\in [\bar t,T]$ and $\tau\in \mathcal{T}_{[t,T]}$,
$$
\begin{array}{lll}
\Dis\E_\tau\left[\exp\left(qr\int_\tau^T \breve\alpha^{1,t}_s {\rm d}s \right)\right] &\leq & \Dis \prod_{j=1}^{i-1}
\Bigg(\E_\tau\Big[\exp\Big(\frac{1}{2\|\bar V^j\|_{{\rm BMO}_{[t,T]}}^2}\int_\tau^T |\bar V^j_s|^2{\rm d}s\Big)\Big]\Bigg)^{1\over n} \\
&&\Dis \times\,  \Bigg(\E_\tau\Big[\exp\Big(nqr\theta\int_\tau^T \Big(\sum_{j=1}^{i-1}|\bar V_s^j|^2+|V_s|^2\Big){\rm d}s\Big)\Big]\Bigg)^{1\over n}\\
&& \Dis \times\,  \prod_{j=i+1}^{n}
\Bigg(\E_\tau\Big[\exp\Big(\frac{1}{2\|V^j\|_{{\rm BMO}_{[t,T]}}^2}\int_\tau^T |V^j_s|^2{\rm d}s\Big)\Big]\Bigg)^{1\over n}\vspace{0.1cm}\\
&\leq & 2,
\end{array}
$$
which means that $\breve\alpha^{1,t}\in \ecal^{\infty}_{[t,T]}(qr)$ and $\|\breve\alpha^{1,t}\|_{\ecal^{\infty}_{[t,T]}(qr)}\leq {\ln 2\over qr}$. On the other hand, it is clear that
$$\left\|\breve\alpha^{2,t}\right\|_{\lcal^{\infty}_{[t,T]}}= C_{p,n,r,\lambda,\bar\delta} \Big(\sum_{j=1}^{i-1}\|\bar V^j\|_{{\rm BMO}_{[t,T]}}^{2\frac{1+\bar\delta}{1-\bar\delta}}+ \sum_{j=i+1}^{n} \|V^j\|_{{\rm BMO}_{[t,T]}}^{2\frac{1+\bar\delta}{1-\bar\delta}}\Big)(T-t),\ \ t\in [\bar t,T].
$$
It then follows from (ii) and (iii) of \cref{rmk:2.1} that for each $t\in [\bar t,T]$, $\breve\alpha\in \ecal^{\infty}_{[t,T]}(r)$ and
$$
\begin{array}{lll}
\Dis \|\breve\alpha\|_{\ecal^{\infty}_{[t,T]}(r)}&\leq &\Dis \|\hat\alpha\|_{\ecal^{\infty}_{[t,T]}(pr)}+{\ln 2\over qr}\vspace{0.2cm}\\
&&\Dis +C_{p,n,r,\lambda,\bar\delta} \Big(\sum_{j=1}^{i-1}\|\bar V^j\|_{{\rm BMO}_{[t,T]}}^{2\frac{1+\bar\delta}{1-\bar\delta}}+ \sum_{j=i+1}^{n} \|V^j\|_{{\rm BMO}_{[t,T]}}^{2\frac{1+\bar\delta}{1-\bar\delta}}\Big)(T-t),
\end{array}
$$
from which the desired conclusion \eqref{eq:3.2} follows immediately.\vspace{0.2cm}

(ii) It follows from H\"older's inequality that for each $t\in [\bar t,T]$, $j=1,\cdots,n$ and $\tau\in \mathcal{T}_{[t,T]}$,
$$
\E_\tau\left[\int_\tau^T|V^j_s|^{1+\bar\delta}{\rm d}s\right]\leq \left(\E_\tau\left[\int_\tau^T|V^j_s|^2{\rm d}s\right]\right)^{1+\bar\delta\over 2} (T-t)^{1-\bar\delta\over 2}\leq \|V^j\|_{{\rm BMO}_{[t,T]}}^{1+\bar\delta}(T-t)^{1-\bar\delta\over 2}.
$$
Then, the desired conclusion \eqref{eq:3.5} follows from \eqref{eq:3.4} and the last inequality.\vspace{0.2cm}

(iii) It follows from H\"older's inequality that for each $t\in [\bar t,T]$, $j=1,\cdots,n$ and $\tau\in \mathcal{T}_{[t,T]}$,
$$
\E_\tau\left[\int_\tau^T|V^j_s|{\rm d}s\right]\leq \|V^j\|_{{\rm BMO}_{[t,T]}}(T-t)^{1\over 2}\ \ {\rm and}\ \ \E_\tau\left[\int_\tau^T|\bar V^j_s|{\rm d}s\right]\leq \|\bar V^j\|_{{\rm BMO}_{[t,T]}}(T-t)^{1\over 2}.
$$
Then, the desired conclusion \eqref{eq:3.7} follows from \eqref{eq:3.6} and the last two inequalities.\vspace{0.1cm}
\end{proof}

Now, we can give the proof of \cref{thm:2.3}. Assume first that $\xi\in L^{\infty}(\R^n)$, $\alpha\in \ecal^{\infty}(p\gamma)$ for some real $p>1$ and the generator $g$ satisfies Assumptions \ref{A:B1} and \ref{A:B2}. Define\vspace{0.1cm}
$$
\begin{array}{lll}
C_1 &:=& \Dis {2+\bar\gamma\over \bar\gamma}\bigg\{8+{\ln 2\over \gamma}+3\|\xi\|_{\infty}+\left\|\alpha\right\|_{\ecal^{\infty}
(p\gamma)}+2\left\|\bar\alpha\right\|_{\mcal^{\infty}}+nC_{p,n,\gamma,\lambda,\delta} T+2n\bar c\bigg\}\vspace{0.2cm}\\
&& \Dis +{4(\gamma +1) \over \gamma^2}\exp\left\{4\gamma \left(\|\xi\|_{\infty}+\|\tilde\alpha\|_{\lcal^{\infty}}+1\right)\right\}
\left(5+\|\tilde\alpha\|_{\lcal^{\infty}}+3\|v\|^2_{{\rm BMO}}+3n^2c^2
\right)\vspace{0.2cm}\\
&& \Dis +1+c_0 \exp\left(2\bar\lambda^2 T\right)\left(54+\|\xi\|_\infty^2+6\|\bar\alpha\|_{\mcal^\infty}^2+6n^2\bar\lambda^2 T\right)
\end{array}
$$
and
$$
\begin{array}{lll}
C_2 &:=& \Dis {2+\bar\gamma\over \bar\gamma}\left(2C_{p,n,\gamma,\lambda,\delta} T+4\bar c\right)+{12nc^2 (\gamma +1) \over \gamma^2}\exp\left\{4\gamma \left(\|\xi\|_{\infty}+\|\tilde\alpha\|_{\lcal^{\infty}}+1\right)\right\}\vspace{0.2cm}\\
&& \Dis +12nc_0\bar\lambda^2 T \exp\left(2\bar\lambda^2 T\right),
\end{array}
$$
where the uniform constant $c_0>0$ is defined in (i) of \cref{pro:A.3} in Appendix, and
\begin{equation}\label{eq:3.10}
C_{p,n,\gamma,\lambda,\delta}:=\lambda\frac{1-\delta}{2}
\Big({pn\gamma\lambda (1+\delta)\over p-1}\Big)^{\frac{1+\delta}{1-\delta}}+1.
\end{equation}
Furthermore, let $C^0_1:=0$ and for $i=1,\cdots,n$, recursively define
\begin{equation}\label{eq:3.11}
C^i_1:=C^{i-1}_1+C_1+C_2\left[C^{i-1}_1\right]^{\frac{1+\delta}{1-\delta}}.
\end{equation}
Finally, define $K:=C^n_1$ and let the constant $\eps$ satisfy
\begin{equation}\label{eq:3.12}
0<\eps\leq \min\left\{T,\ {1\over [1+\phi(K)]^2},\ {1\over nC_{p,n,\gamma,\lambda,\delta} K^{\frac{1+\delta}{1-\delta}}},\  \left({1\over 1+2n\bar\lambda K^{1+\delta\over 2}}\right)^{2\over 1-\delta},\ {1\over 1+n^2\bar\lambda^2K}\right\}.
\end{equation}
It is clear that all these constants defined above (except $\eps$)
depend only on $\|\xi\|_{\infty}$, $\|\alpha\|_{\ecal^{\infty}(p\gamma)}$, $\|\bar\alpha\|_{\mcal^{\infty}}$, $\|\tilde\alpha\|_{\lcal^{\infty}}$, $\|v\|_{{\rm BMO}}$, $n,\gamma,\bar\gamma$, $\lambda,\bar\lambda, c,\bar c, \delta,T$ and $p$, and that $\eps$ also depends on $\phi(\cdot)$.\vspace{0.2cm}

In the sequel, for each $H\in \R^{n\times d}$, $i\in \{1,\cdots,n\}$ and $z^{(1)}\cdots,z^{(i)}\in \R^{1\times d}$, define by
$$[z^{(1)},\cdots,z^{(i)},H^{i+1},\cdots,H^n]\vspace{-0.1cm}$$
the matrix in $\R^{n\times d}$ whose $j$th row is $z^{(j)}$ for  $j=1,\cdots,i$ and $H^j$ for $j=i+1,\cdots,n$ with $i\neq n$. Let $q:=\frac{p}{p-1}$.
Given a pair of processes $(U,V)\in \s^\infty_{[T-\eps,T]}(\R^n)\times {\rm BMO}_{[T-\eps,T]}(\R^{n\times d})$ and a real $\theta$ satisfying
\begin{equation}\label{eq:3.13}
\|U\|_{\s^\infty_{[T-\eps,T]}}+\|V\|_{{\rm BMO}_{[T-\eps,T]}}^2\leq K\ \ \ {\rm and}\ \ \ 0\leq \theta \leq {1\over 4n(q\gamma\vee 1)K}.
\end{equation}
We will first prove that for each $i=1,\cdots,n$, the following one-dimensional BSDE
\begin{equation}\label{eq:3.14}
  Y_t^i=\xi^i+\int_t^T f^i(s,Z_s^i){\rm d}s-\int_t^T Z_s^i {\rm d}B_s, \ \ t\in [T-\eps,T]
\end{equation}
admits, successively, a unique solution $(Y^i,Z^i)$ in the space $\s^\infty_{[T-\eps,T]}(\R)\times {\rm BMO}_{[T-\eps,T]}(\R^{1\times d})$, where for each $(\omega,t,z)\in \Omega\times [T-\eps,T] \times\R^{1\times d}$,
$$
\left\{
\begin{array}{l}
\Dis f^1(\omega,t,z):=g^1\left(\omega,t,U_t(\omega),
[z,V^{2}_t(\omega),\cdots, V^{n}_t(\omega)]\right),\vspace{0.2cm}\\
\Dis f^i(\omega,t,z):=g^i\left(\omega,t,U_t(\omega),
[Z^1_t(\omega),\cdots,Z^{i-1}_t(\omega),z,V^{i+1}_t(\omega),\cdots, V^{n}_t(\omega)]\right),\ \ i=2,\cdots,n-1,\vspace{0.2cm}\\
\Dis f^n(\omega,t,z):=g^n\left(\omega,t,U_t(\omega),
[Z^1_t(\omega),\cdots,Z^{n-1}_t(\omega),z]\right).\vspace{0.2cm}
\end{array}
\right.
$$
Moreover, it holds that for each $i=1,\cdots,n$,
\begin{equation}\label{eq:3.15}
\|Y^i\|_{\s^{\infty}_{[T-\eps,T]}}+\|Z^i\|_{{\rm BMO}_{[T-\eps,T]}}^2 \leq C_1+C_2 \sum_{j=1}^{i-1} \|Z^j\|_{{\rm BMO}_{[T-\eps,T]}}^{2\frac{1+\delta}{1-\delta}},
\end{equation}
\begin{equation}\label{eq:3.16}
\sum_{j=1}^{i}\left(\|Y^j\|_{\s^{\infty}_{[T-\eps,T]}}+\|Z^j\|_{{\rm BMO}_{[T-\eps,T]}}^2\right) \leq C^i_1\leq K
\end{equation}
and
\begin{equation}\label{eq:3.17}
2n(q\gamma\vee 1)\theta \Big(\sum_{j=1}^{i}\|Z^j\|^2_{{\rm BMO}_{[T-\eps,T]}}+\|V\|^2_{{\rm BMO}_{[T-\eps,T]}}\Big)\leq 1.\vspace{0.2cm}
\end{equation}
In particular, letting $i=n$ in \eqref{eq:3.16} yields that
\begin{equation}\label{eq:3.18}
\|Y\|_{\s^\infty_{[T-\eps,T]}}+\|Z\|_{{\rm BMO}_{[T-\eps,T]}}^2\leq K.\vspace{0.2cm}
\end{equation}

For this, let us use an induction argument to prove the following proposition.\vspace{-0.1cm}

\begin{pro}\label{pro:3.2c}
We have the following two assertions.
\begin{itemize}
\item [(a)] For $i=1$, BSDE \eqref{eq:3.14} admits a unique solution $(Y^1,Z^1)$ in the space $\s^\infty_{[T-\eps,T]}(\R)\times {\rm BMO}_{[T-\eps,T]}(\R^{1\times d})$, and inequalities \eqref{eq:3.15}-\eqref{eq:3.17} hold.

\item [(b)] For $i=2,\cdots,n$, assume that for each $l=1,\cdots,i-1$, the following BSDE
\begin{equation}\label{eq:3.19}
  Y_t^l=\xi^l+\int_t^T f^l(s,Z_s^l){\rm d}s-\int_t^T Z_s^l {\rm d}B_s, \ \ t\in [T-\eps,T]
\end{equation}
admits, successively, a unique solution $(Y^l,Z^l)$ in the space $\s^\infty_{[T-\eps,T]}(\R)\times {\rm BMO}_{[T-\eps,T]}(\R^{1\times d})$, and the following inequalities hold:
\begin{equation}\label{eq:3.21}
\sum_{j=1}^{l}\left(\|Y^j\|_{\s^{\infty}_{[T-\eps,T]}}+\|Z^j\|_{{\rm BMO}_{[T-\eps,T]}}^2\right) \leq C^l_1\leq K
\end{equation}
and
\begin{equation}\label{eq:3.22}
2n(q\gamma\vee 1)\theta \Big(\sum_{j=1}^{l}\|Z^j\|^2_{{\rm BMO}_{[T-\eps,T]}}+\|V\|^2_{{\rm BMO}_{[T-\eps,T]}}\Big)\leq 1.\vspace{0.2cm}
\end{equation}
Then, BSDE \eqref{eq:3.14} also admits a unique solution $(Y^i,Z^i)\in \s^\infty_{[T-\eps,T]}(\R)\times {\rm BMO}_{[T-\eps,T]}(\R^{1\times d})$, and the inequalities \eqref{eq:3.15}-\eqref{eq:3.17} also hold.\vspace{0.2cm}
\end{itemize}
\end{pro}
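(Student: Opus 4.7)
\medskip
\noindent\emph{Proof proposal.} The plan is to prove both parts simultaneously by induction on $i$, observing that part (a) is just the base case of part (b) obtained by interpreting all sums over $j<i$ as zero and taking $C_1^0 = 0$. I therefore focus on the inductive step: fix $i \in \{1,\dots,n\}$ and assume \eqref{eq:3.19}--\eqref{eq:3.22} hold at level $l = i-1$. The essential observation is that BSDE \eqref{eq:3.14} is a \emph{one-dimensional} quadratic BSDE: its generator $f^i(\omega,t,z)$ depends only on the scalar row $z \in \R^{1\times d}$, because all other rows have been frozen either to the already-solved $Z^1,\dots,Z^{i-1} \in {\rm BMO}_{[T-\eps,T]}$ (from the inductive hypothesis) or to the given $V^{i+1},\dots,V^n \in {\rm BMO}_{[T-\eps,T]}$. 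Moreover, since $\|U\|_{\s^\infty_{[T-\eps,T]}} \le K$ by \eqref{eq:3.13}, the term $\phi(|U_t|) \le \phi(K)$ contributes only as a deterministic constant to every growth bound.

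I then branch on which alternative of Assumption \ref{A:B1} the function $g^i$ satisfies. When \ref{A:B1}(i) applies to $f := g^i$ or $f := -g^i$, I apply \cref{lem:3.1}(i) with $\hat\alpha_s := \alpha_s + \phi(K)$ to bound the upper side of $f^i$ by $\breve\alpha_s^{\uparrow} + \tfrac{\gamma}{2}|z|^2$ with $\breve\alpha^{\uparrow}\in \ecal^{\infty}_{[T-\eps,T]}(q\gamma)$, and \cref{lem:3.1}(ii) with $\hat\alpha_s := \bar\alpha_s + \phi(K)$ to bound the lower side by $-\breve\alpha_s^{\downarrow} + \tfrac{\bar\gamma}{2}|z|^2$ with $\breve\alpha^{\downarrow}\in \mcal^{\infty}_{[T-\eps,T]}$; this exactly matches the hypotheses of \cref{pro:A.1}. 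When \ref{A:B1}(ii) holds, the linear-in-$z$ coefficient $v_t + \phi(K) + c\sum_{j<i}|Z^j_t|$ lies in ${\rm BMO}_{[T-\eps,T]}$ by the inductive BMO bound on $Z^j$, and together with the $\tfrac{\gamma}{2}|z|^2$ term this places us in the setting of \cref{pro:A.2}. When \ref{A:B1}(iii) holds, \cref{lem:3.1}(iii) absorbs the $\bar\lambda|V^j|$ and $\theta|V^j|^2$ terms (for $j>i$) into a single $\mcal^\infty$ coefficient, and \cref{pro:A.3} provides the solution.

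In each case the relevant appendix proposition yields a unique pair $(Y^i,Z^i) \in \s^{\infty}_{[T-\eps,T]}(\R) \times {\rm BMO}_{[T-\eps,T]}(\R^{1\times d})$ together with an a priori estimate of the form \eqref{eq:3.15}: the constants $C_1$ and $C_2$ have been defined precisely to absorb, respectively, the terms that are independent of $\|Z^j\|_{\rm BMO}$ (the $\breve\alpha^\uparrow$-norm, $\breve\alpha^\downarrow$-norm, $\|\xi\|_\infty$, the appendix constants $c_0$, etc.) and the power $\|Z^j\|_{\rm BMO}^{2(1+\delta)/(1-\delta)}$ appearing in \eqref{eq:3.2}. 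Summing \eqref{eq:3.15} over $l = 1,\dots,i$ and using the recursion \eqref{eq:3.11} gives \eqref{eq:3.16}, and hence $\sum_{j\le i}\|Z^j\|^2_{\rm BMO_{[T-\eps,T]}} \le K$; combined with $\|V\|^2_{\rm BMO_{[T-\eps,T]}} \le K$ and the smallness of $\theta$ imposed in \eqref{eq:3.13}, this yields \eqref{eq:3.17}. Uniqueness is reduced to the Lipschitz setting by Assumption \ref{A:B2} together with Girsanov's transform, exploiting that $(Y^i,Z^i) \in \s^\infty \times {\rm BMO}$ makes the drift coefficient a BMO process.

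The main obstacle is the bookkeeping that makes the induction close. One must verify that the $\eps$-dependent factors $(T-t)$ and $(T-t)^{(1-\delta)/2}$ produced by \cref{lem:3.1} suffice to dominate the $K^{(1+\delta)/(1-\delta)}$-type contributions generated at each step, which is precisely what the constraints \eqref{eq:3.12} on $\eps$ enforce; simultaneously one must check that the recursion \eqref{eq:3.11} with the specific constants $C_1$, $C_2$ still gives $C_1^n \le K$, and that the smallness condition \eqref{eq:3.17} on $\theta$ (needed to apply \cref{lem:3.1} itself) is preserved from one inductive step to the next. The last point is delicate because the bound in \eqref{eq:3.13} on $\theta$ was fixed using the terminal value $K = C_1^n$ of the recursion, so each intermediate step automatically inherits the required smallness; making all of this coherent is the technical heart of the proof.
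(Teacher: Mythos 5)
Your proposal is correct and follows essentially the same route as the paper: freeze the other rows, verify Assumption \ref{A:A5} via \ref{A:B2}, branch on the three alternatives of \ref{A:B1}, invoke \cref{lem:3.1} to check the hypotheses of \cref{pro:A.1,pro:A.2,pro:A.3}, and close the induction through the recursion \eqref{eq:3.11} and the constraints \eqref{eq:3.12}--\eqref{eq:3.13}. The only slip is cosmetic: \cref{lem:3.1}(i) applied with $\hat\alpha=\alpha+\phi(K)\in\ecal^{\infty}(p\gamma)$ places $\breve\alpha^{\uparrow}$ in $\ecal^{\infty}_{[T-\eps,T]}(\gamma)$, not $\ecal^{\infty}_{[T-\eps,T]}(q\gamma)$, which is what \cref{pro:A.1} requires.
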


\begin{proof}
We first prove the assertion (b). Assume that for some $i\in \{2,\cdots,n\}$ and each $l=1,\cdots,i-1$, BSDE \eqref{eq:3.19} admits a unique solution $(Y^l,Z^l)\in \s^\infty_{[T-\eps,T]}(\R)\times {\rm BMO}_{[T-\eps,T]}(\R^{1\times d})$, and the inequalities \eqref{eq:3.21}-\eqref{eq:3.22} hold. Since the generator $g$ satisfies Assumption \ref{A:B2}, it follows from the definition of $f^i$ that $\as$ on $\Omega\times [T-\eps,T]$, for each $(z, \bar z)\in \R^{1\times d}\times\R^{1\times d}$,
$$
|f^i(t,z)-f^i(t,\bar z)|\leq \phi(\|U\|_{\s^\infty_{[T-\eps,T]}})\Big(v_t+2\sum_{j=1}^{i-1}|Z^j_t|+2|V_t| +|z|+|\bar z|\Big)|z-\bar z|,
$$
which means that on $\Omega\times [T-\eps,T]$, $f^i$ satisfies Assumption \ref{A:A5} in Appendix with $\bar\beta=0$,
$$
k=\phi(\|U\|_{\s^\infty_{[T-\eps,T]}})\ \ {\rm and}\ \ \bar v=v+2\sum_{j=1}^{i-1}|Z^j|+2|V|\in {\rm BMO}_{[T-\eps,T]}(\R).
$$
And, since $g$ satisfies Assumption \ref{A:B1}, in view of (iv) of \cref{rmk:2.2}, we need only to consider the following three cases:\vspace{0.2cm}

(1) $g^i$ satisfies (i) of Assumption \ref{A:B1}. For this case, it follows from the definition of $f^i$ that $\as$ on $\Omega\times [T-\eps,T]$, for each $z\in \R^{1\times d}$, we have
\begin{equation}\label{eq:3.23}
\Dis \frac{\bar \gamma}{2} |z|^2-\dot\alpha_t \leq f^i(t,z) \leq  \check\alpha_t+\frac{\gamma}{2} |z|^2,
\end{equation}
where
$$
\check\alpha_t:=\alpha_t+\phi(|U_t|) +\sum_{j=1}^{i-1}\left(\lambda |Z^j_t|^{1+\delta}+\theta |Z^j_t|^2\right)+\sum_{j=i+1}^{n}\left(\lambda |V^j_t|^{1+\delta}+\theta |V^j_t|^2\right)
$$
and
$$
\dot\alpha_t:=\bar\alpha_t+\phi(|U_t|)+\bar c\sum_{j=1}^{i-1}|Z^j_t|^2+\sum_{j=i+1}^{n} \left(\bar\lambda |V^j_t|^{1+\delta}+\theta |V^j_t|^2\right).\vspace{0.2cm}
$$
Since $\alpha\in \ecal^{\infty}_{[T-\eps,T]}(p\gamma)$, $\bar\alpha\in \mcal^{\infty}_{[T-\eps,T]}$ and $\phi(|U|)\in \lcal^\infty_{[T-\eps,T]}$, in view of (ii) of \cref{rmk:2.1}, \eqref{eq:3.13} and \eqref{eq:3.22} with $l=i-1$, using (i) and (ii) of \cref{lem:3.1} with $r=\gamma$, $\bar\delta=\delta$, $\bar t=T-\eps$, $\bar V=Z$, $\breve\alpha=\check\alpha$ and $\hat\alpha=\alpha+\phi(|U|)$ (resp. $\breve\alpha=\dot\alpha$ and $\hat\alpha=\bar\alpha+\phi(|U|)$) we can deduce that
\begin{equation}\label{eq:3.24}
\begin{array}{lll}
\Dis \|\check\alpha\|_{\ecal^{\infty}_{[t,T]}(\gamma)} &\leq &\Dis \|\alpha\|_{\ecal^{\infty}_{[t,T]}(p\gamma)}+\phi(K)(T-t)+{\ln 2\over \gamma}+nC_{p,n,\gamma,\lambda,\delta} K^{\frac{1+\delta}{1-\delta}}(T-t)\vspace{0.1cm}\\
&&\Dis+C_{p,n,\gamma,\lambda,\delta} T\sum_{j=1}^{i-1}\|Z^j\|_{{\rm BMO}_{[t,T]}}^{2\frac{1+\delta}{1-\delta}}<+\infty,\ \ t\in [T-\eps,T]
\end{array}
\end{equation}
and
\begin{equation}\label{eq:3.25}
\begin{array}{lll}
\Dis \left\|\dot\alpha\right\|_{\mcal^{\infty}_{[t,T]}} &\leq &\Dis 1+\left\|\bar\alpha\right\|_{\mcal^{\infty}_{[t,T]}}+\phi(K)(T-t)+n\bar\lambda K^{1+\delta\over 2}(T-t)^{1-\delta\over 2}\vspace{0.1cm}\\
&&\Dis +\bar c\sum_{j=1}^{i-1}\|Z^j\|^2_{{\rm BMO}_{[t,T]}}<+\infty,\ \ t\in [T-\eps,T],\vspace{0.1cm}
\end{array}
\end{equation}
where the constant $C_{p,n,\gamma,\lambda,\delta}$ is defined in \eqref{eq:3.10}. Combining \eqref{eq:3.23}, \eqref{eq:3.24} and \eqref{eq:3.25} yields that on $\Omega\times [T-\eps,T]$, the generator $f^i$ satisfies Assumption \ref{A:A1} in Appendix with $\bar\beta=0$ and $\varphi(\cdot)\equiv 0$. Since $f^i$ also satisfies Assumption \ref{A:A5}, it follows from (iii) of \cref{pro:A.1} that BSDE \eqref{eq:3.14} admits a unique solution $(Y^i,Z^i)\in \s^{\infty}_{[T-\eps,T]}(\R)\times {\rm BMO}_{[T-\eps,T]}(\R^{1\times d})$. Moreover, by (i) of \cref{pro:A.1} we have
\begin{equation}\label{eq:3.26}
\begin{array}{l}
\Dis \|Y^i\|_{\s^{\infty}_{[t,T]}}+\|Z^i\|_{{\rm BMO}_{[t,T]}}^2 \leq  \Dis {2+\bar\gamma\over \bar\gamma}\bigg\{2+{\ln 2\over \gamma}+3\|\xi^i\|_{\infty}+\left\|\alpha\right\|_{\ecal^{\infty}_{[t,T]}
(p\gamma)}+2\left\|\bar\alpha\right\|_{\mcal^{\infty}_{[t,T]}} \vspace{0.3cm}\\
\Dis \hspace{0.5cm} +3\phi(K)(T-t)+\left(C_{p,n,\gamma,\lambda,\delta} T+2\bar c\right) \sum_{j=1}^{i-1} \Big(\|Z^j\|_{{\rm BMO}_{[t,T]}}^{2\frac{1+\delta}{1-\delta}}+\|Z^j\|_{{\rm BMO}_{[t,T]}}^2\Big)\vspace{0.2cm}\\
\Dis \hspace{0.5cm} \left. +nC_{p,n,\gamma,\lambda,\delta} K^{\frac{1+\delta}{1-\delta}}(T-t) +2n\bar\lambda K^{1+\delta\over 2}(T-t)^{1-\delta\over 2}\right\},\ \ t\in [T-\eps,T].
\end{array}
\end{equation}
Then, in view of the inequality
\begin{equation}\label{eq:3-26h}
a^2\leq 1+a^{2\frac{1+\delta}{1-\delta}},\ \ \RE\ a\geq 0,
\end{equation}
it follows from \eqref{eq:3.26} and \eqref{eq:3.12} together with the definitions of $C_1$ and $C_2$ that the desired inequality \eqref{eq:3.15} holds. Moreover, in view of the following inequality
$$
\sum_{j=1}^{m}a_j^{\frac{1+\delta}{1-\delta}}\leq \Big( \sum_{j=1}^{m}a_j\Big)^{\frac{1+\delta}{1-\delta}},\ \ \RE\ m\geq 1\ \ {\rm and}\ \ \RE\ a_j\geq 0,\ j=1,\cdots, m,\vspace{-0.1cm}
$$
combining \eqref{eq:3.15} and \eqref{eq:3.21} with $l=i-1$ as well as \eqref{eq:3.11} we can derive that inequality \eqref{eq:3.16} also holds. Finally, the desired inequality \eqref{eq:3.17} follows from \eqref{eq:3.13} and \eqref{eq:3.16}. Consequently, the assertion (b) is proved in this case.\vspace{0.2cm}

(2) $g^i$ satisfies (ii) of Assumption \ref{A:B1}. For this case, it follows from the definition of $f^i$ that $\as$ on $\Omega\times [T-\eps,T]$, for each $z\in \R^{1\times d}$, we have
$$
|f^i(t,z)| \leq \ddot\alpha_t+\bar u_t |z|+\frac{\gamma}{2} |z|^2,\vspace{-0.2cm}
$$
where
$$
\ddot\alpha:=\tilde\alpha+\phi(|U|)\in \lcal^\infty_{[T-\eps,T]}\ \ \ {\rm and}\ \ \ \bar u:=c\sum_{j=1}^{i-1}|Z^j|+\phi(|U|)+v\in {\rm BMO}_{[T-\eps,T]}(\R).
$$
This means that on $\Omega\times [T-\eps,T]$, the generator $f^i$ satisfies Assumption \ref{A:A3} in Appendix with $\bar\beta=0$ and $\varphi(\cdot)\equiv 0$. Since $f^i$ also satisfies Assumption \ref{A:A5}, it follows from (iii) of \cref{pro:A.2} in Appendix that BSDE \eqref{eq:3.14} has a unique solution $(Y^i,Z^i)\in \s^{\infty}_{[T-\eps,T]}(\R)\times {\rm BMO}_{[T-\eps,T]}(\R^{1\times d})$. Moreover, in view of (i) of \cref{pro:A.2} with $r=T$, \eqref{eq:3.13} and \eqref{eq:3.12}, we have for each $t\in [T-\eps,T]$,
\begin{equation}\label{eq:3.27}
\begin{array}{l}
\Dis\|Y^i\|_{\s^\infty_{[t,T]}}+\|Z^i\|^2_{{\rm BMO}_{[t,T]}}
\leq \Dis {4(\gamma +1) \over \gamma^2}\exp\left\{4\gamma \left(\|\xi^i\|_{\infty}+\|\tilde\alpha\|_{\lcal^{\infty}_{[t,T]}}
+1\right)\right\}\vspace{0.1cm}\\
\hspace{4cm}\Dis \times \,  \Big(5+\|\tilde\alpha\|_{\lcal^{\infty}_{[t,T]}}
+3\|v\|^2_{{\rm BMO}_{[t,T]}}+3nc^2 \sum_{j=1}^{i-1} \|Z^j\|^2_{{\rm BMO}_{[t,T]}} \Big).
\end{array}
\end{equation}
Then, in view of \eqref{eq:3-26h}, it follows from \eqref{eq:3.27} together with the definitions of constants $C_1$ and $C_2$ that the desired inequality \eqref{eq:3.15} holds. And, in the same way as in (1) we can deduce that \eqref{eq:3.16} and \eqref{eq:3.17} also hold. Thus, the assertion (b) is proved in this case.\vspace{0.2cm}

(3) $g^i$ satisfies (iii) of Assumption \ref{A:B1}. For this case, it follows from the definition of $f^i$ that $\as$ on $\Omega\times [T-\eps,T]$, for each $z\in \R^{1\times d}$, we have
\begin{equation}\label{eq:3.28}
|f^i(t,z)| \leq  \Dis \breve\alpha_t+\bar\lambda |z|,
\end{equation}
where
$$
\breve\alpha_t:=\bar\alpha_t+\phi(|U_t|)+\sum_{j=1}^{i-1}\left(\bar\lambda |Z^j_t|+\theta |Z^j_t|^2\right)+\sum_{j=i+1}^{n}\left(\bar\lambda |V^j_t|+\theta |V^j_t|^2\right).\vspace{0.1cm}
$$
Since $\bar\alpha\in \mcal^{\infty}_{[T-\eps,T]}$ and $\phi(|U|)\in \lcal^\infty_{[T-\eps,T]}$, in view of \eqref{eq:3.12}, \eqref{eq:3.13} and \eqref{eq:3.22} with $l=i-1$, using (iii) of \cref{lem:3.1} with $\bar t=T-\eps$, $\bar V=Z$ and $\hat\alpha=\bar\alpha+\phi(|U|)$ we can deduce that
\begin{equation}\label{eq:3.29}
\hspace*{-0.2cm}
\begin{array}{lll}
\Dis \left\|\breve\alpha\right\|_{\mcal^{\infty}_{[t,T]}} &\leq &\Dis 1+\left\|\bar\alpha\right\|_{\mcal^{\infty}_{[t,T]}}+\phi(K)(T-t)+n\bar\lambda K^{1\over 2}(T-t)^{1\over 2}+\bar\lambda\sqrt{T}\sum_{j=1}^{i-1}\|Z^j\|_{{\rm BMO}_{[t,T]}}\\
&\leq &\Dis 3+\left\|\bar\alpha\right\|_{\mcal^{\infty}_{[t,T]}}+\bar\lambda\sqrt{T}\sum_{j=1}^{i-1}\|Z^j\|_{{\rm BMO}_{[t,T]}}<+\infty,\ \ t\in [T-\eps,T].
\end{array}
\end{equation}
Combining \eqref{eq:3.28} and \eqref{eq:3.29} yields that on $\Omega\times [T-\eps,T]$, the generator $f^i$ satisfies Assumption \ref{A:A4} in Appendix with $\dot\alpha=\breve\alpha$, $\bar\beta=0$ and $\varphi(\cdot)\equiv 0$. Since $f^i$ also satisfies Assumption \ref{A:A5}, it follows from (iii) of \cref{pro:A.3} in Appendix that BSDE \eqref{eq:3.14} admits a unique solution $(Y^i,Z^i)$ in the space $\s^{\infty}_{[T-\eps,T]}(\R)\times {\rm BMO}_{[T-\eps,T]}(\R^{1\times d})$. Moreover, in view of \eqref{eq:3-26h}, by (i) of \cref{pro:A.3} we have for each $t\in [T-\eps,T]$,
\begin{equation}\label{eq:3.30}
\begin{array}{lll}
\Dis \|Y^i\|_{\s^\infty_{[t,T]}}+\left\|Z^i\right\|_{{\rm BMO}_{[t,T]}}^2 &\leq & \Dis 1+c_0 \exp\left(2\bar\lambda^2 T\right)\left(54+\|\xi^i\|_\infty^2
+6\|\bar\alpha\|_{\mcal^\infty_{[t,T]}}^2\right)\\
&& \Dis +12nc_0\bar\lambda^2 T \exp\left(2\bar\lambda^2 T\right)\sum_{j=1}^{i-1}\|Z^j\|^2_{{\rm BMO}_{[t,T]}},
\end{array}
\end{equation}
where the uniform constant $c_0>0$ is defined in (i) of \cref{pro:A.3}. Then, it follows from \eqref{eq:3.30} together with the definitions of $C_1$ and $C_2$ that the desired inequality \eqref{eq:3.15} holds. And, in the same way as in (1) we can deduce that \eqref{eq:3.16} and \eqref{eq:3.17} hold. Thus, the assertion (b) is also proved in this case.\vspace{0.2cm}

Next, we prove the assertion (a). Indeed, in view of \eqref{eq:3.13}, by applying the above argument to $i=1$ we can deduce that for $i=1$, BSDE \eqref{eq:3.14} admits a unique solution $(Y^1,Z^1)$ in the space $\s^\infty_{[T-\eps,T]}(\R)\times {\rm BMO}_{[T-\eps,T]}(\R^{1\times d})$ and \eqref{eq:3.15} holds. In addition, in the case of $i=1$, \eqref{eq:3.16} is just \eqref{eq:3.15}, and \eqref{eq:3.17} is trivially satisfied by \eqref{eq:3.13}. Thus, the assertion (a) is also true, and the proof of \cref{pro:3.2c} is then complete.\vspace{0.2cm}
\end{proof}

Now, for each real $\eps>0$ satisfying \eqref{eq:3.12}, define the following complete metric space
$$
\Dis\mathcal{B}_\eps:=\left\{(U,V)\in \s^\infty_{[T-\eps,T]}(\R^n)\times {\rm BMO}_{[T-\eps,T]}(\R^{n\times d}): \|U\|_{\s^{\infty}_{[T-\eps,T]}}+\|V\|_{{\rm BMO}_{[T-\eps,T]}}^2\leq K \right\},
$$
which is a closed convex subset in the Banach space $\s^\infty(\R^n)\times {\rm BMO}(\R^{n\times d})$ with the norm
$$
\|(U,V)\|_{\mathcal{B}_\eps}:=\sqrt{\|U\|_{\s^{\infty}_{[T-\eps,T]}}^2+\|V\|_{{\rm BMO}_{[T-\eps,T]}}^2},\ \ \ \RE\ (U,V)\in \mathcal{B}_\eps.
$$
Based on \cref{pro:3.2c}, we know that those assertions from \eqref{eq:3.14} to \eqref{eq:3.18} are all true. Thus, in the case of $0\leq \theta\leq 1/[4n(q\gamma\vee 1)K]$ we can define the following map from $\mathcal{B}_\eps$ to itself:
$$
\Gamma: (U,V)\in \mathcal{B}_\eps \mapsto \Gamma(U,V):=(Y,Z)\in \mathcal{B}_\eps,
$$
where for each $i=1,\cdots,n$, $(Y^i,Z^i)$ (the $i$th component of $Y$ and the $i$th row of $Z$) is the unique solution of BSDE \eqref{eq:3.14} in the space $\s^\infty_{[T-\eps,T]}(\R)\times {\rm BMO}_{[T-\eps,T]}(\R^{1\times d})$.\vspace{0.2cm}

It remains to show that there exists a real $\eps_0>0$ satisfying \eqref{eq:3.12} and a real $\theta_0\in (0,1/[4n(q\gamma\vee 1)K]]$ (both depending only on $\|\xi\|_{\infty}$, $\|\alpha\|_{\ecal^{\infty}(p\gamma)}$, $\|\bar\alpha\|_{\mcal^{\infty}}$, $\|\tilde\alpha\|_{\lcal^{\infty}}$, $\|v\|_{{\rm BMO}}$, $n,\gamma,\bar\gamma,\lambda,\bar\lambda,c,\bar c,\delta,T,p$ and $\phi(\cdot)$)
such that in the case of $\theta\in [0,\theta_0]$, $\Gamma$ is a contraction in $\mathcal{B}_{\eps_0}$.\vspace{0.2cm}

In the sequel, let $0\leq \theta\leq 1/[4n(q\gamma\vee 1)K]$. For any fixed $\eps$ satisfying \eqref{eq:3.12} as well as $(U,V)\in \mathcal{B}_{\eps}$ and $(\widetilde U,\widetilde V)\in \mathcal{B}_{\eps}$, we set\vspace{-0.1cm}
$$
(Y,Z):=\Gamma(U,V),\ \ \ \ (\widetilde Y,\widetilde Z):=\Gamma(\widetilde U,\widetilde V).
$$
That is, for $i=1,\cdots,n$ and $t\in [T-\eps,T]$, we have
$$
\Dis Y_t^i=\xi^i+\int_t^T g^i(s,U_s,V_s(Z_s,Z_s^i;i)){\rm d}s-\int_t^T Z_s^i {\rm d}B_s
$$
and
$$
\Dis \widetilde Y_t^i=\xi^i+\int_t^T g^i(s,\widetilde U_s,\widetilde V_s(\widetilde Z_s,\widetilde Z_s^i;i)){\rm d}s-\int_t^T \widetilde Z_s^i {\rm d}B_s.\vspace{0.1cm}
$$
Here and hereafter, for each $i=1,\cdots,n$, $w\in \R^{1\times d}$ and $H,\bar H\in \R^{n\times d}$, we denote by $H(\bar H,w;i)$ the matrix in $\R^{n\times d}$ whose $i$th row is $w$, and whose $j$th row is $\bar H^j$ for $j=1,\cdots,i-1$ with $i\neq 1$ and $H^j$ for $j=i+1,\cdots,n$ with $i\neq n$. Then, for each $i=1,\cdots,n$ and $\tau\in \mathcal{T}_{[T-\eps,T]}$, we have
\begin{equation}\label{eq:3.31}
\begin{array}{l}
\Dis Y_\tau^i-\widetilde Y_\tau^i+\int_\tau^T \left(Z_s^i-\widetilde Z_s^i \right) {\rm d}B_s\vspace{0.3cm}\\
\hspace{1.4cm}\Dis -\int_\tau^T \mathop{\underbrace{\left(g^i(s,U_s,V_s(Z_s,Z_s^i;i))-g^i(s,U_s,V_s(Z_s,\widetilde Z_s^i;i))\right)}}_{:=\Delta_s^{1,i}}{\rm d}s\\
\ \ \ \Dis =\int_\tau^T \mathop{\underbrace{\left(g^i(s,U_s,V_s(Z_s,\widetilde Z_s^i;i))-g^i(s,\widetilde U_s,\widetilde V_s(\widetilde Z_s,\widetilde Z_s^i;i))\right)}}_{:=\Delta_s^{2,i}}{\rm d}s.
\end{array}
\end{equation}
It follows from Assumption \ref{A:B2} that for each $s\in [T-\eps,T]$ and each $i=1,\cdots,n$, we have
\begin{equation}\label{eq:3.32}
|\Delta_s^{1,i}|\leq \phi(|U_s|)\left(v_s+2|V_s|+2|Z_s|+|\widetilde Z_s|\right)|Z_s^i-\widetilde Z_s^i|
\end{equation}
and
\begin{equation}\label{eq:3.33}
\Dis |\Delta_s^{2,i}|\leq \Dis \phi(|U_s|\vee |\widetilde U_s|)\Big[ \tilde v_s |U_s-\widetilde U_s|+\hat v_s \sum_{j=1}^{i-1}|Z_s^j-\widetilde Z_s^j|+\sqrt{n}(\breve v_s+ \theta \hat v_s)|V_s-\widetilde V_s|\Big]
\end{equation}
with
\begin{equation}\label{eq:3.34-1}
\Dis \tilde v_s:=v_s^{1+\delta}+3|V_s|^{1+\delta}+2|\widetilde V_s|^{1+\delta}+3|Z_s|^{1+\delta}+5|\widetilde Z_s|^{1+\delta},\vspace{0.2cm}
\end{equation}
\begin{equation}\label{eq:3.34-2}
\Dis \hat v_s:=v_s+|V_s|+|\widetilde V_s|+|Z_s|+2|\widetilde Z_s|\vspace{-0.1cm}
\end{equation}
and\vspace{-0.1cm}
\begin{equation}\label{eq:3.34-3}
\Dis \breve v_s:=v_s^\delta+|V_s|^{\delta}+|\widetilde V_s|^{\delta}+|Z_s|^{\delta}+2|\widetilde Z_s|^{\delta}.
\vspace{0.3cm}
\end{equation}

For $i=1,\cdots,n$, define $G_s(i):\equiv 0,\ s\in [0,T-\eps)$ and
$$
G_s(i)=:\frac{\left(Z_s^i-\widetilde Z_s^i \right)^\top }{|Z_s^i-\widetilde Z_s^i|^2} {\bf 1}_{|Z_s^i-\widetilde Z_s^i|\neq 0} \Delta^{1,i}_s,\ \ s\in [T-\eps, T].
$$
By \eqref{eq:3.32} we know that for each $i=1,\cdots,n$ and $s\in [T-\eps,T]$,
\begin{equation}\label{eq:3.35}
\Dis \Delta_s^{1,i}=\left(Z_s^i-\widetilde Z_s^i \right)G_s(i)\ \ \ \  {\rm and}\ \ \ \ |G_s(i)|\leq \phi(|U_s|)\left(v_s+2|V_s|+2|Z_s|+|\widetilde Z_s|\right).
\end{equation}
Then for each $i=1,\cdots,n$, $\widetilde B_t(i):=B_t-\int_0^t G_s(i){\rm d}s$ is a Brownian motion under the probability measure $\mathbb{P}^i$ defined by
$$
\frac{{\rm d}\mathbb{P}^i}{{\rm d}\mathbb{P}}:=\exp\left\{\int_0^T [G_s(i)]^\top {\rm d}B_s-{1\over 2}\int_0^T |G_s(i)|^2{\rm d}s\right\},\vspace{0.1cm}
$$
and from the definition of $\mathcal{B}_\eps$, there exists a constant $\bar K>0$ such that
$$\|[G(i)]^\top \|_{{\rm BMO}_{[T-\eps,T]}}^2\leq \bar K:=4[\phi(K)]^2\left(\|v\|_{{\rm BMO}}^2+9K\right).$$
Furthermore, it follows from Theorem 3.3 in \citet{Kazamaki1994book} that there exist constants $0<L_1\leq 1$ and $L_2\geq 1$ depending only on $\bar K$ such that for any $M\in {\rm BMO}(\R^{n\times d})$ or $M\in {\rm BMO}(\R^{1\times d})$, we have, for each $i=1,\cdots,n$,
\begin{equation}\label{eq:3.36}
L_1 \|M\|_{{\rm BMO}_{[T-\eps,T]}}\leq \|M\|_{{\rm BMO}_{[T-\eps,T]}(\mathbb{P}^i)}\leq L_2 \|M\|_{{\rm BMO}_{[T-\eps,T]}},
\end{equation}
where
$$
\|M\|_{{\rm BMO}_{[T-\eps,T]}(\mathbb{P}^i)}:=\sup_{\tau\in \mathcal {T}_{[T-\eps,T]}}\left\|\E_{\tau}^i\left[\int_{\tau}^T |M_s|^2 {\rm d}s\right]\right\|_{\infty}^{1\over 2} \vspace{0.2cm}
$$
and $\E_{\tau}^i$ denotes the conditional expectation with respect to $\F_\tau$ under the measure $\mathbb{P}^i$.\vspace{0.2cm}

It follows from \eqref{eq:3.31} and \eqref{eq:3.35} that
$$
Y_\tau^i-\widetilde Y_\tau^i+\int_\tau^T \left(Z_s^i-\widetilde Z_s^i \right) {\rm d}\widetilde B_s(i)=\int_\tau^T \Delta^{2,i}_s {\rm d}s,\ \ i=1,\cdots,n,\ \ \tau\in \mathcal{T}_{[T-\eps,T]}.
$$
Taking square and then the conditional mathematical expectation under $\mathbb{P}^i$ on both sides of the last equation, in view of \eqref{eq:3.33} and the definition of $\mathcal{B}_\eps$ together with H\"older's inequality we can deduce that for each $i=1,\cdots,n$ and $\tau\in \mathcal{T}_{[T-\eps,T]}$,
\begin{equation}\label{eq:3.37}
\begin{array}{l}
\Dis |Y_\tau^i-\widetilde Y_\tau^i|^2+\E^i_\tau\left[\int_\tau^T \left|Z_s^i-\widetilde Z_s^i \right|^2 {\rm d}s\right]\leq \Dis 3[\phi(K)]^2 \E^i_\tau\left[\left(\int_\tau^T \tilde v_s{\rm d}s\right)^2\right] \|U-\widetilde U\|_{\s^\infty_{[T-\eps,T]}}^2\vspace{0.2cm}\\
\Dis\ \ \  +3n[\phi(K)]^2 \left\{\E^i_\tau\left[\left(\int_\tau^T |\hat v_s|^2{\rm d}s\right)^2\right]\right\}^{1\over 2}\  \sum_{j=1}^{i-1} \left\{\E^i_\tau\left[\left(\int_\tau^T |Z_s^j-\widetilde Z_s^j|^2{\rm d}s\right)^2\right]\right\}^{1\over 2}\vspace{0.2cm}\\
\Dis\ \ \  +3n[\phi(K)]^2 \left\{\E^i_\tau\left[\left(\int_\tau^T |\breve v_s+ \theta \hat v_s|^2{\rm d}s\right)^2\right]\right\}^{1\over 2} \left\{\E^i_\tau\left[\left(\int_\tau^T |V_s-\widetilde V_s|^2{\rm d}s\right)^2\right]\right\}^{1\over 2}.
\end{array}
\end{equation}
It follows from the energy inequality for BMO martingales (see for example Section 2.1 in \citet{Kazamaki1994book}) together with \eqref{eq:3.36} that for each $i=1,\cdots,n$ and $\tau\in \mathcal{T}_{[T-\eps,T]}$,
$$
\begin{array}{lll}
\Dis \sum_{j=1}^{i-1} \left\{\E^i_\tau\left[\left(\int_\tau^T |Z_s^j-\widetilde Z_s^j|^2{\rm d}s\right)^2\right]\right\}^{1\over 2}&\leq & \Dis \sqrt{2}\sum_{j=1}^{i-1}\|Z^j-\widetilde Z^j\|_{{\rm BMO}_{[T-\eps,T]}(\mathbb{P}^i)}^2\\
&\leq & \Dis \sqrt{2} L_2^2\sum_{j=1}^{i-1}\|Z^j-\widetilde Z^j\|_{{\rm BMO}_{[T-\eps,T]}}^2
\end{array}
$$
and
$$
\Dis \left\{\E^i_\tau\left[\left(\int_\tau^T |V_s-\widetilde V_s|^2{\rm d}s\right)^2\right]\right\}^{1\over 2}\leq \Dis \sqrt{2}\|V-\widetilde V\|_{{\rm BMO}_{[T-\eps,T]}(\mathbb{P}^i)}^2\leq  \Dis \sqrt{2}L^2_2 \|V-\widetilde V\|_{{\rm BMO}_{[T-\eps,T]}}^2.\vspace{0.2cm}
$$
Furthermore, in view of \eqref{eq:3.34-1}-\eqref{eq:3.34-3}, using H\"older's inequality and the energy inequality for BMO martingales together with \eqref{eq:3.36} and the definition of $\mathcal{B}_\eps$ we can derive that for each $i=1,\cdots,n$ and $\tau\in \mathcal{T}_{[T-\eps,T]}$,
\begin{equation}\label{eq:3.38a}
\E^i_\tau\left[\left(\int_\tau^T \tilde v_s{\rm d}s\right)^2\right]\leq 10\eps^{1-\delta} L^2_4 \|v\|_{\rm BMO}^4+1000\eps^{1-\delta}\left(1+L^4_2 K^2\right),
\end{equation}
\begin{equation}\label{eq:3.38b}
\Dis \left\{\E^i_\tau\left[\left(\int_\tau^T |\hat v_s|^2{\rm d}s\right)^2\right]\right\}^{1\over 2}\leq 400 L^2_2 \left( \|v\|_{\rm BMO}^2+K \right),
\end{equation}
and
\begin{equation}\label{eq:3.38}
\begin{array}{l}
\Dis \left\{\E^i_\tau\left[\left(\int_\tau^T |\breve v_s+ \theta \hat v_s|^2{\rm d}s\right)^2\right]\right\}^{1\over 2} \vspace{0.2cm}\\
\ \ \Dis \leq 800\theta^2 L^2_2 \left(\|v\|_{\rm BMO}^2+K\right)+800 L^2_2 \left(\|v\|_{\rm BMO}^2+K+1\right)\eps^{1-\delta}.\vspace{0.2cm}
\end{array}
\end{equation}
Their proof will be postponed to the end of this section. Combining those inequalities from \eqref{eq:3.37} to \eqref{eq:3.38} yields that for each $i=1,\cdots, n$,
$$
\begin{array}{l}
\Dis \|Y^i-\widetilde Y^i\|_{\s^\infty_{[T-\eps,T]}}^2+L^2_1 \|Z^i-\widetilde Z^i\|_{{\rm BMO}_{[T-\eps,T]}}^2\vspace{0.2cm}\\
\ \ \leq \Dis  C_3 \eps^{1-\delta}\left(\|U-\widetilde U\|_{\s^\infty_{[T-\eps,T]}}^2+\|V-\widetilde V\|_{{\rm BMO}_{[T-\eps,T]}}^2\right)+2\theta^2C_4 \|V-\widetilde V\|_{{\rm BMO}_{[T-\eps,T]}}^2\vspace{0.2cm}\\
\hspace{0.7cm}\Dis +C_4 \sum_{j=1}^{i-1}\|Z^j-\widetilde Z^j\|_{{\rm BMO}_{[T-\eps,T]}}^2,\ \
\end{array}
$$
and then,
\begin{equation}\label{eq:3.39}
\begin{array}{l}
\Dis \|Y^i-\widetilde Y^i\|_{\s^\infty_{[T-\eps,T]}}^2+\|Z^i-\widetilde Z^i\|_{{\rm BMO}_{[T-\eps,T]}}^2\vspace{0.3cm}\\
\ \ \leq \Dis  {C_3 \eps^{1-\delta}+2\theta^2 C_4 \over L_1^2}\left(\|U-\widetilde U\|_{\s^\infty_{[T-\eps,T]}}^2+\|V-\widetilde V\|_{{\rm BMO}_{[T-\eps,T]}}^2\right) \vspace{0.2cm}\\
\hspace{0.7cm}\Dis +{C_4\over L_1^2} \sum_{j=1}^{i-1}\|Z^j-\widetilde Z^j\|_{{\rm BMO}_{[T-\eps,T]}}^2,
\end{array}
\end{equation}
where
$
\Dis C_3:=\Dis 3[\phi(K)]^2 \left\{10 L_2^4 \|v\|_{\rm BMO}^4+1000\left(1+L_2^4 K^2\right)+800\sqrt{2}n L_2^4 \left(\|v\|_{\rm BMO}^2+K+1\right)\right\}
$
and
$$
C_4 :=1200\sqrt{2}nL_2^4 [\phi(K)]^2 \left(\|v\|_{\rm BMO}^2+K\right).\vspace{0.1cm}
$$

Next, in view of \eqref{eq:3.39}, by induction for $i$ we can deduce that for each $i=1,\cdots,n$,
$$
\begin{array}{ll}
\Dis \sum_{j=1}^{i}\|Y^i-\widetilde Y^i\|_{\s^\infty_{[T-\eps,T]}}^2+\sum_{j=1}^{i}\|Z^i-\widetilde Z^i\|_{{\rm BMO}_{[T-\eps,T]}}^2\vspace{0.2cm}\\
\ \ \leq \Dis  {C_3 \eps^{1-\delta}+2\theta^2 C_4 \over L_1^2} C_5^i \left(\|U-\widetilde U\|_{\s^\infty_{[T-\eps,T]}}^2+\|V-\widetilde V\|_{{\rm BMO}_{[T-\eps,T]}}^2\right),
\end{array}
$$
where
$$
C_5^1:=1\ \ {\rm and}\ \
 C_5^i:=C_5^{i-1}+1+{C_4\over L_1^2}C_5^{i-1}\ {\rm for}\ i=2,\cdots,n,\ {\rm recursively}.\vspace{0.2cm}
$$
In particular, letting $i=n$ in the last equation yields that
$$
\left\|\left(Y-\widetilde Y, Z-\widetilde Z\right)\right\|_{\mathcal{B}_\eps}^2 \leq  {C_3 \eps^{1-\delta}+2\theta^2 C_4 \over L_1^2} C_5^n \left\|\left(U-\widetilde U, V-\widetilde V\right)\right\|_{\mathcal{B}_\eps}^2.
$$
Consequently, there exist two very small positive numbers $\eps_0$ and $\theta_0$ such that for each $\theta\in [0,\theta_0]$, the solution map $\Gamma$ is a contraction on the previously given set $\mathcal{B}_{\eps_0}$. \vspace{0.2cm}

Finally, we remark that in view of (i) of \cref{lem:3.1}, all above arguments remain valid for $p=1$ and $q=0$ when $\lambda=\theta=0$. The proof of \cref{thm:2.3} is then completed.\vspace{0.2cm}

Now, using a similar argument to that in pages 1078-1079 of \citet{HuTang2016SPA}, we give the proof of \eqref{eq:3.38a}-\eqref{eq:3.38}. First of all, from the energy inequality for BMO martingales, \eqref{eq:3.36} and the definition of $\mathcal{B}_{\eps}$, we can deduce that for each $i=1,\cdots,n$ and $\tau\in \mathcal{T}_{[T-\eps,T]}$,
\begin{equation}\label{eq:3.44}
\hspace*{-0.45cm}
\begin{array}{l}
\Dis \left\{\E_{\tau}^i\left[\left(\int_{\tau}^T|V_s|^2{\rm d}s\right)^2\right]\right\}^{1\over 2}\leq \sqrt{2} \|V\|^2_{{\rm BMO}_{[T-\eps,T]}(\p^i)}\leq \sqrt{2} L_2^2  \|V\|^2_{{\rm BMO}_{[T-\eps,T]}}\leq \sqrt{2} L_2^2 K,\vspace{0.1cm}\\
\Dis \left\{\E_{\tau}^i\left[\left(\int_{\tau}^T|\widetilde V_s|^2{\rm d}s\right)^2\right]\right\}^{1\over 2}\leq \sqrt{2} \|\widetilde V\|^2_{{\rm BMO}_{[T-\eps,T]}(\p^i)}\leq \sqrt{2} L_2^2  \|\widetilde V\|^2_{{\rm BMO}_{[T-\eps,T]}}\leq \sqrt{2} L_2^2 K,\vspace{0.1cm}\\
\Dis \left\{\E_{\tau}^i\left[\left(\int_{\tau}^T|Z_s|^2{\rm d}s\right)^2\right]\right\}^{1\over 2}\leq \sqrt{2} \|Z\|^2_{{\rm BMO}_{[T-\eps,T]}(\p^i)}\leq \sqrt{2} L_2^2  \|Z\|^2_{{\rm BMO}_{[T-\eps,T]}}\leq \sqrt{2} L_2^2 K,\vspace{0.1cm}\\
\Dis \left\{\E_{\tau}^i\left[\left(\int_{\tau}^T|\widetilde Z_s|^2{\rm d}s\right)^2\right]\right\}^{1\over 2}\leq \sqrt{2} \|\widetilde Z\|^2_{{\rm BMO}_{[T-\eps,T]}(\p^i)}\leq \sqrt{2} L_2^2  \|\widetilde Z\|^2_{{\rm BMO}_{[T-\eps,T]}}\leq \sqrt{2} L_2^2 K,\vspace{0.1cm}\\
\Dis \left\{\E_{\tau}^i\left[\left(\int_{\tau}^T v_s^2{\rm d}s\right)^2\right]\right\}^{1\over 2}\leq \sqrt{2} \|v\|^2_{{\rm BMO}_{[T-\eps,T]}(\p^i)}\leq \sqrt{2} L_2^2  \|v\|^2_{{\rm BMO}_{[T-\eps,T]}}\leq \sqrt{2} L_2^2  \|v\|^2_{{\rm BMO}}.
\end{array}
\end{equation}
Furthermore, by H\"{o}lder's inequality we have that for each $i=1,\cdots,n$ and $\tau\in \mathcal{T}_{[T-\eps,T]}$,
\begin{equation}\label{eq:3.45}
\begin{array}{l}
\Dis \E^i_\tau\left[\left(\int_\tau^T \tilde v_s{\rm d}s\right)^2\right]\leq 5\eps^{1-\delta}\E^i_\tau\left[\left(\int_\tau^T v_s^2{\rm ds}\right)^{1+\delta}\right]+45\eps^{1-\delta}\E^i_\tau\Bigg[\left(\int_\tau^T |V_s|^2{\rm ds}\right)^{1+\delta}\Bigg]\vspace{0.1cm}\\
\ \ \ \ \ \ \Dis +5\eps^{1-\delta}\E^i_\tau\Bigg[4\left(\int_\tau^T |\tilde V_s|^2{\rm ds}\right)^{1+\delta}+9\left(\int_\tau^T |Z_s|^2{\rm ds}\right)^{1+\delta}+25\left(\int_\tau^T |\tilde Z_s|^2{\rm ds}\right)^{1+\delta}\Bigg]\vspace{0.1cm}\\
\ \ \Dis \leq 240\eps^{1-\delta}+5\eps^{1-\delta}\E^i_\tau\left[\left(\int_\tau^T v_s^2{\rm ds}\right)^2\right]+45\eps^{1-\delta}\E^i_\tau\Bigg[\left(\int_\tau^T |V_s|^2{\rm ds}\right)^2\Bigg]\vspace{0.1cm}\\
\ \ \ \ \ \ \Dis +125\eps^{1-\delta}\E^i_\tau\Bigg[\left(\int_\tau^T |\tilde V_s|^2{\rm ds}\right)^2+\left(\int_\tau^T |Z_s|^2{\rm ds}\right)^2+\left(\int_\tau^T |\tilde Z_s|^2{\rm ds}\right)^2\Bigg],\vspace{0.1cm}
\end{array}
\end{equation}
\begin{equation}\label{eq:3.46}
\begin{array}{l}
\Dis \E^i_\tau\left[\left(\int_\tau^T |\breve v_s|^2{\rm d}s\right)^2\right]\leq 25\E_{\tau}^i\left[\left(\int_{\tau}^T\left(v_s^{2\delta}+|V_s|^{2\delta}+|\widetilde V_s|^{2\delta}+|Z_s|^{2\delta}+4|\widetilde Z_s|^{2\delta}\right){\rm d}s\right)^2\right]\vspace{0.1cm}\\
\ \ \Dis \leq 125\eps^{2(1-\delta)}\E_{\tau}^i\left[\left(\int_{\tau}^T v_s^2 {\rm d}s\right)^{2\delta}+\left(\int_{\tau}^T |V_s|^2 {\rm d}s\right)^{2\delta}+\left(\int_{\tau}^T |\widetilde V_s|^2 {\rm d}s\right)^{2\delta}\right]\vspace{0.1cm}\\
\ \ \ \ \ \ \Dis +125\eps^{2(1-\delta)}\E_{\tau}^i\left[\left(\int_{\tau}^T |Z_s|^2 {\rm d}s\right)^{2\delta}+16\left(\int_{\tau}^T |\widetilde Z_s|^2 {\rm d}s\right)^{2\delta}\right]\vspace{0.1cm}\\
\ \ \Dis \leq 2500\eps^{2(1-\delta)}+125\eps^{2(1-\delta)}\E_{\tau}^i\left[\left(\int_{\tau}^T v_s^2 {\rm d}s\right)^2+\left(\int_{\tau}^T |V_s|^2 {\rm d}s\right)^2\right]\vspace{0.1cm}\\
\ \ \ \ \ \ \Dis +2000\eps^{2(1-\delta)}\E_{\tau}^i\left[\left(\int_{\tau}^T |\widetilde V_s|^2 {\rm d}s\right)^2+\left(\int_{\tau}^T |Z_s|^2 {\rm d}s\right)^2+\left(\int_{\tau}^T |\widetilde Z_s|^2 {\rm d}s\right)^2\right]
\end{array}
\end{equation}
and
\begin{equation}\label{eq:3.47}
\begin{array}{l}
\Dis \E^i_\tau\left[\left(\int_\tau^T |\hat v_s|^2{\rm d}s\right)^2\right]\leq 25\E^i_\tau\Bigg[\left(\int_\tau^T \left(v_s^2+|V_s|^2+|\tilde V_s|^2+|Z_s|^2+4|\tilde Z_s|^2\right){\rm ds}\right)^2\Bigg]\vspace{0.1cm}\\
\ \ \Dis \leq 125\E^i_\tau\Bigg[\left(\int_\tau^T v_s^2{\rm ds}\right)^2+\left(\int_\tau^T |V_s|^2{\rm ds}\right)^2+\left(\int_\tau^T |\tilde V_s|^2{\rm ds}\right)^2+\left(\int_\tau^T |Z_s|^2{\rm ds}\right)^2\Bigg]\vspace{0.1cm}\\
\Dis \ \ \ \ \ \ +2000\E^i_\tau\Bigg[\left(\int_\tau^T |\tilde Z_s|^2{\rm ds}\right)^2\Bigg].
\end{array}
\end{equation}
Finally, in view of
$$
\begin{array}{l}
\Dis \left\{\E^i_\tau\left[\left(\int_\tau^T |\breve v_s+ \theta \hat v_s|^2{\rm d}s\right)^2\right]\right\}^{1\over 2}\vspace{0.1cm}\\
\Dis \ \ \leq 2\left\{\E^i_\tau\left[\left(\int_\tau^T |\breve v_s|^2{\rm d}s\right)^2\right]\right\}^{1\over 2}+2\theta^2 \left\{\E^i_\tau\left[\left(\int_\tau^T |\hat v_s|^2{\rm d}s\right)^2\right]\right\}^{1\over 2},
\end{array}
$$
the desired inequalities \eqref{eq:3.38a}-\eqref{eq:3.38} follows immediately from \eqref{eq:3.44}-\eqref{eq:3.47}.

\section{Global bounded solution: proof of Theorems~\ref{thm:2.5} and \ref{thm:2.8a}}
\label{sec:4-global bounded solution}
\setcounter{equation}{0}

To prove the existence of global bounded solution, we need  some uniform estimates of the solution.

\subsection{Proof of \cref{thm:2.5}\vspace{0.2cm}}

To prove \cref{thm:2.5}, we need the following proposition.

\begin{pro}\label{pro:4.1}
Let $\xi\in L^{\infty}(\R^n)$, $\alpha\in \ecal^{\infty}(p\gamma\exp(\beta T))$ for some $p>1$ and the generator $g$ satisfy Assumption \ref{A:C1a}. Assume that for some $h\in (0,T]$, BSDE \eqref{eq:2.1} has a solution $(Y,Z)\in \s^{\infty}_{[T-h,T]}(\R^n)\times {\rm BMO}_{[T-h,T]}(\R^{n\times d})$ on the time interval $[T-h,T]$. Then, there exists a $\widetilde K>0$ depending only on $(\|\xi\|_{\infty}, \|\alpha\|_{\ecal^{\infty}(p\gamma\exp(\beta T))}, \|\bar\alpha\|_{\mcal^{\infty}}, \|\tilde\alpha\|_{\lcal^{\infty}}, \|v\|_{{\rm BMO}}, n,\beta,\gamma,\bar\gamma,\lambda,\bar\lambda,c,\bar c,\delta,T,p)$ and being independent of $h$ such that for each $\theta\in [0,1/[4n(q\gamma\vee 1)\widetilde K]]$ with $q:={p\over p-1}$, we have
$$
\|Y\|_{\s^{\infty}_{[T-h,T]}}+\|Z\|_{{\rm BMO}_{[T-h,T]}}^2 \leq \widetilde K.
$$
Moreover, the preceding assertion is still true for $p=1$ and $q=0$ when $\lambda=0$ and $\theta=0$.
\end{pro}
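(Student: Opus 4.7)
The plan is to derive an a priori bound that is uniform in $h\in(0,T]$ by a component-wise induction on $i$, using an exponential Gronwall weight to handle the $\beta|y|$-growth and exploiting the sign-split structure of \ref{A:C1a} to eliminate the otherwise uncontrollable $\phi(|y|)$-term. The key structural observation is: under \ref{A:C1a}, in each of the three cases (i)--(iii), the scalar product $g^i(\omega,t,y,z)\,\mathrm{sgn}(y^i)$ is dominated by an expression of the form (bounded stochastic forcing)$\,+\,\beta|y|\,+\,$($z$-terms) with \emph{no} $\phi(|y|)$ contribution. Indeed, on $\{y^i>0\}$ the upper bounds in \ref{A:C1a} contribute the $\beta|y|\mathbf{1}_{y^i>0}$ piece (the $\phi$-indicator vanishing); on $\{y^i<0\}$, $\mathrm{sgn}(y^i)=-1$ converts the lower bound into an upper bound involving $\beta|y|$ instead of $\phi(|y|)$. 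This is precisely the structural role of the sign-indicators built into \ref{A:C1a}.

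With this in hand, freeze all rows of $Z$ other than $Z^i$. The $i$-th row becomes a one-dimensional quadratic BSDE whose driver, after the sign trick and after absorbing the linear $y$-dependence by a Gronwall weight of the form $e^{\beta(T-t)}$ (or the more refined exponential transform $\exp(\gamma e^{\beta(T-t)}|Y^i_t|)$), retains its original quadratic growth in $z^i$ together with sub-quadratic and $\theta$-small quadratic forcings in the other rows $z^j$. According to whether index $i$ falls under \ref{A:C1a}(i), (ii) or (iii), the corresponding one-dimensional a priori estimate \cref{pro:A.1}, \cref{pro:A.2} or \cref{pro:A.3} applies. The Gronwall factor $e^{\beta T}$ picked up in the transformation is exactly what forces the exponential integrability hypothesis on $\alpha$ to be stated against $p\gamma e^{\beta T}$.

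The induction on $i$ then proceeds as in the proof of \cref{thm:2.3}: having controlled $\|Y^j\|_{\s^\infty_{[T-h,T]}}+\|Z^j\|^2_{{\rm BMO}_{[T-h,T]}}$ for $j<i$, one applies \cref{lem:3.1} to absorb the forcings $\bar c\sum_{j<i}|z^j|^2$, $\lambda\sum_{j\neq i}|z^j|^{1+\delta}$ and $\theta\sum_{j\neq i}|z^j|^2$ into a single process $\breve\alpha$ whose $\ecal^\infty(p\gamma e^{\beta T})$- or $\mcal^\infty$-norm is bounded by the already-estimated quantities plus $\theta$-dependent contributions from $\sum_{j>i}\|Z^j\|^2_{{\rm BMO}_{[T-h,T]}}$. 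Feeding this into the one-dimensional estimate yields
$$\|Y^i\|_{\s^\infty_{[T-h,T]}}+\|Z^i\|_{{\rm BMO}_{[T-h,T]}}^2 \le A_i + B_i\,\theta\sum_{j>i}\|Z^j\|_{{\rm BMO}_{[T-h,T]}}^2$$
for constants $A_i,B_i$ depending only on the listed parameters. Summing over $i$ produces an inequality $S\le A+B\,\theta\,S$ for the total sum $S$; choosing $\theta$ small enough (as a function of $A,B$) gives a uniform bound $S\le 2A=:\widetilde K$, independent of $h$. The boundary case $p=1$, $q=0$, $\lambda=\theta=0$ is immediate since no $\lambda$- or $\theta$-terms remain to be absorbed.

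The main obstacle I expect is the circular dependence between $\theta_0$ and $\widetilde K$: the admissible range for $\theta$ depends on $\widetilde K$ through the compatibility constraint $\theta\le 1/[4n(q\gamma\vee 1)\widetilde K]$, yet $\widetilde K$ arises from the inductive estimate that itself depends on $\theta$. This is resolved by a two-stage argument: first, impose only the crude condition $B\theta<1/2$ to close the inductive estimate with a finite $\widetilde K$; then, shrink $\theta_0$ a posteriori to enforce the sharper compatibility bound. A secondary technical point is that the sign-trick applied through $\mathrm{sgn}(y^i)$ requires handling the local-time contributions in the Itô--Tanaka formula, which is standard via approximation of $|\cdot|$ by smooth symmetric functions, but must be carried out with some care in conjunction with the exponential transformation.
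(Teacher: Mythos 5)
There is a genuine gap in the closing step. Your structural observations (the sign-split in \ref{A:C1a} eliminating $\phi(|y|)$ from the one-sided bounds, freezing the other rows, invoking \cref{pro:A.1,pro:A.2,pro:A.3} case by case, and inducting on $i$ to absorb $\bar c\sum_{j<i}|z^j|^2$) all match the paper. But your claimed inductive output
$\|Y^i\|_{\s^\infty}+\|Z^i\|^2_{{\rm BMO}}\le A_i+B_i\,\theta\sum_{j>i}\|Z^j\|^2_{{\rm BMO}}$
is not what the one-dimensional estimates deliver. The forcings $\beta|y|$ (the \emph{full} Euclidean norm $|y|$, coupling all components, which no per-component exponential weight $e^{\beta(T-t)}$ can absorb), $\lambda\sum_{j\neq i}|z^j|$ in the upper bound of \ref{A:C1a}(i), and $\bar\lambda\sum_{j>i}|z^j|^{1+\delta}$ in its lower bound all enter the auxiliary processes $\check\alpha,\dot\alpha$ of \cref{lem:3.1} \emph{without} a factor of $\theta$; after Young's inequality they contribute terms of the form $\bar C_2\|Y\|_{\s^\infty}(T-t)$ and $\bar C_3\|Z\|^2_{{\rm BMO}}(T-t)^{(1-\delta)/2}$ involving all rows $j>i$ and all of $Y$ (this is the paper's estimate \eqref{eq:4.5}). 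Consequently your inequality $S\le A+B\theta S$ is false as stated, and "choose $\theta$ small" cannot close the argument: the self-referential terms are small in $T-t$, not in $\theta$.

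The missing idea is the time-splitting and iteration that constitutes the core of the paper's proof: one first fixes $\eps_0>0$ depending only on the listed parameters so that on an interval of length $\eps\le\eps_0$ the terms $\bar C_2\|Y\|(T-t)$ and $\bar C_3\|Z\|^2(T-t)^{(1-\delta)/2}$ can be absorbed into the left-hand side, obtaining a bound $2\bar C_1\bar C_5^n$ on $[T-\eps,T]$; one then uses $Y_{T-\eps}$ as a new (uniformly bounded) terminal value and repeats on $[T-2\eps,T-\eps]$, with constants $\bar C_6^m$ defined recursively through the function $\Phi$. Since the number of steps is at most $1+\lceil T/\eps_0\rceil$, independent of $h$, this yields the uniform $\widetilde K$. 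Without this mechanism (or an equivalent global Gronwall treating the $Y$- and $Z$-feedback simultaneously), neither the finiteness of the bound nor its independence of $h$ --- which is the entire content of the proposition --- is established. Your two-stage resolution of the $\theta_0$--$\widetilde K$ circularity is fine, but it addresses a secondary issue, not this one.
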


\begin{proof}
We only prove the case of $p>1$. The other case can be proved in the same way. \vspace{0.2cm}

First of all, define an increasing non-negative real-valued function $\Phi:[0,+\infty)\To [0,+\infty)$ as follows:\vspace{0.1cm}
$$
\begin{array}{l}
\Phi(x):=\Dis {2(1+\beta T)+\bar\gamma\over \bar\gamma}\exp\left(2\beta T\right)\bigg[2+2n\bar\lambda T^{1-\delta\over 2}+{\ln 2\over \gamma\exp(\beta T) }+\|\alpha\|_{\ecal^{\infty}(p\gamma \exp(\beta T))}+2\left\|\bar\alpha\right\|_{\mcal^{\infty}}\vspace{0.2cm}\\
\hspace{1.5cm} \Dis +3x\bigg]+ {4(\gamma +1) \over \gamma^2}\exp\left\{4\gamma \exp(\beta T)\left(\|\xi\|_{\infty}+\|\tilde\alpha\|_{\lcal^{\infty}}\right)\right\}
\bigg[\beta T \exp(\beta T)\left(\|\xi\|_{\infty}+
\|\tilde\alpha\|_{\lcal^{\infty}}\right)\vspace{0.2cm}\\
\hspace{1.5cm} \Dis +1+\|\tilde\alpha\|_{\lcal^{\infty}}+2\|v\|^2_{{\rm BMO}_{[t,T]}}\bigg]+1+c_0 \exp\left(2\beta T+ 2\bar\lambda^2 T\right)\left(6
+6\|\bar\alpha\|_{\mcal^\infty}^2+x^2\right),\vspace{0.2cm}
\end{array}
$$
and define the following four constants:
$$
\bar C_1:= \Phi(\|\xi\|_{\infty}),\ \ \ \ \ \ \ \bar C_2 := \Dis {6\beta(1+\beta T)+3\beta\bar\gamma\over \bar\gamma} \exp\left(2\beta T\right),
$$
$$
\bar C_3 :=\Dis {2(1+\beta T)+\bar\gamma\over \bar\gamma}\exp\left(2\beta T\right)\left(C_{p,n,\beta,\gamma,\lambda,T}+2n\bar\lambda \right)+12nc_0\bar\lambda^2 T \exp\left(2\beta T+2\bar\lambda^2 T\right)\vspace{0.2cm}
$$
and
$$
\bar C_4 :=\Dis {4\bar c(1+\beta T)+2c\bar\gamma\over \bar\gamma}\exp\left(2\beta T\right)+{8nc^2(\gamma +1) \over \gamma^2}\exp\left\{4\gamma \exp(\beta T)\left(\|\xi\|_{\infty}+
\|\tilde\alpha\|_{\lcal^{\infty}}\right)\right\},\vspace{0.2cm}
$$
where the uniform constant $c_0>0$ is defined in (i) of \cref{pro:A.3} in Appendix, and
\begin{equation}\label{eq:4.1}
C_{p,n,\beta,\gamma,\lambda,T}:=
{n\gamma p\lambda^2\over 2(p-1)}\exp(\beta T).\vspace{0.2cm}
\end{equation}

Note that $(Y,Z)\in \s^{\infty}_{[T-h,T]}\times {\rm BMO}_{[T-h,T]}$ is  a solution of BSDE \eqref{eq:2.2} on $[T-h,T]$. Then for each $i=1,\cdots,n$, $(Y,Z)$ also solves the following BSDE:
\begin{equation}\label{eq:4.2}
Y_t^i=\xi^i+\int_t^T f^i(s,Y_s^i,Z_s^i) {\rm d}s-\int_t^T Z_s^i {\rm d}B_s, \ \ t\in [T-h,T],
\end{equation}
where for each $(\omega,t,y,z)\in \Omega\times [T-h,T]\times\R\times\R^{1\times d}$,
\begin{equation}\label{eq:4.3}
f^i(t,y,z):=g^i(t,Y_t(y;i),Z_t(z;i)).
\end{equation}
In the sequel, let $q:={p\over p-1}$ and the constant $\theta$ always satisfy
\begin{equation}\label{eq:4.4}
4n(q\gamma\vee 1)\theta \|Z\|_{{\rm BMO}_{[T-h,T]}}^2\leq 1.
\end{equation}
We will first prove that for each $i=1,\cdots,n$ and each $t\in [T-h,T]$, if $T-t\leq 1$, then
\begin{equation}\label{eq:4.5}
\hspace*{-0.2cm}
\begin{array}{l}
\Dis \|Y^i\|_{\s^{\infty}_{[T-t,T]}}+\|Z^i\|_{{\rm BMO}_{[T-t,T]}}^2 \\
\ \ \leq \Dis \bar C_1+\bar C_2 \|Y\|_{\s^{\infty}_{[T-t,T]}} (T-t)+\bar C_3 \|Z\|_{{\rm BMO}_{[T-t,T]}}^2 (T-t)^{1-\delta\over 2}+\bar C_4 \sum_{j=1}^{i-1} \|Z^j\|_{{\rm BMO}_{[T-t,T]}}^{2}.
\end{array}
\end{equation}
Indeed, for any fixed $i=1,\cdots,n$, since $g$ satisfies Assumption \ref{A:C1a}, in view of \cref{rmk:2.4}, we need to consider the following three cases.\vspace{0.2cm}

(1) $g^i$ satisfies (i) of Assumption \ref{A:C1a}. In this case, it follows from \eqref{eq:4.3} that $\as$ on $\Omega\times [T-h,T]$, for each $(y,z)\in \R\times \R^{1\times d}$, we have
\begin{equation}\label{eq:4.6}
f^i(t,y,z)\, {\bf 1}_{y>0} \leq  \Dis \check\alpha_t+\beta |y|+\frac{\gamma}{2} |z|^2\quad {\rm and} \quad
f^i(t,y,z)\geq \Dis \frac{\bar \gamma}{2} |z|^2-\dot\alpha_t-\beta |y|,
\end{equation}
where
$$
\check\alpha_t:=\alpha_t+\beta |Y_t| +\sum_{j\neq i}\left(\lambda |Z^j_t|+\theta |Z^j_t|^2\right)
$$
and
$$
\dot\alpha_t:=\bar\alpha_t+\beta |Y_t|+\bar c\sum_{j=1}^{i-1}|Z^j_t|^2+\sum_{j=i+1}^{n} \left(\bar\lambda |Z^j_t|^{1+\delta}+\theta |Z^j_t|^2\right).\vspace{0.2cm}
$$
Since $\alpha\in \ecal^{\infty}_{[T-h,T]}(p\gamma \exp(\beta T))$, $\bar\alpha\in \mcal^{\infty}_{[T-h,T]}$ and $|Y|\in \lcal^\infty_{[T-h,T]}$, in view of (ii) and (i) of \cref{rmk:2.1} as well as \eqref{eq:4.4}, applying (i) of \cref{lem:3.1} with
$$
r=\gamma \exp(\beta T),\ \bar\delta=0,\ \bar t=T-h,\ V=\bar V=Z,\ \breve\alpha=\check\alpha\ \ {\rm and}\ \ \hat\alpha=\alpha+\beta |Y|
$$
and (ii) of \cref{lem:3.1} with
$$\bar\delta=\delta,\ \bar t=T-h,\ V=\bar V=Z,\ \breve\alpha=\dot\alpha\ \ {\rm and}\ \ \hat\alpha=\bar\alpha+\beta |Y|
$$
we can deduce that
\begin{equation}\label{eq:4.7}
\begin{array}{lll}
\Dis \|\check\alpha\|_{\ecal^{\infty}_{[t,T]}(\gamma \exp(\beta T))} &\leq &\Dis \|\alpha\|_{\ecal^{\infty}_{[t,T]}(p\gamma \exp(\beta T))}+\beta \|Y\|_{\s^\infty_{[t,T]}}(T-t)+{\ln 2\over \gamma \exp(\beta T)}\vspace{0.2cm}\\
&&\Dis +C_{p,n,\beta,\gamma,\lambda,T}\|Z\|_{{\rm BMO}_{[t,T]}}^2 (T-t)<+\infty,\ \ t\in [T-h,T]
\end{array}
\end{equation}
and
\begin{equation}\label{eq:4.8}
\begin{array}{lll}
\Dis \left\|\dot\alpha\right\|_{\mcal^{\infty}_{[t,T]}} &\leq &\Dis 1+\left\|\bar\alpha\right\|_{\mcal^{\infty}_{[t,T]}}+\beta \|Y\|_{\s^\infty_{[t,T]}} (T-t)+n\bar\lambda \|Z\|_{{\rm BMO}_{[t,T]}}^{1+\delta}(T-t)^{1-\delta\over 2}\vspace{0.2cm}\\
&&\Dis +\bar c\sum_{j=1}^{i-1}\|Z^j\|^2_{{\rm BMO}_{[t,T]}}<+\infty,\ \ t\in [T-h,T],
\end{array}
\end{equation}
where the constant $C_{p,n,\beta,\gamma,\lambda,T}$ is defined in \eqref{eq:4.1}. Combining \eqref{eq:4.6}, \eqref{eq:4.7} and \eqref{eq:4.8} yields that on $\Omega\times [T-h,T]$, the generator $f^i$ satisfies the second inequality for the case of $y>0$ and the first inequality in Assumption \ref{A:A1} with $\bar\beta=\beta$ and $\varphi(\cdot)\equiv 0$. It then follows from (i) of \cref{pro:A.1} that for each $t\in [T-h,T]$ such that $T-t\leq 1$, we have\vspace{0.2cm}
\begin{equation}\label{eq:4.9}
\begin{array}{l}
\Dis \|Y^i\|_{\s^{\infty}_{[t,T]}}+\|Z^i\|_{{\rm BMO}_{[t,T]}}^2 \leq  \Dis {2(1+\beta T)+\bar\gamma\over \bar\gamma}\exp\left(2\beta T\right)\bigg\{2+2n\bar\lambda T^{1-\delta\over 2}+{\ln 2\over \gamma\exp(\beta T) } \vspace{0.2cm}\\
\Dis \hspace{0.5cm} +3\|\xi^i\|_{\infty} +\|\alpha\|_{\ecal^{\infty}(p\gamma \exp(\beta T))}+2\left\|\bar\alpha\right\|_{\mcal^{\infty}}+3\beta \|Y\|_{\s^\infty_{[t,T]}} (T-t)\vspace{0.2cm}\\
\Dis \hspace{0.5cm} +\left(C_{p,n,\beta,\gamma,\lambda,T}+2n\bar\lambda \right) \|Z\|_{{\rm BMO}_{[t,T]}}^2 (T-t)^{1-\delta\over 2}+2\bar c\sum_{j=1}^{i-1}\|Z^j\|^2_{{\rm BMO}_{[t,T]}}\Big\}.
\end{array}
\end{equation}
Then, it follows from \eqref{eq:4.9} together with the definitions of constants $\bar C_1$, $\bar C_2$, $\bar C_3$ and $\bar C_4$ that the desired inequality \eqref{eq:4.5} holds in this case when $T-t\leq 1$.\vspace{0.2cm}

(2) $g^i$ satisfies (ii) of Assumption \ref{A:C1a}. For this case, it follows from \eqref{eq:4.3} that $\as$ on $\Omega\times [T-h,T]$, for each $(y,z)\in \R\times \R^{1\times d}$, we have
$$
f^i(t,y,z){\rm sgn}(y) \leq \tilde\alpha_t+\beta |y|+\bar u_t |z|+\frac{\gamma}{2} |z|^2,
$$
where
$$
\bar u:=c\sum_{j=1}^{i-1}|Z^j|+v\in {\rm BMO}_{[T-h,T]}(\R).\vspace{0.1cm}
$$
This means that on $\Omega\times [T-h,T]$, the generator $f^i$ satisfies the first inequality for the case of $y<0$ and the second inequality for the case of $y>0$ in \ref{A:A3} with $\ddot\alpha=\tilde\alpha$ and $\bar\beta=\beta$. It then follows from (i) of \cref{pro:A.2} with $r=T$ \vspace{0.2cm}that for each $t\in [T-h,T]$,
\begin{equation}\label{eq:4.10}
\hspace{-0.2cm}
\begin{array}{l}
\Dis\|Y^i\|_{\s^\infty_{[t,T]}}+\|Z^i\|^2_{{\rm BMO}_{[t,T]}}\leq  \Dis {4(\gamma +1) \over \gamma^2}\exp\left\{4\gamma \exp(\beta T)\left(\|\xi^i\|_{\infty}+\|\tilde\alpha\|_{\lcal^{\infty}}\right)\right\} \big(1+\|\tilde\alpha\|_{\lcal^{\infty}}\\
\Dis \hspace{2cm} +\beta T \exp(\beta T)\left(\|\xi^i\|_{\infty}+\|\tilde\alpha\|_{\lcal^{\infty}}\right)
+2\|v\|^2_{{\rm BMO}_{[t,T]}}+2nc^2 \sum_{j=1}^{i-1} \|Z^j\|^2_{{\rm BMO}_{[t,T]}}\big).
\end{array}
\end{equation}
Then, it follows from \eqref{eq:4.10} together with the definitions of constants $\bar C_1$, $\bar C_2$, $\bar C_3$ and $\bar C_4$ that the desired inequality \eqref{eq:4.5} holds for each $t\in [T-h,T]$ in this case.\vspace{0.2cm}

(3) $g^i$ satisfies (iii) of Assumption \ref{A:C1a}. For this case, it follows from \eqref{eq:4.3} that $\as$ on $\Omega\times [T-h,T]$, for each $(y,z)\in \R\times \R^{1\times d}$, we have
\begin{equation}\label{eq:4.11}
f^i(t,y,z)\, {\rm sgn}(y) \leq  \Dis \breve\alpha_t+\beta |y|+\bar\lambda |z|,
\end{equation}
where
$$
\breve\alpha_t:=\bar\alpha_t+\sum_{j\neq i}\left(\bar\lambda |Z^j_t|+\theta |Z^j_t|^2\right).\vspace{0.1cm}
$$
In view of \eqref{eq:4.4}, using (iii) of \cref{lem:3.1} with $\bar\delta=\delta$, $\bar t=T-h$, $V=\bar V=Z$ and $\hat\alpha=\bar\alpha$ we can deduce that
\begin{equation}\label{eq:4.12}
\Dis \left\|\breve\alpha\right\|_{\mcal^{\infty}_{[t,T]}} \leq \Dis 1+\left\|\bar\alpha\right\|_{\mcal^{\infty}_{[t,T]}}+\bar\lambda \sqrt{n} \|Z\|_{{\rm BMO}_{[t,T]}} (T-t)^{1\over 2}<+\infty,\ \ \ t\in [T-h,T].\vspace{0.1cm}
\end{equation}
Combining \eqref{eq:4.11} and \eqref{eq:4.12} yields that on $\Omega\times [T-h,T]$, the generator $f^i$ satisfies the first inequality for the case of $y<0$ and the second inequality for the case of $y>0$ in Assumption \ref{A:A4} with $\dot\alpha=\breve\alpha$ and $\bar\beta=\beta$. It follows from (i) of \cref{pro:A.3} that for each $t\in [T-h,T]$,
\begin{equation}\label{eq:4.13}
\begin{array}{lll}
\Dis \|Y^i\|_{\s^\infty_{[t,T]}}+\left\|Z^i\right\|_{{\rm BMO}_{[t,T]}}^2 &\leq & \Dis 1+c_0 \exp\left(2\beta T+ 2\bar\lambda^2 T\right)\left(6+\|\xi^i\|_\infty^2
+6\|\bar\alpha\|_{\mcal^\infty}^2\right)\\
&& \Dis +12nc_0\bar\lambda^2 T \exp\left(2\beta T+2\bar\lambda^2 T\right) \|Z\|_{{\rm BMO}_{[t,T]}}^2(T-t).
\end{array}
\end{equation}
Then, it follows from \eqref{eq:4.13} together with the definitions of constants $\bar C_1$, $\bar C_2$, $\bar C_3$ and $\bar C_4$ that the desired inequality \eqref{eq:4.5} holds in this case when $T-t\leq 1$.\vspace{0.2cm}

Furthermore, in view of \eqref{eq:4.5}, by induction for $i$ it is not difficult to derive that for each $i=1,\cdots,n$ and each $t\in [T-h,T]$, if $T-t\leq 1$, then
\begin{equation}\label{eq:4.14}
\begin{array}{l}
\Dis \sum_{j=1}^{i}\left(\|Y^j\|_{\s^{\infty}_{[t,T]}}+\|Z^j\|_{{\rm BMO}_{[t,T]}}^2\right)\\
\ \ \leq \Dis \left(\bar C_1+\bar C_2 \|Y\|_{\s^{\infty}_{[T-t,T]}} (T-t)+\bar C_3 \|Z\|_{{\rm BMO}_{[T-t,T]}}^2(T-t)^{1-\delta\over 2}\right)\bar C_5^i,
\end{array}
\end{equation}
where
\begin{equation}\label{eq:4.15}
\bar C_5^1:=1\ \ {\rm and}\ \
\bar C_5^i:=\bar C_5^{i-1}+1+\bar C_4\bar C_5^{i-1}\ {\rm for}\ i=2,\cdots,n,\ {\rm recursively}.
\end{equation}
In particular, letting $i=n$ in \eqref{eq:4.14} yields that
for each $t\in [T-h,T]$, if $T-t\leq 1$, then
$$
\begin{array}{lll}
\Dis \|Y\|_{\s^{\infty}_{[t,T]}}+\|Z\|_{{\rm BMO}_{[t,T]}}^2 &\leq & \Dis \bar C_1\bar C_5^n+\bar C_2 \bar C_5^n\|Y\|_{\s^{\infty}_{[T-t,T]}} (T-t)\\
&&\Dis +\bar C_3 \bar C_5^n\|Z\|_{{\rm BMO}_{[T-t,T]}}^2(T-t)^{1-\delta\over 2}.
\end{array}
$$
Then, we have
\begin{equation}\label{eq:4.16}
\|Y\|_{\s^{\infty}_{[T-\eps,T]}}+\|Z\|_{{\rm BMO}_{[T-\eps,T]}}^2 \leq \Dis 2\bar C_1\bar C_5^n,
\end{equation}
where
$$
\eps:=\min\Bigg\{h,\  1, \ {1\over 2\bar C_2 \bar C_5^n}, \ \Big({1\over 2\bar C_3 \bar C_5^n}\Big)^{2\over 1-\delta}\Bigg\}>0.\vspace{0.2cm}
$$

Finally, for $m\geq 1$, we define successively the following constants:
\begin{equation}\label{eq:4.17}
\bar C_6^1:= 2\bar C_1\bar C_5^n\ \ \ {\rm and}\ \ \ \bar C_6^{m+1}:=\bar C_6^m+2\Phi(\bar C_6^m) \bar C_5^n,
\end{equation}
where $\bar C_5^n$ is defined in \eqref{eq:4.15}. And, we let $m_0$ be the unique positive integer satisfying
$T-h \in [T-m_0\eps,T-(m_0-1)\eps)$, or equivalently,
\begin{equation}\label{eq:4.18}
{h\over \eps}\leq m_0 <{h\over \eps}+1.
\end{equation}
If $m_0=1$, it then follows from \eqref{eq:4.16} and \eqref{eq:4.17} that
$$
\|Y\|_{\s^{\infty}_{[T-h,T]}}+\|Z\|_{{\rm BMO}_{[T-h,T]}}^2 \leq \Dis \bar C_6^1=\bar C_6^{m_0}.
$$
If $m_0=2$, it then follows from \eqref{eq:4.16}  and \eqref{eq:4.17} that
\begin{equation}\label{eq:4.19}
\|Y_{T-\eps}\|_{\infty}\leq \|Y\|_{\s^{\infty}_{[T-\eps,T]}}\leq \bar C_6^1.
\end{equation}
Now, consider the following system of BSDEs
$$
  Y_t=Y_{T-\eps}+\int_t^{T-\eps} g(s,Y_s,Z_s){\rm d}s-\int_t^{T-\eps} Z_s {\rm d}B_s, \ \ t\in [T-h,T-\eps].
$$
In view of \eqref{eq:4.19} and the definition of the function $\Phi(\cdot)$, by virtue of \cref{pro:A.1,pro:A.2,pro:A.3} we can use a similar argument as that obtaining \eqref{eq:4.5} to get that for each $i=1,\cdots,n$,
$$
\begin{array}{l}
\Dis \|Y^i\|_{\s^{\infty}_{[T-h,T-\eps]}}+\|Z^i\|_{{\rm BMO}_{[T-h,T-\eps]}}^2 \\
\ \ \leq \Dis \Phi(\bar C_1^1)+\bar C_2 \|Y\|_{\s^{\infty}_{[T-h,T-\eps]}} \eps +\bar C_3 \|Z\|_{{\rm BMO}_{[T-h,T-\eps]}}^2 \eps^{1-\delta\over 2}+\bar C_4 \sum_{j=1}^{i-1} \|Z^j\|_{{\rm BMO}_{[T-h,T-\eps]}}^{2}.
\end{array}
$$
We would like to mention that for the proof of the last assertion, we need to use $T-\eps$ instead of $T$ and $Y_{T-\eps}$ instead of $\eta$ in \cref{pro:A.1,pro:A.3}, and use $T-\eps$ instead of $r$ in \cref{pro:A.2}. Moreover, in view of the definition of $\eps$, by induction and a similar argument as that from \eqref{eq:4.14} to \eqref{eq:4.16} we can further get that
$$
\|Y\|_{\s^{\infty}_{[T-h,T-\eps]}}+\|Z\|_{{\rm BMO}_{[T-h,T-\eps]}}^2 \leq \Dis 2\Phi(\bar C_6^1) \bar C_5^n.
$$
Combining  the last inequality and \eqref{eq:4.16} yields that, in view of \eqref{eq:4.17},
$$
\|Y\|_{\s^{\infty}_{[T-h,T]}}+\|Z\|_{{\rm BMO}_{[T-h,T]}}^2 \leq \Dis \bar C_6^1+2\Phi(\bar C_6^1) \bar C_5^n = \bar C_6^2=\bar C_6^{m_0},
$$
Proceeding the above computation gives that if $m_0$ satisfies \eqref{eq:4.18}, then
\begin{equation}\label{eq:4.20}
\|Y\|_{\s^{\infty}_{[T-h,T]}}+\|Z\|_{{\rm BMO}_{[T-h,T]}}^2 \leq \bar C_6^{m_0}=\bar C_6^{\left\lceil {h\over \eps}\right\rceil}\leq \bar C_6^{1+\left\lceil {T\over \eps_0}\right\rceil}=:\widetilde K,
\end{equation}
where $\bar C_6^{m_0}$ is defined in \eqref{eq:4.17}, $\lceil x\rceil$ stands for the minimum of integers which is equel to or bigger than $x\in \R$, and
$$
\eps_0:=\min\Bigg\{{1\over 2\bar C_2 \bar C_5^n}, \ \Big({1\over 2\bar C_3 \bar C_5^n}\Big)^{2\over 1-\delta}\Bigg\}\in (0,1).
$$
Then, the desired conclusion follows from \eqref{eq:4.4} and \eqref{eq:4.20}. The proof  is complete.\vspace{0.2cm}
\end{proof}

\begin{proof}[Proof of \cref{thm:2.5}]
With \cref{thm:2.3} and \cref{pro:4.1} in hands, we can closely follow the proof of Theorem 4.1 in \citet{cheridito2015Stochastics} to prove our \cref{thm:2.5}. All the details are omitted here.\vspace{0.2cm}
\end{proof}

\subsection{Proof of \cref{thm:2.8a}\vspace{0.2cm}}

To prove \cref{thm:2.8a}, we need  the following proposition.

\begin{pro}\label{pro:4.2c}
Let $\xi\in L^\infty(\R^n)$ and the generator $g$ satisfy Assumption \ref{A:C1b}. Assume that for some $h\in (0,T]$, BSDE \eqref{eq:2.1} has a solution $(Y,Z)\in \s^{\infty}_{[T-h,T]}(\R^n)\times {\rm BMO}_{[T-h,T]}(\R^{n\times d})$ on the time interval $[T-h,T]$. Then, there exists a positive constant $\widetilde K>0$ depending only on $(\|\xi\|_{\infty}, \|\tilde\alpha\|_{\lcal^{\infty}},  n,\beta,\gamma,\bar\gamma,\lambda,\delta,T)$ and being independent of $h$ such that
$$
\|Y\|_{\s^{\infty}_{[T-h,T]}}\leq \widetilde K.\vspace{0.2cm}
$$
\end{pro}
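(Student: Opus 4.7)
\medskip
\noindent\emph{Proof proposal.} The plan is to mirror the argument used for \cref{pro:4.1}, in which each component equation is viewed as a scalar BSDE with the off-diagonal components frozen, the corresponding one-dimensional a priori bound from \cref{pro:A.1,pro:A.2,pro:A.3} is applied, and the resulting local estimate is concatenated across short sub-intervals whose length depends only on the parameters listed in the statement.

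For every $i\in\{1,\cdots,n\}$ I treat $(Y^j,Z^j)_{j\neq i}$ as given and write $Y^i$ as the solution of the scalar BSDE \eqref{eq:4.2} with generator $f^i(t,y,z):=g^i(t,Y_t(y;i),Z_t(z;i))$. The three alternatives in Assumption~\ref{A:C1b} correspond precisely to the three Appendix propositions: when $i\in J_1$, the sign-structured quadratic bound matches Assumption~\ref{A:A1} with $\varphi\equiv 0$ and $\bar\beta=\beta$, so \cref{pro:A.1}(i) applies; when $i\in J_2$, the generator has the form required by Assumption~\ref{A:A3} with linear-in-$z$ BMO coefficient $\bar u=v+\phi(|Y|)+c\sum_{j<i}|Z^j|$, so \cref{pro:A.2}(i) applies; when $i\in J_3$, the linear-in-$z$ growth of \ref{A:C1b}(iii) matches Assumption~\ref{A:A4}, so \cref{pro:A.3}(i) applies. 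In each case I invoke \cref{lem:3.1} to control the auxiliary stochastic processes $\check\alpha,\dot\alpha,\breve\alpha$ in the $\ecal^\infty$ or $\mcal^\infty$ norms in terms of the BMO norms of $Z^j$, as already done in \eqref{eq:4.7}--\eqref{eq:4.12}. This yields, for each $i$ and each $t\in[T-h,T]$ with $T-t\le 1$, an inequality of the form
\begin{equation*}
\|Y^i\|_{\s^\infty_{[t,T]}}+\|Z^i\|^2_{{\rm BMO}_{[t,T]}}
\le \bar C_1+\bar C_2\,\|Y\|_{\s^\infty_{[t,T]}}(T-t)+\bar C_3\,\|Z\|^2_{{\rm BMO}_{[t,T]}}(T-t)^{\frac{1-\delta}{2}}+\bar C_4\sum_{j<i}\|Z^j\|^2_{{\rm BMO}_{[t,T]}},
\end{equation*}
in complete analogy with \eqref{eq:4.5}, where $\bar C_1,\cdots,\bar C_4$ depend only on the parameters listed in the statement.

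A finite induction on $i$ (cf.\ \eqref{eq:4.14}--\eqref{eq:4.15}) bundles these estimates into a single bound on $\|Y\|_{\s^\infty_{[t,T]}}+\|Z\|^2_{{\rm BMO}_{[t,T]}}$; then choosing a small $\eps>0$ depending only on $\bar C_2,\bar C_3$ (and hence only on the allowed parameters) allows me to absorb the right-hand side terms proportional to $(T-t)$ and $(T-t)^{(1-\delta)/2}$, producing $\|Y\|_{\s^\infty_{[T-\eps,T]}}+\|Z\|^2_{{\rm BMO}_{[T-\eps,T]}}\le 2\bar C_1\bar C_5^n$. Finally I iterate this short-interval estimate over at most $\lceil T/\eps\rceil$ sub-intervals, using a step function $\Phi$ identical in spirit to \eqref{eq:4.17}, which tracks how the bound at time $T-k\eps$ feeds into the constants at the next step. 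Extracting the $\s^\infty$-norm component from the resulting uniform bound on $\|Y\|_{\s^\infty}+\|Z\|^2_{\rm BMO}$ then gives $\|Y\|_{\s^\infty_{[T-h,T]}}\le\widetilde K$ with $\widetilde K$ independent of $h$.

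The principal technical obstacle is the case $i\in J_2$: the scalar generator $f^i$ carries the linear-in-$z$ stochastic coefficient $v+\phi(|Y|)+c\sum_{j<i}|Z^j|$, whose BMO norm couples the current step to all lower-indexed $Z^j$'s and to the sup-norm of $Y$ itself. Propagating this coupling through the induction without blow-up relies crucially on the sign pattern in \ref{A:C1b}(ii) (which places the $\phi(|y|)$ term only on the side $y^i<0$ of the upper bound, so that the unfavorable contribution enters only through $\tilde l_i$ rather than as a linear growth in $|y|$ with a $\phi$-coefficient), and on the exponential-in-$\|\xi\|_\infty$ factor in \cref{pro:A.2}(i) which remains admissible because $\|\xi\|_\infty$ is one of the declared parameters.
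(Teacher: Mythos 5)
Your plan transplants the proof of \cref{pro:4.1} (scalar reduction, Appendix estimates, sub-interval iteration) to Assumption \ref{A:C1b}, but that template breaks at two places, and the paper's actual proof is structurally different precisely to avoid them. First, for $i\in J_1$ with $|J_1|\ge 2$ and $\delta>0$: the upper bound in \ref{A:C1b}(i) carries $\lambda\sum_{j\in J_1}|z^j|^{1+\delta}$ regardless of the sign of $y^i$, so your frozen-coefficient $\check\alpha$ contains $|Z^j|^{1+\delta}$ for $j\in J_1$ with $j>i$. Applying \cref{lem:3.1}(i) with $\bar\delta=\delta$ then produces a term $C\,\|Z^j\|_{{\rm BMO}}^{2\frac{1+\delta}{1-\delta}}(T-t)$, which is \emph{superlinear} in $\|Z\|^2_{{\rm BMO}}$ and therefore cannot be absorbed by choosing $T-t$ small independently of the (unknown) solution; the estimate \eqref{eq:4.7} you cite works only because \ref{A:C1a}(i) has the exponent $1+\delta\,{\bf 1}_{y^i<0}$, i.e.\ exponent $1$ on the side actually used, so \cref{lem:3.1}(i) is invoked there with $\bar\delta=0$. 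The paper circumvents this by treating all $J_1$-components \emph{jointly}: the strict lower quadratic bound $\frac{\bar\gamma}{2}|z^j|^2\le g^j+\cdots$ for every $j\in J_1$ yields a reverse exponential estimate on $\E_t[\exp(\frac{\bar\gamma\eps_0}{2n_1}\sum_{j\in J_1}\int_t^T|Z^j_s|^2\,{\rm d}s)]$ in terms of $\sum_{j\in J_1}\|Y^j\|_{\s^\infty}$ (see \eqref{eq:4.23c}--\eqref{eq:4.27c}), and Young's inequality converts $\sum_{j\in J_1}\int|Z^j|^{1+\delta}$ back into this quantity with a coefficient small enough to be absorbed, giving the closed integral inequality \eqref{eq:4.30c}.

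Second, for $i\in J_2$ your route through \cref{pro:A.2}(i) is circular: $\tilde l_i$ in \ref{A:C1b}(ii) contains $\phi(|y|)\,|z^i|$, so your $\bar u$ contains $\phi(|Y_t|)$, and the bound \eqref{eq:A.9} then carries an additive term of order $\phi(\|Y\|_{\s^\infty})^2$ with no small prefactor; since $\phi$ is an arbitrary nondecreasing function, this cannot be closed by Gronwall or by shrinking the interval. (Your remark about the ``sign pattern'' addresses the $\phi(|y|)\,{\bf 1}_{y^i>0}$ term outside $\tilde l_i$, not the $\phi(|y|)$ sitting \emph{inside} $\tilde l_i$ as a coefficient of $|z^i|$, which appears in both the upper and lower bounds.) The paper instead absorbs the entire coefficient of $|z^i|$ --- namely $v_t+\phi(|Y_t|)+c\sum_{j<i}|Z^j_t|+\frac{\gamma}{2}|Z^i_t|$ --- into a Girsanov drift, obtaining $|Y^i_t|\le\|\xi^i\|_\infty+\|\tilde\alpha\|_{\lcal^\infty}+\beta\int_t^T\|Y\|_{\s^\infty_{[s,T]}}\,{\rm d}s$ with no dependence on $\phi$, $v$, $c$ or any BMO norm of $Z$. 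A symptom of both problems is the parameter list: the claimed $\widetilde K$ depends only on $(\|\xi\|_\infty,\|\tilde\alpha\|_{\lcal^\infty},n,\beta,\gamma,\bar\gamma,\lambda,\delta,T)$ and bounds only $\|Y\|_{\s^\infty}$, whereas your argument would unavoidably drag in $\|v\|_{\rm BMO}$, $c$ and $\phi$. The correct proof assembles \eqref{eq:4.31c}, \eqref{eq:4.32c} and \eqref{eq:4.33c} (the $J_3$ case via It\^o applied to $\sum_{j\in J_3}|Y^j|^2$) and closes with a single Gronwall inequality on all of $[T-h,T]$; no sub-interval iteration is needed.
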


\begin{proof}
Let $\xi\in L^\infty(\R^n)$ and $\tilde\alpha\in \lcal^{\infty}$ such that
\begin{equation}\label{eq:4.21c}
\|\xi\|_\infty\leq C_1\ \ {\rm and}\ \ \left\|\int_0^T \tilde\alpha_t{\rm d}t\right\|_\infty\leq C_2
\end{equation}
for two positive constants $C_1$ and $C_2$. Denote respectively by $n_1$, $n_2$ and $n_3$ the number of elements in $J_1$, $J_2$ and $J_3$ such that $n_1+n_2+n_3=n$. Since the generator $g$ satisfies Assumption \ref{A:C1b}, in view of (i) of \cref{rmk:2.7a}, we need to consider the following three cases.\vspace{0.2cm}

(1) For $i\in J_1$, $g^i$ satisfies (i) of Assumption \ref{A:C1b}. In this case, for each $t\in [T-h,T]$, we have
$$
\Dis g^i\left(\omega,t,Y_t(\omega),Z_t(\omega)\right)\, {\rm sgn}(Y_t^i(\omega))\leq \tilde\alpha_t(\omega)+\beta |Y_t(\omega)|+ \lambda\sum_{j\in J_1}|Z_t^j(\omega)|^{1+\delta}+\frac{\gamma}{2} |Z_t^i(\omega)|^2
$$
and
$$
g^i\left(\omega,t,Y_t(\omega),Z_t(\omega)\right)\geq \frac{\bar\gamma}{2} |Z_t^i(\omega)|^2-\tilde\alpha_t(\omega)-\beta |Y_t(\omega)|-\lambda \sum_{j\in J_1}|Z_t^j(\omega)|^{1+\delta}.
$$
Note that $(Y^i,Z^i)\in \s^{\infty}_{[T-h,T]}(\R)\times {\rm BMO}_{[T-h,T]}(\R^{1\times d})$ is a solution of BSDE \eqref{eq:2.2} on $[T-h,T]$. According to Lemma A.2 in \citet{FanHuTang2023JDE} and in view of the last two inequalities together with \eqref{eq:4.21c}, we can get that for each $i\in J_1$ and $t\in [T-h,T]$,
\begin{equation}\label{eq:4.22c}
\begin{array}{lll}
\Dis \exp\left(\gamma |Y_t^i|\right)&\leq &\Dis \exp\Big(\gamma(C_1+C_2 )+\beta\gamma \int_t^T \|Y\|_{\s^{\infty}_{[s,T]}}{\rm d}s\Big)\vspace{0.1cm}\\
&&\Dis\  \times \, \E_t\Big[\exp\Big(\gamma\lambda \sum_{j\in J_1}\int_t^T |Z_s^j|^{1+\delta} {\rm d}s\Big)\Big]
\end{array}
\end{equation}
and
\begin{equation}\label{eq:4.23c}
\begin{array}{l}
\Dis \E_t\left[\exp\left(\frac{\bar \gamma}{2}\eps_0 \int_t^T |Z_s^i|^2{\rm d}s \right)\right]\vspace{0.2cm}\\
\ \ \leq \Dis\E_t\Big[\exp\Big(6\eps_0 \|Y^i\|_{\s^{\infty}_{[t,T]}}+3\eps_0\left\|\int_0^T\tilde\alpha_t{\rm d}s\right\|_\infty\\
\hspace{2.4cm}\Dis +3\eps_0 \beta \int_t^T |Y_s|{\rm d}s+3\eps_0 \lambda \sum_{j\in J_1}\int_t^T |Z_s^j|^{1+\delta} {\rm d}s\Big)\Big]\\
\ \ \leq \Dis \exp\Big(6\eps_0 \sum_{j\in J_1}\|Y^j\|_{\s^{\infty}_{[t,T]}}+3\eps_0 C_2+3\eps_0 \beta\int_t^T \|Y\|_{\s^{\infty}_{[s,T]}}{\rm d}s \Big)\\
\hspace{1cm} \Dis \times \,  \E_t\Big[\exp\Big(3\eps_0 \lambda \sum_{j\in J_1}\int_t^T |Z_s^j|^{1+\delta} {\rm d}s\Big)\Big],
\end{array}
\end{equation}
where
$$
\eps_0:=\frac{\bar\gamma}{9}\bigwedge \frac{\gamma}{24}>0.\vspace{0.1cm}
$$
Furthermore, in view of \eqref{eq:4.23c}, by H\"{o}lder's inequality we obtain that for $t\in [T-h,T]$,\vspace{0.1cm}
\begin{equation}\label{eq:4.24c}
\begin{array}{l}
\Dis \E_t\Big[\exp\Big(\frac{\bar \gamma\eps_0}{2n_1} \sum_{j\in J_1}\int_t^T |Z_s^j|^2{\rm d}s \Big)\Big]\\
\ \ \Dis \leq  \exp\Big(6\eps_0 \sum_{j\in J_1}\|Y^j\|_{\s^{\infty}_{[t,T]}}+3\eps_0 C_2+3\eps_0 \beta\int_t^T \|Y\|_{\s^{\infty}_{[s,T]}}{\rm d}s \Big)\\
\hspace{1cm} \Dis \times\,  \E_t\Big[\exp\Big(3\eps_0 \lambda \sum_{j\in J_1}\int_t^T |Z_s^j|^{1+\delta} {\rm d}s\Big)\Big].
\end{array}
\end{equation}
By Young's inequality, observe that for each pair of $a,b>0$,
\begin{equation}\label{eq:4.25c}
ab^{1+\delta}=\Big(\Big(\frac{1+\delta}{2}\Big)^{\frac{1+\delta}{1-\delta}} a^{\frac{2}{1-\delta}}\Big)^{\frac{1-\delta}{2}}
\Big(\frac{2}{1+\delta}b^2\Big)^{\frac{1+\delta}{2}}\leq b^2+\frac{1-\delta}{2}\Big(\frac{1+\delta}{2}\Big)^{\frac{1+\delta}{1-\delta}} a^{\frac{2}{1-\delta}}.
\end{equation}
Letting $a=12n_1\lambda/\bar\gamma$ and $b=|Z_s|$ in \eqref{eq:4.25c}, we have for each $j\in J_1$,
\begin{equation}\label{eq:4.26c}
3\eps_0\lambda |Z_s^j|^{1+\delta}=\frac{\bar\gamma\eps_0}{4n_1}
\Big(\frac{12n_1\lambda}{\bar\gamma} |Z_s^j|^{1+\delta}\Big)\leq \frac{\bar\gamma\eps_0}{4n_1} |Z_s^j|^2+ C_3,\ \ \ s\in\T,
\end{equation}
with
$$
C_3:=\frac{\bar\gamma\eps_0(1-\delta)}{8n_1}
\Big(\frac{1+\delta}{2}\Big)^{\frac{1+\delta}{1-\delta}} \Big(\frac{12n_1\lambda}{\bar\gamma}
\Big)^{\frac{2}{1-\delta}}.\vspace{0.2cm}
$$
Coming back to \eqref{eq:4.24c}, by \eqref{eq:4.26c} and H\"{o}lder's inequality we derive that for $t\in [T-h,T]$,
\begin{equation}\label{eq:4.27c}
\begin{array}{l}
\Dis \E_t\Big[\exp\Big(\frac{\bar \gamma\eps_0}{2n_1} \sum_{j\in J_1}\int_t^T |Z_s^j|^2{\rm d}s \Big)\Big]\\
\ \ \Dis \leq \exp\Big(12\eps_0 \sum_{j\in J_1}\|Y^j\|_{\s^{\infty}_{[t,T]}}+6\eps_0 C_2+ 2C_3 T+6\eps_0 \beta\int_t^T \|Y\|_{\s^{\infty}_{[s,T]}}{\rm d}s\Big).
\end{array}
\end{equation}
On the other hand, it follows from \eqref{eq:4.22c} and Jensen's inequality  that
\begin{equation}\label{eq:4.28c}
\begin{array}{lll}
\Dis \exp\Big(\gamma \sum_{j\in J_1}|Y_t^j|\Big)&\leq &\Dis \exp\left(n_1\gamma(C_1+C_2 )+n_1\gamma\beta \int_t^T \|Y\|_{\s^{\infty}_{[s,T]}}{\rm d}s \right)\\
&&\Dis \ \times \, \E_t\Big[\exp\Big(n_1\gamma\lambda \sum_{j\in J_1}\int_t^T |Z_s^j|^{1+\delta} {\rm d}s\Big)\Big],\ \ \ \ t\in [T-h,T].
\end{array}
\end{equation}
By letting $a=2 n^2_1\gamma\lambda/\bar\gamma\eps_0$ and $b=|Z_s|$ in \eqref{eq:4.25c}, we have for each $j\in J_1$,\vspace{0.1cm}
\begin{equation}\label{eq:4.29c}
n_1\gamma\lambda |Z_s^j|^{1+\delta}=\frac{\bar\gamma\eps_0}{2n_1}\left(\frac{2 n^2_1\gamma\lambda}{\bar\gamma\eps_0} |Z_s|^{1+\delta}\right)\leq \frac{\bar\gamma\eps_0}{2n_1} |Z_s^j|^2+ C_4,\ \ \ s\in\T,
\end{equation}
where
$$
C_4:=\frac{\bar\gamma\eps_0(1-\delta)}{4n_1}\left(\frac{1+\delta}{2}
\right)^{\frac{1+\delta}{1-\delta}} \left(\frac{2 n^2_1\lambda\gamma}{\bar\gamma\eps_0}
\right)^{\frac{2}{1-\delta}}.\vspace{0.2cm}
$$
Combining \eqref{eq:4.27c}-\eqref{eq:4.29c} yields that for each $t\in [T-h,T]$,\vspace{0.1cm}
$$
\begin{array}{lll}
\Dis \sum_{j\in J_1}|Y_t^j|& \leq &\Dis n_1(C_1+C_2 )+{C_4 T\over \gamma}+\frac{6\eps_0 C_2+ 2C_3 T}{\gamma} +\frac{12\eps_0}{\gamma}\sum_{j\in J_1}\|Y^j\|_{\s^{\infty}_{[t,T]}}\\
&&\Dis +\beta \Big(n_1+\frac{6\eps_0}{\gamma}\Big)\int_t^T \|Y\|_{\s^{\infty}_{[s,T]}}{\rm d}s.\vspace{0.1cm}
\end{array}
$$
And, it follows from the definition of $\eps_0$ that
\begin{equation}\label{eq:4.30c}
\sum_{j\in J_1}\|Y^j\|_{\s^{\infty}_{[t,T]}} \leq \Dis C_5+2\beta\left( n_1+\frac{1}{4}\right)\int_t^T \|Y\|_{\s^{\infty}_{[s,T]}}{\rm d}s,\ \ \ \ t\in [T-h,T],
\end{equation}
where
$$
C_5:=2n_1(C_1+C_2)+\frac{2C_4 T}{\gamma}+\frac{12\eps_0 C_2+ 4C_3 T}{\gamma}.\vspace{0.2cm}
$$
By taking square in both sides of \eqref{eq:4.30c} and using H\"{o}lder's inequality, we can conclude that for each $t\in [T-h,T]$,
\begin{equation}\label{eq:4.31c}
\sum_{j\in J_1}\|Y^j\|_{\s^{\infty}_{[t,T]}}^2\leq \Big(\sum_{j\in J_1}\|Y^j\|_{\s^{\infty}_{[t,T]}}\Big)^2 \leq \Dis 2C_5^2+8\beta^2T\left( n_1+1\right)^2\int_t^T \|Y\|_{\s^{\infty}_{[s,T]}}^2{\rm d}s.\vspace{0.2cm}
\end{equation}

(2) For $i\in J_2$, $g^i$ satisfies (ii) of Assumption \ref{A:C1b}. In this case, for each $t\in [T-h,T]$, we have
$$
 \Dis g^i\left(\omega,t,Y_t(\omega),Z_t(\omega)\right)\, {\rm sgn}(Y_t^i(\omega)) \leq \tilde\tilde\alpha_t(\omega)+\beta |Y_t(\omega)|+\bar v_t(\omega)|Z_t^i(\omega)|+\frac{\gamma}{2} |Z_t^i(\omega)|^2
$$
with
$$
\bar v_t(\omega):=v_t(\omega)+\phi(|Y_t(\omega)|)
+c\sum_{j=1}^{i-1}|Z_t^j(\omega)|\in {\rm BMO}_{[T-h,T]}(\R).
$$
Note that $(Y^i,Z^i)\in \s^{\infty}_{[T-h,T]}(\R)\times {\rm BMO}_{[T-h,T]}(\R^{1\times d})$ is a solution of BSDE \eqref{eq:2.2} on $[T-h,T]$. In view of the last inequality, by using It\^{o}-Tanaka's formula and Girsanov's transform, a similar argument to that from \eqref{eq:A.11} to \eqref{eq:A.12} yields that for each $j\in J_2$ and $t\in [T-h,h]$,
$$
|Y^i_t|\leq \|\xi^i\|_\infty+\|\tilde \tilde\alpha\|_{\lcal^\infty}+\beta \int_t^T \|Y\|_{\s^\infty_{[s,T]}} {\rm d}s,
$$
and then, in view of \eqref{eq:4.21},
$$
\sum_{j\in J_2}\|Y^j\|_{\s^\infty_{[t,T]}}\leq n_2(C_1+C_2) +n_2\beta \int_t^T \|Y\|_{\s^\infty_{[s,T]}} {\rm d}s.
$$
By taking square in both sides of the last inequality and using H\"{o}lder's inequality, we can conclude that for each $t\in [T-h,T]$,
\begin{equation}\label{eq:4.32c}
\sum_{j\in J_2}\|Y^j\|_{\s^{\infty}_{[t,T]}}^2\leq \Big(\sum_{j\in J_2}\|Y^j\|_{\s^{\infty}_{[t,T]}}\Big)^2 \leq \Dis 2n_2^2(C_1+C_2)^2+2T n_2^2\beta^2\int_t^T \|Y\|_{\s^{\infty}_{[s,T]}}^2{\rm d}s.\vspace{0.3cm}
\end{equation}

(3) For $i\in J_3$, $g^i$ satisfies (iii) of Assumption \ref{A:C1b}. In this case, for each $t\in [T-h,T]$, we have
$$
 \Dis g^i\left(\omega,t,Y_t(\omega),Z_t(\omega)\right)\, {\rm sgn}(Y_t^i(\omega)) \leq \tilde\alpha_t(\omega)+\beta |Y_t(\omega)|+\lambda \sum_{j\in J_3}|Z_t^j(\omega)|,
$$
and then, in view of inequality $2ab\leq 2\eps a^2+\frac{1}{2\eps}b^2$ for each $a,b\geq 0$ and $\eps>0$,
$$
\begin{array}{l}
\Dis 2\sum_{j\in J_3}Y_t^j(\omega)g^j\left(\omega,t,Y_t(\omega),Z_t(\omega)\right)\\
\ \ \Dis \leq 2\sum_{j\in J_3} |Y_t^j(\omega)|\Big(
\tilde\alpha_t(\omega)+\beta |Y_t(\omega)|+\lambda \sum_{j\in J_3}|Z_t^j(\omega)|\Big)\\
\ \ \Dis \leq \sum_{j\in J_3}\Big[2\tilde\alpha_t(\omega)|Y_t^j(\omega)|
+2\beta |Y_t(\omega)|^2+\sum_{j\in J_3}\Big(2n_3\lambda^2|Y_t^j(\omega)|^2
+\frac{1}{2n_3}|Z_t^j(\omega)|^2\Big)\Big]\\
\ \ \Dis \leq 2\tilde\alpha_t(\omega)\sum_{j\in J_3}|Y_t^j(\omega)|+2n_3\left(\beta+n_3\lambda^2\right)
|Y_t(\omega)|^2+\frac{1}{2}\sum_{j\in J_3}|Z_t^j(\omega)|^2.
\end{array}
$$
Note that for each $i\in J_3$, $(Y^i,Z^i)\in \s^{\infty}_{[T-h,T]}(\R)\times {\rm BMO}_{[T-h,T]}(\R^{1\times d})$ is an adapted solution of BSDE \eqref{eq:2.2} on $[T-h,T]$. In view of the last inequality together with \eqref{eq:4.21c}, applying It\^{o}'s formula to $\sum_{j\in J_3}|Y^j_s|^2$ yields that for each $t\in [T-h,h]$,
$$
\begin{array}{l}
\Dis \sum_{j\in J_3}|Y^j_t|^2+\frac{1}{2}\E_t\Big[\int_t^T \sum_{j\in J_3}|Z_s^j(\omega)|^2 {\rm d}s\Big]\\
\Dis \ \ \leq \E_t\Big[\sum_{j\in J_3}|\xi^j|^2\Big]+\E_t\Big[\int_t^T\Big(2\tilde\alpha_s\sum_{j\in J_3}|Y_s^j|+2n_3\Big(\beta+n_3\lambda^2\Big)
|Y_s|^2\Big){\rm d}s\Big]\\
\Dis \ \ \leq C_1^2+2C_2\sum_{j\in J_3}\|Y^j\|_{\s^\infty_{[t,T]}}+2n_3\left(\beta+n_3\lambda^2\right)
\int_t^T \|Y\|^2_{\s^\infty_{[s,T]}}{\rm d}s,
\end{array}
$$
and then, in view of
$$
2C_2\sum_{j\in J_3}\|Y^j\|_{\s^\infty_{[t,T]}}\leq 2n_3C_2^2+{1\over 2n_3}\Big(\sum_{j\in J_3}\|Y^j\|_{\s^\infty_{[t,T]}}\Big)^2\leq 2n_3C_2^2+{1\over 2}\sum_{j\in J_3}\|Y^j\|_{\s^\infty_{[t,T]}}^2,
$$
we have
\begin{equation}\label{eq:4.33c}
\sum_{j\in J_3}\|Y^j\|_{\s^\infty_{[t,T]}}^2\leq 2C_1^2+4n_3C_2^2 +4n_3\left(\beta+n_3\lambda^2\right)
\int_t^T \|Y\|^2_{\s^\infty_{[s,T]}}{\rm d}s.\vspace{0.1cm}
\end{equation}

Finally, adding \eqref{eq:4.31c}, \eqref{eq:4.32c} and \eqref{eq:4.33c} up together yields that
$$
\|Y\|^2_{\s^\infty_{[t,T]}}\leq C_6+C_7\int_t^T \|Y\|^2_{\s^\infty_{[s,T]}}{\rm d}s,\ \ t\in [T-h,T]
$$
with $C_6:=2C_5^2+2n_2^2(C_1+C_2)^2+2C_1^2+4n_3C_2^2$ and
$$
C_7:=8\beta^2T\left( n_1+1\right)^2+2T n_2^2\beta^2+4n_3\left(\beta+n_3\lambda^2\right).\vspace{-0.1cm}
$$
It then follows from Gronwall's inequality that
$$
\|Y\|^2_{\s^\infty_{[t,T]}}\leq C_6\exp(C_7(T-t))\leq C_6\exp(C_7T)=: \widetilde K, \ \ t\in [T-h,T],
$$
which yields the desired conclusion. The proof  is  complete.\vspace{0.2cm}
\end{proof}

\begin{proof}[Proof of \cref{thm:2.8a}]
With \cref{thm:2.3} and \cref{pro:4.2c} in hands, we can closely follow the proof of Theorem 4.1 in \citet{cheridito2015Stochastics} to prove our \cref{thm:2.5}. All the details are omitted here.\vspace{0.2cm}
\end{proof}

\section{Global unbounded solution: proof of Theorems~\ref{thm:2.22e} and \ref{thm:2.9}}
\label{sec:5-global unbounded solution}
\setcounter{equation}{0}

\subsection{Proof of \cref{thm:2.22e}\vspace{0.2cm}}

\begin{proof}[Proof of \cref{thm:2.22e}]
Define
\begin{equation}\label{eq:4.21e}
\bar g(t,y,z):=g\Big(t,\ y+\int_0^t H_s {\rm d}B_s,\ z+H_t\Big),\ \ (\omega,t,y,z)\in \Omega\times\T\times \R^n\times \R^{n\times d}.
\end{equation}
Consider the following multi-dimensional BSDE
\begin{equation}\label{eq:4.21}
\bar Y_t=\bar \xi+\int_t^T \bar g(s,\bar Y_s,\bar Z_s){\rm d}s-\int_t^T \bar Z_s {\rm d}B_s, \ \ t\in\T,
\end{equation}
or, equivalently,
\begin{equation}\label{eq:4.22}
\bar Y_t^i=\bar\xi^i+\int_t^T \bar g^i(s,\bar Y_s,\bar Z_s) {\rm d}s-\int_t^T \bar Z_s^i {\rm d}B_s, \ \ t\in\T,\ \ i=1,\cdots,n.
\end{equation}
It is not difficult to check that BSDE \eqref{eq:4.21} admits a unique global solution $(\bar Y,\bar Z)\in \s^\infty(\R^n)\times {\rm BMO}(\R^{n\times d})$ if and only if BSDE \eqref{eq:2.1} admits a unique global solution $(Y,Z):=(\bar Y+\int_0^\cdot H_s {\rm d}B_s,\ \bar Z+H)$ on the time interval $\T$ such that
$$
\Big(Y-\int_0^\cdot H_s {\rm d}B_s,\,  Z\Big)\in \s^\infty(\R^n)\times {\rm BMO}(\R^{n\times d}).
$$
Consequently, in view of \cref{cor:2.6e}, for completing the proof of \cref{thm:2.22e} it suffices to prove that the generator $\bar g$ defined in \eqref{eq:4.21e} also satisfies Assumptions \ref{A:B1}, \ref{A:D2} and \ref{A:AB} with $\theta=0$. In view of assumptions of the generator $g$ and parameters $(\alpha,\bar\alpha,\tilde\alpha, H)$, it is straightforward to verify the above assertion. The proof is then complete.
\end{proof}

\subsection{Proof of \cref{thm:2.9}\vspace{0.2cm}}

\begin{proof}[Proof of \cref{thm:2.9}]
Identical to  the proof of \cref{thm:2.22e}, we see that in view of \cref{thm:2.5}, for completing the proof of \cref{thm:2.9} it suffices to prove that the generator $\bar g$ defined in \eqref{eq:4.21e} also satisfies Assumptions \ref{A:D1} and \ref{A:D2} with $\beta=0$ and some other appropriate parameters when the constant $\theta$ is smaller than a given constant $\bar \theta_0$ depending only on $p$, $\bar p$ and $\gamma$. Clearly, $\bar g$ satisfies \ref{A:D2} since $g$ satisfies it. \vspace{0.2cm}

In the sequel, we will prove that the generator $\bar g$ also satisfies Assumption \ref{A:D1} with $\beta=0$ and some other appropriate parameters under the given conditions. Assume that $\alpha\in \ecal^{\infty}(p\gamma)$ for some real $p>1$, and $|H|^2\in \ecal^{\infty}(2\bar p (q\gamma)^2)$ for some $\bar p>1$ with $q=p/(p-1)$ such that $1/p+1/q=1$. Define the following constants:\vspace{0.2cm}
$$
\eps:={p-1\over p+1},\ \ \ \hat p:={(p+3)\bar p+p-1\over 2(p+2\bar p-1)}>1,\ \ \ \tilde p:={p+1\over 2\hat p}>1\ \ {\rm and}\ \ \tilde q:={p+1\over p-2\hat p+1}>1.\vspace{0.2cm}
$$
It is clear that $1/\tilde p+1/\tilde q=1$. Note that $g$ satisfies Assumption \ref{A:D1} with $\beta=0$. By a similar argument to (iv) of \cref{rmk:2.2}, in Assumption \ref{A:D1} we can without loss of generality assume that $f=g^i$ satisfies either of conditions \ref{A:D1}(i), \ref{A:D1}(ii) and \ref{A:D1}(iii) with $\beta=0$ for all $i=1,\cdots,n$. Thus, for $i=1,\cdots,n$, we need to consider the following three cases.\vspace{0.2cm}

(1) $g^i$ satisfies \ref{A:D1}(i) with $\beta=0$. For this case, it follows from the definition of $\bar g$ that $\as$, for each $(y,z)\in\R^n\times \R^{n\times d}$, we have
$$
\Dis \bar g^i(\omega,t,y,z) \leq  \Dis \alpha_t(\omega)+\sum_{j\neq i} \left(\lambda |z^j+H_t^j(\omega)|+\theta |z^j+H_t^j(\omega)|^2\right)+\frac{\gamma}{2} |z^i+H_t^i(\omega)|^2
$$
and
$$
\begin{array}{l}
\Dis \bar g^i(\omega,t,y,z)\geq \Dis \frac{\bar \gamma}{2} |z^i+H_t^i(\omega)|^2 -\bar\alpha_t(\omega)-\sum_{j=i+1}^{n} \left(\bar\lambda |z^j+H_t^j(\omega)|^{1+\delta}+\theta |z^j+H_t^j(\omega)|^2\right)\\
\hspace{2.4cm} \Dis -\bar c\sum_{j=1}^{i-1}|z^j+H_t^j(\omega)|^2.
\end{array}
$$
Note that for each $a,b\geq 0$, it holds that\vspace{0.1cm}
$$
\begin{array}{l}
\Dis (a+b)^2\leq 2a^2+2b^2,\ \ (a+b)^{1+\delta}\leq 2a^{1+\delta}+2b^{1+\delta},\vspace{0.1cm}\\
\Dis (a+b)^2\leq (1+\eps)a^2+\left(1+{1\over \eps}\right)b^2\ \ {\rm and}\ \ (a+b)^2\geq {1\over 2}a^2-b^2.
\end{array}
$$
We know that $\as$, for each $(y,z)\in\R^n\times \R^{n\times d}$,
\begin{equation}\label{eq:4.23}
\begin{array}{l}
\Dis \frac{\bar \gamma}{4} |z^i|^2 -\hat\alpha_t(\omega)-\sum_{j=i+1}^{n} \left(2\bar\lambda |z^j|^{1+\delta}+2\theta |z^j|^2\right)-2\bar c\sum_{j=1}^{i-1}|z^j|^2\leq \Dis \bar g^i(\omega,t,y,z)\\
\quad\quad\quad \leq  \Dis \breve\alpha_t(\omega)+\sum_{j\neq i} \left(\lambda |z^j|+2\theta |z^j|^2\right)+\frac{\gamma(1+\eps)}{2} |z^i|^2
\end{array}
\vspace{0.1cm}
\end{equation}
with
$$
\breve\alpha_t(\omega):=\alpha_t(\omega)+\sum_{j\neq i} \left(\lambda |H_t^j(\omega)|+2\theta |H_t^j(\omega)|^2\right)+\frac{\gamma(1+\eps)}{2\eps} |H_t^i(\omega)|^2
$$
and
$$
\hat\alpha_t(\omega):= \bar\alpha_t(\omega)+2\bar c\sum_{j=1}^{i-1}|H_t^j(\omega)|^2+\frac{\bar \gamma}{2} |H_t^i(\omega)|^2+\sum_{j=i+1}^{n} \left(2\bar\lambda |H_t^j(\omega)|^{1+\delta}+2\theta |H_t^j(\omega)|^2\right).
$$
By virtue of  $\bar\alpha\in \mcal^\infty$ and $H\in {\rm BMO}(\R^{n\times d})$ together with the fact that
$$
\hat\alpha\leq 2n\bar\lambda+\bar\alpha+\left(2\bar c+{\bar\gamma\over 2}+2\bar\lambda+2\theta\right) |H|^2,\vspace{-0.2cm}
$$
we can deduce that\vspace{-0.1cm}
\begin{equation}\label{eq:4.24}
\hat\alpha \in \mcal^\infty.
\end{equation}
On the other hand, from the definitions of constants $q, \hat p, \tilde p, \tilde q,\eps$ and $\breve\alpha$ it is not difficult to verify that $p\gamma=\tilde p \hat p\gamma (1+\eps)$ and that there exists two very small positive constants $\bar\eps$ and $\bar\theta_0$ such that
$$
\breve\alpha\leq \alpha+{n\lambda^2\over 4\bar\eps}+(\bar \eps+2\theta+q\gamma)|H|^2\quad {\rm and} \quad (\bar\eps+2\bar\theta_0+q\gamma)\tilde q \hat p\gamma (1+\eps)=2\bar p(q\gamma)^2.
$$
Then, it follows from the integrability condition of $\alpha$ and $|H|^2$ that $\alpha\in \ecal^{\infty}(\tilde p \hat p\gamma (1+\eps))$ and $(\bar \eps+2\theta+q\gamma) |H|^2\in \ecal^{\infty}(\tilde q \hat p\gamma (1+\eps))$ for each $\theta\in [0,\bar\theta_0]$. Note that $1/\tilde p+1/\tilde q=1$. By (iii) of \cref{rmk:2.1} we know that for each $\theta\in [0,\bar\theta_0]$,
\begin{equation}\label{eq:4.25}
\breve\alpha\in \ecal^{\infty}(\hat p\gamma (1+\eps)).
\end{equation}
Combining \eqref{eq:4.23}, \eqref{eq:4.24} and \eqref{eq:4.25} yields that the generator $\bar g^i$ satisfies (i) of Assumption \ref{A:D1} with parameters $(\breve\alpha, \hat\alpha,0,\gamma(1+\eps),\bar\gamma/2,2c,\lambda,2\bar\lambda,2\theta)$ instead of $(\alpha, \bar\alpha,\beta,\gamma,\bar\gamma,c,\lambda,\bar\lambda,\theta)$.\vspace{0.2cm}

(2) $g^i$ satisfies \ref{A:D1}(ii) with $\beta=0$ and violates \ref{A:D1} (i) and (iii). Note that we have $H^i\equiv 0$ in this case. It follows from the definition of $\bar g$ that $\as$, for each $(y,z)\in\R^n\times \R^{n\times d}$, we have\vspace{0.1cm}
$$
|\bar g^i(\omega,t,y,z)|\leq \tilde\alpha_t(\omega)+|z^i|\Big(\bar v_t(\omega)+c\sum_{j=1}^{i-1}|z^j|\Big)+\frac{\gamma}{2} |z^i|^2
$$
with
$$
\bar v_t(\omega):=v_t(\omega)+c\sum_{j=1}^{i-1}|H_t^j(\omega)|
\leq v_t(\omega)+\sqrt{n}c|H_t(\omega)|.
$$
And, it follows from $v\in {\rm BMO}(\R)$ and $H\in {\rm BMO}(\R^{n\times d})$ that $\bar v\in {\rm BMO}(\R)$, which means that $\bar g^i$ satisfies (ii) of Assumption \ref{A:D1} with
parameters $(\tilde\alpha,0,\bar v,c,\gamma)$ instead of $(\tilde\alpha,\beta, v,c,\gamma)$.\vspace{0.2cm}

(3) $g^i$ satisfies \ref{A:D1}(iii) with $\beta=0$. In this case, it follows from the definition of $\bar g$ that $\as$, for each $(y,z)\in\R^n\times \R^{n\times d}$, we have\vspace{-0.1cm}
$$
|g^i(\omega,t,y,z)|\leq \check\alpha_t(\omega)+\bar\lambda |z|+2\theta\sum_{j\neq i}|z^j|^2\vspace{-0.1cm}
$$
with\vspace{-0.2cm}
$$
\check\alpha_t(\omega):=\bar\alpha_t(\omega)+\bar\lambda |H_t(\omega)|+2\theta\sum_{j\neq i} |H_t^j(\omega)|^2\leq \bar\alpha_t(\omega)+\bar\lambda |H_t(\omega)|+2\theta |H_t(\omega)|^2.
$$
And, it follows from $\bar\alpha\in \mcal^\infty$ and $H\in {\rm BMO}(\R^{n\times d})$ that $\check\alpha\in \mcal^\infty$, which means that $\bar g^i$ satisfies (iii) of Assumption \ref{A:D1} with
parameters $(\check\alpha,0,\bar\lambda,2\theta)$ instead of $(\bar\alpha,\beta,\bar\lambda,\theta)$.

All in all, we have proved the desired conclusion. \cref{thm:2.9} is then proved.
\end{proof}

\appendix
\section{Auxiliary results on bounded solutions of one-dimensional BSDEs with unbounded coefficients}
\renewcommand{\appendixname}{}

We collect here some general results concerning the uniform estimate of (bounded) solution to scalar-valued BSDEs. We give some brief proofs for completeness. Let the real-valued function $f(\omega,t,y,z): \Omega\times\T\times\R\times\R^{1\times d}\To \R$ be $(\F_t)$-progressively measurable for each $(y,z)\in \R\times\R^{1\times d}$, and consider the following one-dimensional BSDE:
\begin{equation}\label{eq:A.1}
Y_t=\eta+\int_t^T  f(s,Y_s,Z_s){\rm d}s-\int_t^T Z_s {\rm d}B_s, \ \ t\in\T,
\end{equation}
where $\eta\in L^{\infty}(\R)$, and the solution $(Y,Z)$ is a pair of $(\F_t)$-progressively measurable processes with values in $\R\times\R^{1\times d}$ such that $Y\in \s^{\infty}(\R)$.\vspace{0.2cm}

Assume that $\bar\beta,\bar\delta\geq 0$ are two given constants, $\varphi(\cdot):[0,+\infty)\To [0,+\infty)$ is a nondecreasing continuous function, and $
\check\alpha\in\ecal^{\infty}(\gamma\exp(\bar\beta T)), \ \dot\alpha\in\mcal^{\infty}, \ \ddot\alpha\in\lcal^{\infty},\ \bar u\in {\rm BMO}(\R), \ \bar v\in {\rm BMO}(\R)$ are five real-valued non-negative $(\F_t)$-progressively measurable processes.\vspace{0.2cm}

We introduce the following assumptions on the generator $f$:
\begin{enumerate}
\renewcommand{\theenumi}{(A\arabic{enumi})}
\renewcommand{\labelenumi}{\theenumi}
\item\label{A:A1} Almost everywhere in $\Omega\times\T$, for any $(y,z)\in \R\times\R^{1\times d}$, we have
$$
\Dis \frac{\bar \gamma}{2} |z|^2-\dot\alpha_t(\omega)-\bar\beta |y|\leq f(\omega,t,y,z) \leq  \Dis \check\alpha_t(\omega)+\left [\bar\beta |y|\,  {\bf 1}_{y>0}+\varphi(|y|)\, {\bf 1}_{y<0}\right ]+\frac{\gamma}{2} |z|^2.
$$

\item\label{A:A2} Almost everywhere in $\Omega\times\T$, for any $(y,z)\in \R\times\R^{1\times d}$, we have
$$
\Dis \frac{\bar \gamma}{2} |z|^2-\dot\alpha_t(\omega)-\bar\beta |y|\leq -f(\omega,t,-y,-z) \leq  \Dis \check\alpha_t(\omega)+\left [\bar\beta |y|\,  {\bf 1}_{y>0}+\varphi(|y|)\, {\bf 1}_{y<0}\right ]+\frac{\gamma}{2} |z|^2.
$$

\item\label{A:A3} Almost everywhere in $\Omega\times\T$, for any $(y,z)\in \R\times\R^{1\times d}$, we have
$$
\begin{array}{l}
\Dis -\left[\bar\beta |y|\,  {\bf 1}_{y<0}+\varphi(|y|)\, {\bf 1}_{y>0}\right]-h(\omega,t,z)\leq f(\omega,t,y,z)\\
\hspace{6.5cm}\Dis \leq \left[\bar\beta |y|\,  {\bf 1}_{y>0}+\varphi(|y|)\, {\bf 1}_{y<0}\right]+h(\omega,t,z)
\end{array}
$$
with
$$
\Dis h(\omega,t,z):=\ddot\alpha_t(\omega)+\bar u_t(\omega) |z|+\frac{\gamma}{2} |z|^2.
$$

\item\label{A:A4} Almost everywhere in $\Omega\times\T$, for any $(y,z)\in \R\times\R^{1\times d}$, we have
$$
\begin{array}{l}
\Dis -\dot\alpha_t(\omega)-\left[\bar\beta |y|\,  {\bf 1}_{y<0}+\varphi(|y|)\, {\bf 1}_{y>0}\right]-\bar\lambda |z|\leq f(\omega,t,y,z)\\
\hspace{7.0cm}\Dis \leq \dot\alpha_t(\omega)+\left[\bar\beta |y|\,  {\bf 1}_{y>0}+\varphi(|y|)\, {\bf 1}_{y<0}\right]+\bar\lambda |z|.
\end{array}
$$

\item\label{A:A5} Almost everywhere in $\Omega\times\T$, for any $(y,\bar y,z,\bar z)\in \R\times\R\times\R^{1\times d}\times\R^{1\times d}$, we have for some $k>0$,
$$
|f(\omega,t,y,z)-f(\omega,t,\bar y,\bar z)|
\leq \bar\beta |y-\bar y|+k\left(\bar v_t(\omega)+|z|+|\bar z|\right)|z-\bar z|.
$$
\end{enumerate}

\begin{pro}\label{pro:A.1}
Assume that the generator $f$ satisfies Assumption \ref{A:A1} (resp. \ref{A:A2}).
\begin{itemize}
\item [(i)] For any solution $(Y,Z)$ of BSDE \eqref{eq:A.1} such that $Y\in \s^{\infty}(\R)$, we have $Z\in {\rm BMO}(\R^{1\times d})$ and for each $t\in \T$,
\begin{equation}\label{eq:A.2}
\begin{array}{l}
\Dis \|Y\|_{\s^\infty_{[t,T]}}+\left\|Z\right\|_{{\rm BMO}_{[t,T]}}^2\vspace{0.3cm}\\
\ \ \leq \Dis {2(1+\bar\beta T)+\bar\gamma\over \bar\gamma}\exp(2\bar\beta T) \left(3\|\eta\|_{\infty}+\|\check\alpha\|_{\ecal^{\infty}_{[t,T]}(\gamma\exp(\bar\beta T))}+2\left\|\dot\alpha\right\|_{\mcal^{\infty}_{[t,T]}}\right).\vspace{0.1cm}
\end{array}
\end{equation}
And, if the generator $f$ only satisfies the second inequality for the case of $y>0$ and the first inequality in \ref{A:A1} (resp. \ref{A:A2}), the above conclusion \eqref{eq:A.2} still holds.

\item [(ii)] BSDE \eqref{eq:A.1} admits a minimal (resp. maximal) solution $(Y,Z)$ such that $Y\in \s^{\infty}(\R)$ in the sense that for any solution $(\bar Y,\bar Z)$ of BSDE \eqref{eq:A.1} such that $\bar Y\in \s^{\infty}(\R)$,  we have for each $t\in \T$, $\ps$, $Y_t\leq \bar Y_t$ (resp. $Y_t\geq \bar Y_t$). Moreover, $Z\in {\rm BMO}(\R^{1\times d})$.

\item [(iii)] If the generator $f$ further satisfies Assumption \ref{A:A5}, then BSDE \eqref{eq:A.1} admits a unique solution $(Y,Z)$ such that $Y\in \s^{\infty}(\R)$. Moreover, $Z\in {\rm BMO}(\R^{1\times d})$.\vspace{0.2cm}
\end{itemize}
\end{pro}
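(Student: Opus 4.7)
My plan for Proposition A.1 is to prove (i) by an exponential transform controlling $\|Y\|_{\s^\infty}$ and a direct quadratic-lower-bound argument controlling $\|Z\|_{\rm BMO}$; (ii) by truncation plus Kobylanski's monotone stability; and (iii) by a BMO-Girsanov linearisation exploiting \ref{A:A5}. Assumptions \ref{A:A1} and \ref{A:A2} are treated simultaneously, since the transformation $(Y,Z)\mapsto(-Y,-Z)$ with generator $-f(\omega,t,-y,-z)$ converts one into the other; the one-sided version of (i) is obtained by retaining only the $Y^+$-branch of the argument.

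For (i), set $\mu:=\gamma\exp(\bar\beta T)$ and apply the Meyer--It\^o--Tanaka formula to $\exp(\mu Y_t^+)$. The local-time term at $0$ is non-negative and sits on the favourable side of the inequality, and the It\^o correction $\frac{\mu^2}{2}|Z|^2{\bf 1}_{Y>0}$ combines with the upper bound of $f$ in \ref{A:A1} to cancel the $|Z|^2$ term because $\mu\geq\gamma$. Taking the conditional expectation produces
$$\exp(\mu Y_\tau^+)\leq \E_\tau\left[\exp(\mu\eta^+)+\mu\int_\tau^T \exp(\mu Y_s^+)\bigl(\check\alpha_s+\bar\beta Y_s^+\bigr)\,{\rm d}s\right],$$
and a backward Gr\"onwall argument on $\|Y^+\|_{\s^\infty_{[\cdot,T]}}$, together with the definition of the $\ecal^\infty_{[t,T]}(\mu)$-norm of $\check\alpha$, closes to produce the announced $\s^\infty$-bound on $Y^+$. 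The symmetric argument applied to $-Y$ (using the lower bound in \ref{A:A1}) yields the corresponding bound on $Y^-$. With $\|Y\|_{\s^\infty_{[t,T]}}$ under control, the BMO estimate on $Z$ is immediate: from the lower bound $f\geq\frac{\bar\gamma}{2}|z|^2-\dot\alpha_t-\bar\beta|y|$ and $dY_t=-f\,{\rm d}t+Z_t\,{\rm d}B_t$, for each $\tau\in\mathcal{T}_{[t,T]}$,
$$\frac{\bar\gamma}{2}\E_\tau\left[\int_\tau^T|Z_s|^2\,{\rm d}s\right]\leq |Y_\tau|+\|\eta\|_\infty+\|\dot\alpha\|_{\mcal^\infty_{[t,T]}}+\bar\beta T\|Y\|_{\s^\infty_{[t,T]}},$$
from which the stated constant $\frac{2(1+\bar\beta T)+\bar\gamma}{\bar\gamma}\exp(2\bar\beta T)$ is assembled.

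For (ii), I truncate the $z$-dependence of $f$ at height $n$ via $f_n(t,y,z):=f(t,y,\pi_n(z))$ with $\pi_n$ a smooth radial truncation; the $f_n$ remain subject to the one-sided structural bounds of \ref{A:A1} and are Lipschitz in $(y,z)$, so that Pardoux--Peng gives a unique solution $(Y^n,Z^n)\in\s^\infty\times\hcal^2$. The a priori estimate of (i) is uniform in $n$, and choosing the truncation so that $f_n\uparrow f$ (resp.\ $\downarrow f$), Kobylanski's monotone stability theorem provides a minimal (resp.\ maximal) solution, with $Z\in{\rm BMO}$ inherited from the uniform bound. For (iii), the difference of two solutions $\delta Y:=Y^1-Y^2$ solves a linear BSDE with drift $a_s\delta Y_s+b_s\delta Z_s$, where $|a_s|\leq\bar\beta$ and $|b_s|\leq k(\bar v_s+|Z^1_s|+|Z^2_s|)\in{\rm BMO}(\R)$ by \ref{A:A5}; by Kazamaki's theorem the Dol\'eans exponential of $\int b\,{\rm d}B$ is a true martingale, and the standard Girsanov linearisation together with $\delta Y_T=0$ yields $\delta Y\equiv 0$. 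The main obstacle will be matching the precise constants in step one: the time-dependent exponent $\mu=\gamma\exp(\bar\beta T)$ must exactly absorb the linear $\bar\beta|y|$-growth of $f$ without inflating the scaling against $\check\alpha$, so that the backward Gr\"onwall loop closes with the declared factor $\frac{2(1+\bar\beta T)+\bar\gamma}{\bar\gamma}\exp(2\bar\beta T)$ rather than any larger constant.
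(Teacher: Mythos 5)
Your parts (i) and (iii) follow essentially the paper's route. For (i) the paper uses the time-dependent transform $u(t,x)=\exp\big(\gamma e^{\bar\beta t}x+\gamma\int_0^t e^{\bar\beta s}\check\alpha_s\,{\rm d}s\big)$ applied to $Y^+$, which absorbs both the $\bar\beta y$ term and $\check\alpha$ into the exponent and avoids any Gr\"onwall loop on the $Y^+$-branch; your constant exponent $\mu=\gamma e^{\bar\beta T}$ plus backward Gr\"onwall can be made to close with the same constant, but the display you write (with $\exp(\mu Y_s^+)\check\alpha_s$ sitting inside a single conditional expectation) only Gr\"onwalls cleanly if you first pass to the multiplicative form, i.e.\ essentially reinstate the paper's transform. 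One genuine caveat in (i): the ``symmetric argument applied to $-Y$'' for $Y^-$ cannot be another exponential transform, because the relevant bound there involves $\dot\alpha$, which is only in $\mcal^{\infty}$ and has no exponential moments; the correct (and the paper's) step is the plain linear estimate $Y^-_t\le\E_t[\eta^-+\int_t^T(\dot\alpha_s+\bar\beta|Y_s|)\,{\rm d}s]$ obtained by discarding the positive quadratic term in the lower bound of \ref{A:A1}. Part (iii) (BMO--Girsanov linearisation under \ref{A:A5}) matches the paper.

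Part (ii) has a real gap. Assumption \ref{A:A1} is a pure two-sided growth condition; $f$ is not assumed Lipschitz, nor even continuous, in $(y,z)$. Your approximation $f_n(t,y,z):=f(t,y,\pi_n(z))$ therefore is not Lipschitz in $(y,z)$ (truncating $z$ does nothing to the $y$-dependence and does not regularise a discontinuous $f$), so Pardoux--Peng does not apply to the $f_n$; moreover $f_n$ and $f_{n'}$ bear no pointwise order relation for $|z|$ between $n$ and $n'$, so no choice of radial truncation makes the sequence monotone, and Kobylanski's monotone stability cannot be invoked. The paper resolves both problems at once with the Lipschitz inf-convolution
$$f^m(\omega,t,y,z):=\inf\left\{f(\omega,t,\bar y,\bar z)+(m+\bar\beta)|y-\bar y|+m|z-\bar z|:\ (\bar y,\bar z)\in\R\times\R^{1\times d}\right\},$$
which is uniformly Lipschitz, increases to $f$, and still satisfies \ref{A:A1} with the same parameters, after which the uniform estimate of (i) and the monotone stability result of Luo--Fan give the minimal solution (the maximal one via the $(-Y,-Z)$ symmetry under \ref{A:A2}). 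You would need to replace your truncation by this (or an equivalent monotone Lipschitz approximation) for (ii) to go through; also note that the boundedness of each approximate solution $Y^m$ is not automatic from Pardoux--Peng but is extracted from the classical $L^2$ a priori estimate together with $\eta\in L^{\infty}(\R)$, $\check\alpha\in\ecal^{\infty}(\gamma e^{\bar\beta T})$ and $\dot\alpha\in\mcal^{\infty}$.
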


\begin{proof} We only give the proof when the generator $f$ satisfies Assumption \ref{A:A1}. The other case can be proved in the same way.\vspace{0.2cm}

(i) Let $(Y,Z)$ be a solution of BSDE \eqref{eq:A.1} such that $Y\in \s^{\infty}(\R)$.  For each integer $m\geq 1$ and each stopping time $\tau\in \mathcal{T}_{[0,T]}$, define the following stopping time\vspace{0.1cm}
$$
\sigma_m^\tau:=T\wedge \,  \inf\left\{s\in [\tau,T]:\ \int_\tau^s |Z_s|^2{\rm d}s\geq m\right\}
$$
with convention $\inf\emptyset=\infty$. It follows from the first inequality in Assumption \ref{A:A1} that for each $m\geq 1$ and each $t\in\T$,\vspace{0.2cm}
$$
{\bar\gamma\over 2}\E_\tau\left[\int_\tau^{\sigma_m^\tau} |Z_s|^2 {\rm d}s\right]\leq |Y_\tau|+ \E_\tau\left[|Y_{\sigma_m^\tau}| +\int_\tau^{\sigma_m^\tau}\left(\dot\alpha_s+ \bar\beta |Y_s|\right) {\rm d}s\right],\ \ \tau\in \mathcal{T}_{[t,T]}.\vspace{0.1cm}
$$
Sending $m\To +\infty$ in previous inequality and using Fatou's lemma yields that for each $t\in\T$,
$$
{\bar\gamma\over 2}\E_\tau\left[\int_\tau^T |Z_s|^2 {\rm d}s\right]\leq \|\eta\|_{\infty}+\left\|\dot\alpha\right\|_{\mcal^{\infty}_{[t,T]}}+(1+\bar\beta T)\|Y\|_{\s^\infty_{[t,T]}},\ \ \tau\in \mathcal{T}_{[t,T]},
$$
which means that $Z\in {\rm BMO}(\R^{1\times d})$, and for each $t\in\T$,
\begin{equation}\label{eq:A.3}
{\bar\gamma\over 2}\|Z\|_{{\rm BMO}_{[t,T]}}^2 \leq \|\eta\|_{\infty}+\left\|\dot\alpha\right\|_{\mcal^{\infty}_{[t,T]}}+(1+\bar\beta T)\|Y\|_{\s^\infty_{[t,T]}}.
\end{equation}
Furthermore, define the function
$$
u(t,x):=\exp\left(\gamma\exp(\bar\beta t)x+\gamma\int_0^t\exp(\bar\beta s)\check\alpha_s{\rm d}s\right),\ \ (t,x)\in \T\times\R.
$$
In view of the second inequality for the case of $y>0$ in Assumption \ref{A:A1}, by applying It\^o-Tanaka's formula to $u(s,Y^+_s)$ we can deduce that for each $m\geq 1$ and $t\in \T$,
$$
\begin{array}{lll}
\Dis \exp(\gamma Y^+_t) &\leq & \Dis \E_t\left[\exp\left(\gamma\exp(\bar\beta T) \eta^++\gamma\exp(\bar\beta T) \int_t^T \check\alpha_s {\rm d}s\right)\right],\vspace{0.2cm}\\
&\leq & \Dis \exp\left(\gamma\exp(\bar\beta T)\|\eta\|_{\infty}\right)\exp\left(\gamma\exp(\bar\beta T)\|\check\alpha\|_{\ecal^{\infty}_{[t,T]}(\gamma\exp(\bar\beta T))}\right)
\end{array}
$$
and then,
\begin{equation}\label{eq:A.4}
Y^+_t\leq \exp(\bar\beta T)\left(\|\eta\|_{\infty}+\|\check\alpha\|_{\ecal^{\infty}_{[t,T]}(\gamma\exp(\bar\beta T))}\right).
\end{equation}
On the other hand, from the first inequality in Assumption \ref{A:A1} we can also get that for each $m\geq 1$ and $t\in\T$,
$$
Y^-_t=(-Y_t)^+ \leq \E_t\left[\eta^-+\int_t^T \left(\dot\alpha_s+ \bar\beta |Y_s|\right) {\rm d}s\right],
$$
which together with \eqref{eq:A.4} yields that for each $t\in \T$,
$$
|Y_t|\leq \exp(\bar\beta T)\left(\|\eta\|_{\infty}+\|\check\alpha\|_{\ecal^{\infty}_{[t,T]}(\gamma\exp(\bar\beta T))}\right)+\|\eta\|_{\infty}+\left\|\dot\alpha\right\|_{\mcal^{\infty}_{[t,T]}}
+\bar\beta \E_t\left[\int_t^T |Y_s| {\rm d}s\right].
$$
And, it follows from Gronwall's inequality that
\begin{equation}\label{eq:A.5}
\|Y\|_{\s^\infty_{[t,T]}}\leq \exp(2\bar\beta T)\left(2\|\eta\|_{\infty}+\|\check\alpha\|_{\ecal^{\infty}_{[t,T]}
(\gamma\exp(\bar\beta T))}+\left\|\dot\alpha\right\|_{\mcal^{\infty}_{[t,T]}}\right), \ \ t\in\T.
\end{equation}
Finally, the desired conclusion \eqref{eq:A.2} follows from \eqref{eq:A.5} and \eqref{eq:A.3} immediately. \vspace{0.2cm}

(ii) In view of Assumption \ref{A:A1}, it is easy to verify that for each integer $m\geq 1$, the following function
$$
\begin{array}{c}
f^m(\omega,t,y,z):=\inf\left\{f(\omega,t,\bar y,\bar z)+(m+\bar\beta)|y-\bar y|+m|z-\bar z|:\ (\bar y,\bar z)\in \R\times\R^{1\times d}\right\},\vspace{0.1cm}\\
(\omega,t,y,z)\in \Omega\times\T\times \R\times\R^{1\times d}
\end{array}
$$
is well defined and an $(\F_t)$-progressively measurable process for each $(y,z)$. It is also not difficult to prove that $f^m$ is uniformly Lipschitz continuous in the state variables $(y,z)$ and also satisfies Assumption \ref{A:A1} with the same parameters for each $m\geq 1$, and that the sequence $\{f^m\}_{m=1}^{\infty}$ converges increasingly uniformly on compact sets to the generator $f$ as $m$ tends to $+\infty$. Then, $\as$, for each $(y,z)\in \R\times\R^{1\times d}$ and $m\geq 1$, we have
\begin{equation}\label{eq:A.6}
|f^m(\omega,t,y,z)|\leq \check\alpha_t(\omega)+\dot\alpha_t(\omega)+\varphi(|y|)+\bar\beta |y| +{\gamma\over 2}|z|^2,
\end{equation}
and then
\begin{equation}\label{eq:A.7}
|f^m(\omega,t,0,0)|\leq \check\alpha_t(\omega)+\dot\alpha_t(\omega)+\varphi(0),
\end{equation}
which means that
$$
\E\left[\left(\int_0^T |f^m(s,0,0)|{\rm d}s\right)^2\right]<+\infty.\vspace{0.1cm}
$$
Consequently, by the classical results (see for example Theorems 3 and 2 in \citet{FanJiangTian2011SPA}) we know that for each $m\geq 1$, the following BSDE
$$
Y_t^m=\eta+\int_t^T  f^m(s,Y_s^m,Z_s^m){\rm d}s-\int_t^T Z_s^m {\rm d}B_s, \ \ t\in\T
$$
admits a unique solution $(Y^m,Z^m)\in \s^2(\R)\times \hcal^2(\R^{1\times d})$, and $Y^m$ converges increasing pointwisely to a process $Y$. Moreover, by the classical a priori estimate on the $L^2$ solution (see for example Proposition 3.2 in \citet{Bri03}) we know that there exists a uniform constant $c_0>0$ such that for each $m\geq 1$ and $t\in \T$,
$$
\E_t\left[\sup_{s\in [t,T]}|Y^m_s|^2\right]\leq c_0 \exp\left(2(m+\bar\beta)T+2m^2 T\right)\E_t\left[|\eta|^2+\left(\int_t^T |f^m(s,0,0)|{\rm d}s\right)^2\right],
$$
which together with \eqref{eq:A.7} and the facts that $\eta\in L^{\infty}(\R)$, $\check\alpha\in \ecal^\infty(\gamma\exp(\bar\beta T))$ and $\dot\alpha\in \mcal^\infty$ yields that $Y^m\in \s^{\infty}(\R)$ for each $m\geq 1$, and then $Z^m\in {\rm BMO}(\R^{1\times d})$ by (i).\vspace{0.2cm}

In the sequel, it follows from (i) that there exists a uniform constant $A>0$ which is independent of $m$ such that $\as$, we have $\sup_{m\geq 1} |Y^m_t(\omega)|\leq A$ and, in view of \eqref{eq:A.6},
$$
\RE\ (y,z)\in [-A,A]\times \R^{1\times d},\ \ |f^m(\omega,t,y,z)|\leq \check\alpha_t(\omega)+\dot\alpha_t(\omega)+\varphi(A)+\bar\beta A +{\gamma\over 2}|z|^2.
$$
Thus, we can apply the monotonic stability result Proposition 3.1 in \citet{LuoFan2018SD} to obtain the existence of a process $Z\in \hcal^2(\R^{1\times d})$ such that $Y\in \s^\infty(\R)$ and $(Y,Z)$ is a solution of BSDE \eqref{eq:A.1}. And, it follows from (i) that $Z\in {\rm BMO}(\R^{1\times d})$.\vspace{0.2cm}

It remains to show that $(Y,Z)$ is the minimal solution. For this, let $(\bar Y,\bar Z)$ be any solution of BSDE \eqref{eq:A.1} such that $\bar Y\in \s^{\infty}(\R)$. By (i) again we know that $\bar Z\in {\rm BMO}(\R^{1\times d})$. This means that $(\bar Y,\bar Z)\in \s^2(\R)\times\hcal^2(\R^{1\times d})$. Then, since $f^m$ is uniformly Lipschitz continuous in $(y,z)$ and $f^m\leq f$ for each $m\geq 1$, it follows from the classical comparison theorem on the $L^2$-solution that for each $m\geq 1$ and $t\in \T$, $\ps$, $Y^m_t\leq \bar Y_t$, and letting $m\To \infty$ yields that $Y\leq \bar Y$, which is the desired conclusion.\vspace{0.2cm}

(iii) Let the generator $f$ further satisfy Assumption \ref{A:A5}, and $(Y,Z)$ and
$(\bar Y,\bar Z)$ be the solution of BSDE \eqref{eq:A.1} such that $Y\in \s^{\infty}(\R)$ and $\bar Y\in \s^{\infty}(\R)$. First of all, from (i) we know that $Z\in {\rm BMO}(\R^{1\times d})$ and $\bar Z\in {\rm BMO}(\R^{1\times d})$. Furthermore, define $\hat Y:=Y-\bar Y$ and $\hat Z:=Z-\bar Z$. By virtue of It\^o-Tanaka's formula and \ref{A:A5} we can deduce that for each $t\in\T$,
\begin{equation}\label{eq:A.8}
\hspace*{-0.4cm}
\begin{array}{lll}
|\hat Y_t| &\leq & \Dis \int_t^T \left(\bar\beta |\hat Y_s| +k(\bar v_s+|Z_s|+|\bar Z_s|)|\hat Z_s|\right){\rm d}s-\int_t^T {\rm sgn}(\hat Y_s)\hat Z_s {\rm d}B_s\vspace{0.2cm}\\
&\leq & \Dis \bar\beta \int_t^T |\hat Y_s| {\rm d}s-\int_t^T {\rm sgn}(\hat Y_s)\hat Z_s \Big[{\rm d}B_s-k{\rm sgn}(\hat Y_s)(\bar v_s+|Z_s|+|\bar Z_s|)\frac{\hat Z^\top _s }{|\hat Z_s|}{\bf 1}_{|\hat Z_s|\neq 0}{\rm d}s\Big].
\end{array}
\end{equation}
Since all of processes $\bar v, Z,\bar Z$ belong to ${\rm BMO}(\R^{1\times d})$, it is easy to verify that the process
$$
M_t:=k\int_0^t {\rm sgn}(\hat Y_s)(\bar v_s+|Z_s|+|\bar Z_s|)\frac{\hat Z_s }{|\hat Z_s|}{\bf 1}_{|\hat Z_s|\neq 0}{\rm d}B_s,\ \ t\in\T
$$
is a BMO martingale. Define
$$
\frac{{\rm d}\tilde{\mathbb P}}{{\rm d}{\mathbb P}}:=\exp\left\{M_T-\frac{1}{2}\langle M\rangle_T\right\}
$$
and
$$
\tilde B_t:=B_t-k\int_0^t {\rm sgn}(\hat Y_s)(\bar v_s+|Z_s|+|\bar Z_s|)\frac{\hat Z^\top _s }{|\hat Z_s|}{\bf 1}_{|\hat Z_s|\neq 0} {\rm d}s,\ t\in \T.\vspace{0.2cm}
$$
Then, $\tilde{\mathbb P}$ is a new probability, and $\tilde B$ is a Brownian motion with respect to $\tilde {\mathbb P}$. Then, taking the obvious conditional mathematical expectation with respect to $\tilde{\mathbb P}$ in \eqref{eq:A.8} and utilizing Gronwall's inequality yields that $|\hat Y_t|=0$ for each $t\in \T$, which is the desired conclusion.
\end{proof}

\begin{pro}\label{pro:A.2}
Assume that the generator $f$ satisfies Assumption \ref{A:A3}.
\begin{itemize}
\item [(i)] For any solution $(Y,Z)$ of BSDE \eqref{eq:A.1} such that $Y\in \s^{\infty}(\R)$, we have $Z\in {\rm BMO}(\R^{1\times d})$ and for each $0\leq t\leq r\leq T$,
\begin{equation}\label{eq:A.9}
\begin{array}{l}
\Dis\|Y\|_{\s^\infty_{[t,r]}}+\|Z\|^2_{{\rm BMO}_{[t,r]}}\leq  \Dis {4(\gamma +1) \over \gamma^2}\exp\left\{4\gamma \exp(\bar\beta T)\left(\|\eta\|_{\infty}+\|\ddot\alpha\|_{\lcal^{\infty}_{[t,T]}}
\right)\right\}
\vspace{0.2cm}\\
\Dis \hspace{1cm} \times \left\{1+\bar\beta T \exp(\bar\beta T)\left(\|\eta\|_{\infty}+\|\ddot\alpha\|_{\lcal^{\infty}_{[t,T]}}\right)
+\|\ddot\alpha\|_{\lcal^{\infty}_{[t,T]}}+\|\bar u\|^2_{{\rm BMO}_{[t,r]}}\right\}.\vspace{0.2cm}
\end{array}
\end{equation}
And, if the generator $f$ only satisfies the first inequality for the case of $y<0$ and the second inequality for the case of $y>0$ in \ref{A:A3}, the above inequality~\eqref{eq:A.9} is still true.

\item [(ii)] BSDE \eqref{eq:A.1} admits a solution $(Y,Z)$ such that $Y\in \s^{\infty}(\R)$. Moreover, $Z\in {\rm BMO}(\R^{1\times d})$.

\item [(iii)] If the generator $f$ further satisfies Assumption \ref{A:A5}, then BSDE \eqref{eq:A.1} admits a unique solution $(Y,Z)$ such that $Y\in \s^{\infty}(\R)$. Moreover, $Z\in {\rm BMO}(\R^{1\times d})$.\vspace{0.2cm}
\end{itemize}
\end{pro}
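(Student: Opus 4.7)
\medskip

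\noindent\textbf{Proof proposal for \cref{pro:A.2}.}

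The plan for part~(i) is to run an Itô--Tanaka plus Girsanov argument adapted from the proof of \cref{pro:A.1}, the novelty being the need to absorb the BMO-coefficient term $\bar u_t|z|$. First I extract from \ref{A:A3} the one-sided bound
$${\rm sgn}(y)\,f(\omega,t,y,z) \leq \bar\beta|y| + \ddot\alpha_t(\omega) + \bar u_t(\omega)|z| + \frac{\gamma}{2}|z|^2,\quad \forall (y,z),$$
valid on both half-planes (the $\varphi(|y|)$ terms drop out on the ``natural'' side). To obtain the BMO estimate on $Z$, I would apply Itô--Tanaka to $\psi(Y_t)$ with the convex $C^{1}$-function $\psi(y):=\gamma^{-2}(e^{\gamma|y|}-1-\gamma|y|)$ (whose derivatives satisfy $|\psi'(y)|=\gamma^{-1}(e^{\gamma|y|}-1)$ and $\psi''(y)=e^{\gamma|y|}$), exploit the cancellation $\tfrac{1}{2}\psi''(y)|z|^{2}-|\psi'(y)|\tfrac{\gamma}{2}|z|^{2}=\tfrac{1}{2}|z|^{2}$, and handle the leftover linear term by Girsanov with the density $\mathcal{E}\bigl(\int \mu^\top dB\bigr)$, where $\mu_s:=\bar u_s\,{\rm sgn}(Y_s)\,Z_s^\top/|Z_s|\,\mathbf{1}_{\{Z_s\neq 0\}}$. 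Since $|\mu|\leq\bar u\in {\rm BMO}(\R)$, this is a uniformly integrable martingale and the ${\rm BMO}$-norms under $\mathbb P$ and $\tilde{\mathbb P}$ are equivalent (Theorem~3.3 of \citet{Kazamaki1994book}). Taking $\tilde\E_t$ yields, after absorption,
$$\psi(Y_t)+\tfrac{1}{2}\tilde\E_t\Bigl[\int_t^T|Z_s|^2 ds\Bigr]\leq \tilde\E_t\Bigl[\psi(\eta)+\int_t^T|\psi'(Y_s)|(\bar\beta|Y_s|+\ddot\alpha_s)\,ds\Bigr].$$

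To get the $\s^\infty$-bound on $Y$ itself, I would separately control $Y^+$ and $Y^-$ by applying Itô--Tanaka to $e^{\gamma e^{\bar\beta(T-t)} Y_t^+}$ and $e^{\gamma e^{\bar\beta(T-t)}(-Y_t)^+}$, exactly mirroring the time-weighted exponential used in step~(i) of \cref{pro:A.1}, again passing to $\tilde{\mathbb P}$ to kill $\bar u|Z|$. Combining the resulting sup-bound on $|Y|$ with the $Z$-estimate above (and bounding $\tilde\E_t\bigl[\int_t^T |\psi'(Y_s)|\bar u_s|Z_s|ds\bigr]$ by Cauchy--Schwarz via $\|\bar u\|_{\rm BMO}\|Z\|_{\rm BMO}$ when needed) gives a quadratic inequality in $\|Z\|_{{\rm BMO}_{[t,r]}}$ that, after solving, produces the announced bound~\eqref{eq:A.9} with the explicit constants displayed.

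For part~(ii), existence follows by the same monotone-approximation scheme as in \cref{pro:A.1}(ii): I would truncate $f$ by the Lipschitz inf-convolution $f^m$, check that each $f^m$ still obeys \ref{A:A3} with the same parameters and satisfies $|f^m(\cdot,0,0)|\leq\ddot\alpha+\varphi(0)$, solve the Lipschitz BSDEs to get $(Y^m,Z^m)\in\s^2\times\hcal^2$ with $Y^m\in\s^\infty$, apply the a priori bound from part~(i) uniformly in $m$, and then invoke the monotone stability result (Proposition~3.1 of \citet{LuoFan2018SD}) to pass to the limit. For part~(iii), uniqueness under \ref{A:A5} follows verbatim from the Girsanov argument in the proof of \cref{pro:A.1}(iii): the difference $\hat Y=Y-\bar Y$ satisfies an Itô--Tanaka inequality whose $\bar v+|Z|+|\bar Z|$-term generates a BMO martingale, and an equivalent-measure change reduces the problem to Gronwall on $|\hat Y|$.

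The main technical obstacle is the $Y$-bound: a naïve Gronwall on $e^{\gamma Y^+}$ is circular because the drift $\gamma e^{\gamma Y^+}\bar\beta Y^+$ grows faster than $e^{\gamma Y^+}$ itself. The resolution is to let the weight $e^{\bar\beta(T-t)}$ inside the exponent absorb the $\bar\beta|y|$ drift via $v_t + \bar\beta y\,v_y = 0$, at the cost of a residual $\frac{\gamma^2}{2} e^{\bar\beta(T-t)}(e^{\bar\beta(T-t)}-1)\,v\,|z|^2$ term that must be dominated by rerouting the coefficient into $\bar u$ before applying Girsanov; keeping track of this interplay between the time-weight, the quadratic cancellation, and the BMO-equivalence constants is where the bookkeeping is most delicate.
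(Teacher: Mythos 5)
Your outline for part (i) is workable but diverges from the paper in two ways that affect the constants. For the $Z$-estimate the paper does \emph{not} use Girsanov: it applies It\^o--Tanaka to $\exp(2\gamma|Y|)$ (the doubled exponent is the point) and absorbs the cross term $\bar u_t|Z_t|$ pointwise by $2ab\le 2a^2+b^2/2$, so that the $\bar u^2$ contribution appears linearly and everything stays under $\mathbb P$; this is what makes the constant in \eqref{eq:A.9} fully explicit with the additive $\|\bar u\|^2_{{\rm BMO}_{[t,r]}}$ term. Your primary route (change of measure with density $\mathcal E(\int\mu^\top{\rm d}B)$, $|\mu|\le\bar u$, then Kazamaki's Theorem 3.3 to return to $\mathbb P$) proves $Z\in{\rm BMO}$ but introduces the non-explicit equivalence constants $L_1,L_2$ depending on $\|\bar u\|_{\rm BMO}$, so it would not literally yield \eqref{eq:A.9}; your parenthetical Cauchy--Schwarz/absorption variant is the one to promote to the main argument. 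For the $Y$-bound the paper is simpler than your weighted-exponential scheme: it applies It\^o--Tanaka directly to $|Y|$ and puts \emph{both} $\bar u_s|Z_s|$ and $\tfrac{\gamma}{2}|Z_s|^2$ into the Girsanov drift $({\bar u_s}|Z_s|^{-1}{\bf 1}_{|Z_s|\neq0}+\tfrac{\gamma}{2})Z_s^\top$ (legitimate once $Z\in{\rm BMO}$ is known from the first step), after which plain Gronwall gives $\|Y\|_{\s^\infty_{[t,T]}}\le e^{\bar\beta T}(\|\eta\|_\infty+\|\ddot\alpha\|_{\lcal^\infty_{[t,T]}})$; the circularity you worry about in your last paragraph never arises on this route.

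The genuine gap is in part (ii). Under \ref{A:A3} the generator is only bounded below by $-\varphi(|y|)-\bar\beta|y|-\ddot\alpha_t-\bar u_t|z|-\tfrac{\gamma}{2}|z|^2$, i.e.\ it may decrease quadratically in $z$ (e.g.\ $f=-\tfrac{\gamma}{2}|z|^2$ is admissible). For such $f$ the single inf-convolution $f^m(y,z)=\inf_{(\bar y,\bar z)}\{f(\bar y,\bar z)+m|y-\bar y|+m|z-\bar z|\}$ is identically $-\infty$, so "the same monotone-approximation scheme as in \cref{pro:A.1}(ii)" cannot be run verbatim — it works there only because \ref{A:A1} gives the lower bound $\tfrac{\bar\gamma}{2}|z|^2-\dot\alpha-\bar\beta|y|$, which is bounded below affinely in $(y,z)$. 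The paper repairs this by splitting $f=f^+-f^-$ and inf-convolving the two nonnegative parts separately with independent indices, producing a doubly indexed family $f^{m,l}$ that increases in $m$ and decreases in $l$, and then applying the monotone stability result twice. You need this (or an equivalent device) for existence; parts (i) and (iii) are otherwise fine, with (iii) indeed identical to the uniqueness argument of \cref{pro:A.1}(iii).
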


\begin{proof} (i) Let $(Y,Z)$ be any solution of BSDE \eqref{eq:A.1} such that $Y\in \s^{\infty}(\R)$. For each $0\leq t\leq r\leq T$, each stopping time $\tau\in \mathcal{T}_{[t,r]}$ and each integer $m\geq 1$, define the following stopping time
$$
\sigma_m^{\tau,r}:=r\wedge \,  \inf\left\{s\in [\tau,r]:\ \int_\tau^s |Z_s|^2{\rm d}s\geq m\right\}.
$$
Using It\^{o}-Tanaka's formula to compute $\exp(2\gamma |Y_t|)$ and utilizing the first inequality for the case of $y<0$ and the second inequality for the case of $y>0$ in Assumption \ref{A:A3}, we see that
$$
\begin{array}{l}
\Dis \exp(2\gamma |Y_\tau|)+2\gamma^2\E_\tau\left[\int_\tau^{\sigma_m^{\tau,r}} \exp(2\gamma |Y_s|) |Z_s|^2\ {\rm d}s\right]\vspace{0.2cm}\\
\ \ \leq \Dis \E_\tau\left[\exp(2\gamma |\eta|)\right]+2\gamma\E_\tau\left[\int_\tau^{\sigma_m^{\tau,r}} \exp(2\gamma |Y_s|)\left(\ddot\alpha_s+\bar\beta |Y_s| +\bar u_t |Z_s|+\frac{\gamma}{2}|Z_s|^2\right){\rm d}s\right].
\end{array}
$$
Therefore, in view of the basic inequality that $2ab\leq 2a^2+b^2/2$ for each $a,b\geq 0$,
$$
\begin{array}{lll}
\Dis \gamma^2\ \E_\tau \left[\int_\tau^{\sigma_m^{\tau,r}} |Z_s|^2{\rm d}s\right]&\leq & \Dis \exp(2\gamma \|\eta\|_\infty)+2\gamma\exp\left(2\gamma \|Y\|_{\s^\infty_{[t,r]}}\right)\left(\|\ddot\alpha\|_{\lcal^{\infty}_{[t,r]}}
+\bar\beta T\|Y\|_{\s^\infty_{[t,r]}}\right)\vspace{0.2cm}\\
&& \Dis +2\exp\left(4\gamma \|Y\|_{\s^\infty_{[t,r]}}\right)\|\bar u\|^2_{{\rm BMO}_{[t,r]}}+{\gamma^2\over 2}\E_\tau \left[\int_\tau^{\sigma_m^{\tau,r}} |Z_s|^2{\rm d}s \right].
\end{array}
$$
Sending $m\To +\infty$  and using Fatou's lemma yields that $Z\in {\rm BMO}(\R^{1\times d})$, and we have
\begin{equation}\label{eq:A.10}
\begin{array}{lll}
\Dis\|Z\|^2_{{\rm BMO}_{[t,r]}}&\leq & \Dis {2\over \gamma^2}\exp(2\gamma \|\eta\|_\infty)+{4\over \gamma}\exp\left(2\gamma \|Y\|_{\s^\infty_{[t,T]}}\right)\left(\|\ddot\alpha\|_{\lcal^{\infty}_{[t,T]}}
+\bar\beta T\|Y\|_{\s^\infty_{[t,T]}}\right)\vspace{0.2cm}\\
&& \Dis +{4\over \gamma^2}\exp\left(4\gamma \|Y\|_{\s^\infty_{[t,T]}}\right)\|\bar u\|^2_{{\rm BMO}_{[t,r]}}.
\end{array}
\end{equation}
Furthermore, using It\^{o}-Tanaka's formula we also have
\begin{equation}\label{eq:A.11}
\begin{array}{lll}
|Y_t| &\leq & \Dis |\eta|+\int_t^T \left(\ddot\alpha_s+\bar\beta |Y_s| +\bar u_t |Z_s|+\frac{\gamma}{2}|Z_s|^2\right){\rm d}s-\int_t^T {\rm sgn}(Y_s) Z_s {\rm d}B_s\vspace{0.2cm}\\
&\leq & \Dis \|\eta\|_{\infty}+\|\ddot\alpha\|_{\lcal^{\infty}_{[t,T]}}+\bar\beta\int_t^T |Y_s|{\rm d}s\vspace{0.2cm}\\
&& \Dis -\int_t^T {\rm sgn}(Y_s) Z_s\Big[{\rm d}B_s-{\rm sgn}(Y_s)\left(\bar u_s \frac{1}{|Z_s|}{\bf 1}_{|Z_s|\neq 0}+\frac{\gamma}{2}\right)Z_s^\top \  {\rm d}s\Big].
\end{array}
\end{equation}
Since both processes $\bar u$ and $Z$ belong to ${\rm BMO}(\R^{1\times d})$, we see that the process
$$
M_{\bar t}:=\int_0^{\bar t} {\rm sgn}(Y_s)\left(\bar u_s \frac{1}{|Z_s|}{\bf 1}_{|Z_s|\neq 0}+\frac{\gamma}{2}\right)Z_s {\rm d}B_s,\quad  \bar t\in\T
$$
is a BMO martingale. Define
$$
\frac{{\rm d}\tilde{\mathbb P}}{{\rm d}{\mathbb P}}:=\exp\left\{M_T-{1\over 2}\langle M\rangle_T\right\}
$$
and
$$
\tilde B_{\bar t}:=B_{\bar t}-\int_0^{\bar t} {\rm sgn}(Y_s)\left(\bar u_s \frac{1}{|Z_s|}{\bf 1}_{|Z_s|\neq 0}+\frac{\gamma}{2}\right)Z_s^\top \ {\rm d}s,\quad \bar t\in \T.\vspace{0.2cm}
$$
Then, $\tilde{\mathbb P}$ is a new probability, and $\tilde B$ is a Brownian motion with respect to $\tilde {\mathbb P}$. Then, taking the obvious conditional mathematical expectation with respect to $\tilde{\mathbb P}$ in \eqref{eq:A.11} and using Gronwall's inequality yields that
\begin{equation}\label{eq:A.12}
\|Y\|_{\s^\infty_{[t,T]}}\leq \exp(\bar\beta T)\left(\|\eta\|_{\infty}+\|\ddot\alpha\|_{\lcal^{\infty}_{[t,T]}}\right).
\end{equation}
Then, the desired inequality~\eqref{eq:A.9} immediately follows from \eqref{eq:A.10} and \eqref{eq:A.12}. \vspace{0.2cm}

(ii) It is easy to check that for each pair of integers $m,l\geq 1$, the following function
$$
\begin{array}{l}
f^{m,l}(\omega,t,y,z):= \Dis \inf\left\{f^+(\omega,t,\bar y,\bar z)+m|y-\bar y|+m|z-\bar z|:\ (\bar y,\bar z)\in \R\times\R^{1\times d}\right\} \vspace{0.2cm}\\
\Dis \hspace{2.8cm} -\inf\left\{f^-(\omega,t,\bar y,\bar z)+l|y-\bar y|+l|z-\bar z|:\ (\bar y,\bar z)\in \R\times\R^{1\times d}\right\},\vspace{0.2cm}\\
\Dis \hspace{4cm} (\omega,t,y,z)\in \Omega\times\T\times \R\times\R^{1\times d}
\end{array}
$$
is well defined and an $(\F_t)$-progressively measurable process for each $(y,z)$. In view of Assumption \ref{A:A3}, we see that $f^{m,l}$ is uniformly Lipschitz continuous in the state variables $(y,z)$ and also satisfies Assumption \ref{A:A3} with the same parameters for $m,l\geq 1$, and that for each $l\geq 1$, the sequence $\{f^{m,l}\}_{m=1}^{\infty}$ converges increasingly and uniformly on compact sets to a function $f^{\infty,l}$ as $m\To +\infty$, and $\{f^{\infty,l}\}_{l=1}^{\infty}$ converges decreasingly and  uniformly on compact sets to the generator $f$ as $l\To +\infty$. Then, $\as$, for each $(y,z)\in \R\times\R^{1\times d}$ and $m,l\geq 1$, we have
\begin{equation}\label{eq:A.13}
|f^{m,l}(\omega,t,y,z)|\leq \ddot\alpha_t(\omega)+{1\over 2}\bar u^2_t(\omega)+\varphi(|y|)+\bar\beta |y| +{\gamma+1\over 2}|z|^2,
\end{equation}
and then
\begin{equation}\label{eq:A.14}
|f^{m,l}(\omega,t,0,0)|\leq \ddot\alpha_t(\omega)+{1\over 2}\bar u^2_t(\omega)+\varphi(0),
\end{equation}
which means that
$$
\E\left[\left(\int_0^T |f^{m,l}(s,0,0)|{\rm d}s\right)^2\right]<+\infty.\vspace{0.1cm}
$$
Consequently, by the classical results we know that for each $m,l\geq 1$, the following BSDE\vspace{0.1cm}
$$
Y_t^{m,l}=\eta+\int_t^T  f^{m,l}(s,Y_s^{m,l},Z_s^{m,l}){\rm d}s-\int_t^T Z_s^{m,l} {\rm d}B_s, \ \ t\in\T\vspace{0.1cm}
$$
admits a unique solution $(Y^{m,l},Z^{m,l})\in \s^2(\R)\times \hcal^2(\R^{1\times d})$, $Y^{m,l}$ converges increasing pointwisely to a process $Y^{\infty,l}$ as $m\To +\infty$, and $Y^{\infty,l}$ converges decreasing pointwisely to a process $Y$ as $l\To +\infty$. Moreover, by the classical a priori estimate on the $L^2$ solution (see Proposition 3.2 in \citet{Bri03}) we know that there exists a uniform constant $c_0>0$ such that for each $m,l\geq 1$ and $t\in \T$,
$$
\E_t\left[\sup_{s\in [t,T]}|Y^{m,l}_s|^2\right]\leq c_0 \exp\left(2(m\vee l)T+2(m\vee l)^2 T\right)\E_t\left[|\eta|^2+\left(\int_t^T |f^{m,l}(s,0,0)|{\rm d}s\right)^2\right],
$$
which together with \eqref{eq:A.14}, the energy inequality for BMO martingales (see for example Section 2.1 in \citet{Kazamaki1994book}) and the facts that $\eta\in L^{\infty}(\R)$, $\ddot\alpha\in \lcal^\infty$ and $\bar u\in {\rm BMO}(\R)$ yields that $Y^{m,l}\in \s^{\infty}(\R)$ for each $m,l\geq 1$, and then $Z^{m,l}\in {\rm BMO}(\R^{1\times d})$ by (i).\vspace{0.2cm}

Finally, it follows from (i) that there exists a uniform constant $A>0$ which is independent of $m$ and $l$ such that $\as$, we have $\sup_{m,l\geq 1} |Y^{m,l}_t(\omega)|\leq A$ and, in view of \eqref{eq:A.13},
$$
\RE\ (y,z)\in [-A,A]\times \R^{1\times d},\ \ |f^{m,l}(\omega,t,y,z)|\leq \ddot\alpha_t(\omega)+{1\over 2}\bar u^2_t(\omega)+\varphi(A)+\bar\beta A +{\gamma+1\over 2}|z|^2.
$$
Thus, we can apply twice the monotonic stability result Proposition 3.1 in \cite{LuoFan2018SD} to obtain the existence of a process $Z\in \hcal^2(\R^{1\times d})$ such that $Y\in \s^\infty(\R)$ and $(Y,Z)$ is a desired solution of BSDE \eqref{eq:A.1}. And, it follows from (i) that $Z\in {\rm BMO}(\R^{1\times d})$.\vspace{0.2cm}

(iii) In view of  (i),  (iii) is proved in the same way as (iii) of \cref{pro:A.1}.\vspace{0.1cm}
\end{proof}

In the following proposition, the generator $g$ has a linear growth in the state variable $z$.

\begin{pro}\label{pro:A.3}
Assume that the generator $f$ satisfies Assumption \ref{A:A4}.
\begin{itemize}
\item [(i)] For any solution $(Y,Z)$ of BSDE \eqref{eq:A.1} such that $Y\in \s^2(\R)$, we have $Y\in \s^{\infty}(\R)$ and $Z\in {\rm BMO}(\R^{1\times d})$ and for each $t\in \T$, there exists a uniform constant $c_0>1$ such that
\begin{equation}\label{eq:A.15}
\|Y\|_{\s^\infty_{[t,T]}}+\left\|Z\right\|_{{\rm BMO}_{[t,T]}}^2
\leq 1+c_0 \exp\left(2\bar\beta T+2\bar\lambda^2 T\right)\left(\|\eta\|_\infty^2
+2\|\dot\alpha\|_{\mcal^\infty_{[t,T]}}^2\right).\vspace{0.1cm}
\end{equation}
And,  if the generator $f$ only satisfies the first inequality for the case of $y<0$ and the second inequality for the case of $y>0$ in \ref{A:A4}, the above conclusion \eqref{eq:A.15} still holds.

\item [(ii)] BSDE \eqref{eq:A.1} admits a minimal (resp. maximal) solution $(Y,Z)$ such that $Y\in \s^2(\R)$ in the sense that for any solution $(\bar Y,\bar Z)$ of BSDE \eqref{eq:A.1} such that $\bar Y\in \s^2(\R)$, we have for each $t\in \T$, $\ps$, $Y_t\leq \bar Y_t$ (resp. $Y_t\geq \bar Y_t$). Moreover, $(Y,Z)\in \s^\infty(\R)\times{\rm BMO}(\R^{1\times d})$.
\item [(iii)] If the generator $f$ further satisfies Assumption \ref{A:A5}, then BSDE \eqref{eq:A.1} admits a unique solution $(Y,Z)$ such that $Y\in \s^2(\R)$. Moreover, $(Y,Z)\in \s^\infty(\R)\times{\rm BMO}(\R^{1\times d})$.\vspace{0.1cm}
\end{itemize}
\end{pro}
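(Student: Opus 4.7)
The plan is to mirror the three-step structure of \cref{pro:A.1,pro:A.2}, but tailored to the fact that in \ref{A:A4} the generator grows only \emph{linearly} in $z$ (so the exponential/Kobylanski transform is no longer natural). I begin with the a priori estimate (i). Fix a solution $(Y,Z)$ with $Y\in \s^2(\R)$. The two-sided bound in \ref{A:A4} yields
\[
\mathrm{sgn}(Y_s)\,f(s,Y_s,Z_s)\le \dot\alpha_s+\bar\beta|Y_s|+\bar\lambda|Z_s|\qquad \as,
\]
and in particular $|Y_s f(s,Y_s,Z_s)|\le |Y_s|\dot\alpha_s+\bar\beta|Y_s|^2+\bar\lambda|Y_s||Z_s|$. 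I apply It\^o's formula to $e^{at}|Y_t|^2$ with $a:=2\bar\beta+2\bar\lambda^2$ on a localizing interval $[\tau,\tau_m]$ where $\tau_m:=\inf\{s\ge\tau:|Y_s|\ge m\}\wedge T$, and use Young's inequality $2\bar\lambda|Y_s||Z_s|\le 2\bar\lambda^2|Y_s|^2+\tfrac12|Z_s|^2$ to absorb the cross term and half of the $|Z|^2$ term that comes from the quadratic variation. Taking conditional expectations (the stochastic integral is a true martingale on $[\tau,\tau_m]$ because $|Y|$ is bounded there) and sending $m\to\infty$ via monotone/dominated convergence (using $Y\in\s^2$), one obtains
\begin{equation}\label{eq:plan-ito}
|Y_\tau|^2+\tfrac12 \E_\tau\Big[\int_\tau^T e^{a(s-\tau)}|Z_s|^2\,{\rm d}s\Big]\le e^{aT}\|\eta\|_\infty^2+2e^{aT}\E_\tau\Big[\int_\tau^T|Y_s|\,\dot\alpha_s\,{\rm d}s\Big].
\end{equation}

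The main obstacle is the cross term $\E_\tau[\int_\tau^T|Y_s|\dot\alpha_s\,{\rm d}s]$, since $\dot\alpha$ is only in $\mcal^\infty$ (not $\lcal^\infty$) and we do not yet know that $Y$ is bounded. The key observation is that $|Y_s|\le \|Y\|_{\s^\infty_{[\tau,T]}}$ almost surely on $[\tau,T]$, where the right-hand side is a \emph{deterministic} constant; so
\[
\E_\tau\Big[\int_\tau^T|Y_s|\dot\alpha_s\,{\rm d}s\Big]\le \|Y\|_{\s^\infty_{[\tau,T]}}\,\|\dot\alpha\|_{\mcal^\infty_{[\tau,T]}}.
\]
Taking the essential supremum over $\tau\in\mathcal{T}_{[t,T]}$ in \eqref{eq:plan-ito} (using that $\|Y\|_{\s^\infty_{[t,T]}}^2=\sup_\tau\|Y_\tau\|_\infty^2$) yields a quadratic inequality $X^2\le e^{aT}\|\eta\|_\infty^2+2e^{aT}X\,\|\dot\alpha\|_{\mcal^\infty_{[t,T]}}$ in $X:=\|Y\|_{\s^\infty_{[t,T]}}$, which, once $X$ is known to be finite, gives \eqref{eq:A.15}. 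The finiteness of $X$ is the real bootstrap: on a small backward interval $[T-h,T]$ with $h$ so small that $\bar\beta h<\tfrac12$, I would apply Girsanov (with drift $\nu_s=\bar\lambda\,\mathrm{sgn}(Y_s)Z_s^\top/|Z_s|\mathbf{1}_{|Z_s|\ne 0}$, which is bounded by $\bar\lambda$) to the inequality obtained from It\^o--Tanaka for $|Y|$; this absorbs the $\bar\lambda|Z|$ term into the drift and leaves a bound of the form $|Y_\tau|\le \|\eta\|_\infty + C(\bar\lambda,h)\|\dot\alpha\|_{\mcal^\infty}+\bar\beta h\|Y\|_{\s^\infty_{[T-h,T]}}$, yielding a finite bound on $\|Y\|_{\s^\infty_{[T-h,T]}}$ depending only on the data. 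Iterating backward over $\lceil T/h\rceil$ such intervals gives $Y\in\s^\infty$, at which point \eqref{eq:A.15} follows from the quadratic inequality above and then $Z\in \mathrm{BMO}$ from the $|Z|^2$-piece of \eqref{eq:plan-ito}.

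For (ii) I build the minimal solution by a double inf-convolution: set
\[
\begin{aligned}
f^{m,l}(\omega,t,y,z):=&\inf_{(\bar y,\bar z)}\{f^+(\omega,t,\bar y,\bar z)+m|y-\bar y|+m|z-\bar z|\}\\
&\quad -\inf_{(\bar y,\bar z)}\{f^-(\omega,t,\bar y,\bar z)+l|y-\bar y|+l|z-\bar z|\},
\end{aligned}
\]
which is Lipschitz in $(y,z)$, still satisfies \ref{A:A4} with the same parameters, and increases in $m$ (decreases in $l$) to $f$. Classical $L^2$-theory gives a unique solution $(Y^{m,l},Z^{m,l})\in\s^2\times\hcal^2$; the estimate (i) applied to $f^{m,l}$ is uniform in $m,l$, so one can pass to the limit first in $m$ and then in $l$ using the monotone stability result Proposition 3.1 of \citet{LuoFan2018SD} (the $|f^{m,l}|$-bound needed there follows from \ref{A:A4} and Young's inequality $\bar\lambda|z|\le \tfrac{\bar\lambda^2}{2}+\tfrac{|z|^2}{2}$ to fit the linear-quadratic growth framework). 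Minimality follows from the comparison theorem in $L^2$ applied at each level $f^{m,l}\le f$. Finally (iii) is by now routine: with \ref{A:A5}, if $(Y,Z)$ and $(\bar Y,\bar Z)$ are two solutions, part (i) gives both in $\s^\infty\times\mathrm{BMO}$; It\^o--Tanaka for $|\hat Y|=|Y-\bar Y|$ produces a Girsanov drift $k\,\mathrm{sgn}(\hat Y)(\bar v+|Z|+|\bar Z|)\hat Z^\top/|\hat Z|\mathbf{1}_{|\hat Z|\ne 0}$ which is in $\mathrm{BMO}$ (since $\bar v,Z,\bar Z\in\mathrm{BMO}$), so under the equivalent measure the stochastic integral is a true martingale, and Gronwall applied to $\E_\tau^{\tilde{\mathbb P}}[|\hat Y_\tau|]$ yields $\hat Y\equiv 0$.
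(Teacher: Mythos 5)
Your parts (i) and (iii) are workable: for (i) you re-derive by hand (It\^o applied to $e^{at}|Y_t|^2$, Young's inequality, plus a small-interval Girsanov bootstrap for finiteness of $\|Y\|_{\s^\infty}$) what the paper obtains in one stroke by citing the conditional $L^2$ a priori estimate of \citet{Bri03} and then bounding $\E_\tau[(\int_\tau^T\dot\alpha_s\,{\rm d}s)^2]\le 2\|\dot\alpha\|_{\mcal^\infty}^2$ via the energy inequality; your route is longer and needs more care (the change of measure affects the conditional expectation of $\int\dot\alpha$, which must be re-controlled by H\"older and the energy inequality), but it is sound. Part (iii) coincides with the paper's argument.

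Part (ii), however, has a genuine gap. Your minimality claim rests on ``the comparison theorem in $L^2$ applied at each level $f^{m,l}\le f$,'' but for the double inf-convolution $f^{m,l}=(f^+)^m-(f^-)_l$ the inequality $f^{m,l}\le f$ is false: one has $(f^+)^m\le f^+$ but also $(f^-)_l\le f^-$, hence $-(f^-)_l\ge -f^-$, so $f^{m,l}-f$ has indeterminate sign. Indeed the intermediate limit $f^{\infty,l}=f^+-(f^-)_l$ lies \emph{above} $f$, so the increasing-then-decreasing limit cannot be identified as the minimal solution by comparison; this is precisely why the paper uses the double construction in \cref{pro:A.2}(ii) only to prove \emph{existence} and not extremality. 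The obvious repair --- a single increasing inf-convolution $f^m(y,z)=\inf_{(\bar y,\bar z)}\{f(\bar y,\bar z)+m|y-\bar y|+m|z-\bar z|\}$, which does satisfy $f^m\le f$ --- is not available here either, because \ref{A:A4} only bounds $f$ from below by $-\dot\alpha-\varphi(|y|)-\bar\beta|y|-\bar\lambda|z|$ and $\varphi$ may grow superlinearly, so the infimum over $\bar y$ can be $-\infty$. The missing idea is the one the paper uses: first truncate the $y$-variable at the a priori bound $M$ from part (i), i.e.\ set $f^M(t,y,z):=f(t,\rho^M(y),z)$, which makes the generator continuous with genuinely linear growth in $(y,z)$, then invoke \citet{Lep97} to obtain directly a minimal and a maximal $L^2$ solution of the truncated equation, and finally use (i) to show that every $\s^2$ solution of the original equation is bounded by $M$ and hence solves the truncated one (and conversely), so extremality transfers.
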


\begin{proof} (i) Let $(Y,Z)$ be any solution of BSDE \eqref{eq:A.1} such that $Y\in \s^2(\R)$. In view of the first inequality for the case of $y<0$ and the second inequality for the case of $y>0$ in Assumption \ref{A:A4} and by virtue of the classical a priori estimate on the $L^2$ solution (see Proposition 3.2 in \citet{Bri03}) we can deduce the existence of a uniform constant $c_0>1$ such that for each $t\in \T$ and each $\tau\in \mathcal{T}_{[t,T]}$,
$$
\E_\tau\left[\sup_{s\in [\tau,T]}|Y_s|^2+\int_\tau^T |Z_s|^2{\rm d}s\right]\leq c_0 \exp\left(2\bar\beta T+2\bar\lambda^2 T\right)\E_\tau\left[|\eta|^2+\left(\int_\tau^T \dot\alpha_s {\rm d}s\right)^2\right],
$$
and then, by the energy inequality for BMO martingales and (i) of \cref{rmk:2.1},
$$
\|Y\|_{\s^\infty_{[t,T]}}^2+\left\|Z\right\|_{{\rm BMO}_{[t,T]}}^2
\leq c_0 \exp\left(2\bar\beta T+2\bar\lambda^2 T\right)\left(\|\eta\|_\infty^2+2\|\dot\alpha\|_{\mcal^\infty_{[t,T]}}^2\right).
$$
Then, the desired inequality \eqref{eq:A.15} follows from the previous inequality immediately, and then $(Y,Z)\in \s^{\infty}(\R)\times {\rm BMO}(\R^{1\times d})$.\vspace{0.2cm}

(ii) Define $M:=1+c_0 \exp\left(2\bar\beta T+2\bar\lambda^2 T\right)\left(\|\eta\|_\infty^2
+2\|\dot\alpha\|_{\mcal^\infty}^2\right)$, $\rho^M(x):={Mx\over M\vee |x|},\ x\in \R$ and
$$
f^M(\omega,t,y,z):=f(\omega,t,\rho^M(y),z), \ \ (\omega,t,y,z)\in \Omega\times\T\times\R\times \R^{1\times d}.\vspace{-0.1cm}
$$
It is easy to verify that the generator $f^M$ also satisfies Assumption \ref{A:A4}, and that $\as$, for each $(y,z)\in \R\times \R^{1\times d}$,
$
|f^M(\omega,t,y,z)|\leq \dot\alpha_t(\omega)+\varphi(M)+\bar\beta M+\bar\lambda |z|
$.
Then, by \citet{Lep97} we know that the following BSDE
\begin{equation}\label{eq:A.16}
Y_t=\eta+\int_t^T  f^M(s,Y_s,Z_s){\rm d}s-\int_t^T Z_s {\rm d}B_s, \ \ t\in\T
\end{equation}
admits a maximal solution $(\bar Y,\bar Z)$ and a minimal solution $(\underline {Y},\underline {Z})$ \vspace{0.2cm}in the space $\s^2(\R)\times \hcal^2(\R^{1\times d})$.

We now show that $(\bar Y,\bar Z)$ and $(\underline {Y},\underline {Z})$ are also the desired maximal and minimal solution of BSDE \eqref{eq:A.1}. Indeed, since $f^M$ satisfies Assumption \ref{A:A4}, it follows from (i) and the definition of $f^M$ that both of them belong to the space $\s^{\infty}(\R)\times {\rm BMO}(\R^{1\times d})$ and are also solutions of BSDE \eqref{eq:A.1}. Furthermore, let $(Y, Z)$ be any solution of BSDE \eqref{eq:A.1} such that $Y\in \s^2(\R)$. Then, it follows from (i) and the definition of $f^M$ again that $(Y,Z)\in \s^{\infty}(\R)\times {\rm BMO}(\R^{1\times d})$ and it is also a solution of BSDE \eqref{eq:A.16} in the space $\s^2(\R)\times \hcal^2(\R^{1\times d})$. Consequently, for each $t\in\T$, we have $\underline{Y}_t\leq Y_t\leq \bar Y_t,\ \ps$, which is the desired conclusion.\vspace{0.2cm}

(iii) In view of (i), (iii) can be proved in the same way as (iii) of \cref{pro:A.1}.
\end{proof}



\setlength{\bibsep}{2pt}

\def\cprime{$'$}

\end{document}